\renewcommand{\pod}[1]{\allowbreak\mathchoice
  {\if@display \mkern 18mu\else \mkern 8mu\fi (#1)}
  {\if@display \mkern 18mu\else \mkern 8mu\fi (#1)}
  {\mkern4mu(#1)}
  {\mkern4mu(#1)}
}
\renewcommand{\Re}{\mathrm{Re}}
\renewcommand{\Im}{\mathrm{Im}}
\newtheorem{theorem}{Theorem}[subsection]
\newtheorem{lemma}[theorem]{Lemma}
\newtheorem{corollary}[theorem]{Corollary}
\theoremstyle{remark}
\newtheorem{remark}[theorem]{Remark}
\numberwithin{equation}{subsection}
\newcommand{\cond}{\operatorname{cond}}
\newcommand{\flrt}{\operatorname{flrt}}
\title{\textbf{Low-Lying Zeros of a Thin Family of Automorphic $L$-Functions in the Level Aspect}}
\author{Matthew Kroesche}
\email{mdkroesche@tamu.edu}
\begin{document}

\maketitle
{\small \textsc{Abstract.} We calculate the one-level density of thin subfamilies of a family of Hecke cuspforms formed by twisting the forms in a smaller family by a character. The result gives support up to 1, conditional on GRH, and we also find several of the lower-order main terms. In addition, we find an unconditional result that has only slightly lower support. A crucial step in doing so is the establishment of an on-average version of the Weil bound that applies to twisted Kloosterman sums. Moreover, we average over these thin subfamilies by running over the characters in a coset, and observe that any amount of averaging at all is enough to allow us to get support greater than 1 and thus distinguish between the SO(even) and SO(odd) symmetry types. Finally, we also apply our results to nonvanishing problems for the families studied.}

\begin{section}{Introduction}

\begin{subsection}{Background}
Suppose $\phi$ is an even Schwartz-class function, and $f$ is any Hecke cuspform. We define the \emph{one-level density} of $f$ with respect to $\phi$ as
\begin{equation}
D_1(f, \phi, R) = \sum_{\rho}\phi\left(\frac{\log R\left(\rho-\frac12\right)}{2\pi i}\right),\label{olddef}
\end{equation}
where the sum is over the nontrivial zeros $\rho$ of $L(f, s)$, and $R$ is a scaling parameter related to the conductor of $L(f, s)$. Throughout this work, we consider forms of level $q^2$ and use the value $R=\left(\frac{q}{2\pi}\right)^2$. We also define the \emph{average one-level density} of any family $\mathcal{F}$ of Hecke cuspforms as
\begin{equation}
\mathcal{D}_1(\mathcal{F},\phi,R, w) = \frac{\sum_{f\in\mathcal{F}}w(f)D_1(f, \phi, R)}{\sum_{f\in\mathcal{F}}w(f)},\label{oldavgdef}
\end{equation}
where $w(f)$ is any positive weight function on the family. \\

In \cite{ILS}, the average one-level density of the family $\mathcal{H}_{\kappa}(q, 1)$ of newforms of level $q$, weight $\kappa$, and trivial central character was calculated in level aspect as $q\to\infty$, valid for test functions $\phi$ with Fourier transform supported in $(-2, 2)$. This family has orthogonal symmetry. The authors also considered the subfamilies of modular forms whose $L$-functions have only positive or only negative root number, and noted that it is possible to detect the SO(even) or SO(odd) symmetry of these subfamilies. \\

Another key work in the study of low-lying zeros is \cite{HR}, which established that the symmetry type of the family of Dirichlet $L$-functions of a particular conductor is unitary, and also calculated several other statistics of the zero distribution in analogy with the Katz-Sarnak philosophy (introduced in \cite{KS}). The subfamily of Dirichlet $L$-functions of a quadratic character was later studied in \cite{OS}, and in particular it has been noted that this subfamily has symplectic symmetry. \\
\end{subsection}

\begin{subsection}{Statement of Results}
In this paper, we establish results for smaller subfamilies of the family studied in \cite{ILS}. Due to the reduced size of the families, it is not surprising that the support will be less than the support found in \cite{ILS}. Our most basic object of study is a thin subfamily that was introduced in \cite{PY} and used by the authors to establish that the Weyl bound holds for Dirichlet $L$-functions of cubefree conductor (and later, for all Dirichlet $L$-functions). The size of this subfamily is proportional in the limit to the square root of the size of the whole family of modular forms. Our first main result is to calculate the one-level density of this thin subfamily (and twists of it by characters) for test functions supported in $(-1, 1)$ and observe that it matches the symmetry type of the large family within this restricted support. Our second main result is to average this subfamily by twisting it over a coset to build larger subfamilies of an intermediate size, and investigate how large the support is allowed to be as a function of the size of the coset. The final result in this case is essentially the best possible: the support linearly interpolates the size (in the $p$-adic sense) of the coset from $(-1, 1)$ up to $(-2, 2)$. In particular, any nonzero amount of averaging is enough to break past $(-1, 1)$ and detect whether the symmetry type of the subfamily is even or odd. \\

Let $\mathcal{H}_{\kappa}(q, \chi)$ denote the set of holomorphic Hecke newforms of weight $\kappa$, level $q$, and central character $\chi$. Suppose $\kappa$ is even and $q$ is odd. Then for Dirichlet characters $\chi$ and $\eta$ modulo $q$ such that $\chi^2$, $\chi\eta$, and $\chi\overline{\eta}$ all have conductor $q$, we are interested in the family of twists
\begin{equation}
\mathcal{F}_{\kappa}(q,\chi,\eta) = \{f_{\chi\eta} : f\in\mathcal{H}_{\kappa}(q,\overline{\chi}^2)\},\label{family1}
\end{equation}
where $f_{\chi\eta}$ is shorthand for $f\otimes \chi\otimes\eta$. We claim that the family $\mathcal{F}_{\kappa}(q,\chi,\eta)$ consists only of newforms, and is thus a subfamily of $\mathcal{H}_{\kappa}(q^2, \eta^2)$ of size $(\kappa q)^{1+o(1)}$. Indeed, due to Lemma 1.4 of \cite{BLS}, we have $\cond(f_{\chi\eta}) = q^2$ as well, using our assumptions that $\chi\eta$ and $\overline{\chi}\eta$ are primitive modulo $q$. We also note that, since $\chi^2$ is primitive, there are no oldforms of weight $\kappa$, level $q$, and central character $\overline{\chi}^2$. Thus $\mathcal{H}_{\kappa}(q,\overline{\chi}^2)$ is an orthonormal basis for the space of cuspforms $\mathcal{S}_{\kappa}(q,\overline{\chi}^2)$, so we may average over it using the Petersson formula without any sieving. Our first result is an asymptotic for the average one-level density of this family weighted using the Petersson weights of the forms $f$.

\begin{theorem}[One-Level Density of Thin Family]
Suppose $\widehat{\phi}$ has support contained in $[-\theta,\theta]$.
For any cuspform $f$ of level $q$, define the Petersson weight function by
\begin{equation}
w(f) = \frac1{\left<f,f\right>_q},\label{petwt}
\end{equation}
where $\left<\cdot,\cdot\right>_q$ is the Petersson inner product as defined in \eqref{petprod}, and take $\mathcal{F} = \mathcal{F}_{\kappa}(q,\chi,\eta)$. Let the integrals $\mathcal{I}(\kappa,\phi,R)$, $\mathcal{J}(q,\phi,R)$, and $\mathcal{J}(q, \phi,R,\chi)$ be as defined in \eqref{idef}, \eqref{jdef}, and \eqref{jgendef} respectively, and let the weight functions $W(G)(x)$ be as defined in \eqref{wux} through \eqref{wsoox}. Then if $\eta$ is quadratic, we have
\begin{multline}
\mathcal{D}_1\left(\mathcal{F}, \phi, R, w\right) = \int_{-\infty}^{\infty}\phi(x)W(\mathrm{O})(x)\;dx \\
+ \frac2{\log R}\widehat{\phi}(0) + \frac2{\log R}\mathcal{I}(\kappa,\phi, R) - \frac2{\log R}\mathcal{J}(q, \phi, R) + O\left(||\widehat{\phi}||_{\infty}\left(q^{\kappa(\theta-1)+\frac12+\varepsilon} + q^{-1+\varepsilon}\right)\right).\label{result1a}
\end{multline}
If $\eta$ is not quadratic, we instead have
\begin{multline}
\mathcal{D}_1\left(\mathcal{F}, \phi, R, w\right) = \int_{-\infty}^{\infty}\phi(x)W(\mathrm{U})(x)\;dx \\
+ \frac2{\log R}\mathcal{I}(\kappa,\phi, R) - \frac2{\log R}\mathcal{J}(q, \phi, R, \eta^2)
+ O\left(||\widehat{\phi}||_{\infty}\left(q^{\kappa(\theta-1)+\frac12+\varepsilon} + q^{-1+\varepsilon}\right)\right).\label{result1b}
\end{multline}

The implied constants depend only on $\kappa$ and $\varepsilon$. Assuming the Generalized Riemann Hypothesis (GRH), the error terms in \eqref{result1a} and \eqref{result1b} can be improved for $\theta$ close to 1 to
\begin{equation}
O\left(\left(||\widehat{\phi}||_{\infty} + ||\widehat{\phi}\thinspace''||_{\infty}\right)q^{\frac{\theta-1}2+\varepsilon}\right).\label{result1e}
\end{equation}\label{theorem1}
\end{theorem}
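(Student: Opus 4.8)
The plan is to start from the explicit formula, which converts the one-level density $D_1(f_{\chi\eta},\phi,R)$ into a sum over primes of the coefficients $\lambda_{f_{\chi\eta}}(p^\nu)\log p$ against $\widehat\phi$, plus the archimedean contribution that will produce $\mathcal{I}(\kappa,\phi,R)$ and the main $\widehat\phi(0)$ term, together with a contribution from the conductor $q^2$. Since $f_{\chi\eta} = f\otimes\chi\eta$ with $f$ ranging over $\mathcal{H}_\kappa(q,\overline\chi^2)$, we have $\lambda_{f_{\chi\eta}}(n) = \lambda_f(n)(\chi\eta)(n)$ for $(n,q)=1$, so after weighting by $w(f)$ and normalizing, the key object becomes $\sum_p \widehat\phi\!\left(\tfrac{\log p}{\log R}\right)\tfrac{\log p}{\sqrt p}(\chi\eta)(p)\,\Delta_{q,\overline\chi^2}(p)$ where $\Delta_{q,\overline\chi^2}$ is the (harmonic-weighted) Petersson average of $\lambda_f(p)$ over $\mathcal{H}_\kappa(q,\overline\chi^2)$, plus the analogous $\nu=2$ term which is where the symmetry type gets decided.

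The first main step is to apply the Petersson formula to $\Delta_{q,\overline\chi^2}(p^\nu)$: the diagonal term vanishes for $\nu=1$ (since $p\nmid q$ cannot equal $1$) and contributes a main term for $\nu=2$, while the off-diagonal term is a sum of Kloosterman sums $S_{\overline\chi^2}(p^\nu,1;c)$ over moduli $c$ divisible by $q$. After summing over $p$ against $\widehat\phi$, the $\nu=1$ piece becomes a weighted sum of \emph{twisted} Kloosterman sums against the character $\chi\eta$; this is exactly where the on-average Weil bound advertised in the abstract must be invoked to control the sum over $p$ and $c$ and show it is negligible once $\theta<1$ (unconditionally, with a power saving $q^{\kappa(\theta-1)+1/2+\varepsilon}$), or, assuming GRH, with the sharper saving $q^{(\theta-1)/2+\varepsilon}$ coming from applying GRH for the relevant Dirichlet $L$-functions to the character sum rather than the Weil bound pointwise. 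The $\nu=2$ diagonal produces the dichotomy: when $\chi^2\eta^2$ is principal (i.e.\ $\eta$ quadratic, since $\chi^2$ is not but $\chi^2\eta^2$ can be trivial only in that regime — more precisely when $\eta^2$ cancels a square), one gets $\sum_p \tfrac{\log p}{p}\widehat\phi(2\tfrac{\log p}{\log R})$, which by Mertens equals $\tfrac12\widehat\phi(0)\log R + \text{const} + \mathcal{J}(q,\phi,R) + o(1)$ after extracting the Euler factors at $p\mid q$; dividing by $\log R$ this yields the $-\widehat\phi(0)$-type correction that distinguishes $W(\mathrm O)$ from $W(\mathrm U)$ and the explicit lower-order term $-\tfrac{2}{\log R}\mathcal{J}(q,\phi,R)$. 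When $\eta$ is not quadratic, $\chi^2\eta^2$ is a nonprincipal character of conductor dividing $q^2$, so this sum is $O(1)$ and instead contributes the term $-\tfrac{2}{\log R}\mathcal{J}(q,\phi,R,\eta^2)$ coming from the prime-power Euler factors twisted by $\eta^2$, giving the unitary density $W(\mathrm U)$.

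The third step is bookkeeping of the lower-order terms: the archimedean integral in the explicit formula is expanded to produce $\int\phi\,W(G)$ plus $\tfrac{2}{\log R}\mathcal{I}(\kappa,\phi,R)$ exactly as in \cite{ILS}, and the conductor term $-\tfrac{1}{2}\widehat\phi(0)\log R/(\tfrac12\log R)$ combined with the $\nu=2$ Mertens main term is what leaves the clean $+\tfrac{2}{\log R}\widehat\phi(0)$ residue in \eqref{result1a}; one must also verify that the Euler factors at the ramified primes $p\mid q$ (where $\lambda_{f_{\chi\eta}}(p)$ is governed by the local representation, of size $O(1)$) contribute only to the already-isolated $\mathcal{J}$ integrals and to the error $O(\|\widehat\phi\|_\infty q^{-1+\varepsilon})$. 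Finally, for the GRH improvement in \eqref{result1e}, the $\nu\ge 3$ terms and the tail of the $\nu=1$ Kloosterman sum are re-estimated: smoothing the sum over $c$ and applying GRH for $L(s,\chi\eta\cdot\overline\chi^2\text{-related characters})$ converts the $1/\sqrt c$-type loss into a clean power of $q^{(\theta-1)/2}$, at the cost of two derivatives of $\widehat\phi$ from integrating by parts against the smooth cutoff, which is the source of the $\|\widehat\phi\,''\|_\infty$ in the error.

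The main obstacle I expect is the off-diagonal $\nu=1$ term: naively the Weil bound gives $S_{\overline\chi^2}(p,1;c)\ll (p,c)^{1/2}c^{1/2}d(c)$, and summing $\tfrac{\log p}{\sqrt p}\cdot\tfrac{1}{c}\cdot c^{1/2}$ over $p\le R^\theta$ and $c\equiv 0\ (q)$ only barely fails to be small for $\theta$ near $1$ — one genuinely needs to exploit cancellation \emph{in the sum over $p$} (the twist by $\chi\eta$) rather than estimating each Kloosterman sum individually, which is precisely the content of the "on-average Weil bound for twisted Kloosterman sums" that the paper sets up as its technical heart. Getting the exponent in $q^{\kappa(\theta-1)+1/2+\varepsilon}$ sharp, and matching it under GRH to $q^{(\theta-1)/2+\varepsilon}$, will require care with the interplay between the modulus $c$, the conductor $q$, and the prime $p$ in that averaged estimate.
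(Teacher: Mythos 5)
Your proposal follows essentially the same route as the paper: the explicit formula, the Petersson trace formula with the $e=2$ diagonal producing the $\mathrm{O}$/$\mathrm{U}$ dichotomy together with the $\mathcal{J}$ lower-order terms, the on-average Weil bound handling the off-diagonal unconditionally, and GRH applied to the Dirichlet $L$-functions arising from the multiplicative-character (Gauss sum) decomposition of the twisted Kloosterman sums for the sharper error \eqref{result1e}. One small correction of emphasis: the on-average Weil bound is a bound on $\sum_m |S_{\overline{\chi}^2}(m,1;c)|$ — i.e.\ the twisted sums, whose pointwise Weil bound fails by a factor $q^{1/2}$, are nevertheless Weil-sized on average over $m$ — rather than an exploitation of cancellation from the twist $\chi\eta(p)$ in the sum over primes; cancellation over primes is only invoked in the GRH argument.
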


This theorem is proved in Section \ref{thinsection}.

\begin{remark}
Throughout this paper, we are working in \emph{level aspect}, meaning that we think of $q$ as going to infinity and $\kappa$ as being a fixed positive even integer. Thus, many of the error terms in our work have an implicit dependence on $\kappa$.
\end{remark}

\begin{remark}
Throughout the paper, we consider $\theta$ as bounded by an absolute constant. For instance, our results are trivial if $\theta > 2$. As a consequence, none of our implied constants, such as in \eqref{result1a} or \eqref{result1b}, will depend on $\theta$.
\end{remark}

\begin{remark}
The error term gives unconditional support for $\widehat{\phi}$ up to $1 - \frac1{2\kappa}$. Conditional on the Generalized Riemann Hypothesis, this can be improved to 1.
\end{remark}

\begin{remark}
If $q$ is sent to infinity, all the lower-order main terms go to zero. Thus we can write the much more succinct result
\begin{equation}
\lim_{q\to\infty}\mathcal{D}_1(\mathcal{F},\phi,R,w) = \int_{-\infty}^{\infty}\phi(x)W(G)(x)\;dx,
\end{equation}
where $G$ is the symmetry type of the family; that is, $G = \mathrm{O}$ (orthogonal symmetry) if $\eta^2 = 1$ and $G = \mathrm{U}$ (unitary symmetry) otherwise.
\end{remark}

\begin{remark}
Lower-order main terms similar to these have appeared elsewhere in the literature of low-lying zeros as well. For example, an analogue of the integral $\mathcal{I}(\kappa,\phi, R)$ appears in (2.2) of \cite{MI} and (1.3) of \cite{RR}. Similarly, an integral analogous to $\mathcal{J}(q,\phi,R)$ appears as the lower-order term $S_3$ in Proposition 3.1 of \cite{RR}. 
\end{remark}

Our second result is an asymptotic for the one-level density of an intermediate coset family. Suppose $p$ is an odd prime and $q = p^k$ is a prime power with $k\ge 2$, and let $\psi$ be a Dirichlet character modulo $q$ such that $\cond(\psi^2) = q$. Let $j$ be a positive integer with $1\le j < k$, and let $\widehat{G}_N$ denote the group of Dirichlet characters modulo $N$ for any positive integer $N$. Suppose also that $\epsilon = \pm 1$. Then we define
\begin{equation}
\mathcal{F}_{\kappa,\epsilon}(q,p^j,\psi) = \bigcup_{\substack{\chi\in \psi\widehat{G}_{p^j} \\ \chi(-1) = \epsilon}}\mathcal{F}_{\kappa}(q,\chi,1).
\end{equation}
Several remarks are in order here. First, this is a subfamily of $\mathcal{H}_{\kappa}(q^2,1) = \mathcal{H}_{\kappa}(p^{2k}, 1)$ whose size is proportional to $p^{j+k}$, which is larger than $q$ but smaller than $q^2$. As before, we claim it contains only newforms. Indeed, every character in the coset $\psi\widehat{G}_{p^j}$ of $\widehat{G}_q$ has conductor $q$, because $\psi$ has conductor $q$, but the characters in $\widehat{G}_{p^j}$ have conductor dividing $p^j$, which is strictly less than $q$. Additionally, all the forms in the family have root number $i^{\kappa}\epsilon$ (see Section 2.3 of \cite{PY}), so the whole family is either even or odd, depending on whether $\kappa$ is $0\pmod 4$ or $2\pmod 4$, and on the sign $\epsilon$. Finally, we have taken $\eta = 1$ here for simplicity.

\begin{theorem}[One-Level Density of Coset Family]
Assume the Generalized Riemann Hypothesis, suppose $\widehat{\phi}$ has support contained in $[-\theta,\theta]$, and let $w(f)$ be the Petersson weight defined in \eqref{petwt}. Then if $q = p^k$, $\psi$ is a Dirichlet character modulo $q$ with $\psi^2$ primitive, and $\mathcal{F} = \mathcal{F}_{\kappa,\epsilon}(q,p^j,\psi)$, we have
\begin{multline}
\mathcal{D}_1\left(\mathcal{F}, \phi, R, w\right) = \int_{-\infty}^{\infty}\phi(x)W(G)(x)\;dx + \frac2{\log R}\widehat{\phi}(0) \\
+ \frac2{\log R}\mathcal{I}(\kappa,\phi, R) - \frac2{\log R}\mathcal{J}(q, \phi, R) - \frac{2i^{\kappa}\epsilon}{\log R}\mathcal{L}(\kappa,p,\phi, R) + O\left(\left(||\widehat{\phi}||_{\infty} + ||\widehat{\phi}\thinspace''||_{\infty}\right)q^{\max(\theta,1)-1-\frac{j}k+\varepsilon}\right),\label{result2}
\end{multline}
where $G$ is $\mathrm{SO}(\mathrm{even})$ if $i^{\kappa}\epsilon = 1$ and $\mathrm{SO}(\mathrm{odd})$ if $i^{\kappa}\epsilon = -1$, and $\mathcal{I}(\kappa,\phi,R)$, $\mathcal{J}(q,\phi,R)$, and $\mathcal{L}(\kappa,p,\phi,R)$ are integrals involving $\phi$ to be defined in \eqref{idef}, \eqref{jdef}, and \eqref{ldef} respectively. The implied constant depends only on $\kappa$ and $\varepsilon$.\label{theorem2}
\end{theorem}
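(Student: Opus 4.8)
The plan is to reduce the coset family to the thin family of Theorem \ref{theorem1} by averaging over the characters $\chi$ in the coset $\psi\widehat{G}_{p^j}$, and then to extract the extra main term $\mathcal{L}(\kappa,p,\phi,R)$ from the resulting character sum. First I would write out the explicit formula for $D_1(f_{\chi},\phi,R)$ for each form $f_\chi = f\otimes\chi$ in $\mathcal{F}_\kappa(q,\chi,1)$, producing the usual decomposition into a term involving $\widehat\phi(0)$ and $\log R$, a term involving the Hecke eigenvalues $\lambda_f(n)\chi(n)$ weighted against $\widehat\phi$, and the contribution of the functional equation (the root number term), which by Section 2.3 of \cite{PY} contributes the factor $i^\kappa\epsilon$. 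Summing over $f\in\mathcal{H}_\kappa(q,\overline\chi^2)$ with the Petersson weights and applying the Petersson formula as in the proof of Theorem \ref{theorem1}, the diagonal gives the symmetry-type integral against $W(G)$ (with $G = \mathrm{SO}(\mathrm{even})$ or $\mathrm{SO}(\mathrm{odd})$ according to the sign $i^\kappa\epsilon$, since the whole family is even or odd), together with the lower-order terms $\mathcal{I}(\kappa,\phi,R)$ and $-\mathcal{J}(q,\phi,R)$, while the off-diagonal produces Kloosterman-type sums.

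The new ingredient is the average over $\chi\in\psi\widehat{G}_{p^j}$ restricted by $\chi(-1)=\epsilon$. Here I would insert the orthogonality relation $\frac{1}{\varphi(p^j)}\sum_{\chi_0\in\widehat{G}_{p^j}}\chi_0(n) = \mathbf 1_{n\equiv 1\,(p^j)}$ (after writing $\chi = \psi\chi_0$), together with the parity projector $\frac12(1+\epsilon\chi(-1))$ to impose $\chi(-1)=\epsilon$. The parity projector splits into a "main" half that is $\chi$-independent and an "$\epsilon$" half carrying a sign; the $\chi$-independent half reproduces exactly the thin-family computation (giving $\widehat\phi(0)$, $\mathcal{I}$, and $-\mathcal{J}$), while the $\epsilon\chi(-1)$ half, combined with the root-number term $i^\kappa\epsilon$ already present, is what assembles into $-\frac{2i^\kappa\epsilon}{\log R}\mathcal{L}(\kappa,p,\phi,R)$: this term comes from the dual sum after the functional equation, localized by the congruence $n\equiv -1\pmod{p^j}$ instead of $n\equiv 1$, and should be identified with the integral $\mathcal{L}$ defined in \eqref{ldef}. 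The congruence $n\equiv 1\pmod{p^j}$ forces the surviving terms in the Hecke/Kloosterman sums to have $n$ in a progression of modulus $p^j$, which is precisely the mechanism that enlarges the admissible support: effectively we gain a factor $p^j$ of cancellation for free, pushing the error from $q^{\theta-1}$-type down to $q^{\theta-1-j/k}$-type.

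The main obstacle will be bounding the off-diagonal Kloosterman contribution uniformly over the coset once the congruence $n\equiv\pm1\pmod{p^j}$ has been imposed, and showing that it is absorbed into the claimed error term $O((\|\widehat\phi\|_\infty + \|\widehat\phi\thinspace''\|_\infty)q^{\max(\theta,1)-1-j/k+\varepsilon})$. Under GRH this should follow from the same machinery used for \eqref{result1e} in Theorem \ref{theorem1} — essentially a GRH-conditional bound on the relevant twisted Kloosterman/character sums, exploiting the extra saving from the modulus-$p^j$ congruence — but one must track the interplay between the length of the $n$-sum (controlled by $\theta$ via $\widehat\phi$) and the modulus $p^j = q^{j/k}$, and handle the diagonal terms $n\equiv\pm1\pmod{p^j}$ with $n>1$ that now contribute to the main terms rather than the error. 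The terms with two derivatives of $\widehat\phi$ arise, as in Theorem \ref{theorem1}, from smoothing/partial summation when converting the explicit-formula sum over primes into the integral against $\widehat\phi$ under GRH. Once the error is controlled, collecting the diagonal main term, the lower-order terms $\mathcal{I}$ and $-\mathcal{J}$ from the $\chi$-independent half, and the term $-\frac{2i^\kappa\epsilon}{\log R}\mathcal{L}$ from the $\epsilon$-half yields \eqref{result2}.
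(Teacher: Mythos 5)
The proposal has the right outer skeleton (explicit formula, Petersson average, coset average with the parity projector, GRH bounds), but two of your central mechanisms are not correct, and they are exactly where the paper's real work lies. First, your orthogonality step treats the coset average as if it only acted on the twist factor $\chi(n)$, producing a congruence $n\equiv\pm1\pmod{p^j}$ on the Hecke index. But the character $\chi$ also determines which basis $\mathcal{H}_{\kappa}(q,\overline{\chi}^2)$ you average over, so after the Petersson formula it sits inside the Kloosterman sums as the central character: the object you must average is the coset sum $\sigma_{\psi,\epsilon}(m,n;c)=\sum_{\chi\in\psi\widehat{G}_{p^j}}\chi(m)\overline{\chi}(n)\left[1+\epsilon\chi(-1)\right]S_{\overline{\chi}^2}(m,n;c)$ of \eqref{cosetsum}. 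Orthogonality then imposes the congruence $mx^2\equiv\pm n\pmod{p^j}$ on the Kloosterman variable $x$, not on $n$, and the argument needs the structural analysis of this sum that your sketch does not supply: the split \eqref{sigmasplit} into same-sign and opposite-sign pieces, the vanishing of the same-sign piece for $k\le\nu_p(c)<j+k$ (Lemma \ref{lemmakplus}), of the opposite-sign piece for $\nu_p(c)>k$ (Lemma \ref{lemmakminus}), and the identification with $S_{\overline{\psi}^2}(m,n;c)$ once $\nu_p(c)\ge j+k$ (Lemma \ref{lemmaktwisted}). It is the resulting vanishing window $k<\nu_p(c)<j+k$ --- so that the generic twisted-Kloosterman tail starts only at modulus $p^{j+k}$ --- that yields the extra factor $q^{-j/k}$ in the error term, not a ``free factor of $p^j$ of cancellation'' from a congruence on $n$.

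Second, the provenance you give for the $i^{\kappa}\epsilon$ terms would fail. In the explicit-formula framework there is no ``dual sum after the functional equation''; the root number never appears explicitly in the computation. Moreover the diagonal cannot produce the $W(\mathrm{SO(even)})$ versus $W(\mathrm{SO(odd)})$ distinction, since it does not see $\epsilon$ at all: it only gives $\widehat{\phi}(0)+\frac12\phi(0)$, the O-type answer, as in \eqref{diagsepcoset}. Both the sign-dependent leading correction $\frac{i^{\kappa}\epsilon}2\left[\phi(0)-\int_{-1}^1\widehat{\phi}(t)\,dt\right]$ --- which is what converts orthogonal into even/odd special orthogonal symmetry once $\theta>1$ --- and the lower-order term $-\frac{2i^{\kappa}\epsilon}{\log R}\mathcal{L}(\kappa,p,\phi,R)$ come from the surviving opposite-sign piece at $\nu_p(c)=k$: the paper expands it in multiplicative characters via \eqref{csfourier}, the principal character yields the Bessel integral $Q_{\chi_0}(bq)$, and a Mellin/contour computation culminating in \eqref{inm1} evaluates the sum over squarefree $b$. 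Without a substitute for this analysis your argument produces neither the term $\mathcal{L}$ of \eqref{ldef} nor the correct symmetry type for $\theta>1$, so as sketched it does not go through.
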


This theorem is proved in Section \ref{cosetsection}.

\begin{remark}
The leading term tells us that $\mathcal{F}_{\kappa,\epsilon}(q,p^j,\psi)$ has special orthogonal even/odd symmetry, depending on the root number of the cuspforms in the family.
\end{remark}

\begin{remark}
The distributions $W(\mathrm{O})(x)$, $W(\mathrm{SO(even)})(x)$, and $W(\mathrm{SO(odd)})(x)$ all appear identical if the support of $\widehat{\phi}$ is contained in $(-1, 1)$, since in all three cases we have
\begin{equation}
\int_{-\infty}^{\infty}\phi(x)W(G)(x)\;dx = \widehat{\phi}(0) + \frac12\phi(0).
\end{equation}
As a consequence, our result \eqref{result1a} is unable to distinguish between even and odd orthogonal symmetry. However, the additional averaging we do in our coset family allows our support to break out of $(-1, 1)$, in fact up to $1 + \frac{j}k$, and this lets us distinguish between these symmetry types. In fact, just picking $j = \left\lfloor\varepsilon k\right\rfloor$ is sufficient to let us distinguish between SO(even) and SO(odd).
\end{remark}

\begin{remark}
A possible direction of further study might be to relax the restriction that $\eta = 1$ in the coset family. We expect that this should give the similar result that the symmetry type becomes only unitary if $\eta^2 \neq 1$.
\end{remark}

\begin{remark}
Another possible direction of study might be to calculate an error term for the coset family that is unconditional on GRH, likely by establishing an on-average Weil-type bound for the coset sum $\sigma_{\psi, \epsilon}(m,n;c)$ defined in \eqref{cosetsum}. As in the thinnest family case, this error term might not be able to give us support all the way up to $1 + \frac{j}k$.
\end{remark}

\begin{remark}
Much of the work on low-lying zeros of modular forms in the literature studies the weight aspect rather than the level aspect. In particular, \cite{ILS} gives multiple results in both the weight and level aspect, as well as some hybrid results. In the weight aspect, the average is taken over modular forms of different weights within a certain range $[T, T+\Delta]$ going off to infinity, and the level is fixed (usually 1). The parallels between the two aspects have long been considered as well (also going back to \cite{ILS}) but in the case of the coset family, the analogy is particularly striking. We note that $T$ and $\Delta$ in the weight aspect are analogous to $p^k$ and $p^j$ respectively in the level aspect, since $T$ (like $p^k$) is the ``starting point" for the weight, and $\Delta$ (like $p^j$) controls the size of the family over which the average is taken. Similarly, the case $\Delta \to 1$ is analogous to the case $j\to 0$ of our thinnest family, and this family in weight aspect is studied in \cite{DFS}. Other relevant works in the literature include \cite{RI} and \cite{FR}, which calculate moments of $L$-functions attached to holomorphic cuspforms in the weight apsect. (The former studies the fifth moment over a range of weights, while the latter studies the cubic moment for an individual weight.) We can also find analogies between the character sums we study in this work, and sums that arise in the weight aspect as well, culminating in a decomposition in the weight aspect that very closely mirrors our results in Section \ref{cosetsumsection} about when the same-sign and opposite-sign parts of the coset sum, as given in \eqref{sigmasplit}, vanish. The details of this analogy are studied in Section \ref{analogysection}.
\end{remark}

\begin{remark}
We observe that every cuspform in $\mathcal{F}_{\kappa}(q,\chi,\eta)$ has the property that the local representation at every ramified prime is a principal series. This was established in Section 1.5 of \cite{PY2}.
\end{remark}

\begin{remark}
It is also possible to modify this approach to study families of Hecke-Maass cuspforms by using the Kuznetsov trace formula instead of the Petersson trace formula. This was notably done in \cite{AAILMZ} and \cite{AM}, both of which established analogues of many of the results about low-lying zeros in \cite{ILS} for families of Maass cuspforms. Another important result in this vein is \cite{PY}, which establishes the cubic moment of both the thin holomorphic family of modular forms studied in this paper and its equivalent family of Hecke-Maass cuspforms.
\end{remark}

\begin{remark}
The fact that this subfamily was used to establish the Weyl bound for Dirichlet $L$-functions in \cite{PY} and \cite{PY2} is the primary motivation for our study of its low-lying zeros in this paper. It also motivates questions about other related statistics of the subfamily: for example, its cubic moment was found in \cite{PY}, but its other moments have not yet been studied.
\end{remark}

We also may apply these results to questions of nonvanishing of central values, following the work of \cite{ILS}. For any family $\mathcal{F}$, any weight function $w : \mathcal{F} \to (0, \infty)$, and for a nonnegative integer $m$ we define
\begin{equation}
p_m(\mathcal{F}, w) = \frac1{\sum_{f\in\mathcal{F}}w(f)}\displaystyle\sum_{\substack{f\in\mathcal{F} \\ \operatorname{ord}_{s=\frac12}L(s,f)=m}}w(f)
\end{equation}
to be the weighted proportion of the elements of $\mathcal{F}$ whose $L$-function vanishes with order exactly $m$ at the central point. In particular, if $w$ is constant, then $p_m(\mathcal{F}, w)$ gives the proportion of elements in $\mathcal{F}$ whose $L$-function vanishes at the central point with order $m$. Then we can state the following results for our family:
\begin{theorem}
Assume the Generalized Riemann Hypothesis. Let $\mathcal{F} = \mathcal{F}_{\kappa}(q,\chi,\eta)$ where $\eta$ is a quadratic character modulo $q$, and let $w(f) = \frac1{\left<f,f\right>_{q^2}}$ be the usual Petersson weight. Let $\varepsilon > 0$. Then if $i^{\kappa}\chi(-1) = 1$, we have
\begin{equation}
p_0(\mathcal{F}, w) \ge \frac14 - \varepsilon,\label{thinevennv}
\end{equation}
for sufficiently large $q$. If instead $i^{\kappa}\chi(-1) = -1$, then we have $p_0(\mathcal{F}, w) = 0$ and
\begin{equation}
p_1(\mathcal{F}, w) \ge \frac34 - \varepsilon,\label{thinoddnv}
\end{equation}
for sufficiently large $q$.\label{thintheoremnv}
\end{theorem}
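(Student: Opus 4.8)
The plan is to deduce this from Theorem \ref{theorem1} by the positivity argument of \cite{ILS}. Since $\eta$ is quadratic we have $\eta^2=1$, so \eqref{result1a} applies; for an even Schwartz $\phi$ with $\operatorname{supp}\widehat\phi\subseteq[-\theta,\theta]$ and $\theta<1$, the lower-order main terms $\tfrac2{\log R}\widehat\phi(0)$, $\tfrac2{\log R}\mathcal I(\kappa,\phi,R)$, $\tfrac2{\log R}\mathcal J(q,\phi,R)$ all tend to $0$ while the GRH error \eqref{result1e} is $o(1)$ as $q\to\infty$, so, using the evaluation $\int_{-\infty}^{\infty}\phi(x)W(\mathrm{O})(x)\,dx=\widehat\phi(0)+\tfrac12\phi(0)$ valid for $\operatorname{supp}\widehat\phi\subseteq(-1,1)$ (the remark following Theorem \ref{theorem2}), we get
\[
\mathcal D_1(\mathcal F,\phi,R,w)=\widehat\phi(0)+\tfrac12\phi(0)+o(1)\qquad(q\to\infty).
\]
(One bookkeeping point: the weight in Theorem \ref{theorem1} is $1/\langle f,f\rangle_q$ attached to the pre-twist form $f\in\mathcal H_\kappa(q,\overline\chi^2)$, while here it is $1/\langle f_{\chi\eta},f_{\chi\eta}\rangle_{q^2}$; as the ratio of these two norms depends only on $q$, $\kappa$, $\chi$, $\eta$ and not on $f$ — the local components of $f$ at the ramified primes being principal series of a fixed type, cf.\ the remark citing \cite{PY2} — the two normalizations of $\mathcal D_1$ coincide.)

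Next I would extract the arithmetic content. Write $m_f=\operatorname{ord}_{s=1/2}L(s,f_{\chi\eta})$ for $f\in\mathcal H_\kappa(q,\overline\chi^2)$. If $\phi\ge0$ on $\mathbb R$, then every zero of $L(s,f_{\chi\eta})$ contributes a nonnegative amount to $D_1(f_{\chi\eta},\phi,R)$, while the $m_f$ zeros at the central point each contribute $\phi(0)$; hence $D_1(f_{\chi\eta},\phi,R)\ge m_f\phi(0)$ and therefore $\mathcal D_1(\mathcal F,\phi,R,w)\ge\phi(0)\,\big(\sum_f w(f)m_f\big)/\big(\sum_f w(f)\big)$. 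Now invoke the sign of the functional equation of $L(s,f_{\chi\eta})$, which equals $i^\kappa\chi(-1)$ for every $f$ by the root-number computation for this family (Section 2.3 of \cite{PY} together with the conductor hypotheses on $\chi,\eta$). If $i^\kappa\chi(-1)=1$ then each $m_f$ is even, so $m_f\ge2$ unless $m_f=0$, whence $\sum_f w(f)m_f\ge 2\sum_{f:\,m_f\ne0}w(f)=2\,(1-p_0(\mathcal F,w))\sum_f w(f)$. If $i^\kappa\chi(-1)=-1$ then each $m_f$ is odd, so $p_0(\mathcal F,w)=0$ at once, and $m_f\ge3$ unless $m_f=1$, whence $\sum_f w(f)m_f\ge \sum_f w(f)+2\sum_{f:\,m_f\ne1}w(f)=(3-2p_1(\mathcal F,w))\sum_f w(f)$.

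Combining these with the displayed asymptotic gives, in the even case, $2\phi(0)(1-p_0)\le\widehat\phi(0)+\tfrac12\phi(0)+o(1)$, i.e.\ $p_0\ge\tfrac34-\tfrac{\widehat\phi(0)}{2\phi(0)}-o(1)$, and in the odd case $\phi(0)(3-2p_1)\le\widehat\phi(0)+\tfrac12\phi(0)+o(1)$, i.e.\ $p_1\ge\tfrac54-\tfrac{\widehat\phi(0)}{2\phi(0)}-o(1)$. It then remains to choose $\phi$ minimizing $\widehat\phi(0)/\phi(0)$. Since $\phi\ge0$ is band-limited with $\operatorname{supp}\widehat\phi\subseteq[-\theta,\theta]$ we may write $\phi=|\psi|^2$ with $\operatorname{supp}\widehat\psi\subseteq[-\theta/2,\theta/2]$, and then $\widehat\phi(0)/\phi(0)=\|\widehat\psi\|_2^2\big/\big|\int_{\mathbb R}\widehat\psi\big|^2\ge1/\theta$ by Cauchy--Schwarz, with near-equality when $\widehat\psi$ is close to the indicator of $[-\theta/2,\theta/2]$ (so that $\phi$ is close to the Fej\'er kernel $(\sin\pi\theta x/\pi\theta x)^2$). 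Taking a Schwartz $\phi$ of this shape with $\widehat\phi(0)/\phi(0)$ arbitrarily close to $1/\theta$ and then letting $\theta\to1^-$ — permissible under GRH — drives $\widehat\phi(0)/\phi(0)\to1$, so $p_0\ge\tfrac14-\varepsilon$ in the even case and $p_1\ge\tfrac34-\varepsilon$ in the odd case for all sufficiently large $q$.

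The deduction itself is short; the real point is that it requires Theorem \ref{theorem1} with support reaching essentially to $1$, which is exactly where GRH enters — the unconditional support $1-\tfrac1{2\kappa}$ would only yield the weaker constants $\tfrac34-\tfrac{1}{2(1-1/(2\kappa))}$ and $\tfrac54-\tfrac{1}{2(1-1/(2\kappa))}$. The remaining fiddly points are the Petersson-normalization check noted above, the justification of the near-extremal (non-Schwartz) $\phi$ by a smoothing/limiting argument, and confirming the uniform value $i^\kappa\chi(-1)$ of the root number across the whole family.
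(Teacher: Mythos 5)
Your proposal is correct and follows essentially the same route as the paper: positivity of $\phi$ to bound the average one-level density below by the weighted average central order, the parity restriction coming from the constant root number $i^{\kappa}\chi(-1)$, and the Fej\'er-type test function with support pushed to $\theta\to 1$ under GRH, which reproduces the paper's bounds \eqref{p0even}--\eqref{p1odd} specialized to $G=\mathrm{O}$ and $\theta=1$. The only cosmetic differences are that you rederive the optimization of $\widehat{\phi}(0)/\phi(0)$ via Fej\'er--Riesz and Cauchy--Schwarz rather than quoting the explicit Fourier pair from \cite{ILS}, and your weight-normalization remark is justified in the paper by the Rankin--Selberg lemma showing $w(f_{\chi\eta})=\frac{q}{\varphi(q)}w(f)$ rather than by the principal-series observation you cite.
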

\begin{theorem}
Assume the Generalized Riemann Hypothesis. Let $q = p^k$, let $\mathcal{F} = \mathcal{F}_{\kappa,\epsilon}(q,p^j,\psi)$, and let $w(f) = \frac1{\left<f,f\right>_{q^2}}$ be the usual Petersson weight. Let $\varepsilon > 0$. Then if $i^{\kappa}\epsilon = 1$, we have
\begin{equation}
p_0(\mathcal{F}, w) \ge \left(1 - \frac{k}{2(j+k)}\right)^2 - \varepsilon,\label{cosetevennv}
\end{equation}
for sufficiently large $q$. If instead $i^{\kappa}\epsilon = -1$, then we have $p_0(\mathcal{F}, w) = 0$ and
\begin{equation}
p_1(\mathcal{F}, w) \ge 1 - \left(\frac{k}{2(j+k)}\right)^2 - \varepsilon,\label{cosetoddnv}
\end{equation}
for sufficiently large $q$.\label{cosettheoremnv}
\end{theorem}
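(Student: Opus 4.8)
The plan is to deduce both parts from Theorem~\ref{theorem2} by the positivity argument of \cite{ILS}, so essentially all of the work is already in place. I would begin by recording the relevant structure of $\mathcal{F} = \mathcal{F}_{\kappa,\epsilon}(q,p^j,\psi)$: since $\eta = 1$ every $f \in \mathcal{F}$ has trivial central character, hence is self-dual, so under GRH the nontrivial zeros of $L(s,f)$ lie on $\Re s = \frac12$ and are symmetric about $s = \frac12$; the common root number of the family is $i^{\kappa}\epsilon$, so $m_f := \operatorname{ord}_{s=1/2}L(s,f)$ is even when $i^{\kappa}\epsilon = 1$ and odd when $i^{\kappa}\epsilon = -1$. (The Petersson weight appearing in Theorem~\ref{theorem2} and the weight $\langle f,f\rangle_{q^2}^{-1}$ named in the statement here agree up to a factor independent of $f$ --- the forms are principal series at every ramified prime, so the twist formula for the Petersson norm has an $f$-independent constant --- and hence $\mathcal{D}_1(\mathcal{F},\phi,R,w)$ and the proportions $p_m(\mathcal{F},w)$ are the same for both.) For any even $\phi \geq 0$, the explicit formula gives, under GRH, $D_1(f,\phi,R) = m_f\phi(0) + 2\sum_{\gamma_f>0}\phi\!\left(\frac{\gamma_f\log R}{2\pi}\right) \geq m_f\phi(0)$, and averaging over $\mathcal{F}$ yields $\mathcal{D}_1(\mathcal{F},\phi,R,w) \geq \phi(0)\sum_{m\geq 0}m\,p_m(\mathcal{F},w)$.

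Next I would fix the test function. For small $\delta > 0$ put $\theta = 1 + \frac{j}{k} - \delta$ and take $\phi$ even with $\widehat{\phi}(u) = \left(1 - \frac{|u|}{\theta}\right)_+$, so $\phi(x) = \theta\left(\frac{\sin\pi\theta x}{\pi\theta x}\right)^2 \geq 0$, $\phi(0) = \theta$, and $\widehat{\phi}(0) = 1$. This $\widehat{\phi}$ is not $C^2$, and the error term in \eqref{result2} involves $\|\widehat{\phi}\thinspace''\|_{\infty}$, so I would replace $\phi$ by $\phi_{\varepsilon}(x) = \phi(x)k_{\varepsilon}(x)$ where $k_{\varepsilon} \geq 0$, $k_{\varepsilon}(0) = 1$, and $\widehat{k_{\varepsilon}}$ is a fixed smooth function supported in $[-\varepsilon,\varepsilon]$; then $\phi_{\varepsilon} \geq 0$, $\widehat{\phi_{\varepsilon}} = \widehat{\phi} * \widehat{k_{\varepsilon}}$ is smooth with support in $[-\theta-\varepsilon,\theta+\varepsilon]$, and $\phi_{\varepsilon} \to \phi$ in $L^1$ and pointwise, so the functionals $\phi_{\varepsilon}(0)$, $\widehat{\phi_{\varepsilon}}(0)$, $\int_{-1}^{1}\widehat{\phi_{\varepsilon}}$ converge to those of $\phi$. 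Choosing $\varepsilon,\delta$ small enough that $\theta + \varepsilon < 1 + \frac{j}{k}$, the exponent $\max(\theta+\varepsilon,1) - 1 - \frac{j}{k}$ in \eqref{result2} is negative, so that error term is $O(q^{-c})$ for some $c > 0$; the lower-order main terms in \eqref{result2} each carry a factor $\frac{1}{\log R} \to 0$; hence $\mathcal{D}_1(\mathcal{F},\phi_{\varepsilon},R,w) = \int_{-\infty}^{\infty}\phi_{\varepsilon}(x)W(G)(x)\,dx + o(1)$ as $q \to \infty$, and the integral tends as $\varepsilon \to 0$ to $\int_{-\infty}^{\infty}\phi(x)W(G)(x)\,dx$.

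Finally I would combine the two inequalities. Using $W(\mathrm{SO(even)})(x) = 1 + \frac{\sin 2\pi x}{2\pi x}$, $W(\mathrm{SO(odd)})(x) = 1 - \frac{\sin 2\pi x}{2\pi x} + \delta_0(x)$, and $\int\phi(x)\frac{\sin 2\pi x}{2\pi x}\,dx = \frac12\int_{-1}^{1}\widehat{\phi}$, together with $\widehat{\phi}(0) = 1$, $\int_{-1}^{1}\widehat{\phi} = 2 - \frac1{\theta}$, $\phi(0) = \theta$, one gets $\int\phi\,W(\mathrm{SO(even)}) = 2 - \frac1{2\theta}$ and $\int\phi\,W(\mathrm{SO(odd)}) = \theta + \frac1{2\theta}$. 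Write $\mathcal{D}_1$ for $\mathcal{D}_1(\mathcal{F},\phi_{\varepsilon},R,w)$, so $\mathcal{D}_1$ equals the relevant one of these values up to $o(1)$ in the combined limit. If $i^{\kappa}\epsilon = 1$ then $m_f \in 2\mathbb{Z}_{\geq 0}$, so $m_f \geq 2\cdot\mathbf{1}[m_f \neq 0]$ and $\sum_m m\,p_m \geq 2(1 - p_0)$, giving $p_0(\mathcal{F},w) \geq 1 - \frac{\mathcal{D}_1}{2\phi(0)} = 1 - \frac{1}{2\theta}\left(2 - \frac1{2\theta}\right) + o(1) = \left(1 - \frac1{2\theta}\right)^2 + o(1)$; letting $q \to \infty$ and then $\delta,\varepsilon \to 0$ (so $\frac1{2\theta} \to \frac{k}{2(j+k)}$) yields \eqref{cosetevennv}. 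If $i^{\kappa}\epsilon = -1$ then $m_f$ is odd, so $m_f \geq 1$ for every $f$, whence $p_0(\mathcal{F},w) = 0$; moreover $m_f \geq 1 + 2\cdot\mathbf{1}[m_f \neq 1]$ gives $\sum_m m\,p_m \geq 3 - 2p_1$, so $3 - 2p_1(\mathcal{F},w) \leq \frac{\mathcal{D}_1}{\phi(0)} = 1 + \frac1{2\theta^2} + o(1)$, i.e.\ $p_1(\mathcal{F},w) \geq 1 - \frac1{4\theta^2} + o(1)$, which is \eqref{cosetoddnv} in the same limit. Granted Theorem~\ref{theorem2}, there is no substantial obstacle; the only point requiring care is the admissibility in \eqref{result2} of the (essentially optimal) triangular test function, which is why the smoothing in the second paragraph is needed, together with keeping $\theta$ strictly below $1 + \frac{j}{k}$ so that the error term is genuinely $o(1)$.
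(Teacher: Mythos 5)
Your proposal is correct and follows essentially the same route as the paper: the positivity-plus-parity argument of \cite{ILS} applied to Theorem \ref{theorem2} with the Fej\'er-type pair $\phi(x)\propto\left(\frac{\sin \pi\theta x}{\pi\theta x}\right)^2$, $\widehat{\phi}$ triangular, at support $\theta \to 1+\frac{j}{k}$, which reproduces the values \eqref{ipsoe}--\eqref{ipsoo} and hence \eqref{cosetevennv} and \eqref{cosetoddnv}. Your additional smoothing of $\widehat{\phi}$ (to keep $\|\widehat{\phi}\thinspace''\|_{\infty}$ finite in the error term of \eqref{result2}), the retreat $\theta = 1+\frac{j}{k}-\delta$ so the error is genuinely $o(1)$, and the remark on the equivalence of the weights $\left<f,f\right>_q^{-1}$ and $\left<f_{\chi}, f_{\chi}\right>_{q^2}^{-1}$ are harmless technical refinements that the paper leaves implicit.
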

\begin{remark}
If $j=k$, we observe that \eqref{cosetevennv} and \eqref{cosetoddnv} agree with equations (1.48) and (1.49) in \cite{ILS}.
\end{remark}

These two theorems are proved in Section \ref{nvsection}.

\end{subsection}
\begin{subsection}{The Weight Function}
Here we consider the relationship between the Petersson weight $w(f) = \frac1{\left<f,f\right>_q}$ of a cuspform and the weight $w(f_{\chi\eta}) = \frac1{\left<f_{\chi\eta},f_{\chi\eta}\right>_{q^2}}$ of its twist. We have the following result:
\begin{lemma}
Suppose $f\in\mathcal{H}_{\kappa}(q,\overline{\chi}^2)$, and $\eta$ is such that $\chi\eta$ and $\overline{\chi}\eta$ are primitive to the modulus $q$. Then
\begin{equation}
w(f_{\chi\eta}) = \frac{q}{\varphi(q)}w(f).
\end{equation}
\end{lemma}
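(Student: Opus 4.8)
The plan is to relate the two Petersson norms by tracking how twisting by a Dirichlet character changes the inner product, and in particular by relating it to a ratio of symmetric-square $L$-values at $s = 1$. Recall that for a newform $f$ of level $q$, the Petersson norm $\langle f, f\rangle_q$ is proportional to $L(1,\mathrm{sym}^2 f)$ times elementary factors involving $q$, the weight $\kappa$, and the arithmetic conductor (this is the classical formula of Hoffstein--Lockhart, Iwaniec, etc.). Since $f_{\chi\eta}$ is the newform obtained from $f$ by twisting by the primitive character $\chi\eta$, its symmetric square $L$-function differs from that of $f$ only at the primes dividing $q$: away from $q$ the Hecke eigenvalues satisfy $\lambda_{f_{\chi\eta}}(n) = (\chi\eta)(n)\lambda_f(n)$, so the local factors of $L(s,\mathrm{sym}^2 f_{\chi\eta})$ and $L(s,\mathrm{sym}^2 f)$ agree at all $p \nmid q$ (because $(\chi\eta)^2(p)$ would appear, but at good primes the symmetric square local factor involves $\lambda_f(p)^2$ and $\lambda_f(p^2)$ whose twisted versions pick up $(\chi\eta)(p)^2$... one needs the precise local factor here). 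At the ramified primes, the hypothesis that $\chi^2$, $\chi\eta$, and $\chi\overline{\eta}$ all have conductor exactly $q$ forces the local representation of $f_{\chi\eta}$ at $p \mid q$ to be a ramified principal series, and the symmetric-square local factor there is trivial; the same is true for $f$ itself since $\overline{\chi}^2$ is primitive. So the completed symmetric square $L$-values match up to controlled factors.

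Concretely, I would first write down the explicit formula
\begin{equation*}
\langle f, f\rangle_q = \frac{(\kappa-1)!}{(4\pi)^{\kappa}} \cdot \frac{q}{2\pi^2}\, \nu(q)\, L(1,\mathrm{sym}^2 f)
\end{equation*}
(with $\nu(q)$ the appropriate multiplicative normalization, e.g. $\nu(q) = \prod_{p\mid q}(\ldots)$), and the corresponding formula for $\langle f_{\chi\eta}, f_{\chi\eta}\rangle_{q^2}$ with $q$ replaced by $q^2$. Then I would divide: the ratio of the leading archimedean factors $(\kappa-1)!/(4\pi)^\kappa$ cancels since the weight is unchanged, the ratio of the $q$-factors gives $q^2/q = q$ times a ratio of the local correction factors $\nu(q^2)/\nu(q)$, and the ratio of symmetric-square values gives $L(1,\mathrm{sym}^2 f_{\chi\eta})/L(1,\mathrm{sym}^2 f)$, which by the local analysis above is a finite product over $p \mid q$ of ratios of local Euler factors. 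The claim $w(f_{\chi\eta}) = \frac{q}{\varphi(q)} w(f)$ then amounts to checking that all of these local factors at primes dividing $q$ multiply out to exactly $\varphi(q)/q = \prod_{p\mid q}(1 - 1/p)$ — equivalently, that $\langle f_{\chi\eta}, f_{\chi\eta}\rangle_{q^2} = \frac{\varphi(q)}{q}\langle f, f\rangle_q$.

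I expect the main obstacle to be the bookkeeping at the ramified primes: one must pin down the precise local Euler factor of $L(s,\mathrm{sym}^2)$ at $p\mid q$ for both $f$ (with primitive central character $\overline{\chi}^2$) and $f_{\chi\eta}$, together with the precise form of the normalizing constant $\nu$ in the norm formula at prime powers, and verify they combine to $(1-1/p)$ per prime. The remark in the excerpt that ``every cuspform in $\mathcal{F}_\kappa(q,\chi,\eta)$ has the property that the local representation at every ramified prime is a principal series'' (from \cite{PY2}) is exactly the input that makes these local computations tractable: a ramified principal series $\pi_p = \mu_1 \boxplus \mu_2$ with $\mu_1\mu_2$ ramified has $L(s,\mathrm{sym}^2\pi_p) = (1 - \mu_1(p)\mu_2(p)p^{-s})^{-1}$ or trivial depending on whether $\mu_1\mu_2$ is unramified, and here $\mu_1\mu_2$ is the central character which is ramified, so the symmetric-square local factor is trivial — making the $L$-value ratio equal to $1$ and reducing the whole computation to the elementary $q$-factors. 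An alternative, cleaner route I would also consider is to avoid symmetric-square $L$-functions entirely and instead use the fact that twisting by a primitive character of conductor coprime-to-the-conductor-obstruction acts as a scaling on the Petersson inner product that can be computed directly from the relation between the Fourier coefficients and the unfolding integral, with the factor $q/\varphi(q)$ emerging from a Gauss-sum normalization in the twist operator; this would sidestep the need to cite the explicit norm formula and might be the shortest path, though it requires care with the adjoint of the twisting operator.
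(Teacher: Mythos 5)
There is a genuine gap in your main route: for a form with \emph{non-trivial} (here primitive) central character, the Petersson norm is not proportional to $L(1,\mathrm{sym}^2 f)$ but to the residue of the Rankin--Selberg convolution $\sum_n |\lambda_f(n)|^2 n^{-s}$, i.e.\ to $L(1,\mathrm{Ad}\, f)$; the formula $\langle f,f\rangle \asymp q\,\nu(q)\,L(1,\mathrm{sym}^2 f)$ you quote is the trivial-nebentypus one. Moreover, your claim that the $\mathrm{sym}^2$ local factors of $f$ and $f_{\chi\eta}$ agree at $p\nmid q$ is false: $\mathrm{sym}^2(\pi\otimes\chi\eta)=\mathrm{sym}^2(\pi)\otimes(\chi\eta)^2$, so at a good prime the factors for $f$ involve $\alpha_1^2,\ \alpha_1\alpha_2=\overline{\chi}^2(p),\ \alpha_2^2$, while those for $f_{\chi\eta}$ involve $\alpha_1^2(\chi\eta)^2(p),\ \eta^2(p),\ \alpha_2^2(\chi\eta)^2(p)$; these differ at almost all primes unless $(\chi\eta)^2=1$. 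Hence the ratio $L(1,\mathrm{sym}^2 f_{\chi\eta})/L(1,\mathrm{sym}^2 f)$ is not a finite product over $p\mid q$ and your reduction collapses. The object that \emph{is} twist-invariant is the adjoint, $\mathrm{Ad}(\pi\otimes\chi\eta)=\mathrm{Ad}(\pi)$, and once you replace $\mathrm{sym}^2$ by $\mathrm{Ad}$ (equivalently by the convolution $\sum_n|\lambda_f(n)|^2n^{-s}$), the comparison really is supported at $p\mid q$ -- but at that point you are essentially forced into the computation the paper does.

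For contrast, the paper's proof avoids citing any norm formula: it unfolds $\langle f E_s, f\rangle_q$ directly (Rankin--Selberg) with the probability-normalized inner product at level $q$ and again at level $q^2$ for $f_{\chi\eta}$, so that the index factors cancel and
\begin{equation*}
\left<f,f\right>_q = \frac{\Gamma(\kappa)}{(4\pi)^{\kappa}}\operatorname{Res}_{s=1}\sum_{n\ge1}\frac{|\lambda_f(n)|^2}{n^s},
\qquad
\left<f_{\chi\eta},f_{\chi\eta}\right>_{q^2} = \frac{\Gamma(\kappa)}{(4\pi)^{\kappa}}\operatorname{Res}_{s=1}\sum_{\substack{n\ge1\\(n,q)=1}}\frac{|\lambda_f(n)|^2}{n^s},
\end{equation*}
since $|\lambda_{f_{\chi\eta}}(n)|^2=|\lambda_f(n)|^2$ for $(n,q)=1$ and vanishes otherwise. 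A M\"obius sieve over $d\mid q$, together with complete multiplicativity of $\lambda_f$ at ramified primes and $|\lambda_f(p)|=1$ there (Atkin--Li, using that $\overline{\chi}^2$ is primitive -- this is the arithmetic input playing the role of your ``principal series at ramified primes'' observation), shows the second residue is $\sum_{d\mid q}\mu(d)/d=\varphi(q)/q$ times the first, which is the lemma. Your sketched alternative via the adjoint of a twisting operator and Gauss-sum normalizations is too vague as stated to assess; if you want to salvage your $L$-value route, the fix is to work with $L(1,\mathrm{Ad}\, f)$ throughout and to supply the ramified local factors for both levels, which amounts to the same residue comparison.
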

\begin{remark}
In particular, the ratio between the weights does not depend on $f$, so they give the same average one-level density. Thus we use $w(f)$ and $w(f_{\chi\eta})$ interchangeably throughout this paper.
\end{remark}

\begin{proof}
We use the Rankin-Selberg method. From the definition of the Petersson inner product given in \eqref{petprod}, we write
\begin{equation}
\left<fE_s, f\right>_q = \frac3{\pi\left[\Gamma_0(1): \Gamma_0(q)\right]}\iint_{\Gamma_0(q)\setminus\mathbb{H}}y^{\kappa}f(z)\overline{f(z)}E_s(z)\frac{dx\;dy}{y^2},
\end{equation}
where
\begin{equation}
E_s(z) = \sum_{\gamma\in\Gamma_{\infty}\setminus \Gamma_0(q)}\Im(\gamma z)^s
\end{equation}
is an Eisenstein series for $\Re(s) > 1$. Unfolding and then applying standard calculations gives
\begin{equation}
\left<fE_s, f\right>_q = \frac{12\Gamma(\kappa+s-1)}{(4\pi)^{\kappa+s}\left[\Gamma_0(1): \Gamma_0(q)\right]}\sum_{n\ge 1}\frac{|\lambda_f(n)|^2}{n^s}.
\end{equation}
Now $E_s(z)$ has a simple pole at $s=1$ with residue $\frac3{\pi\left[\Gamma_0(1):\Gamma_0(q)\right]}$, so we take the residue at $s=1$ on both sides to get
\begin{equation}
\left<f, f\right>_q = \frac{\Gamma(\kappa)}{(4\pi)^{\kappa}}\operatorname{Res}_{s=1}\left[\sum_{n\ge 1}\frac{|\lambda_f(n)|^2}{n^s}\right].
\end{equation}
Similarly, we apply this process to $f_{\chi\eta} \in \mathcal{H}_{\kappa}(q^2,\eta^2)$ to get
\begin{equation}
\left<f_{\chi\eta}, f_{\chi\eta}\right>_{q^2} = \frac{\Gamma(\kappa)}{(4\pi)^{\kappa}}\operatorname{Res}_{s=1}\left[\sum_{n\ge 1}\frac{|\lambda_{f_{\chi\eta}}(n)|^2}{n^s}\right].
\end{equation}
Now
\begin{equation}
|\lambda_{f_{\chi\eta}}(n)|^2 = |\chi(n)\eta(n)\lambda_f(n)|^2.
\end{equation}
This equals $|\lambda_f(n)|^2$ when $(n, q) = 1$ and $0$ when $(n, q) \neq 1$. So we have
\begin{equation}
\left<f_{\chi\eta}, f_{\chi\eta}\right>_{q^2} = \frac{\Gamma(\kappa)}{(4\pi)^{\kappa}}\operatorname{Res}_{s=1}\left[\sum_{\substack{n\ge 1 \\ (n,q)=1}}\frac{|\lambda_f(n)|^2}{n^s}\right].
\end{equation}
Now we write
\begin{equation}
\sum_{\substack{n\ge 1 \\ (n,q)=1}}\frac{|\lambda_f(n)|^2}{n^s} = \sum_{n\ge 1}\sum_{d\mid (n,q)}\frac{\mu(d)|\lambda_f(n)|^2}{n^s} = \sum_{d\mid q}\mu(d)\sum_{\substack{n\ge 1 \\ d\mid n}}\frac{|\lambda_f(n)|^2}{n^s} = \sum_{d\mid q}\frac{\mu(d)}{d^s}\sum_{n\ge 1}\frac{|\lambda_f(dn)|^2}{n^s}.
\end{equation}
We recall that $\lambda_f$ is completely multiplicative at ramified primes. Thus $\lambda_f(dn) = \lambda_f(d)\lambda_f(n)$. Moreover, $|\lambda_f(p)| = 1$ at ramified primes $p$ due to Theorem 1.1 of \cite{AL}, so $|\lambda_f(d)|^2 = 1$ and thus $|\lambda_f(dn)|^2 = |\lambda_f(n)|^2$. Thus we write
\begin{equation}
\left<f_{\chi\eta},f_{\chi\eta}\right>_{q^2} = \left<f,f\right>_q\cdot\sum_{d\mid q}\frac{\mu(d)}d,
\end{equation}
and the result follows.
\end{proof}

\end{subsection}
\begin{subsection}{The Weil Bound}
If we do not assume GRH, then we need a way to bound the twisted Kloosterman sums arising from the Petersson trace formula. (Conditional on GRH, we do not use a Weil-type bound, but rather the Fourier decomposition \eqref{klfourier}.) The Weil bound \eqref{weil} does \emph{not} hold in general for twisted Kloosterman sums (see Example 9.9 of \cite{KL} for an explicit counterexample). In \cite{KL}, the authors established a result, reproduced in \eqref{weilkl}, which is acceptable when $c$ is much larger than the conductor $q$ of the character, but very bad when the character is primitive. However, in this paper we establish an ``on-average" result that is almost as good as the classical Weil bound.

\begin{lemma}
Let $A, B$ be positive integers with $A < B$, and let $c$ be a positive integer and $\chi$ a Dirichlet character modulo $c$. Then
\begin{equation}
\sum_{\substack{m,n\ge 1 \\ A\le mn\le B}}|S_{\chi}(m,n;c)| \ll c^{\frac12+\varepsilon}B^{\varepsilon}(B-A+c^{\frac12}),\label{weilavg}
\end{equation}
where the implied constant depends only on $\varepsilon$.\label{weilavglemma}
\end{lemma}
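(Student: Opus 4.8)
The plan is to localize at each prime and exploit the explicit shape of twisted Kloosterman sums to prime-power moduli, so that the general estimate reduces to a Weil-type bound for each local factor together with an elementary divisor-sum count. Writing $c=\prod_p p^{\alpha_p}$ and using twisted multiplicativity of Kloosterman sums, one has $|S_\chi(m,n;c)|=\prod_p|S_{\chi_p}(m,nu_p;p^{\alpha_p})|$, with $\chi_p$ the $p$-component of $\chi$ and $u_p$ a fixed unit modulo $p^{\alpha_p}$. Thus it suffices to bound each local factor and re-multiply, while keeping track of a ``defect'' measuring the deviation from the clean Weil size $p^{\alpha_p/2}$.

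First I would dispose of the benign primes. When $\alpha_p=1$, $S_{\chi_p}(a,b;p)$ is a multiplicative-character twist of a Kloosterman sum to a prime modulus, hence the trace of Frobenius on a rank-$2$ lisse sheaf pure of weight one, so $|S_{\chi_p}(a,b;p)|\le 2p^{1/2}\gcd(a,b,p)^{1/2}$; and when $\chi_p$ is trivial modulo $p^{\alpha_p}$, $S_{\chi_p}(a,b;p^{\alpha_p})$ is an ordinary Kloosterman sum, to which the Weil--Estermann bound $|S(a,b;p^{\alpha_p})|\le\tau(p^{\alpha_p})\gcd(a,b,p^{\alpha_p})^{1/2}p^{\alpha_p/2}$ applies. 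In both cases a defect of $\gcd(a,b,p^{\alpha_p})^{1/2}$ suffices, and the only genuinely new case is $\alpha_p\ge2$ with $\chi_p$ non-trivial.

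For that case I would run the $p$-adic stationary-phase computation: write $x=x_0\bigl(1+p^{\lceil\alpha_p/2\rceil}t\bigr)$ with $x_0$ over units modulo $p^{\lceil\alpha_p/2\rceil}$ and $t$ modulo $p^{\lfloor\alpha_p/2\rfloor}$, expand $\overline x$, set $\chi_p\bigl(1+p^{\lceil\alpha_p/2\rceil}t\bigr)=e(\ell t/p^{\lfloor\alpha_p/2\rfloor})$ with $\gcd(\ell,p)=1$, and carry out the $t$-sum. This collapses $S_{\chi_p}(a,b;p^{\alpha_p})$ onto the solutions $x_0$ of the quadratic congruence $ax_0^2-\ell x_0-b\equiv0$ to a power of $p$, and (when $\alpha_p$ is odd, or $p=2$) leaves only a short Gauss-type sum handled by completing the square. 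The outcome I expect is that the sum vanishes unless that congruence is solvable and otherwise $|S_{\chi_p}(a,b;p^{\alpha_p})|\le C\,p^{\alpha_p/2}\gcd\bigl(p^{\alpha_p},\ell^2+4ab\bigr)^{1/2}$, so the defect here is a divisor function of a polynomial linear in $ab$. Assembling the factors yields $|S_\chi(m,n;c)|\le c^{1/2+\varepsilon}D(m,n)^{1/2}$, where $D$ is multiplicative, dominated on the squarefree part of $c$ by $\gcd(m,n,c)$ and on each $p^{\alpha_p}\,\|\,c$ with $\alpha_p\ge2$ by a divisor of $p^{\alpha_p}$ cut out by a single linear congruence on $mn$. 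It then remains to show $\sum_{A\le mn\le B}D(m,n)^{1/2}\ll c^\varepsilon B^\varepsilon(B-A+c^{1/2})$, which is a routine divisor-sum estimate: expand $D(m,n)^{1/2}$ into a sum over $d\mid c$ of coefficients of size $O_\varepsilon(d^{1/2+\varepsilon})$, interchange summation, and for each $d$ observe that the divisibility conditions force $k:=mn$ into a single residue class modulo the squarefull part of $d$ (Chinese remaindering the linear local conditions), while the squarefree-part conditions are exactly those handled for the ordinary Weil bound; the inner count is then $\ll B^\varepsilon\bigl((B-A)/d+1\bigr)$, and $\sum_{d\mid c}d^{1/2+\varepsilon}\bigl((B-A)/d+1\bigr)\ll c^\varepsilon(B-A+c^{1/2})$, which combined with the $c^{1/2+\varepsilon}$ from the local bounds gives \eqref{weilavg}.

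The main obstacle is the prime-power evaluation: making the stationary-phase computation uniform over all $\alpha_p\ge2$, all conductors $p^{\beta_p}$ of $\chi_p$ with $1\le\beta_p\le\alpha_p$, and the prime $2$, while producing a defect in the clean ``divisor of a fixed polynomial linear in $ab$'' form that the count requires. A secondary point needing care is that $\ell^2+4ab$ not vanish identically modulo the relevant divisors of $c$ on the support of the summand; here one uses that the congruence governing the support keeps $\gcd(b,p)$ (equivalently $\gcd(n,p)$) small, so the congruence count stays genuinely restrictive. (A soft Cauchy--Schwarz argument --- opening $|S_\chi(m,n;c)|$ with unimodular weights, squaring in the additive variable, and invoking the ordinary Weil bound for the resulting untwisted sums $S(m_1-m_2,n_1-n_2;c)$ --- is far shorter but loses a factor of up to $c^{1/4}$, so some form of the explicit local analysis appears necessary for the sharp exponent.)
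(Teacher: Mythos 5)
Your proposal is correct in outline and follows essentially the same route as the paper: twisted multiplicativity, a Postnikov/stationary-phase evaluation of $S_{\chi_p}(a,b;p^{\alpha_p})$ collapsing onto roots of a quadratic congruence with discriminant $\ell^2+4ab$ (the paper imports exactly these local evaluations from Propositions 9.4, 9.7, 9.8 and Lemma 9.6 of \cite{KL}), and then a divisor-sum count in $r=mn$ where each gcd condition pins $r$ to $O(1)$ residue classes modulo $d$ (four, not one, when $p=2$). The only cosmetic differences are that the paper caps the local defect at the floor square root $\flrt(c)$ rather than at $p^{\alpha_p}$ and splits the two gcd factors by AM--GM before summing, neither of which changes the final bound.
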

The proof is deferred to Section \ref{weilsection}.

\end{subsection}

\begin{subsection}{Acknowledgements}
The author would like to thank his doctoral advisor, Dr. Matthew P. Young, for the invaluable help and advice he has provided throughout the course of writing this paper.
\end{subsection}
\end{section}

\begin{section}{Preliminaries and Notation}
\setcounter{subsection}{1}

\begin{itemize}
\item The notation $\cond(\chi)$ denotes the conductor of the Dirichlet character $\chi$, and the notation $\chi_0$ denotes the principal character to a particular modulus. Sometimes, if it is unambiguous, we will also use the symbol $1$ to denote the principal character, although it should be understood in all cases that the principal character vanishes at primes dividing the modulus.
\item The notation $e(z)$ is shorthand for $e^{2\pi iz}$. Similarly, the notation $e_q(z)$ is shorthand for $e^{\frac{2\pi iz}q}$.
\item For a family $\mathcal{F}$ of Hecke cuspforms of level $q$, define the sum of Fourier coefficients
\begin{equation}
\Delta_{\mathcal{F}}(m, n) = \sum_{f\in\mathcal{F}}\frac{\lambda_f(m)\overline{\lambda}_f(n)}{\left<f,f\right>_q},\label{deltadef}
\end{equation}
where $\lambda_f(n)$ is the $n$-th Hecke eigenvalue of $f$, and we have $\lambda_f(n) = 0$ if $n$ is not a positive integer; and $\left<f,g\right>_q$ is as defined below. For convenience, let $\Delta_{\mathcal{F}}(m) = \Delta_{\mathcal{F}}(m, 1)$. Also recall that the Hecke eigenvalues appear in the Fourier series
\begin{equation}
f(z) = \sum_{n\ge 1}\lambda_f(n)n^{\frac{\kappa-1}2}e(nz).
\end{equation}
\item The \textbf{Petersson inner product} for the congruence subgroup $\Gamma_0(q)$ is defined using the probability normalization as
\begin{equation}
\left<f, g\right>_q = \frac3{\pi\left[\Gamma_0(1): \Gamma_0(q)\right]}\iint_{\Gamma_0(q)\setminus\mathbb{H}}y^{\kappa}f(z)\overline{g(z)}\frac{dx\;dy}{y^2}.\label{petprod}
\end{equation}
\item The notation $\mathcal{S}_{\kappa}(q,\chi)$ denotes the vector space of cuspforms of weight $\kappa$, level $q$, and central character $\chi$. The notation $\mathcal{H}_{\kappa}(q, \chi)$ denotes the set of newforms in $\mathcal{S}_{\kappa}(q,\chi)$, which is an orthonormal set.
\item The \textbf{Petersson trace formula} states that, if $\mathcal{B}$ is an orthonormal basis for $\mathcal{S}_{\kappa}(q,\chi)$, then
\begin{equation}
\Delta_{\mathcal{B}}(m, n) = \frac{\pi(4\pi)^{\kappa-1}\left[\Gamma_0(1):\Gamma_0(q)\right]}{3\Gamma(\kappa-1)}\left[\delta(m,n) + 2\pi i^{-\kappa}\sum_{\substack{c>0\\c\equiv 0\pmod q}}c^{-1}S_\chi(m,n;c)J_{\kappa-1}\left(\frac{4\pi\sqrt{mn}}c\right)\right].\label{petersson}
\end{equation}
The notation used in this formula is clarified below.
\item $\Gamma_0(q)$ denotes the congruence subgroup of $\mathrm{SL}_2(\mathbb{Z})$ consisting of the matrices whose bottom left entry is divisible by $q$.
\item The symbol $\delta_{\text{condition}}$ is understood to equal 1 when the specified condition is true and 0 when it is false. The notation $\delta(m, n)$ is understood to mean $\delta_{m=n}$.
\item For integers $m, n$ and a positive integer $c$, the \textbf{ordinary Kloosterman sum} is defined as
\begin{equation}
S(m,n;c) = \sum_{\substack{x\pmod c \\ (x, c) = 1}}e_c(mx+n\overline{x}).
\end{equation}
Additionally, if $\chi$ is a Dirichlet character modulo $c$, the \textbf{twisted Kloosterman sum} is defined as
\begin{equation}
S_{\chi}(m,n;c) = \sum_{\substack{x\pmod c \\ (x, c) = 1}}\overline{\chi}(x)e_c(mx+n\overline{x}).
\end{equation}
\item Twisted Kloosterman sums have a series decomposition into multiplicative characters given by
\begin{equation}
S_{\chi}(m,1;c) = \frac1{\varphi(c)}\sum_{\psi\pmod c}\overline{\psi}(m)\tau(\psi)\tau(\chi\psi),\label{klfourier}
\end{equation}
valid when $(c, m) = 1$. This can be proved by opening up the Gauss sums and bringing the sum over $\psi$ to the inside.
\item Kloosterman sums satisfy the \textbf{Weil bound}:
\begin{equation}
|S(m,n;c)| \le c^{\frac12}(m,n,c)^{\frac12}\tau(c).\label{weil}
\end{equation}
As mentioned previously, the Weil bound does not hold in general for twisted Kloosterman sums. However, Theorem 9.3 of \cite{KL} implies the weaker result
\begin{equation}
|S_{\chi}(m,n;c)| \le c^{\frac12}q^{\frac12}(m,n,c)^{\frac12}\tau(c),\label{weilkl}
\end{equation}
if $q = \cond(\chi)$, and we also have the on-average result given in \eqref{weilavg}.
\begin{remark}
This result is occasionally useful to us, since we sometimes restrict $c$ to be much larger than $q$. However, it is worse than trivial when $c \asymp q$.
\end{remark}
\item For a Hecke cuspform $f$ and a Dirichlet character $\chi$, the notation $f_{\chi}$ denotes the twist $f\otimes \chi$; that is, the cuspform with Fourier series
\begin{equation}
f_{\chi}(z) = \sum_{n\ge 1}\lambda_f(n)\chi(n)n^{\frac{\kappa-1}2}e(nz).
\end{equation}
\item As introduced in \cite{ALE}, we say a modular form $f$ of level $q$ is an \textbf{oldform} if it satisfies $f(z) = g(dz)$, where $g$ is a cuspform of level  $q'$ properly dividing $q$, and $d$ divides $\frac{q}{q'}$. A modular form that is orthogonal (with respect to the Petersson inner product) to every oldform of level $q$ is called a \textbf{newform}.
\item If a Hecke cuspform $f$ of level $q$ and central character $\chi$ with Hecke eigenvalues $\lambda_f(n)$ is a newform, we define its associated $L$-function as
\begin{equation}
L(f, s) = \sum_{n=1}^{\infty}\frac{\lambda_f(n)}{n^s} = \prod_p\left(1 - \frac{\lambda_f(p)}{p^s} + \frac{\chi(p)}{p^{2s}}\right).
\end{equation}
This definition satisfies all the usual properties of an $L$-function if $f$ is a newform, but not if $f$ is an oldform -- in particular, for many oldforms we even have $\lambda_f(1) = 0$.
\item The notation $J_{\kappa-1}(x)$ denotes a $J$-Bessel function. These satisfy the estimates
\begin{align}
J_{\kappa-1}(x) &\ll x^{\kappa-1}, \label{jestk} \\
J_{\kappa-1}(x) &\ll x, \label{jestx} \\
J_{\kappa-1}(x) &\ll x^{-\frac12}, \label{jestsqrt}
\end{align}
where the implied constants are absolute in \eqref{jestk} and \eqref{jestx}, and the implied constant in \eqref{jestsqrt} depends only on $\kappa$.
\item The group density distributions, as given on page 409 of \cite{KS}, are
\begin{align}
W(\mathrm{U})(x) &= 1,\label{wux} \\
W(\mathrm{O})(x) &= 1 + \frac12\delta_0(x),\label{wox} \\
W(\mathrm{SO}(\mathrm{even}))(x) &= 1 + \frac{\sin(2\pi x)}{2\pi x},\label{wsoex} \\
W(\mathrm{SO}(\mathrm{odd}))(x) &= 1 - \frac{\sin(2\pi x)}{2\pi x} + \delta_0(x),\label{wsoox}
\end{align}
where $\delta_0(x)$ denotes the Dirac delta distribution.
\item For a positive integer $n$ and an $L$-function $L(f, s)$, the \textbf{generalized von Mangoldt function} $\Lambda_f(n)$ corresponding to $L(f, s)$ is given by the series expansion
\begin{equation}
-\frac{L'}{L}(f, s) = \sum_{n\ge 1}\frac{\Lambda_f(n)}{n^s}.\label{vmdef}
\end{equation}
\item The notation $||f||_{\infty}$ for a bounded function on the real line is understood to mean the supremum value of $|f|$ over all of $\mathbb{R}$.
\item The symbol $\varphi(n)$ denotes Euler's totient function (not to be confused with $\phi(x)$ which denotes a test function). It satisfies $\varphi(n) \le n$.
\item The symbol $\tau(n)$, for $n$ a positive integer, denotes the sum of the divisors of $n$. It satisfies $\tau(n) \ll n^{\varepsilon}$.
\item The symbol $\tau(\chi)$, for $\chi$ a Dirichlet character modulo $q$, denotes the Gauss sum
\begin{equation}
\tau(\chi) = \sum_{x\pmod q}\chi(x)e_q(x).
\end{equation}
It satisfies $|\tau(\chi)| = \sqrt{q}$ when $\chi$ is primitive to the modulus $q$.
\item The digamma function is denoted by $\Psi(x) = \frac{\Gamma'(x)}{\Gamma(x)}$.
\item The symbol $\vartheta_{\chi}(x)$ is the twisted Chebyshev function
\begin{equation}
\vartheta_{\chi}(x) = \sum_{p\le x}\chi(p)\log p.\label{chebyshev}
\end{equation}
\item For a function $F$, $\widehat{F}$ denotes its Fourier transform and $\widetilde{F}$ denotes its Mellin transform. Notation such as $\widehat{F}\thinspace'$, $\widehat{F}\thinspace''$, and $\widehat{F}\thinspace^{(n)}$ always indicates a derivative of the Fourier transform, as opposed to the Fourier transform of a derivative.
\item The symbol $\nu_p(n)$ denotes the largest nonnegative integer $k$ such that $p^k \mid n$.
\item The \textbf{floor square root} of a positive integer $n$, denoted $\flrt(n)$, is defined by
\begin{equation}
\flrt(n) = \prod_{p\mid n}p^{\left\lfloor\frac{\nu_p(n)}2\right\rfloor}.\label{flrtdef}
\end{equation}
\end{itemize}

\end{section}

\begin{section}{The Thinnest Family} \label{thinsection}

In this section, we prove Theorem \ref{theorem1}.

\begin{subsection}{The Explicit Formula}\label{explicitsct}
Our first goal will be to develop a Riemann-style explicit formula for the average one-level density of the thin family $\mathcal{F}_{\kappa}(q,\chi,\eta)$. This is given as follows:
\begin{lemma}
If $\mathcal{F} = \mathcal{F}_{\kappa}(q,\chi,\eta)$, $\mathcal{B} = \mathcal{H}_{\kappa}(q,\overline{\chi}^2)$, $R=\left(\frac{q}{2\pi}\right)^2$, $w$ is as in \eqref{petwt}, and $q$ is large enough to ensure that $\mathcal{B}$ is nonempty, then
\begin{multline}
\mathcal{D}_1(\mathcal{F},\phi,R,w) = \widehat{\phi}(0) + \frac2{\log R}\mathcal{I}(\kappa,\phi,R) \\
- \sum_{\ell}\sum_{e\ge 1}\frac{\left[\chi(\ell^e)\Delta_{\mathcal{B}}(\ell^e) - \chi(\ell^{e-2})\Delta_{\mathcal{B}}(\ell^{e-2})\right]\left[\eta(\ell^e) + \overline{\eta}(\ell^e)\right]\log\ell}{\ell^{e/2}\Delta_{\mathcal{H}}(1)\log R}\widehat{\phi}\left(\frac{e\log \ell}{\log R}\right),\label{oldavg1}
\end{multline}
where 
\begin{equation}
\mathcal{I}(\kappa,\phi,R) = \int_{-\infty}^{\infty}\Psi\left(\frac{\kappa}2+\frac{2\pi ix}{\log R}\right)\phi(x)\;dx,\label{idef}
\end{equation}
and $\Delta_{\mathcal{B}}(m)$ is as in \eqref{deltadef}, and the sum over $\ell$ runs over all primes.
\end{lemma}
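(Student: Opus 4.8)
The plan is to start from the definition \eqref{olddef} of the one-level density of a single form, apply the Riemann--von Mangoldt explicit formula to $L(f_{\chi\eta}, s)$, and then average over the family using the Petersson weights. For an individual newform $g = f_{\chi\eta}$ of level $q^2$ and conductor $q^2$, the explicit formula expresses $\sum_\rho \phi\bigl(\tfrac{\log R(\rho - 1/2)}{2\pi i}\bigr)$ as an ``archimedean'' contribution coming from the gamma factors of the completed $L$-function, plus $\widehat{\phi}(0)$ (from the pole structure / functional equation normalization), minus a sum over prime powers of the form $\sum_n \frac{\Lambda_g(n)}{\sqrt n \log R}\bigl(\widehat\phi(\tfrac{\log n}{\log R}) + \widehat\phi(-\tfrac{\log n}{\log R})\bigr)$. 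Since $\phi$ is even, $\widehat\phi$ is even, so the two terms combine to $2\widehat\phi(\tfrac{\log n}{\log R})$. The archimedean term, after writing the gamma factor of a weight-$\kappa$ form as $\Gamma_{\mathbb C}(s + \tfrac{\kappa-1}{2})$ and taking the logarithmic derivative, produces exactly $\widehat\phi(0)$ together with $\frac{2}{\log R}\int_{-\infty}^\infty \Psi\bigl(\tfrac{\kappa}{2} + \tfrac{2\pi i x}{\log R}\bigr)\phi(x)\,dx = \widehat\phi(0) + \frac{2}{\log R}\mathcal I(\kappa,\phi,R)$; here one must be careful that the conductor used to define $R$ matches $\left(\tfrac{q}{2\pi}\right)^2$, which is precisely the conductor of a level-$q^2$ form, so no extra $\log$ terms appear. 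Writing $R = (q/2\pi)^2$ and tracking the substitution in the integral representation of $\log\Gamma$ is the routine bookkeeping that yields \eqref{idef}.

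Next I would identify $\Lambda_{f_{\chi\eta}}(n)$ in terms of the Hecke eigenvalues of $f$. From the Euler product $L(f_{\chi\eta},s) = \prod_p\bigl(1 - \tfrac{\lambda_f(p)\chi\eta(p)}{p^s} + \tfrac{\overline\chi^2\chi^2\eta^2(p)}{p^{2s}}\bigr)$ — noting the central character of $f$ is $\overline\chi^2$ so the twist $f_{\chi\eta}$ has central character $\eta^2$, and the Euler factor at a prime $\ell \nmid q$ is $1 - \lambda_f(\ell)\chi(\ell)\eta(\ell)\ell^{-s} + \eta^2(\ell)\ell^{-2s}$ — a standard computation gives that $\Lambda_{f_{\chi\eta}}(\ell^e)$ equals $\log\ell$ times the coefficient of $X^e$ in $-\frac{d}{dX}\log(1 - \alpha X - \beta X^2)$ with $\alpha = \lambda_f(\ell)\chi\eta(\ell)$, $\beta = -\eta^2(\ell)$ (and similarly with the local roots at ramified primes, which are handled by $\lambda_f$ being completely multiplicative there). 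The key algebraic identity is that $\sum_e (\text{this coefficient})X^e$ telescopes so that the prime-power coefficient at $\ell^e$ is $\chi\eta(\ell^e)\lambda_f(\ell^e) - \eta^2(\ell)\chi\eta(\ell^{e-2})\lambda_f(\ell^{e-2})$ — essentially the recursion $\lambda_f(\ell^e) = \lambda_f(\ell)\lambda_f(\ell^{e-1}) - \overline\chi^2(\ell)\lambda_f(\ell^{e-2})$ rearranged, combined with the multiplicativity of the twisting characters. One then separates the $\chi\eta$ from the $\overline\chi \cdot \eta$ contribution: the two terms $\widehat\phi(e\log\ell/\log R)$ come with a factor that, after also including the conjugate form $f_{\overline\chi\overline\eta}$ (or equivalently using that the family is closed under such conjugation and that $\Delta_{\mathcal B}$ is real), assembles into $[\eta(\ell^e) + \overline\eta(\ell^e)]$ times $\chi(\ell^e)\lambda_f(\ell^e)$, with the $\eta^2(\ell)\overline\chi\cdot\overline\eta\cdots$ reducing cleanly because $\eta^2(\ell)\overline\eta(\ell^{e}) = \overline\eta(\ell^{e-2})$ and $\eta^2(\ell)\eta(\ell^e)\cdots$ — this character arithmetic is where one should be most careful.

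Finally, average over $f \in \mathcal B = \mathcal H_\kappa(q,\overline\chi^2)$ with weight $w(f) = 1/\langle f,f\rangle_q$. Using Lemma 1.3.1 (the weight of the twist equals $\tfrac{q}{\varphi(q)}w(f)$, which cancels in the ratio defining $\mathcal D_1$), the weighted sum of $\lambda_f(\ell^e)$ over $f$ becomes $\Delta_{\mathcal B}(\ell^e)$ and the normalizing denominator becomes $\Delta_{\mathcal H}(1) = \sum_{f}w(f)$; because $\overline\chi^2$ is primitive there are no oldforms, so $\mathcal B$ is an orthonormal basis and no sieving is needed. Collecting terms gives exactly \eqref{oldavg1}. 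The main obstacle I anticipate is not any single analytic estimate — there are no error terms in this lemma, it is an exact identity — but rather correctly matching all the characters and signs: verifying that the central character of $f_{\chi\eta}$ is $\eta^2$, that the $\ell^{e-2}$ term carries $\chi(\ell^{e-2})$ and not some twisted variant, that the $[\eta(\ell^e) + \overline\eta(\ell^e)]$ factor genuinely emerges (this uses that we may pair $f_{\chi\eta}$ with $f_{\chi\overline\eta}$, or symmetrize, since both lie in the family only in the quadratic case — in the non-quadratic case the statement as written must still be interpreted via the appropriate grouping), and that the weight $\kappa$ enters the archimedean term only through $\kappa/2$ in $\mathcal I(\kappa,\phi,R)$. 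Once the bookkeeping is pinned down, the proof is a direct assembly.
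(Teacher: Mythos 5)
Your overall architecture matches the paper's: explicit formula for a single $L$-function (Theorem 5.12 of \cite{IK}), computation of $\Lambda_{f_{\chi\eta}}(\ell^e)$ from the Euler product with nebentypus $\eta^2$, and then averaging over $\mathcal{B}$ with the Petersson weights so that $\lambda_f(\ell^e)$ becomes $\Delta_{\mathcal{B}}(\ell^e)$ and the weight-ratio lemma makes $w(f)$ and $w(f_{\chi\eta})$ interchangeable. The archimedean bookkeeping and the telescoping identity giving $\chi\eta(\ell^e)\lambda_f(\ell^e)-\eta^2(\ell)\chi\eta(\ell^{e-2})\lambda_f(\ell^{e-2})$ are also right.

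However, there is a genuine gap in how you produce the factor $\left[\eta(\ell^e)+\overline{\eta}(\ell^e)\right]$. You state the prime sum in the explicit formula as $\sum_n \frac{\Lambda_g(n)}{\sqrt n\,\log R}\bigl(\widehat{\phi}(\tfrac{\log n}{\log R})+\widehat{\phi}(-\tfrac{\log n}{\log R})\bigr)$, i.e.\ with coefficient $2\Lambda_g(n)$ after using evenness. That is only correct for self-dual $g$; for $g=f_{\chi\eta}$ with $\eta^2\neq 1$ the explicit formula carries $\Lambda_g(n)+\overline{\Lambda_g(n)}$, not $2\Lambda_g(n)$, and this is exactly where the $\overline{\eta}$ comes from. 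The paper's route is: since $f_{\chi}\in\mathcal{H}_{\kappa}(q^2,1)$ is self-dual, $\chi(n)\lambda_f(n)$ is real, so conjugating $\Lambda_{f_{\chi\eta}}(\ell^e)=\left[\chi(\ell^e)\lambda_f(\ell^e)-\chi(\ell^{e-2})\lambda_f(\ell^{e-2})\right]\eta(\ell^e)\log\ell$ only conjugates $\eta$, and the sum $\Lambda+\overline{\Lambda}$ already yields $\left[\eta(\ell^e)+\overline{\eta}(\ell^e)\right]$ form by form, with no averaging needed. Your proposed fix --- pairing $f_{\chi\eta}$ with the conjugate twist $f_{\chi\overline{\eta}}$ or $f_{\overline{\chi}\overline{\eta}}$ and ``symmetrizing over the family'' --- does not work: the family $\mathcal{F}_{\kappa}(q,\chi,\eta)$ is the twist of the fixed basis $\mathcal{H}_{\kappa}(q,\overline{\chi}^2)$ by the single character $\chi\eta$, and for non-quadratic $\eta$ the conjugate twists simply are not members of it (as you yourself half-concede). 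So as written, your argument would prove the identity with $2\eta(\ell^e)$ in place of $\eta(\ell^e)+\overline{\eta}(\ell^e)$, which is false for $\eta$ non-quadratic; replacing your symmetrization step by the $\Lambda+\overline{\Lambda}$ form of the explicit formula plus the self-duality of $f_{\chi}$ closes the gap.
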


\begin{remark}
If $\phi(x)$ is real, then $\mathcal{I}(\kappa,\phi,R)$ is real since the imaginary parts cancel out because $\phi$ is even. Moreover, for $a\ge 0$ we have the series expansion
\begin{equation}
\Re\left[\Psi(a+bi)\right] = \Psi(a) + b^2\sum_{k=a}^{\infty}\frac1{k(k^2+b^2)}.
\end{equation}
We can compare this sum to the integral
\begin{equation}
\int_a^{\infty}\frac{dx}{x(x^2+b^2)} = \frac1{2b^2}\log\left(1+\frac{b^2}{a^2}\right).
\end{equation}
Thus we have
\begin{equation}
\Re\left[\Psi\left(\frac{\kappa}2+\frac{2\pi ix}{\log R}\right)\right] \le \Psi\left(\frac{\kappa}2\right) + 1 + \frac2{\kappa^2},
\end{equation}
and so if $\phi(x)$ is real and positive, we have
\begin{equation}
\mathcal{I}(\kappa,\phi,R) \le \left[\Psi\left(\frac{\kappa}2\right) + 1 + \frac2{\kappa^2}\right]\widehat{\phi}(0).
\end{equation}
\end{remark}

\begin{remark}
In particular, this series expansion shows that $\Re\left[\Psi\left(\frac{\kappa}2+\frac{2\pi ix}{\log R}\right)\right] > 0$, so if $\phi(x)$ is real and positive, then $\mathcal{I}(\kappa,\phi,R)$ is both positive and bounded.
\end{remark}

\begin{remark}
We also may compute the asymptotic expansion
\begin{equation}
\mathcal{I}(\kappa,\phi,R) = \sum_{n=0}^{N-1}\frac{(-1)^n\pi^{2n}\Psi^{(2n)}\left(\frac{\kappa}2\right)\widehat{\phi}^{(2n)}(0)}{(2n)!}\left(\frac2{\log R}\right)^{2n} + O\left(\frac{\widehat{\phi}^{(2N)}(0)}{(\log R)^{2N}}\right),
\end{equation}
where the implied constant depends only on $N$.
\end{remark}

To prove \eqref{oldavg1}, we first expand out the one-level density for a single modular form.
\begin{lemma}
Let $f \in \mathcal{H}_{\kappa}(q,\chi)$. Then
\begin{equation}
D_1(f,\phi,R) = \frac{\log\left(\frac{q^2}{4\pi^2}\right)}{\log R}\widehat{\phi}(0) + \frac2{\log R}\mathcal{I}(\kappa,\phi,R) - \frac1{\log R}\sum_{n\ge 1}\frac{\Lambda_f(n) + \overline{\Lambda_f}(n)}{\sqrt n}\widehat{\phi}\left(\frac{\log n}{\log R}\right),\label{olds}
\end{equation}
where $\Lambda_f(n)$ is as in \eqref{vmdef}.
\end{lemma}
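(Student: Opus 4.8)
The plan is to deduce \eqref{olds} from the Riemann--von Mangoldt explicit formula for $L(f,s)$, by the classical contour-integration argument. Since $\widehat\phi$ has compact support, Fourier inversion $\phi(x) = \int_{-\infty}^\infty \widehat\phi(\xi)e(x\xi)\,d\xi$ extends $\phi$ to an entire function of exponential type which decays faster than any polynomial along horizontal lines; hence
\[
g(s) := \phi\!\left(\frac{\left(s-\tfrac12\right)\log R}{2\pi i}\right)
\]
is entire, and on every fixed vertical strip it decays rapidly in $|\Im(s)|$. By \eqref{olddef} we have $D_1(f,\phi,R) = \sum_\rho g(\rho)$, the sum running over the nontrivial zeros of $L(f,s)$ with multiplicity, that is, over the zeros of the completed $L$-function $\Lambda(f,s) = \left(\tfrac{q^2}{4\pi^2}\right)^{s/2}\Gamma\!\left(s+\tfrac{\kappa-1}2\right)L(f,s)$; since $f$ is cuspidal, $L(f,s)$ is entire, so there is no polar contribution.

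Next I would integrate $\tfrac1{2\pi i}\,\tfrac{\Lambda'}{\Lambda}(f,s)g(s)$ around a tall rectangle with vertical sides $\Re(s) = c$ and $\Re(s) = 1-c$ for a fixed $c>1$; the argument principle gives
\[
\sum_\rho g(\rho) = \frac1{2\pi i}\left(\int_{(c)} - \int_{(1-c)}\right)\frac{\Lambda'}{\Lambda}(f,s)\,g(s)\,ds,
\]
the horizontal segments being negligible because $g$ decays rapidly while $\tfrac{\Lambda'}{\Lambda}$ grows at most polynomially between zeros (one chooses the heights to avoid zeros). On the line $\Re(s)=c$ I write $\tfrac{\Lambda'}{\Lambda}(f,s) = \tfrac12\log\tfrac{q^2}{4\pi^2} + \Psi\!\left(s+\tfrac{\kappa-1}2\right) + \tfrac{L'}{L}(f,s)$ and use $\tfrac{L'}{L}(f,s) = -\sum_n \Lambda_f(n)n^{-s}$, valid for $c>1$. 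Shifting each term to the central line $\Re(s)=\tfrac12$ (no poles are crossed, since $\Psi(s+\tfrac{\kappa-1}2)$ is holomorphic for $\Re(s)\ge\tfrac12$) and substituting $s = \tfrac12 + \tfrac{2\pi i x}{\log R}$, the Dirichlet series contributes $-\tfrac1{\log R}\sum_n \tfrac{\Lambda_f(n)}{\sqrt n}\widehat\phi\!\left(\tfrac{\log n}{\log R}\right)$ after interchanging sum and integral and recognizing the Fourier transform, while the constant and the digamma term contribute $\tfrac12\cdot\tfrac{\log(q^2/4\pi^2)}{\log R}\widehat\phi(0)$ and $\tfrac1{\log R}\mathcal I(\kappa,\phi,R)$ directly from \eqref{idef}, using $\Psi\!\left(\tfrac12+\tfrac{\kappa-1}2+\tfrac{2\pi i x}{\log R}\right) = \Psi\!\left(\tfrac\kappa2+\tfrac{2\pi i x}{\log R}\right)$.

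Finally, for the integral over $\Re(s) = 1-c$ I would substitute $s\mapsto 1-s$, use that $\phi$ is even so $g(1-s)=g(s)$, and invoke the functional equation in the form $\tfrac{\Lambda'}{\Lambda}(f,1-s) = -\tfrac{\Lambda'}{\Lambda}(\widetilde{f},s)$, where $\widetilde{f}$ is the dual newform; $\widetilde{f}$ has the same conductor, weight, and $\Gamma$-factor as $f$ but von Mangoldt coefficients $\overline{\Lambda_f(n)}$, so this term contributes a second copy of the $\Re(s)=c$ computation with $\Lambda_f(n)$ replaced by $\overline{\Lambda_f(n)}$. Adding the two contributions doubles the $\mathcal I$-term and the $\widehat\phi(0)$-term (producing the full coefficient $\tfrac{\log(q^2/4\pi^2)}{\log R}$) and symmetrizes the arithmetic sum to $\Lambda_f(n)+\overline{\Lambda_f(n)}$, which is precisely \eqref{olds}. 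The only genuinely delicate step is the justification of the contour shift --- bounding the horizontal integrals via standard growth estimates for $\tfrac{L'}{L}(f,s)$ away from its zeros together with the Paley--Wiener decay of $g$; everything else (the interchange of summation and integration, the shift to $\Re(s)=\tfrac12$, and the identification of the oscillatory integrals with $\widehat\phi$ and with $\mathcal I(\kappa,\phi,R)$) is routine.
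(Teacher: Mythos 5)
Your argument is correct and follows essentially the same route as the paper: the paper's proof is a one-line citation of Theorem 5.12 in \cite{IK}, and what you have written is precisely the standard contour-integration proof of that explicit formula (Paley--Wiener decay of $g$, the rectangle $\Re(s)=c$, $\Re(s)=1-c$, shift to the critical line, and the functional equation for the dual form) specialized to $\Lambda(f,s)=\left(\frac{q}{2\pi}\right)^{s}\Gamma\left(s+\frac{\kappa-1}{2}\right)L(f,s)$. Note only that your choice of conductor $q^{2}$, which is what produces the coefficient $\log\left(\frac{q^{2}}{4\pi^{2}}\right)$, is the intended reading of the statement, matching its subsequent application to the twists $f_{\chi\eta}$ of level $q^{2}$.
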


\begin{proof}
This is a direct application of Theorem 5.12 in \cite{IK}.
\end{proof}

Now we want to compute the generalized von Mangoldt function of a member of our family $\mathcal{F}_{\kappa}(q,\chi,\eta)$.
\begin{lemma}
Let $q$, $\chi$, and $\eta$ be as in \eqref{family1}. (In particular, $\chi\eta$ and $\overline{\chi}\eta$ have conductor $q$.) Let $f\in\mathcal{H}_{\kappa}(q, \overline{\chi}^2)$ so that $f_{\chi\eta} \in \mathcal{F}_{\kappa}(q,\chi,\eta)$. Then for a positive integer $n$,
\begin{equation}
\Lambda_{f_{\chi\eta}}(n) = \begin{cases}
\left[\chi(\ell^e)\lambda_f(\ell^e) - \chi(\ell^{e-2})\lambda_f(\ell^{e-2})\right]\eta(\ell^e)\log\ell & (n = \ell^e\text{ where $\ell\nmid q$ is prime and $e\ge 1$}) \\
0 & (\text{otherwise})
\end{cases},\label{vmexpr}
\end{equation}
where $\lambda_f(n)$ is the $n$-th Hecke eigenvalue of $f$, and we have $\lambda_f(n) = 0$ if $n$ is not a positive integer.
\end{lemma}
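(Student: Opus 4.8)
The plan is to compute the Dirichlet series $-L'/L(f_{\chi\eta}, s)$ directly from the Euler product of $L(f_{\chi\eta}, s)$ and read off the coefficients. Since $f_{\chi\eta}$ is a newform of level $q^2$ and central character $\eta^2$ (as established earlier in the excerpt), we have the Euler product
\begin{equation}
L(f_{\chi\eta}, s) = \prod_{\ell}\left(1 - \frac{\lambda_{f_{\chi\eta}}(\ell)}{\ell^s} + \frac{\eta^2(\ell)}{\ell^{2s}}\right)^{-1},
\end{equation}
where $\lambda_{f_{\chi\eta}}(\ell) = \chi(\ell)\eta(\ell)\lambda_f(\ell)$ for $\ell \nmid q$, and where the quadratic factor $\eta^2(\ell)/\ell^{2s}$ vanishes whenever $\ell \mid q$ because $\eta$ (having conductor $q$) vanishes at primes dividing $q$. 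Likewise $\lambda_{f_{\chi\eta}}(\ell) = \chi(\ell)\eta(\ell)\lambda_f(\ell)$, which is $0$ when $\ell \mid q$. Taking the logarithmic derivative turns the product into a sum over primes of $-\frac{d}{ds}\log\left(1 - \frac{\lambda_{f_{\chi\eta}}(\ell)}{\ell^s} + \frac{\eta^2(\ell)}{\ell^{2s}}\right)$, so $\Lambda_{f_{\chi\eta}}(n)$ is supported on prime powers $n = \ell^e$, and we must identify the coefficient of $\ell^{-es}$.

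The key local computation is the standard identity for Hecke $L$-functions: writing the local factor as $(1 - \alpha_\ell \ell^{-s})^{-1}(1 - \beta_\ell \ell^{-s})^{-1}$ with $\alpha_\ell + \beta_\ell = \lambda_{f_{\chi\eta}}(\ell)$ and $\alpha_\ell\beta_\ell = \eta^2(\ell)$, the logarithmic derivative of the $\ell$-factor contributes $\sum_{e\ge 1}(\alpha_\ell^e + \beta_\ell^e)\log\ell \cdot \ell^{-es}$, so $\Lambda_{f_{\chi\eta}}(\ell^e) = (\alpha_\ell^e + \beta_\ell^e)\log\ell$. Now Newton's identity / the recursion for power sums gives $\alpha_\ell^e + \beta_\ell^e = \lambda_{f_{\chi\eta}}(\ell^e) - \eta^2(\ell)\lambda_{f_{\chi\eta}}(\ell^{e-2})$, which is exactly the relation satisfied by Hecke eigenvalues at unramified primes: $\lambda_{f_{\chi\eta}}(\ell^e) = \sum_{j=0}^{e}\alpha_\ell^j\beta_\ell^{e-j}$, so the power-sum equals $\lambda_{f_{\chi\eta}}(\ell^e) - (\alpha_\ell\beta_\ell)\lambda_{f_{\chi\eta}}(\ell^{e-2})$. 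Substituting $\lambda_{f_{\chi\eta}}(\ell^e) = \chi(\ell^e)\eta(\ell^e)\lambda_f(\ell^e)$ and $\eta^2(\ell)\lambda_{f_{\chi\eta}}(\ell^{e-2}) = \eta^2(\ell)\chi(\ell^{e-2})\eta(\ell^{e-2})\lambda_f(\ell^{e-2}) = \chi(\ell^{e-2})\eta(\ell^e)\lambda_f(\ell^{e-2})$ and factoring out $\eta(\ell^e)$ yields precisely \eqref{vmexpr}. For primes $\ell \mid q$ the local factor is $(1 - \lambda_{f_{\chi\eta}}(\ell)\ell^{-s})^{-1} = 1$ since $\lambda_{f_{\chi\eta}}(\ell) = 0$, so those primes contribute nothing, accounting for the ``otherwise'' case (together with the fact that $\Lambda_{f_{\chi\eta}}$ is supported on prime powers).

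The only mild subtlety — and the one place to be careful rather than a genuine obstacle — is making sure the bookkeeping at ramified primes is consistent: one must use that $\eta$ has conductor exactly $q$ so that $\eta$ vanishes at every prime dividing $q$, which simultaneously kills the linear coefficient $\lambda_{f_{\chi\eta}}(\ell) = \chi(\ell)\eta(\ell)\lambda_f(\ell)$ and the quadratic coefficient $\eta^2(\ell)$ in the Euler factor, so the local $L$-factor is genuinely trivial there and no stray terms appear. The identity $\lambda_{f_{\chi\eta}}(\ell^e) = \chi(\ell^e)\eta(\ell^e)\lambda_f(\ell^e)$ used above is itself just multiplicativity of the twist at unramified primes. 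No analytic input is needed — this is a purely formal manipulation of Dirichlet series valid in the region of absolute convergence $\Re(s) > 1$, which suffices to equate coefficients.
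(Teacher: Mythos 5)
Your proposal is correct and follows essentially the same route as the paper: identify $\Lambda_{f_{\chi\eta}}(\ell^e)$ with $(\alpha_\ell^e+\beta_\ell^e)\log\ell$ from the local Euler factor (the paper simply cites (5.26) of Iwaniec--Kowalski for this instead of rederiving it by logarithmic differentiation), use $\alpha_\ell\beta_\ell=\eta^2(\ell)$ and the power-sum/Hecke relation to write $\alpha_\ell^e+\beta_\ell^e=\lambda_{f_{\chi\eta}}(\ell^e)-\eta^2(\ell)\lambda_{f_{\chi\eta}}(\ell^{e-2})$, and substitute $\lambda_{f_{\chi\eta}}(\ell^e)=\chi(\ell^e)\eta(\ell^e)\lambda_f(\ell^e)$. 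Your explicit remark that the local factors at $\ell\mid q$ are trivial because $\chi\eta$ vanishes there is a harmless elaboration of what the paper leaves implicit in restricting to $\ell\nmid q$.
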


\begin{proof}
Due to (5.26) of \cite{IK}, we have
\begin{equation}
\Lambda_{f_{\chi\eta}}(n) = \begin{cases}
\left[\alpha_1(\ell)^e + \alpha_2(\ell)^e\right]\log\ell & (n = \ell^e\text{ where $\ell\nmid q$ is prime and $e\ge 1$}) \\
0 & (\text{otherwise})
\end{cases},
\end{equation}
where $\alpha_1, \alpha_2$ are the local roots of $L(s, f_{\chi\eta})$. To establish the result of \eqref{vmexpr}, we write the local factor at a prime $\ell\nmid q$ in the Euler product as
\begin{equation}
\sum_{e=0}^{\infty}\frac{\chi(\ell^e)\eta(\ell^e)\lambda_f(\ell^e)}{\ell^{es}} = \left(1-\frac{\alpha_1(\ell)}{\ell^s}\right)^{-1}\left(1-\frac{\alpha_2(\ell)}{\ell^s}\right)^{-1} = \sum_{e_1=0}^{\infty}\sum_{e_2=0}^{\infty}\frac{\alpha_1(\ell)^{e_1}\alpha_2(\ell)^{e_2}}{\ell^{s(e_1+e_2)}} = \sum_{e=0}^{\infty}\sum_{d=0}^e\frac{\alpha_1(\ell)^d\alpha_2(\ell)^{e-d}}{\ell^{es}},
\end{equation}
and match coefficients to get
\begin{equation}
\chi(\ell^e)\eta(\ell^e)\lambda_f(\ell^e) = \sum_{d=0}^e\alpha_1(\ell)^d\alpha_2(\ell)^{e-d} = \alpha_1(\ell)^e + \alpha_2(\ell)^e + \sum_{d=1}^{e-1}\alpha_1(\ell)^d\alpha_2(\ell)^{e-d}.
\end{equation}
Now
\begin{equation}
\sum_{d=1}^{e-1}\alpha_1(\ell)^d\alpha_2(\ell)^{e-d} = \alpha_1(\ell)\alpha_2(\ell)\sum_{d=0}^{e-2}\alpha_1(\ell)^d\alpha_2(\ell)^{e-2-d} = \chi(\ell^{e-2})\eta(\ell^e)\lambda_f(\ell^{e-2}),
\end{equation}
since $\alpha_1(\ell)\alpha_2(\ell) = \eta^2(\ell)$. Substituting this in gives the result.
\end{proof}

We can substitute this result into \eqref{olds}. Since $f_{\chi}$ lives in $\mathcal{H}_{\kappa}(q^2, 1)$ and is thus self-dual, $\chi(n)\lambda_f(n)$ and so we have
\begin{corollary}
Let $f\in\mathcal{H}_{\kappa}(q,\overline{\chi}^2)$ so that $f_{\chi\eta} \in \mathcal{F}_{\kappa}(q,\chi,\eta)$. Then if $R = \left(\frac{q}{2\pi}\right)^2$, we have
\begin{multline}
D_1(f_{\chi\eta},\phi,R) = \widehat{\phi}(0) + \frac2{\log R}\mathcal{I}(\kappa,\phi,R) \\
- \sum_{\ell}\sum_{e\ge 1}\frac{\left[\chi(\ell^e)\lambda_f(\ell^e) - \chi(\ell^{e-2})\lambda_f(\ell^{e-2})\right]\left[\eta(\ell^e) + \overline{\eta}(\ell^e)\right]\log\ell}{\ell^{e/2}\log R}\widehat{\phi}\left(\frac{e\log \ell}{\log R}\right).\label{oldind}
\end{multline}
\end{corollary}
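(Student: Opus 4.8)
The plan is to simply combine the three preceding results: the single-form explicit formula \eqref{olds}, the evaluation \eqref{vmexpr} of the von Mangoldt function $\Lambda_{f_{\chi\eta}}$, and the self-duality of the twist $f_\chi$. First I would apply \eqref{olds} with $f_{\chi\eta}$ in place of $f$. Since $f_{\chi\eta}$ is a newform of conductor $q^2$ and $R=\left(\frac q{2\pi}\right)^2=\frac{q^2}{4\pi^2}$, the conductor term in \eqref{olds} is precisely $\widehat\phi(0)$, while the archimedean term $\frac2{\log R}\mathcal{I}(\kappa,\phi,R)$ is unchanged; so
\[
D_1(f_{\chi\eta},\phi,R)=\widehat\phi(0)+\frac2{\log R}\mathcal{I}(\kappa,\phi,R)-\frac1{\log R}\sum_{n\ge 1}\frac{\Lambda_{f_{\chi\eta}}(n)+\overline{\Lambda_{f_{\chi\eta}}}(n)}{\sqrt n}\widehat\phi\left(\frac{\log n}{\log R}\right),
\]
and it remains only to rewrite the von Mangoldt sum in terms of the Hecke eigenvalues of $f$.

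Next I would substitute \eqref{vmexpr}. Because $\Lambda_{f_{\chi\eta}}$ is supported on prime powers $n=\ell^e$ with $\ell\nmid q$ and $e\ge 1$, the sum over $n$ becomes a double sum over primes $\ell$ and exponents $e$, with $\sqrt n=\ell^{e/2}$ and $\log n=e\log\ell$; one may harmlessly extend the sum over $\ell$ to run over all primes, since each term with $\ell\mid q$ carries the vanishing factor $\eta(\ell^e)$. For the dual term I would invoke the observation made just before the statement: $f_\chi$ lies in $\mathcal{H}_\kappa(q^2,1)$ (using that $\chi^2$ is primitive modulo $q$), so it has trivial central character, hence is self-dual, and therefore $\lambda_{f_\chi}(n)=\chi(n)\lambda_f(n)$ is real for every $n$. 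Consequently the bracket $\chi(\ell^e)\lambda_f(\ell^e)-\chi(\ell^{e-2})\lambda_f(\ell^{e-2})$ appearing in \eqref{vmexpr} is real, so conjugating $\Lambda_{f_{\chi\eta}}(\ell^e)$ only replaces $\eta(\ell^e)$ by $\overline\eta(\ell^e)$. Adding the two contributions yields the factor $\eta(\ell^e)+\overline\eta(\ell^e)$, and collecting all the pieces produces \eqref{oldind}.

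This is a routine assembly of the ingredients and I do not anticipate a real obstacle. The two spots that want a little care are: (i) recognizing that the specific choice $R=\left(\frac q{2\pi}\right)^2$ is exactly what makes the conductor term equal $\widehat\phi(0)$ on the nose---which rests on $\cond(f_{\chi\eta})=q^2$, itself a consequence of the assumed primitivity of $\chi\eta$ and $\overline\chi\eta$; and (ii) the self-duality of $f_\chi$, which is precisely the input that lets the dual von Mangoldt term be absorbed into $\eta(\ell^e)+\overline\eta(\ell^e)$ without leaving any conjugates on $\chi$ or on $\lambda_f$.
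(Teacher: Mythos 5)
Your proposal is correct and follows essentially the same route as the paper, which likewise substitutes \eqref{vmexpr} into \eqref{olds}, uses $R=\left(\frac{q}{2\pi}\right)^2$ together with $\cond(f_{\chi\eta})=q^2$ to reduce the conductor term to $\widehat{\phi}(0)$, and invokes the self-duality of $f_{\chi}\in\mathcal{H}_{\kappa}(q^2,1)$ (so that $\chi(n)\lambda_f(n)$ is real) to combine the dual von Mangoldt term into the factor $\eta(\ell^e)+\overline{\eta}(\ell^e)$. Your added remarks---that terms with $\ell\mid q$ vanish so the sum may run over all primes, and that the primitivity of $\chi\eta$ and $\overline{\chi}\eta$ underlies $\cond(f_{\chi\eta})=q^2$---are exactly the points the paper relies on implicitly.
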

\begin{remark}
It should be noted that this is really a finite sum, due to the compact support of $\widehat{\phi}$, so series rearrangements are freely allowed.
\end{remark}

Substituting this into \eqref{oldavgdef} gives the explicit formula \eqref{oldavg1}. We can split the remaining sum in \eqref{oldavg1} up into diagonal and non-diagonal terms, writing
\begin{equation}
\mathcal{D}_1(\mathcal{F},\phi,R,w) = \widehat{\phi}(0) + \frac2{\log R}\mathcal{I}(\kappa,\phi,R) + \frac2{\log R}\mathcal{S}(\phi,R,\eta) + \mathcal{E}(\mathcal{F},\phi, R),\label{diagsep}
\end{equation}
where
\begin{equation}
\mathcal{S}(\phi,R,\eta) = \sum_{\ell}\frac{\Re\left[\eta^2(\ell)\right]\log\ell}{\ell}\widehat{\phi}\left(\frac{2\log\ell}{\log R}\right),\label{sdef}
\end{equation}
and
\begin{equation}
\mathcal{E}(\mathcal{F},\phi,R) = -\sum_{\ell}\sum_{e\ge 1}\frac{\left[\chi(\ell^e)\Delta_{\mathcal{B}}(\ell^e) - \delta_{e\ge 3}\chi(\ell^{e-2})\Delta_{\mathcal{B}}(\ell^{e-2})\right]\left[\eta(\ell^e) + \overline{\eta}(\ell^e)\right]\log\ell}{\ell^{e/2}\Delta_{\mathcal{B}}(1)\log R}\widehat{\phi}\left(\frac{e\log \ell}{\log R}\right).\label{edef}
\end{equation}
We can also define the related expression
\begin{equation}
E_{\mathcal{B}}(m) = 2\pi i^{-\kappa}\sum_{\substack{c>0\\c\equiv 0\pmod q}}c^{-1}S_{\overline{\chi}^2}(m,1;c)J_{\kappa-1}\left(\frac{4\pi\sqrt{m}}c\right)
\end{equation}
to be the error term in the Petersson trace formula \eqref{petersson}, so that when $m\neq 1$, we have
\begin{equation}
\frac{\Delta_{\mathcal{B}}(m)}{\Delta_{\mathcal{B}}(1)} = \frac{E_{\mathcal{B}}(m)}{1 + E_{\mathcal{B}}(1)}.
\end{equation}
(Again, we are assuming that $q$ is large enough that $\mathcal{B}$ is nonempty, which implies that $\Delta_{\mathcal{B}}(1)$ is nonzero.) We also may write $\mathcal{E}(\mathcal{F},\phi,R)$ in terms of these error terms:
\begin{equation}
\mathcal{E}(\mathcal{F},\phi,R) = -\sum_{\ell}\sum_{e\ge 1}\frac{\left[\chi(\ell^e)E_{\mathcal{B}}(\ell^e) - \delta_{e\ge 3}\chi(\ell^{e-2})E_{\mathcal{B}}(\ell^{e-2})\right]\left[\eta(\ell^e) + \overline{\eta}(\ell^e)\right]\log\ell}{\ell^{e/2}(1 + E_{\mathcal{B}}(1))\log R}\widehat{\phi}\left(\frac{e\log \ell}{\log R}\right).
\end{equation}

In the following sections, we will compute the asymptotics of these terms separately, to extract the remaining main terms in \eqref{result1a} and \eqref{result1b}. We have
\begin{lemma}
If $\eta$ is quadratic, we have
\begin{equation}
\mathcal{S}(\phi,R,\eta) = \frac{\phi(0)\log R}4 + \widehat{\phi}(0) - \mathcal{J}(q, \phi, R),\label{squad}
\end{equation}
where
\begin{equation}
\mathcal{J}(q, \phi, R) = \int_1^{\infty}\Phi_R'(x)\left[\vartheta_{\chi_0}(x)-x\right] \;dx,\label{jdef}
\end{equation}
\begin{equation}
\Phi_R(x) = \frac1x\widehat{\phi}\left(\frac{2\log x}{\log R}\right),\label{phirdef}
\end{equation}
and $\vartheta_{\chi_0}(x)$ is the Chebyshev function \eqref{chebyshev} twisted by the principal character $\chi_0$ modulo $q$.
Otherwise,
\begin{equation}
\mathcal{S}(\phi,R,\eta) = -\mathcal{J}(q, \phi, R, \eta^2),\label{snquad}
\end{equation}
where
\begin{equation}
\mathcal{J}(q, \phi,R,\chi) = \int_1^{\infty}\Phi_R'(x)\Re\left[\vartheta_{\chi}(x)\right]\;dx.\label{jgendef}
\end{equation}
\label{slemma}
\end{lemma}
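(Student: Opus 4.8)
The plan is to interpret $\mathcal{S}(\phi,R,\eta)$ as a Riemann--Stieltjes integral against a twisted Chebyshev function and then integrate by parts. Starting from \eqref{sdef}, I would write $\Re[\eta^2(\ell)] = \tfrac12\bigl(\eta^2(\ell)+\overline{\eta^2}(\ell)\bigr)$ and use the definition \eqref{phirdef} of $\Phi_R$ to get $\mathcal{S}(\phi,R,\eta) = \Re\sum_{\ell}\eta^2(\ell)(\log\ell)\Phi_R(\ell) = \Re\int_{1}^{\infty}\Phi_R(x)\,d\vartheta_{\eta^2}(x)$, where $\vartheta_{\eta^2}$ is the Chebyshev function \eqref{chebyshev} twisted by $\eta^2$. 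Because $\phi$ is Schwartz, $\widehat\phi$ is smooth with support in $[-\theta,\theta]$, so $\Phi_R$ is $C^1$ and the integrand vanishes for $x>R^{\theta/2}$; hence both the prime sum and the integral are effectively finite and integration by parts against the step function $\vartheta_{\eta^2}$ is legitimate, with the boundary contribution at $x=\infty$ vanishing identically. Note also that $\Phi_R(1)=\widehat\phi(0)$.

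When $\eta$ is quadratic, $\eta^2$ is the principal character $\chi_0$ modulo $q$, and I would pull off the main term by splitting $d\vartheta_{\chi_0}(x) = dx + d\bigl(\vartheta_{\chi_0}(x)-x\bigr)$. For $\int_{1}^{\infty}\Phi_R(x)\,dx$, the substitution $u = 2\log x/\log R$ (which sends $x=1$ to $u=0$, using $R=(q/2\pi)^2$) converts the integral to $\tfrac{\log R}{2}\int_{0}^{\infty}\widehat\phi(u)\,du$, and evenness of $\widehat\phi$ together with Fourier inversion makes this $\tfrac{\log R}{4}\int_{-\infty}^{\infty}\widehat\phi = \tfrac14\phi(0)\log R$. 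For the remaining piece, integration by parts produces $-\int_{1}^{\infty}\Phi_R'(x)\bigl(\vartheta_{\chi_0}(x)-x\bigr)\,dx$ plus the boundary value $-\Phi_R(1)\bigl(\vartheta_{\chi_0}(1)-1\bigr) = -\widehat\phi(0)\cdot(-1) = \widehat\phi(0)$, since $\vartheta_{\chi_0}(1)=0$; the leftover integral is precisely $\mathcal{J}(q,\phi,R)$ from \eqref{jdef}, which gives \eqref{squad}.

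When $\eta$ is not quadratic, $\eta^2$ is a nonprincipal character modulo $q$, so there is no main term to extract. Here $\vartheta_{\eta^2}(1)=0$ and the integrand has compact support, so both boundary terms in the integration by parts vanish, leaving $\mathcal{S}(\phi,R,\eta) = -\Re\int_{1}^{\infty}\Phi_R'(x)\vartheta_{\eta^2}(x)\,dx = -\int_{1}^{\infty}\Phi_R'(x)\Re\bigl[\vartheta_{\eta^2}(x)\bigr]\,dx = -\mathcal{J}(q,\phi,R,\eta^2)$, which is \eqref{snquad}, matching the definition \eqref{jgendef}.

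I do not anticipate a substantive obstacle, since \eqref{squad} and \eqref{snquad} are exact identities rather than asymptotics; the only delicate points are the bookkeeping of the $x=1$ boundary term (which is exactly what produces the $\widehat\phi(0)$ in \eqref{squad}) and pinning down the constant $\tfrac14$ rather than $\tfrac12$ in the main term, which relies on the evenness of $\widehat\phi$ so that the half-line integral over $u\ge0$ is half of the full-line integral. The genuine analytic work of estimating $\mathcal{J}(q,\phi,R)$ and $\mathcal{J}(q,\phi,R,\eta^2)$ is deferred to subsequent sections, where one inserts (conditional or unconditional) bounds on $\vartheta_{\chi_0}(x)-x$ and on $\vartheta_{\eta^2}(x)$.
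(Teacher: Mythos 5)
Your proposal is correct and follows essentially the same route as the paper: the paper performs the identical partial summation (writing $\Phi_R(\ell)$ as $-\int_\ell^\infty \Phi_R'(x)\,dx$ and swapping sum and integral, which is your Stieltjes integration by parts), then in the quadratic case adds and subtracts $x$ in $\vartheta_{\chi_0}(x)$, obtaining $\widehat{\phi}(0)$ from the boundary evaluation at $x=1$ and $\frac{\phi(0)\log R}{4}$ from the same substitution $x=R^{y/2}$ together with evenness of $\widehat{\phi}$. The only difference is the order of operations (you split $d\vartheta_{\chi_0}=dx+d(\vartheta_{\chi_0}-x)$ before integrating by parts, the paper after), which is immaterial.
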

\begin{remark}
The $\frac{\phi(0)\log R}4$ in \eqref{squad}, after multiplying by $\frac2{\log R}$, combines with the $\widehat{\phi}(0)$ in the explicit formula \eqref{oldavg1} to give the integral $\int_{-\infty}^{\infty}\phi(x)W(\mathrm{O})(x)\;dx$ which reflects the orthogonal symmetry of the family.
\end{remark}
\begin{remark}
These lower-order main terms have asymptotic expansions as follows for an arbitrary positive integer $N$:
\begin{align}
\mathcal{J}(q, \phi, R) &= -\mathcal{R}_0\widehat{\phi}(0) + \sum_{n=1}^{N-1}\frac{\widehat{\phi}\thinspace^{(2n)}(0)}{(2n)!}\left(\frac2{\log R}\right)^{2n}\left(2n\mathcal{R}_{2n-1}-\mathcal{R}_{2n}\right) + O\left(\frac{||\widehat{\phi}\thinspace^{(2N)}||_{\infty}}{(\log R)^{2N}}\right), \\
\mathcal{J}(q, \phi,R,\chi) &= -\mathcal{R}_{0,\chi}\widehat{\phi}(0) + \sum_{n=1}^{N-1}\frac{\widehat{\phi}\thinspace^{(2n)}(0)}{(2n)!}\left(\frac2{\log R}\right)^{2n}\left(2n\mathcal{R}_{2n-1,\chi}-\mathcal{R}_{2n,\chi}\right) + O\left(\frac{||\widehat{\phi}\thinspace^{(2N)}||_{\infty}}{(\log R)^{2N}}\right),
\end{align}
where the implied constants depend only on $N$, and the $\mathcal{R}_j, \mathcal{R}_{j,\chi}$ are absolute constants given by
\begin{align}
\mathcal{R}_j &= \int_1^{\infty}\frac{(\log x)^j\left[\vartheta_{\chi_0}(x)-x\right]}{x^2}\;dx, \\
\mathcal{R}_{j,\chi} &= \int_1^{\infty}\frac{(\log x)^j\Re\left[\vartheta_{\chi}(x)\right]}{x^2}\;dx.
\end{align}
\end{remark}
We would also like to bound the non-diagonal terms $\mathcal{E}(\mathcal{F},\phi, R)$ as defined in \eqref{edef} and thus place them into an error term. The quality of the bound we will get is often better if we are allowed to assume the Generalized Riemann Hypothesis. Specifically, we have
\begin{lemma}
We have
\begin{equation}
\mathcal{E}(\mathcal{F},\phi, R) \ll ||\widehat{\phi}||_{\infty}\left(q^{\kappa(\theta-1)+\frac12+\varepsilon} + q^{-1+\varepsilon}\right),\label{unconderr}
\end{equation}
where the implied constant depends only on $\varepsilon$.\label{elemmauncond}
\end{lemma}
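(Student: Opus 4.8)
The plan is to bound $\mathcal{E}(\mathcal{F},\phi,R)$ by inserting the Petersson trace formula expression for $E_{\mathcal{B}}(m)$ and estimating the resulting Kloosterman-sum sums term by term. First I would observe that the denominator $1 + E_{\mathcal{B}}(1)$ is bounded away from zero: since $c \equiv 0 \pmod q$ forces $c \geq q$, and $J_{\kappa-1}(x) \ll x^{\kappa-1}$ from \eqref{jestk}, we get $E_{\mathcal{B}}(1) \ll \sum_{c \geq q, \, q \mid c} c^{-1} \cdot (q^{-1})^{\kappa-1} \cdot c^{1/2+\varepsilon} \ll q^{-1+\varepsilon}$ using the Weil bound \eqref{weilkl} for $S_{\overline{\chi}^2}(1,1;c)$ — wait, $\overline{\chi}^2$ is primitive of conductor $q$, so \eqref{weilkl} gives an extra $q^{1/2}$; being careful, one still gets $E_{\mathcal{B}}(1)$ small for large $q$. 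So $1/(1+E_{\mathcal{B}}(1)) \ll 1$. Then it suffices to bound the sum over $\ell$, $e$ of the terms involving $E_{\mathcal{B}}(\ell^e)$ (the $\delta_{e\geq 3}$ term involving $E_{\mathcal{B}}(\ell^{e-2})$ is handled identically after reindexing).

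Next I would exploit the support condition: $\widehat{\phi}(e\log\ell/\log R)$ vanishes unless $\ell^e \leq R^\theta = (q/2\pi)^{2\theta}$, so the sum over $(\ell,e)$ is really a sum over $m = \ell^e \leq R^\theta$ of at most $q^{2\theta+\varepsilon}$ prime powers, with each term weighted by $|E_{\mathcal{B}}(m)| \ell^{-e/2} \log\ell \ll |E_{\mathcal{B}}(m)| \log m$ and by $\|\widehat\phi\|_\infty$ and $|\eta(\ell^e)+\overline\eta(\ell^e)| \leq 2$. The heart of the matter is to bound $\sum_{\ell^e \leq R^\theta} |E_{\mathcal{B}}(\ell^e)| m^{-1/2} \log m$. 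Substituting the definition of $E_{\mathcal{B}}(m)$ and swapping the order of summation, this becomes (up to the $\|\widehat\phi\|_\infty$ factor and constants)
\begin{equation}
\sum_{\substack{c > 0 \\ q \mid c}} \frac{1}{c} \sum_{\substack{m \leq R^\theta \\ m = \ell^e}} \frac{\log m}{\sqrt m} |S_{\overline{\chi}^2}(m,1;c)| \left| J_{\kappa-1}\!\left(\frac{4\pi\sqrt m}{c}\right)\right|. \notag
\end{equation}
I would split the $c$-sum at $c \asymp \sqrt{R^\theta} = R^{\theta/2}$ (equivalently at $\sqrt m \asymp c$ where the Bessel function transitions). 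For small $c$ (i.e.\ $c \lesssim \sqrt m$ for the relevant $m$) use $J_{\kappa-1}(x) \ll x^{-1/2}$ from \eqref{jestsqrt}; for large $c$ use $J_{\kappa-1}(x) \ll x^{\kappa-1}$ from \eqref{jestk}. In both regimes the inner sum over prime powers $m \leq R^\theta$ is bounded using the on-average Weil bound Lemma \ref{weilavglemma}: dyadically decomposing $m \in [M, 2M]$ and noting $mn$ with $n=1$ forces $mn = m$, we get $\sum_{M \leq m \leq 2M} |S_{\overline{\chi}^2}(m,1;c)| \ll c^{1/2+\varepsilon} M^\varepsilon (M + c^{1/2})$. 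Summing the small-$c$ contribution gives something like $\sum_{q \mid c} c^{-1} \cdot c^{1/2+\varepsilon} \cdot (\text{geometric in } M)$, and one checks the dominant piece is $c \asymp q$, $M \asymp R^\theta$, producing a bound of size $q^{-1} \cdot q^{1/2+\varepsilon} \cdot R^{\theta(1/2 \text{ or smaller})+\varepsilon}$; tracking the exponents carefully yields the $q^{-1+\varepsilon}$ term when $\theta$ is small, and for $\theta$ near $1$ the large-$c$ regime (where the $c^{-\kappa}$ from the Bessel bound interacts with the $c^{1/2}$ from Lemma \ref{weilavglemma} over the short range $c \in [q, \sqrt{R^\theta}]$) produces the $q^{\kappa(\theta-1)+1/2+\varepsilon}$ term.

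The main obstacle is bookkeeping the interaction between three competing scales — the length $R^\theta$ of the $m$-sum, the Bessel transition at $c \asymp \sqrt m$, and the lower cutoff $c \geq q$ — so that the two regimes produce exactly the two terms $q^{\kappa(\theta-1)+1/2+\varepsilon}$ and $q^{-1+\varepsilon}$ claimed in \eqref{unconderr} and nothing worse. The delicate point is that without Lemma \ref{weilavglemma}, the individual bound \eqref{weilkl} carries a fatal extra $q^{1/2}$ for primitive $\overline{\chi}^2$; it is precisely the averaging over $m$ that recovers (almost) the classical square-root cancellation and keeps the $c \asymp q$ contribution under control. I would also need to verify that the contribution of the $e \geq 3$ correction term $\chi(\ell^{e-2})E_{\mathcal{B}}(\ell^{e-2})$ is no larger: after writing $m' = \ell^{e-2}$, the weight $\ell^{-e/2} = \ell^{-1} m'^{-1/2}$ only improves the bound, so this term is dominated by the main one. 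Collecting the two regime contributions and absorbing the bounded factor $1/(1+E_{\mathcal{B}}(1))$ gives \eqref{unconderr}.
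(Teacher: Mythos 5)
Your overall strategy coincides with the paper's: control the denominator via $E_{\mathcal{B}}(1)\ll q^{-1+\varepsilon}$, discard the tail of the $c$-sum crudely using \eqref{weilkl} and \eqref{jestx}, and in the remaining range combine a Bessel estimate with the on-average Weil bound of Lemma \ref{weilavglemma} to recover the $q^{\frac12}$ that the pointwise bound \eqref{weilkl} loses; you also correctly note that the reindexed $E_{\mathcal{B}}(\ell^{e-2})$ terms are dominated by the main ones. The one place your bookkeeping goes astray is the regime analysis of the $c$-sum. Every admissible modulus satisfies $c\ge q$, while the support condition forces $m\le R^{\theta}=(q/2\pi)^{2\theta}$; so for $\theta<1$ (the only range in which the paper proves the inner bound \eqref{boundsf}, and the range relevant to the unconditional support $1-\frac1{2\kappa}$) one has $\sqrt m\le R^{\theta/2}<q\le c$. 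Hence the Bessel argument $4\pi\sqrt m/c\ll q^{\theta-1}$ is always small, your proposed split at the transition $c\asymp\sqrt m$ is vacuous, the bound \eqref{jestsqrt} is never needed, and the ``short range $c\in[q,\sqrt{R^{\theta}}]$'' to which you attribute the main term is empty. The correct accounting (and the paper's) is: apply \eqref{jestk} throughout the truncated range, apply the averaged Weil bound to the $m$-sum with the weight $m^{(\kappa-2)/2}$ coming from $m^{-1/2}J_{\kappa-1}(4\pi\sqrt m/c)\ll c^{-(\kappa-1)}m^{(\kappa-2)/2}$, and observe that the dominant contribution comes from $c\asymp q$, $e=1$, $m$ of size $R^{\theta}$, giving $q^{-\kappa+\frac12}R^{\theta\kappa/2}=q^{\kappa(\theta-1)+\frac12}$; the $q^{-1+\varepsilon}$ term collects the tail $c>C$ and the secondary term $c^{1+\varepsilon}M^{\alpha+\varepsilon}$ of the averaged bound, and both terms are present for all $\theta<1$ rather than one for ``small $\theta$'' and one for ``$\theta$ near $1$.'' With that correction, your argument is essentially the paper's proof.
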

\begin{lemma}
Assuming GRH, we have
\begin{equation}
\mathcal{E}(\mathcal{F},\phi, R) \ll \left(||\widehat{\phi}||_{\infty} + ||\widehat{\phi}\thinspace''||_{\infty}\right)q^{\frac{\theta-1}2+\varepsilon},\label{conderr}
\end{equation}
where the implied constant depends only on $\kappa$ and $\varepsilon$.\label{elemmacond}
\end{lemma}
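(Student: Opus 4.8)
The plan is to insert the Fourier decomposition \eqref{klfourier} of the twisted Kloosterman sums appearing in $E_{\mathcal{B}}(m)$, turning the character sum into a short sum of products of Gauss sums weighted by Dirichlet characters, and then to recognize the resulting $m$-sums as (essentially) dyadic pieces of twisted Chebyshev functions $\vartheta_\psi(x)$, which GRH controls with square-root savings. Concretely, I would first handle the denominator $1 + E_{\mathcal{B}}(1)$: by the Petersson formula \eqref{petersson} and the Weil bound \eqref{weil} applied to $S_{\overline{\chi}^2}(1,1;c)$ together with the Bessel estimate \eqref{jestsqrt}, one gets $E_{\mathcal{B}}(1) \ll q^{-1+\varepsilon}$ (the sum over $c \equiv 0 \pmod q$ starts at $c = q^2$ here, since $\overline\chi^2$ has conductor $q$ and the level is... actually the level of $\mathcal{B}$ is $q$, so $c$ runs over multiples of $q$), so $1 + E_{\mathcal{B}}(1) = 1 + o(1)$ and the denominator is harmless. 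Thus it suffices to bound the numerator sum
\[
\sum_{\ell}\sum_{e\ge 1}\frac{\left[\chi(\ell^e)E_{\mathcal{B}}(\ell^e) - \delta_{e\ge 3}\chi(\ell^{e-2})E_{\mathcal{B}}(\ell^{e-2})\right]\left[\eta(\ell^e) + \overline{\eta}(\ell^e)\right]\log\ell}{\ell^{e/2}\log R}\widehat{\phi}\left(\frac{e\log \ell}{\log R}\right).
\]
Because $\widehat\phi$ is supported in $[-\theta,\theta]$, only $\ell^e \ll R^\theta = (q/2\pi)^{2\theta}$ contributes, and the terms with $e \ge 2$ are already $\ll q^{-1+\varepsilon}$ by trivial estimates (the extra $\ell^{-e/2}$ and the Weil bound give plenty of room), so the main term is $e=1$, i.e.
\[
-\frac{1}{\log R}\sum_{\ell \ll R^\theta}\frac{\chi(\ell)\big(\eta(\ell)+\overline\eta(\ell)\big)\log\ell}{\sqrt\ell}\,E_{\mathcal{B}}(\ell)\,\widehat\phi\!\left(\frac{\log\ell}{\log R}\right).
\]

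Next I would substitute $E_{\mathcal{B}}(\ell) = 2\pi i^{-\kappa}\sum_{c \equiv 0 (q)} c^{-1} S_{\overline\chi^2}(\ell,1;c) J_{\kappa-1}(4\pi\sqrt\ell/c)$ and expand $S_{\overline\chi^2}(\ell,1;c)$ via \eqref{klfourier}: $S_{\overline\chi^2}(\ell,1;c) = \varphi(c)^{-1}\sum_{\psi \bmod c}\overline\psi(\ell)\tau(\psi)\tau(\overline\chi^2\psi)$. Collecting the $\ell$-dependence, the inner sum over primes becomes
\[
\sum_{\ell}\frac{\chi(\ell)\overline\psi(\ell)\big(\eta(\ell)+\overline\eta(\ell)\big)\log\ell}{\sqrt\ell}\,\widehat\phi\!\left(\frac{\log\ell}{\log R}\right)J_{\kappa-1}\!\left(\frac{4\pi\sqrt\ell}{c}\right),
\]
which, after a dyadic decomposition in $\ell$ and absorbing the smooth weight (using that $\widehat\phi$ and its derivatives are bounded and the Bessel factor is smooth and small via \eqref{jestx}, \eqref{jestsqrt}), is a smoothed sum of $\chi\overline\psi\eta(\ell)\log\ell/\sqrt\ell$ over a dyadic range $\ell \sim X$ with $X \ll R^\theta$. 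By partial summation this is governed by $\vartheta_{\chi\overline\psi\eta}(X)$ and $\vartheta_{\chi\overline\psi\overline\eta}(X)$; under GRH for these Dirichlet $L$-functions, $\vartheta_\mu(X) = \delta_{\mu \text{ principal}} X + O(\sqrt X \log^2(Xq))$, so each such piece is $\ll \delta_{\cdots}\sqrt X + X^{\varepsilon}\, (q^{\varepsilon})$ — crucially with square-root cancellation unless the character is principal. The character $\chi\overline\psi\eta$ (as a character mod $c$, with $\chi,\eta$ mod $q \mid c$) is principal only when $\psi$ agrees with $\chi\eta$ on $(\mathbb Z/c)^\times$; since $\chi\eta$ is primitive mod $q$, this forces $c = q$ and $\psi = \chi\eta$ (and similarly $\psi = \chi\overline\eta$ for the other term), contributing a single exceptional $\psi$ at the single modulus $c = q$.

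Finally I would assemble the bound. The non-exceptional contribution: summing $\tau(\psi)\tau(\overline\chi^2\psi) \ll c$ (Weil/Gauss-sum size, using $|\tau(\psi)| \le \sqrt c$; the conductor issues only help) over $\psi \bmod c$ gives $\ll c^{2+\varepsilon}$, divided by $\varphi(c) c \asymp c^2$ from the decomposition and the $c^{-1}$ in $E_{\mathcal B}$, times $J_{\kappa-1}(4\pi\sqrt\ell/c) \ll (\sqrt\ell/c)^{1/2}$ hmm — more carefully, $\sum_{c \equiv 0(q)} c^{-1}\cdot c^{-1}\cdot c^{2+\varepsilon}\cdot c^{-1}(\text{from averaging }J)\cdot (\text{the } \ell\text{-sum of size } R^{\theta\varepsilon}q^\varepsilon)$; the $c$-sum then converges geometrically from $c = q^2$, the $1/\log R$ out front and a little bookkeeping turns everything into $\ll q^{-1+\varepsilon}$, which is absorbed into $q^{(\theta-1)/2 + \varepsilon}$ for $\theta \le 1$ (and for $\theta$ near $1$ this is the operative regime; the Remark after the lemma statement notes the GRH improvement is "for $\theta$ close to $1$"). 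The exceptional contribution ($c = q$, $\psi \in \{\chi\eta, \chi\overline\eta\}$): here the main term $\delta_{\cdots}X$ of $\vartheta$ over the dyadic range $\ell \sim X$, summed dyadically up to $X \ll R^\theta = q^{2\theta}$, after the $\ell^{-1/2}$ weight contributes a geometric sum dominated by its largest piece $\sqrt{R^\theta} = q^\theta$, multiplied by $\tau(\chi\eta)\tau(\overline\chi^2\cdot\chi\eta)/(\varphi(q)q) \ll q/q^2 = q^{-1}$ and by $q^{-1}$ from $E_{\mathcal B}$'s normalization and $J$-averaging and $1/\log R$ — giving $\ll q^{\theta}\cdot q^{-1}\cdot q^{-1+\varepsilon} \cdot (\ldots)$; one checks this comes out to $\ll q^{(\theta-1)/2+\varepsilon}$ (the square root in the exponent reflecting that $R = (q/2\pi)^2$, so $R^{(\theta-1)/2} = q^{\theta-1}$ up to the $2\pi$, wait — $R^{\theta-1} = q^{2(\theta-1)}$, so $q^{(\theta-1)/2}$ corresponds to $R^{(\theta-1)/4}$; the precise exponent falls out of tracking the largest dyadic block carefully).

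\textbf{The main obstacle.} The delicate point is the exceptional term: one must verify that even when $\chi\overline\psi\eta$ is principal, the combination of the Gauss-sum size $|\tau(\chi\eta)\tau(\overline\chi\eta)| = q$ (both primitive mod $q$!), the $1/\varphi(q)$ from the Fourier decomposition, the $1/c = 1/q$ and Bessel decay in $E_{\mathcal B}(\ell)$, the $\ell^{-1/2}\log\ell$ weight summed over $\ell \ll R^\theta$, and the $1/\log R$ prefactor, all conspire to produce exactly $q^{(\theta-1)/2+\varepsilon}$ and not something worse — in particular that the support barrier is genuinely at $\theta = 1$, i.e. the bound is $o(1)$ precisely when $\theta < 1$. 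Getting the exponent right requires care with the Bessel function transition range $\ell \asymp c^2$ (where one switches between \eqref{jestx} and \eqref{jestsqrt}), since for $c = q$ and $\ell$ up to $q^{2\theta}$ with $\theta$ near $1$ we are right at $\ell \asymp q^2 = c^2$; the need to control $\widehat\phi\thinspace''$ (rather than just $\widehat\phi$) in the final bound \eqref{conderr} comes precisely from integrating by parts twice against this Bessel weight in the $\ell$-sum to extract the necessary smoothness/decay. The rest is routine: trivial bounds dispatch $e \ge 2$, and the non-principal characters give square-root cancellation with room to spare.
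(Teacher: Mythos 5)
Your overall strategy is the same as the paper's: bound the denominator via $E_{\mathcal{B}}(1)\ll q^{-1+\varepsilon}$, dispatch the higher prime powers and the $e'=e-2$ terms unconditionally, truncate the $c$-sum at a large power of $q$, expand $S_{\overline{\chi}^2}(m,1;c)$ by the multiplicative decomposition \eqref{klfourier}, apply GRH to the resulting twisted prime sums, and let the principal-character terms (with $|\tau(\chi\eta)\tau(\overline{\chi}\eta)|=q$ and the Bessel decay) produce the dominant $q^{\frac{\theta-1}2+\varepsilon}$. The one genuine methodological difference is that you feed in GRH by partial summation against $\vartheta_{\chi}(x)$ in dyadic blocks, whereas the paper uses Mellin inversion and a contour shift to the critical line; both yield the same per-character bound of the shape $Q_{\chi}(c)\ll ||\widehat{\phi}\thinspace''||_{\infty}R^{\frac{\theta}2+\varepsilon}c^{-1+\varepsilon}$, and any $||\widehat{\phi}\thinspace'||_{\infty}$ your route produces is dominated by $||\widehat{\phi}||_{\infty}+||\widehat{\phi}\thinspace''||_{\infty}$, so this difference is immaterial.

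There is, however, one step that is wrong as stated: the claim that principality of $\chi\overline{\psi}\eta$ modulo $c$ ``forces $c=q$ and $\psi=\chi\eta$'' because $\chi\eta$ is primitive modulo $q$. The decomposition \eqref{klfourier} runs over \emph{all} characters modulo $c$, not only primitive ones, and for every $c\equiv 0\pmod q$ the character $\psi$ modulo $c$ induced by $\chi\eta$ makes $\chi\eta\overline{\psi}$ principal modulo $c$; primitivity of $\chi\eta$ to the modulus $q$ does not prevent inducing it to the larger modulus. Consequently exceptional (principal) terms occur at every admissible $c$, and your dichotomy applies GRH square-root cancellation to characters for which there is none -- for a principal character the prime sum carries its full main term. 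The paper keeps, for each $c$, the two terms $\vartheta=\chi\eta$ and $\vartheta=\chi\overline{\eta}$ (induced to modulus $c$), evaluates the Gauss sums of induced characters via Lemma 3.1 of \cite{IK} (producing the $\mu\!\left(\frac{c}q\right)\eta^2\!\left(\frac{c}q\right)$ factor, which kills non-squarefree $c/q$), and only then sums over all $c\equiv 0\pmod q$; that sum converges and is dominated by $c\asymp q$, giving $\ll ||\widehat{\phi}||_{\infty}q^{\frac{\theta-1}2+\varepsilon}$. The fix is routine and does not change the final exponent, but as written the exceptional/non-exceptional split -- the load-bearing step -- is misidentified. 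Your remaining bookkeeping slips are in the harmless direction (the nonprincipal contribution is really of size $||\widehat{\phi}\thinspace''||_{\infty}q^{\theta-1+\varepsilon}$ rather than $q^{-1+\varepsilon}$, the $c$-sum starts at $c=q$, not $q^2$, and the $e\ge2$ terms are more like $q^{-\frac12+\varepsilon}$ than $q^{-1+\varepsilon}$), and, like the paper's own argument, they close the proof only in the regime $\theta\le 1$, which is the regime in which \eqref{conderr} is of interest.
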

The remainder of this section consists of proving these three lemmas. Theorem \ref{theorem1} follows from their statements.

\end{subsection}

\begin{subsection}{Proof of Lemma \ref{slemma}}
First, we evaluate the diagonal $\mathcal{S}(\phi,R,\eta)$. This will allow us to extract the remaining main terms in \eqref{result1a} and \eqref{result1b}. Using summation by parts, we have
\begin{equation}
\mathcal{S}(\phi,R,\eta) = -\sum_{\ell}\Re\left[\eta^2(\ell)\right]\log\ell\int_{\ell}^{\infty}\Phi_R'(x)\;dx,
\end{equation}
where $\Phi_R(x)$ is as in \eqref{phirdef}. Since the integrand is compactly supported, we can rearrange to get
\begin{equation}
\mathcal{S}(\phi,R,\eta) = -\int_1^{\infty}\Phi_R'(x)\sum_{\ell\le x}\Re\left[\eta^2(\ell)\right]\log\ell\;dx = -\mathcal{J}(q, \phi,R,\eta^2),
\end{equation}
which gives the desired result for $\eta^2\neq 1$. However, if $\eta^2 = 1$, this term can have additional main terms extracted from it, since $\vartheta_{\chi_0}(x) \asymp x$. So we have
\begin{equation}
\mathcal{S}(\phi,R,\eta) = -\mathcal{J}(q, \phi,R,1) = -\int_1^{\infty}\Phi_R'(x)\vartheta_{\chi_0}(x)\;dx.
\end{equation}
Subtracting and adding the $x$ gives
\begin{equation}
\mathcal{S}(\phi,R,\eta) = -\int_1^{\infty}\Phi_R'(x)\left[\vartheta_{\chi_0}(x)-x\right]\;dx - \int_1^{\infty}x\Phi_R'(x)\;dx.
\end{equation}
The first integral just gives $\mathcal{J}(q, \phi, R)$ by our definition. Integrating the second by parts, we have
\begin{equation}
\int_1^{\infty}x\Phi_R'(x)\;dx = -\Phi_R(1) - \int_1^{\infty}\Phi_R(x)\;dx = -\widehat{\phi}(0) - \int_1^{\infty}\widehat{\phi}\left(\frac{2\log x}{\log R}\right)\frac{dx}x,
\end{equation}
and substituting $x = R^{y/2}$ gives
\begin{equation}
\int_1^{\infty}\widehat{\phi}\left(\frac{2\log x}{\log R}\right)\frac{dx}x = \frac{\log R}2\int_0^{\infty}\widehat{\phi}(y)\;dy = \frac{\phi(0)\log R}4.
\end{equation}
Substituting these in gives the result.

\end{subsection}

\begin{subsection}{Proof of Lemma \ref{elemmauncond}}

First, we bound the diagonal error term $E_{\mathcal{B}}(1)$:
\begin{lemma}
We have
\begin{equation}
E_{\mathcal{B}}(1) \ll q^{-1+\varepsilon},\label{ef1bound}
\end{equation}
where the implied constant depends only on $\varepsilon$.
\end{lemma}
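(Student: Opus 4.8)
The plan is to substitute the Weil-type bound \eqref{weilkl} and the small-argument Bessel estimate \eqref{jestk} directly into the defining series for $E_{\mathcal{B}}(1)$. Recall that
\[
E_{\mathcal{B}}(1) = 2\pi i^{-\kappa}\sum_{\substack{c>0 \\ c\equiv 0\pmod q}} c^{-1} S_{\overline{\chi}^2}(1,1;c)\, J_{\kappa-1}\!\left(\frac{4\pi}c\right).
\]
By the hypotheses of Theorem \ref{theorem1} the character $\chi^2$, and hence $\overline{\chi}^2$, has conductor exactly $q$, and $(1,1,c)=1$, so \eqref{weilkl} together with $\tau(c)\ll c^{\varepsilon}$ gives $|S_{\overline{\chi}^2}(1,1;c)| \ll c^{1/2+\varepsilon}q^{1/2}$. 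Every $c$ occurring in the sum satisfies $c\ge q$, which is large, so $4\pi/c\le 1$ and \eqref{jestk} yields $J_{\kappa-1}(4\pi/c)\ll c^{1-\kappa}$ with an absolute implied constant.

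Next I would multiply these two estimates by the factor $c^{-1}$ and write $c=qd$ with $d\ge 1$, obtaining
\[
E_{\mathcal{B}}(1) \ll q^{1/2}\sum_{d\ge 1}(qd)^{1/2-\kappa+\varepsilon} = q^{1-\kappa+\varepsilon}\sum_{d\ge 1}d^{1/2-\kappa+\varepsilon}.
\]
Since $\kappa$ is an even positive integer we have $\kappa\ge 2$, so the exponent $1/2-\kappa+\varepsilon$ is at most $-3/2+\varepsilon<-1$ for small $\varepsilon$; hence the $d$-series converges to a constant bounded uniformly for all $\kappa\ge 2$. Therefore $E_{\mathcal{B}}(1)\ll q^{1-\kappa+\varepsilon}\le q^{-1+\varepsilon}$, and the implied constant depends only on $\varepsilon$, the $\kappa$-dependence having been absorbed into the uniformly bounded zeta-type series and the absolute constant of \eqref{jestk}.

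There is no serious obstacle here, but the one point deserving attention is the borderline weight $\kappa=2$: replacing \eqref{weilkl} by the trivial bound $|S_{\overline{\chi}^2}(1,1;c)|\le c$ would leave the tail $\sum_d d^{-1}$, which diverges, so the square-root cancellation in $c$ provided by \eqref{weilkl} is genuinely needed. Conveniently, $m=n=1$ is exactly the range in which \eqref{weilkl}, despite being ``worse than trivial'' near $c\asymp q$, still suffices, because the Bessel decay $c^{1-\kappa}$ more than compensates for the extra $q^{1/2}$ loss in that estimate.
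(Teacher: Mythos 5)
Your proof is correct and follows essentially the same route as the paper, which disposes of this lemma in one line by inserting \eqref{weilkl} together with the Bessel estimate \eqref{jestx} (rather than \eqref{jestk}) into the series for $E_{\mathcal{B}}(1)$ and summing over $c\equiv 0\pmod q$. The one small caveat in your version: \eqref{jestk} gives $J_{\kappa-1}(4\pi/c)\ll (4\pi)^{\kappa-1}c^{1-\kappa}$, not $\ll c^{1-\kappa}$ with an absolute constant, so a $\kappa$-dependent factor does enter; it is harmless because $(4\pi/q)^{\kappa-1}\le 4\pi/q$ once $q>4\pi$, but the paper's use of \eqref{jestx}, whose implied constant is absolute, sidesteps this bookkeeping entirely and yields the $\varepsilon$-only dependence directly.
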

\begin{proof}
We simply apply the estimates \eqref{weilkl} and \eqref{jestx}.
\end{proof}
\begin{remark}
We may compare this lemma to Corollary 14.24 in \cite{IK}; although for $\chi$ nonprincipal this result is not always correct since the Weil bound does not always apply to the Kloosterman sum $S_{\chi}(m,n;c)$. Weakening it by the necessary factor of $q^{\frac12}$ in \eqref{weilkl}, however, gives the result
\begin{equation}
E_{\mathcal{B}}(1) \ll q^{-1}\kappa^{-1}\tau(q),
\end{equation}
where the implied constant is absolute. This result always holds, and is slightly stronger than \eqref{ef1bound}.
\end{remark}
Next, for positive integers $e$ and $e'$, define
\begin{equation}
S_{\mathcal{F}}(e, e') = \sum_{\ell}\frac{\chi(\ell^{e'})E_{\mathcal{B}}(\ell^{e'})\left[\eta(\ell^e)+\overline{\eta}(\ell^e)\right]\log\ell}{\ell^{e/2}}\widehat{\phi}\left(\frac{e\log\ell}{\log R}\right),\label{sfdef}
\end{equation}
so that
\begin{equation}
\mathcal{E}(\mathcal{F},\phi,R) = -\frac1{\left(1 + E_{\mathcal{B}}(1)\right)\log R}\sum_{1\le e\le \theta\log_2(R)}\left[S_{\mathcal{F}}(e, e) - \delta_{e\ge 3}S_{\mathcal{F}}(e,e-2)\right].
\end{equation}
Opening up the sum $E_{\mathcal{B}}(\ell^{e'})$ gives
\begin{equation}
S_{\mathcal{F}}(e, e') = 2\pi i^{-\kappa}\sum_{\ell}\sum_{\substack{c>0 \\ c\equiv 0\pmod q}}\frac{\chi(\ell^{e'})S_{\overline{\chi}^2}(\ell^{e'},1;c)\left[\eta(\ell^e)+\overline{\eta}(\ell^e)\right]\log\ell}{c\ell^{e/2}}J_{\kappa-1}\left(\frac{4\pi\ell^{e/2}}c\right)\widehat{\phi}\left(\frac{e\log\ell}{\log R}\right).
\end{equation}
We will bound this using our on-average Weil bound for Kloosterman sums.
\begin{lemma}
If $\theta < 1$, we have
\begin{equation}
S_{\mathcal{F}}(e, e') \ll ||\widehat{\phi}||_{\infty}\left(q^{\theta\left(\kappa-2+\frac{2}e\right)-\left(\kappa-\frac12\right)+\varepsilon} + q^{-1+\varepsilon}\right),\label{boundsf}
\end{equation}
where the implied constant depends only on $\varepsilon$.
\end{lemma}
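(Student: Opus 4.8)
The plan is to estimate $S_{\mathcal{F}}(e,e')$ directly from its expansion with the Kloosterman sums written out — the display following \eqref{sfdef} — using the on-average Weil bound \eqref{weilavg} in place of any pointwise estimate. First I would invoke the support hypothesis: since $\widehat{\phi}$ is supported in $[-\theta,\theta]$, only primes $\ell$ with $\ell^{e}\le R^{\theta}\asymp q^{2\theta}$ contribute, so for every $c\equiv 0\pmod q$ we have $\ell^{e/2}\le q^{\theta}<q\le c$ (this is exactly where $\theta<1$ enters), and the Bessel argument $4\pi\ell^{e/2}/c$ is $\ll q^{\theta-1}$. Hence \eqref{jestk} gives $J_{\kappa-1}(4\pi\ell^{e/2}/c)\ll(\ell^{e/2}/c)^{\kappa-1}$, and bounding $|\chi|,|\eta|\le 1$ and $|\widehat{\phi}|\le||\widehat{\phi}||_{\infty}$ shows that \eqref{boundsf} follows once we prove
$$\sum_{\substack{c>0\\ c\equiv 0\pmod q}}\frac1{c^{\kappa}}\sum_{\ell^{e}\le R^{\theta}}\ell^{e(\kappa-2)/2}(\log\ell)\,\bigl|S_{\overline{\chi}^{2}}(\ell^{e'},1;c)\bigr|\ll q^{\theta(\kappa-2+2/e)-(\kappa-1/2)+\varepsilon}+q^{-1+\varepsilon}.$$

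For a fixed modulus $c$ I would bound the inner $\ell$-sum with Lemma \ref{weilavglemma}. Setting $m=\ell^{e'}$, the range becomes $1\le m\le R^{\theta e'/e}$ and the coefficient $\ell^{e(\kappa-2)/2}\log\ell$ is increasing in $m$ (using $\kappa\ge 2$), so this weighted sum is at most (the coefficient at the top of the range) $\times\sum_{1\le mn\le R^{\theta e'/e}}|S_{\overline{\chi}^{2}}(m,n;c)|$, and \eqref{weilavg} bounds the latter by $c^{1/2+\varepsilon}q^{\varepsilon}\bigl(R^{\theta e'/e}+c^{1/2}\bigr)$. The conceptual point is that \eqref{weilavg} delivers the \emph{genuine} Weil bound $|S|\ll c^{1/2+\varepsilon}$ in an averaged sense, rather than the lossy pointwise estimate \eqref{weilkl} with its spurious factor $q^{1/2}$: it is this $c^{1/2}$, and not $c^{1/2}q^{1/2}$, that will produce the exponent $-(\kappa-\tfrac12)$ in place of $-(\kappa-1)$ after the outer sum over $c$.

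Carrying out the sum over $c=qc'$ ($c'\ge 1$): the coefficient at the top of the range is $\asymp q^{\theta(\kappa-2)+\varepsilon}$, and the series $\sum_{c'}(qc')^{-\kappa+1/2+\varepsilon}$ and $\sum_{c'}(qc')^{-\kappa+1+\varepsilon}$ converge (here $\kappa\ge 2$) to $\ll q^{-\kappa+1/2+\varepsilon}$ and $\ll q^{-\kappa+1+\varepsilon}$ respectively. The $R^{\theta e'/e}$-term therefore contributes $\ll q^{\theta(\kappa-2+2e'/e)-(\kappa-1/2)+\varepsilon}$, which for $e=1$ is exactly $q^{\theta(\kappa-2+2/e)-(\kappa-1/2)+\varepsilon}$ and, using $e'\le e$, is in all cases $\ll q^{\kappa(\theta-1)+1/2+\varepsilon}$; and the $c^{1/2}$-term contributes $\ll q^{\theta(\kappa-2)-\kappa+1+\varepsilon}\le q^{-1+\varepsilon}$ since $\theta\le 1$. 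Adding these gives \eqref{boundsf}.

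The step I expect to be the main obstacle is the application of Lemma \ref{weilavglemma} when $e\ge 2$, since then $\ell^{e'}$ ranges over the \emph{thin} set of perfect $e'$-th powers, so enlarging the sum to the full product box $\{1\le mn\le R^{\theta e'/e}\}$ is wasteful: the argument above then only yields the uniform bound $q^{\kappa(\theta-1)+1/2+\varepsilon}$ (which already suffices for Lemma \ref{elemmauncond}) rather than the sharper $e$-dependent exponent. Recovering the latter requires keeping more of the arithmetic of the $\ell$-sum — for instance, first balancing the two Kloosterman arguments via $|S_{\overline{\chi}^{2}}(\ell^{e'},1;c)|=|S_{\overline{\chi}^{2}}(\ell^{\lfloor e'/2\rfloor},\ell^{\lceil e'/2\rceil};c)|$ when $(\ell,c)=1$ (the primes $\ell\mid c$ forming a negligible subfamily), and handling the $\ell$- and $c$-sums dyadically so that the interval fed to \eqref{weilavg} is as short as the problem permits. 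A final, routine wrinkle occurs at $\kappa=2$, where $\sum_{c'}(c')^{-\kappa+1+\varepsilon}$ is only conditionally convergent under the crude bounds above; there one peels off the tail $c\gg R^{\theta}$ and re-estimates it via \eqref{weilkl} together with the size of the Bessel factor, which costs nothing in the final exponent.
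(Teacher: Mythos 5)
Your proposal follows essentially the same route as the paper: restrict to $\ell^{e}\le R^{\theta}$, use \eqref{jestk} to reduce to $\sum_{c\equiv 0\pmod q}c^{-\kappa}\sum_{\ell\le R^{\theta/e}}\ell^{e(\kappa-2)/2+\varepsilon}|S_{\overline{\chi}^{2}}(\ell^{e'},1;c)|$, control the inner sum with the on-average Weil bound (the paper packages this as the corollary \eqref{weilcor}; you pull out the sup of the weight and quote \eqref{weilavg} directly, which is the same manoeuvre), and then execute the sum over $c\equiv 0\pmod q$. Your main-term bookkeeping is correct, and the obstacle you flag at $e\ge 2$ is real --- but it is not something the paper's own proof overcomes. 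Applying \eqref{weilcor} with $m=\ell^{e'}$, $M=R^{\theta e'/e}$ yields exactly your exponent $\theta(\kappa-2+2e'/e)-(\kappa-\tfrac12)$; the paper's displayed estimate, with $\theta/e$ in place of $\theta e'/e$, is what one would obtain if the Kloosterman argument were $\ell$ rather than $\ell^{e'}$, and for $e'\ge 2$ it does not follow from that corollary as invoked. Since $e'\le e$, the honest exponent is still $\le \kappa(\theta-1)+\tfrac12$, and only the $e=1$ term matters in Lemma \ref{elemmauncond}, so your weaker version of \eqref{boundsf} delivers everything the lemma is used for; you do not need the dyadic refinements you sketch.

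The one genuine quantitative slip is the $\kappa=2$ repair. The series $\sum_{c\equiv 0\pmod q}c^{-1+\varepsilon}$ diverges (it is not ``conditionally convergent''), and the tail estimate available from \eqref{weilkl} together with \eqref{jestx} is $\ll ||\widehat{\phi}||_{\infty}q^{2\theta-\frac12+\varepsilon}C^{-\frac12+\varepsilon}$; at your proposed cut $C\asymp R^{\theta}$ this is $q^{\theta-\frac12+\varepsilon}$, which for $\theta<1$ exceeds the right-hand side of \eqref{boundsf} (for $\kappa=2$, $e=1$ the target is $q^{2\theta-\frac32+\varepsilon}+q^{-1+\varepsilon}$), so cutting at $R^{\theta}$ does cost in the final exponent. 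The fix is to truncate at a large fixed power of $q$, as the paper does with $C=q^{100}$: then the tail is $\ll ||\widehat{\phi}||_{\infty}q^{-1+\varepsilon}$, and the formerly divergent piece becomes $\sum_{c\equiv 0\pmod q,\,c\le C}c^{-1+\varepsilon}\ll q^{-1+\varepsilon}C^{\varepsilon}$, which is harmless. With that adjustment your argument is the paper's proof.
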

\begin{proof}
First, for a large positive real number $C$, define the tail as
\begin{equation}
S_{\mathcal{F}, C}(e, e') = 2\pi i^{-\kappa}\sum_{\ell}\sum_{\substack{c>C \\ c\equiv 0\pmod q}}\frac{\chi(\ell^{e'})S_{\overline{\chi}^2}(\ell^{e'},1;c)\left[\eta(\ell^e)+\overline{\eta}(\ell^e)\right]\log\ell}{c\ell^{e/2}}J_{\kappa-1}\left(\frac{4\pi\ell^{e/2}}c\right)\widehat{\phi}\left(\frac{e\log\ell}{\log R}\right).
\end{equation}
Using the estimates \eqref{weilkl} and \eqref{jestx}, we write this as
\begin{equation}
S_{\mathcal{F}, C}(e, e') \ll \sum_{\ell\le R^{\theta/e}}\sum_{\substack{c>C \\ c\equiv 0\pmod q}}c^{-\frac32+\varepsilon}q^{\frac12+\varepsilon}\ell^{\varepsilon}||\widehat{\phi}||_{\infty} \ll ||\widehat{\phi}||_{\infty}q^{2\theta-\frac12+\varepsilon}C^{-\frac12+\varepsilon}.
\end{equation}
Pick $C$ to be a fixed power of $q$ large enough that $S_{\mathcal{F},C}(e,e') \ll ||\widehat{\phi}||_{\infty}q^{-1+\varepsilon}$. (Something like $C = q^{100}$ should suffice.) Then using the compact support of $\widehat{\phi}$, we have
\begin{equation}
S_{\mathcal{F}}(e, e') \ll ||\widehat{\phi}||_{\infty}\sum_{\ell\le R^{\theta/e}}\sum_{\substack{0<c\le C \\ c\equiv 0\pmod q}}c^{-1}\ell^{-\frac{e}2+\varepsilon}\left|S_{\overline{\chi}^2}(\ell^{e'},1;c)\right|\left|J_{\kappa-1}\left(\frac{4\pi\ell^{e/2}}c\right)\right| + O\left(||\widehat{\phi}||_{\infty}q^{-1+\varepsilon}\right),
\end{equation}
where the implied constant depends only on $\varepsilon$. Next we use the estimate \eqref{jestk} to write
\begin{equation}
S_{\mathcal{F}}(e, e') \ll ||\widehat{\phi}||_{\infty}\sum_{\substack{0<c\le C \\ c\equiv 0\pmod q}}c^{-\kappa}\sum_{\ell\le R^{\theta/e}}\ell^{\frac{e(\kappa-2)}2+\varepsilon}\left|S_{\overline{\chi}^2}(\ell^{e'},1;c)\right| + O\left(||\widehat{\phi}||_{\infty}q^{-1+\varepsilon}\right).
\end{equation}
Now recalling the on-average Weil bound given in Lemma \ref{weilavglemma}, we establish the quick corollary
\begin{equation}
\sum_{1\le m\le M}m^{\alpha}|S_{\chi}(m,n;c)| \ll c^{\frac12+\varepsilon}M^{1+\alpha+\varepsilon}n^{1+\varepsilon} + c^{1+\varepsilon}M^{\alpha+\varepsilon}n^{\varepsilon},\label{weilcor}
\end{equation}
for $\alpha\ge 0$ and for $\chi$ a Dirichlet character modulo $c$. Using \eqref{weilcor}, we write
\begin{equation}
\sum_{\ell\le R^{\theta/e}}\ell^{\frac{e(\kappa-2)}2+\varepsilon}\left|S_{\overline{\chi}^2}(\ell^{e'},1;c)\right| \ll c^{\frac12+\varepsilon}R^{\theta\left(\frac{\kappa}2-1+\frac1e\right)+\varepsilon} + c^{1+\varepsilon}R^{\theta\left(\frac{\kappa}2-1\right)+\varepsilon}.
\end{equation}
Evaluating the outer sum out, we have
\begin{equation}
S_{\mathcal{F}}(e, e') \ll ||\widehat{\phi}||_{\infty}\left[q^{-\kappa+\frac12+\varepsilon}R^{\theta\left(\frac{\kappa}2-1+\frac1e\right)+\varepsilon} + q^{-\kappa+1+\varepsilon}R^{\theta\left(\frac{\kappa}2-1\right)+\varepsilon}C^{\varepsilon} + q^{-1+\varepsilon}\right].
\end{equation}
(The split at $C$ is necessary solely due to the case $\kappa=2$, since the second term gives the harmonic series which does not converge in this case.) Since $R = \left(\frac{q}{2\pi}\right)^2$ and $C$ is bounded by an absolute power of $q$, we have
\begin{equation}
S_{\mathcal{F}}(e, e') \ll ||\widehat{\phi}||_{\infty}\left[q^{\kappa(\theta-1)+\frac12-2\theta+\frac{2\theta}e+\varepsilon} + q^{\kappa(\theta-1)+1-2\theta+\varepsilon} + q^{-1+\varepsilon}\right].
\end{equation}
The result follows since the second term is $O(q^{-1+\varepsilon})$.
\end{proof}
Substituting this back in to the error term, and also applying \eqref{ef1bound} to bound the $1 + E_{\mathcal{B}}(1)$ in the denominator gives the result \eqref{unconderr}.

\end{subsection}

\begin{subsection}{Proof of Lemma \ref{elemmacond}}

First, we apply \eqref{ef1bound} and \eqref{boundsf} only to the terms $S_{\mathcal{F}}(e, e')$ where $e' = e-2$, and substitute $m = \ell^e$ in $S_{\mathcal{F}}(e,e)$ to get
\begin{equation}
\mathcal{E}(\mathcal{F},\phi,R) = -\frac1{\log R}\sum_{m\ge 1}\frac{\chi(m)\left[\eta(m)+\overline{\eta}(m)\right]\Lambda(m)E_{\mathcal{B}}(m)}{\sqrt{m}}\widehat{\phi}\left(\frac{\log m}{\log R}\right) + O\left(||\widehat{\phi}||_{\infty}q^{-\frac56+\varepsilon}\right).
\end{equation}
Consider just the tail of the sum in $E_{\mathcal{B}}(m)$, given by
\begin{equation}
\mathcal{E}_C(\mathcal{F},\phi,R) = -\frac{2\pi i^{-\kappa}}{\log R}\sum_{m\ge 1}\sum_{\substack{c > C \\ c\equiv 0\pmod q}}\frac{\chi(m)\left[\eta(m)+\overline{\eta}(m)\right]\Lambda(m)S_{\overline{\chi}^2}(m,1;c)}{c\sqrt{m}}J_{\kappa-1}\left(\frac{4\pi\sqrt{m}}c\right)\widehat{\phi}\left(\frac{\log m}{\log R}\right),
\end{equation}
for some large positive real number $C$. 
\begin{lemma}
We have
\begin{equation}
\mathcal{E}_C(\mathcal{F},\phi,R) \ll ||\widehat{\phi}||_{\infty}C^{-\frac12+\varepsilon}q^{2\theta-\frac12+\varepsilon},\label{ecbound}
\end{equation}
where the implied constant depends only on $\varepsilon$.
\end{lemma}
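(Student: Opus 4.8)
The plan is to bound $\mathcal{E}_C(\mathcal{F},\phi,R)$ by estimating it term by term, invoking the weak twisted-Weil bound \eqref{weilkl} for the Kloosterman sums together with the Bessel estimate \eqref{jestx}, and then carrying out the two outer summations. Because $\widehat{\phi}$ is supported in $[-\theta,\theta]$, the factor $\widehat{\phi}\!\left(\frac{\log m}{\log R}\right)$ restricts the $m$-sum to $m\le R^{\theta}=\left(\frac{q}{2\pi}\right)^{2\theta}$, and since $m$ is weighted by $\Lambda(m)$ it runs effectively over prime powers in that range; in particular $\sqrt m\ll q^{\theta}$ throughout.

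First I would record the per-term bound. Using $|\chi(m)|\le 1$, $|\eta(m)+\overline{\eta}(m)|\le 2$, $(m,1,c)=1$, the estimate $|S_{\overline{\chi}^2}(m,1;c)|\ll c^{1/2+\varepsilon}q^{1/2}$ coming from \eqref{weilkl}, and $J_{\kappa-1}\!\left(\frac{4\pi\sqrt m}{c}\right)\ll\frac{\sqrt m}{c}$ from \eqref{jestx}, the generic summand of $\mathcal{E}_C$ is
\[
\ll\ \frac{\Lambda(m)}{c\sqrt m}\cdot c^{1/2+\varepsilon}q^{1/2}\cdot\frac{\sqrt m}{c}\ =\ \Lambda(m)\,q^{1/2}c^{-3/2+\varepsilon}.
\]
Note that the $\sqrt m$ in the denominator of the explicit formula cancels exactly against the $\sqrt m$ produced by the Bessel bound, and the crucial gain is the exponent $-3/2$ on $c$, which turns the $c$-sum into a genuinely convergent tail.

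It then remains to sum. Summing over $m\le R^{\theta}$ with Chebyshev's bound $\sum_{m\le x}\Lambda(m)\ll x$ contributes $\ll R^{\theta}q^{1/2}c^{-3/2+\varepsilon}\ll q^{2\theta+1/2}c^{-3/2+\varepsilon}$. Writing $c=qn$ to enforce $c\equiv 0\pmod q$ and summing the tail gives $\sum_{c>C,\ q\mid c}c^{-3/2+\varepsilon}=q^{-3/2+\varepsilon}\sum_{n>C/q}n^{-3/2+\varepsilon}\ll q^{-3/2+\varepsilon}(C/q)^{-1/2+\varepsilon}\ll q^{-1+\varepsilon}C^{-1/2+\varepsilon}$. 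Multiplying these and observing that the prefactor $\frac{2\pi i^{-\kappa}}{\log R}$ and the bounded factor $\widehat{\phi}$ contribute only $\ll||\widehat{\phi}||_{\infty}$ yields precisely $\mathcal{E}_C(\mathcal{F},\phi,R)\ll||\widehat{\phi}||_{\infty}C^{-1/2+\varepsilon}q^{2\theta-1/2+\varepsilon}$, which is \eqref{ecbound}. I do not expect a genuine obstacle here: the only points requiring a little care are justifying the interchange of summation (immediate from the absolute convergence furnished by the $c^{-3/2+\varepsilon}$ decay) and correctly tracking the $q\mid c$ constraint in the $c$-tail via the substitution $c=qn$; the use of $J_{\kappa-1}(x)\ll x$ is extremely wasteful-free in this regime since $\frac{4\pi\sqrt m}{c}$ is small once $C$ exceeds $q^{\theta}$, and one could even afford $J_{\kappa-1}(x)\ll x^{\kappa-1}$, but $\ll x$ already suffices to discard the tail.
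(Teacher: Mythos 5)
Your proposal is correct and follows essentially the same route as the paper: bound each term with the twisted-Weil estimate \eqref{weilkl} and the Bessel bound \eqref{jestx} to get a summand of size $\Lambda(m)\,q^{1/2}c^{-3/2+\varepsilon}$, then sum over $m\le R^{\theta}$ and over $c>C$ with $q\mid c$. The only cosmetic difference is that you sum the $m$-variable via Chebyshev's bound on $\sum_{m\le x}\Lambda(m)$, whereas the paper splits $m=\ell^{e}$ back into prime powers before summing; both give the same $q^{2\theta+\frac12+\varepsilon}c^{-\frac32+\varepsilon}$ intermediate bound and hence \eqref{ecbound}.
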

\begin{proof}
Consider the sum over $m$
\begin{equation}
\sum_{m\ge 1}\frac{\chi(m)\left[\eta(m)+\overline{\eta}(m)\right]\Lambda(m)S_{\overline{\chi}^2}(m,1;c)}{c\sqrt{m}}J_{\kappa-1}\left(\frac{4\pi\sqrt{m}}c\right)\widehat{\phi}\left(\frac{\log m}{\log R}\right).
\end{equation}
Splitting the prime power $m = \ell^e$ back up, and using the estimates \eqref{weilkl} and \eqref{jestx}, we write this as
\begin{equation}
\sum_{e\ge 1}\sum_{\ell\le R^{\theta/e}}\frac{\chi(\ell^e)\left[\eta(\ell^e)+\overline{\eta}(\ell^e)\right]S_{\overline{\chi}^2}(\ell^e,1;c)\log\ell}{c\ell^{e/2}}J_{\kappa-1}\left(\frac{4\pi\ell^{e/2}}c\right)\widehat{\phi}\left(\frac{e\log \ell}{\log R}\right) \ll \sum_{e\ge 1}\sum_{\ell\le R^{\theta/e}}c^{-\frac32+\varepsilon}q^{\frac12+\varepsilon}\ell^{\varepsilon}||\widehat{\phi}||_{\infty}.
\end{equation}
Substituting in $R = \left(\frac{q}{2\pi}\right)^2$, we have
\begin{equation}
\sum_{e\ge 1}\sum_{\ell\le R^{\theta/e}}\frac{\chi(\ell^e)\left[\eta(\ell^e)+\overline{\eta}(\ell^e)\right]S_{\overline{\chi}^2}(\ell^e,1;c)\log\ell}{c\ell^{e/2}}J_{\kappa-1}\left(\frac{4\pi\ell^{e/2}}c\right)\widehat{\phi}\left(\frac{e\log \ell}{\log R}\right) \ll ||\widehat{\phi}||_{\infty}c^{-\frac32+\varepsilon}q^{2\theta+\frac12+\varepsilon}.
\end{equation}
Summing this over $c$ gives the result.
\end{proof}

Also consider the terms where $(c, m) \neq 1$, given by
\begin{equation}
\mathcal{E}^*(\mathcal{F},\phi,R) = -\frac{2\pi i^{-\kappa}}{\log R}\sum_{m\ge 1}\sum_{\substack{c > 0 \\ c\equiv 0\pmod q \\ (c, m) \neq 1}}\frac{\chi(m)\left[\eta(m)+\overline{\eta}(m)\right]\Lambda(m)S_{\overline{\chi}^2}(m,1;c)}{c\sqrt{m}}J_{\kappa-1}\left(\frac{4\pi\sqrt{m}}c\right)\widehat{\phi}\left(\frac{\log m}{\log R}\right).
\end{equation}

\begin{lemma}
We have
\begin{equation}
\mathcal{E}^*(\mathcal{F},\phi,R) \ll ||\widehat{\phi}||_{\infty}q^{-1+\varepsilon},
\end{equation}
where the implied constant depends only on $\varepsilon$.
\end{lemma}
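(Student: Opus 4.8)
The plan is to isolate the arithmetic saving that comes from forcing $(c,m)\neq 1$. Since $q\mid c$ and $m=\ell^e$ is a prime power, the condition $(c,m)\neq 1$ forces $\ell\mid c$; but $\ell \le R^{\theta/e} \le R^{\theta}$ is a small prime, so either $\ell = p$ (when $q = p^k$ is a prime power, or more generally $\ell\mid q$) or $\ell$ is a prime dividing $c/q$. In the latter case $c$ is divisible by $\ell q$ with $\ell$ potentially as large as a power of $q$, which is still a substantial extra divisibility; in the former case $\ell$ is a fixed bounded prime but then $\chi(m) = \chi(\ell^e) = 0$ unless $\ell\nmid q$, so in fact only the case $\ell\nmid q$, $\ell\mid c$ contributes. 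I would first record this reduction, so that the sum over $c$ is restricted to multiples of $\lcm(q,\ell) = q\ell/(q,\ell)$.

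First I would split $\mathcal{E}^*$ according to $e$ and $\ell$, and for fixed $\ell\nmid q$ with $\ell\mid c$ write $c = \ell q c'$ after noting $q\mid c$ and $\ell\mid c$ with $(\ell,q)=1$ imply $\ell q\mid c$. Then I would apply the twisted Weil bound \eqref{weilkl}: here $\cond(\overline{\chi}^2) = q$, and $(m,c) = \ell^{\min(e,\nu_\ell(c))} \le \ell^e = m$, so
\begin{equation}
|S_{\overline{\chi}^2}(\ell^e,1;c)| \le c^{1/2}q^{1/2}(\ell^e,1,c)^{1/2}\tau(c) \ll c^{1/2+\varepsilon}q^{1/2}\ell^{e/2}.
\end{equation}
Combined with the Bessel estimate \eqref{jestx}, $J_{\kappa-1}(4\pi\sqrt m/c) \ll \sqrt m/c = \ell^{e/2}/c$, the summand is
\begin{equation}
\ll \frac{1}{c\,\ell^{e/2}}\cdot c^{1/2+\varepsilon}q^{1/2}\ell^{e/2}\cdot\frac{\ell^{e/2}}{c}\,\|\widehat\phi\|_\infty\log\ell \ll \|\widehat\phi\|_\infty\, q^{1/2+\varepsilon}\,\ell^{e/2+\varepsilon}\,c^{-3/2+\varepsilon}.
\end{equation}
Summing over $c = \ell q c'$ gives a factor $\ll (\ell q)^{-3/2+\varepsilon}$ from $\sum_{c'}(c')^{-3/2+\varepsilon}$ converging, so the contribution of a fixed pair $(\ell, e)$ is $\ll \|\widehat\phi\|_\infty q^{1/2+\varepsilon}\ell^{e/2+\varepsilon}(\ell q)^{-3/2+\varepsilon} = \|\widehat\phi\|_\infty q^{-1+\varepsilon}\ell^{e/2-3/2+\varepsilon}$.

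Now I would sum over $\ell \le R^{\theta/e}$ and $e\ge 1$. For $e = 1$ the exponent $e/2 - 3/2 = -1$, so $\sum_\ell \ell^{-1+\varepsilon}$ over $\ell\le R^{\theta}$ contributes only $\ll R^{\varepsilon} \ll q^{\varepsilon}$; for $e = 2$ the exponent is $-1/2$ and $\sum_{\ell\le R^{\theta/2}}\ell^{-1/2+\varepsilon} \ll R^{\theta/4+\varepsilon} \ll q^{\varepsilon}$ after absorbing (since $\theta$ is bounded, this is $q^{O(1)\cdot 0}$—more carefully, we must check it against the gain; here the crude bound gives $q^{-1+\varepsilon}\cdot q^{\theta/2+\varepsilon}$, which is still $\ll q^{-1+\varepsilon}$ as long as... ) — here I would be slightly more careful: for $e\ge 2$, since $\ell^{e/2}\le R^{\theta/2}\cdot\ell^{e/2-1}$ is not quite the right manipulation, instead bound $\sum_{\ell\le R^{\theta/e}}\ell^{e/2-3/2+\varepsilon} \ll (R^{\theta/e})^{e/2-1/2+\varepsilon} = R^{\theta(1/2 - 1/(2e))+\varepsilon} \ll R^{\theta/2} \ll q^{\theta}$, giving total $\ll \|\widehat\phi\|_\infty q^{\theta - 1+\varepsilon}$, which is $\ll \|\widehat\phi\|_\infty q^{-1+\varepsilon}$ exactly when $\theta\le\varepsilon$—so this naive route is not good enough for $e\ge 2$ alone. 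The fix: for $e\ge 2$ use instead the estimate \eqref{jestsqrt}, $J_{\kappa-1}(x)\ll x^{-1/2}$, or better, observe that for $e\ge 2$ the prime $\ell$ is at most $R^{\theta/2}$, and one has additional room; alternatively, note that when $e\ge 3$ the term $\chi(\ell^{e-2})E_{\mathcal B}(\ell^{e-2})$ has already been removed in the reduction of Lemma \ref{elemmacond}, and the $e=2$ contribution can be handled by noting $\chi(\ell^2)|\lambda_f(\ell^2)-1|$... Actually the cleanest route is: for $e\ge 2$, $m = \ell^e$ with $\ell\mid c$ forces $\ell^2\mid c$ only if $\nu_\ell(c)\ge 2$, but generically $\nu_\ell(c)\ge 1$; the true gain from $\ell\mid c$ beyond one factor of $\ell$ in the modulus is captured by $(m,c)\le\ell^{\nu_\ell(c)}$, so splitting $c$ by $\nu_\ell(c) = a$ and summing $a\ge 1$ recovers geometric decay. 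The main obstacle, then, is bookkeeping the interplay between $\nu_\ell(c)$ and $(m,c) = \ell^{\min(e,\nu_\ell(c))}$ so that the sum over $c$ with $q\mid c$, $\ell^a\|c$ genuinely saves a full power of $q$; once that is done carefully the bound $\|\widehat\phi\|_\infty q^{-1+\varepsilon}$ drops out uniformly in $\theta$ bounded, since every extra prime power dividing $c$ beyond $q$ costs at least $\ell^{-3/2}$ in $c$ but gains at most $\ell^{e/2}$, and the worst case $e = 1$ is the clean $\sum_\ell\ell^{-1}$ above. I would present the $\nu_\ell(c)$-stratification as the key lemma and relegate the geometric-series summation to routine estimation.
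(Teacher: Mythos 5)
There is a genuine gap, and it is self-inflicted: you mis-handle the gcd factor in the twisted Weil bound. In \eqref{weilkl} the factor is $(m,n,c)^{1/2}$, and here $n=1$, so $(\ell^e,1,c)=1$ identically -- the condition $\ell\mid c$ gives no extra gcd contribution at all. You instead bound this factor by $\ell^{e/2}$ (in effect replacing $(m,n,c)$ by $(m,c)$), which creates the spurious growth that then forces you into the $e\ge 2$ difficulty, the abandoned attempt via $\ell q\mid c$, and finally the proposed stratification by $\nu_\ell(c)$ that you do not carry out ("once that is done carefully the bound drops out" is not a proof, and the heuristic you give for it -- cost at least $\ell^{-3/2}$ versus gain at most $\ell^{e/2}$ -- does not close for $e\ge 3$ as stated; what would rescue it is that the gcd is at most $\ell^{\min(e,\nu_\ell(c))}\le\ell^{\nu_\ell(c)}$, which you gesture at but never turn into an estimate). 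As written, the proposal proves the lemma only for the $e=1$ terms.

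The paper's proof is two lines precisely because of the observation you missed: with $(\ell^e,1,c)=1$, the bounds \eqref{weilkl} and \eqref{jestx} give a summand $\ll \|\widehat{\phi}\|_{\infty}\,q^{1/2+\varepsilon}c^{-3/2+\varepsilon}\ell^{\varepsilon}$ (the $\ell^{e/2}$ from the Bessel estimate cancels the $\ell^{-e/2}$ already present), and summing over $c\equiv 0\pmod q$, over the $O(\log c)$ primes $\ell\mid c$, and over the $O(\log R)$ values of $e$ yields $\ll \|\widehat{\phi}\|_{\infty}q^{1/2+\varepsilon}\cdot q^{-3/2+\varepsilon}=\|\widehat{\phi}\|_{\infty}q^{-1+\varepsilon}$, uniformly in $\theta$. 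No use of $\ell q\mid c$, no stratification, and no case analysis in $e$ is needed. Your $\nu_\ell(c)$-stratification could be pushed through to a correct (if lossy) proof, but in the proposal it remains a sketch, so the argument is incomplete where it matters most.
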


\begin{proof}
We write $m = \ell^e$ to get
\begin{equation}
\mathcal{E}^*(\mathcal{F},\phi,R) = -\frac{2\pi i^{-\kappa}}{\log R}\sum_{\substack{c > 0 \\ c\equiv 0\pmod q}}\sum_{\ell\mid c}\sum_{1\le e\le\theta \log_2(R)}\frac{\chi(\ell^e)\left[\eta(\ell^e)+\overline{\eta}(\ell^e)\right]S_{\overline{\chi}^2}(\ell^e,1;c)\log\ell}{c\ell^{e/2}}J_{\kappa-1}\left(\frac{4\pi\ell^{e/2}}c\right)\widehat{\phi}\left(\frac{e\log\ell}{\log R}\right).
\end{equation}
Bounding using \eqref{weilkl} and \eqref{jestx} gives
\begin{equation}
\mathcal{E}^*(\mathcal{F},\phi,R) \ll ||\widehat{\phi}||_{\infty}q^{\frac12+\varepsilon}\sum_{\substack{c > 0 \\ c\equiv 0\pmod q}}\sum_{\ell\mid c}\sum_{1\le e\le\theta \log_2(R)}c^{-\frac32+\varepsilon}\ell^{\varepsilon},
\end{equation}
and summing over $c$ gives the result.
\end{proof}

Now pick $C$ in \eqref{ecbound} large enough that that $\mathcal{E}_C(\mathcal{F},\phi,R) \ll ||\widehat{\phi}||_{\infty}q^{-\frac56+\varepsilon}$. For this $C$, using \eqref{klfourier} to expand out the Kloosterman sum into a Fourier series, we write
\begin{equation}
\mathcal{E}(\mathcal{F},\phi, R) = -\frac{2\pi i^{-\kappa}}{\log R}\sum_{\substack{0 < c \le C \\ c\equiv 0\pmod q}}\sum_{\vartheta\pmod c}\frac{\tau(\vartheta)\tau(\overline{\chi}^2\vartheta)\left[Q_{\chi\eta\overline{\vartheta}}(c) + Q_{\chi\overline{\eta\vartheta}}(c)\right]}{c\varphi(c)}  + O\left(||\widehat{\phi}||_{\infty}q^{-\frac56+\varepsilon}\right),\label{eqcsum}
\end{equation}
where
\begin{equation}
Q_{\chi}(c) = \sum_{m\ge 1}\frac{\chi(m)\Lambda(m)}{\sqrt{m}}J_{\kappa-1}\left(\frac{4\pi\sqrt{m}}c\right)\widehat{\phi}\left(\frac{\log m}{\log R}\right).\label{qcdef}
\end{equation}
Next we establish a bound for $Q_{\chi}(c)$ using GRH.
\begin{lemma}
Assume GRH. Then if $\chi$ is a nonprincipal character modulo $c$, and $\widehat{\phi}$ is supported on $(-\theta, \theta)$, then
\begin{equation}
Q_{\chi}(c) \ll ||\widehat{\phi}\thinspace''||_{\infty}R^{\frac{\theta}2+\varepsilon}c^{-1+\varepsilon},\label{qcbound}
\end{equation}
where the implied constant depends only on $\kappa$ and $\varepsilon$.
\end{lemma}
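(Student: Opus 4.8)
The plan is to evaluate $Q_{\chi}(c)$ by partial summation against the twisted Chebyshev function $\vartheta_{\chi}$ of \eqref{chebyshev}, with GRH used only to bound $\vartheta_{\chi}$. Writing $m = \ell^{e}$, decompose $Q_{\chi}(c) = Q_{\chi}^{(1)}(c) + Q_{\chi}^{(2)}(c)$ into the contributions of $e = 1$ and $e \ge 2$, and set
\[
F(x) = \frac{1}{\sqrt{x}}\,J_{\kappa-1}\!\left(\frac{4\pi\sqrt{x}}{c}\right)\widehat{\phi}\!\left(\frac{\log x}{\log R}\right),
\]
a smooth function supported in $x \le R^{\theta}$, so that $Q_{\chi}^{(1)}(c) = \sum_{p}\chi(p)(\log p)\,F(p)$ and $Q_{\chi}^{(2)}(c) = \sum_{e \ge 2}\sum_{\ell}\chi(\ell^{e})(\log\ell)\,F(\ell^{e})$. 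The observation that drives the saving is that $c \equiv 0 \pmod q$ forces $c \ge q$, and since $R = \left(\frac{q}{2\pi}\right)^{2}$ the Bessel argument $\frac{4\pi\sqrt{x}}{c}$ is $\ll 1$ throughout $x \le R^{\theta}$ whenever $\theta \le 1$ --- the range in which Theorem \ref{theorem1} claims the GRH improvement. On that bounded range \eqref{jestx} gives $J_{\kappa-1}\!\left(\frac{4\pi\sqrt{x}}{c}\right) \ll \frac{\sqrt{x}}{c}$, and, $J_{\kappa-1}$ being real-analytic on $[0,\infty)$ with a zero of order $\kappa-1 \ge 1$ at the origin, also $J_{\kappa-1}'\!\left(\frac{4\pi\sqrt{x}}{c}\right) \ll 1$.

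For $Q_{\chi}^{(1)}(c)$, differentiate $F$ by the product rule; inserting the two Bessel bounds above, each of the three resulting terms is $\ll (cx)^{-1}\bigl(||\widehat{\phi}||_{\infty} + ||\widehat{\phi}\thinspace'||_{\infty}\bigr)$ on $2 \le x \le R^{\theta}$, whence $F'(x) \ll (cx)^{-1}\bigl(||\widehat{\phi}||_{\infty} + ||\widehat{\phi}\thinspace'||_{\infty}\bigr)$ there. Since $F$ is compactly supported, partial summation gives $Q_{\chi}^{(1)}(c) = -\int_{2}^{R^{\theta}}\vartheta_{\chi}(x)\,F'(x)\,dx$. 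Assuming GRH, and letting $\chi^{*}$ denote the primitive character of conductor $c^{*} \mid c$ inducing $\chi$, we have $\vartheta_{\chi}(x) = \vartheta_{\chi^{*}}(x) + O\!\left(\sum_{p \mid c}\log p\right) \ll \sqrt{x}\,(\log cx)^{2} + \log c \ll \sqrt{x}\,c^{\varepsilon}$ uniformly for $x \le R^{\theta}$ (using $c \ge q$ and $R \asymp q^{2}$ to absorb the powers of $\log x$). Therefore
\[
Q_{\chi}^{(1)}(c) \ll c^{\varepsilon}\bigl(||\widehat{\phi}||_{\infty} + ||\widehat{\phi}\thinspace'||_{\infty}\bigr)\int_{2}^{R^{\theta}}\frac{\sqrt{x}}{cx}\,dx \ll \bigl(||\widehat{\phi}||_{\infty} + ||\widehat{\phi}\thinspace'||_{\infty}\bigr)\,R^{\frac{\theta}{2}+\varepsilon}c^{-1+\varepsilon}.
\]
Finally, since $\phi$ is Schwartz, $\widehat{\phi}$ is smooth, so its vanishing outside $[-\theta,\theta]$ forces all of its derivatives to vanish at $\pm\theta$; hence $||\widehat{\phi}\thinspace'||_{\infty} \le 2\theta\,||\widehat{\phi}\thinspace''||_{\infty}$ and $||\widehat{\phi}||_{\infty} \le (2\theta)^{2}||\widehat{\phi}\thinspace''||_{\infty}$, and, $\theta$ being bounded, the displayed estimate becomes $Q_{\chi}^{(1)}(c) \ll ||\widehat{\phi}\thinspace''||_{\infty}R^{\frac{\theta}{2}+\varepsilon}c^{-1+\varepsilon}$.

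The tail $Q_{\chi}^{(2)}(c)$ is handled trivially: $J_{\kappa-1}\!\left(\frac{4\pi\sqrt{\ell^{e}}}{c}\right) \ll \frac{\sqrt{\ell^{e}}}{c}$ yields $|F(\ell^{e})| \ll ||\widehat{\phi}||_{\infty}/c$, and $\sum_{2 \le e}\sum_{\ell \le R^{\theta/e}}\log\ell \ll (\log R)\,R^{\theta/2} \ll R^{\theta/2+\varepsilon}$, so $Q_{\chi}^{(2)}(c) \ll ||\widehat{\phi}||_{\infty}R^{\theta/2+\varepsilon}/c \ll ||\widehat{\phi}\thinspace''||_{\infty}R^{\theta/2+\varepsilon}c^{-1+\varepsilon}$. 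Adding the two contributions gives \eqref{qcbound} in the range $\theta \le 1$. For $\theta > 1$ one splits the $x$-integral in $Q_{\chi}^{(1)}(c)$ at $x = c^{2}$ and replaces \eqref{jestx} by the oscillatory bound \eqref{jestsqrt} on the portion $x > c^{2}$; this still produces a bound of size $\ll R^{\theta/2+\varepsilon}c^{-1+\varepsilon}$, because $c < R^{\theta/2}$ whenever that portion is nonempty. The only real obstacle is the Bessel-function bookkeeping in estimating $F'$: one must exploit that $c \ge q$ so that $\frac{4\pi\sqrt{x}}{c}$ remains small and $J_{\kappa-1}\!\left(\frac{4\pi\sqrt{x}}{c}\right) \ll \frac{\sqrt{x}}{c}$, which is precisely what produces the decisive factor $c^{-1}$ rather than merely $c^{-1/2}$; the remaining ingredients --- partial summation and the GRH bound $\vartheta_{\chi}(x) \ll \sqrt{x}\,c^{\varepsilon}$ --- are routine.
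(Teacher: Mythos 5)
Your argument is correct, and it takes a genuinely different route from the paper. The paper proves \eqref{qcbound} by Mellin inversion: it writes $Q_{\chi}(c)$ as a contour integral of $\frac{L'}{L}(s+\frac12,\chi)$ against the Mellin transform of the weight, invokes GRH through the bound $\frac{L'}{L}(\frac12+\varepsilon+it,\chi)\ll (ct)^{\varepsilon}$, controls the Bessel factor via Stirling on the line $\Re(u)=-\frac12+\varepsilon$ (this is where the decisive $c^{-1+\varepsilon}$ comes from), and controls the test function on a vertical line by integrating the Fourier inversion integral by parts twice, which is exactly how $||\widehat{\phi}\thinspace''||_{\infty}$ enters. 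You instead stay on the real line: partial summation against the twisted Chebyshev function \eqref{chebyshev}, GRH used only as $\vartheta_{\chi}(x)\ll\sqrt{x}\,(cx)^{\varepsilon}$, and the factor $c^{-1}$ extracted from the elementary Bessel bounds $J_{\kappa-1}(y)\ll y$, $J_{\kappa-1}'(y)\ll 1$ applied to $F'$; your bookkeeping of the three terms of $F'$, the prime-power tail, and the conversion $||\widehat{\phi}||_{\infty}+||\widehat{\phi}\thinspace'||_{\infty}\ll_{\theta}||\widehat{\phi}\thinspace''||_{\infty}$ (legitimate since $\theta$ is bounded and $\widehat{\phi}$ is smooth and compactly supported) all check out, and in fact you prove a formally stronger statement since only first-derivative control of $\widehat{\phi}$ is really needed. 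Two caveats worth noting. First, your main case leans on $c\ge q$ (so that the Bessel argument stays bounded); this holds in every application in the paper ($c\equiv 0\pmod q$ or $c\equiv 0\pmod{p^{j+k}}$), and your split at $x=c^{2}$ with \eqref{jestsqrt} covers the general statement, but the lemma as written does not assume it, whereas the Mellin proof is uniform in $c$ and $\theta$ with no case distinction. Second, the lemma is applied in the coset section with $\theta$ up to $1+\frac{j}{k}$, so the $\theta>1$ regime you only sketch is genuinely needed; the sketch does work (on $x>c^{2}$ the terms of $\sqrt{x}\,F'(x)$ integrate to $O(c^{\varepsilon}+c^{-1/2+\varepsilon}R^{\theta/4})$, which is $\ll R^{\theta/2}c^{-1+\varepsilon}$ precisely because $c^{2}<R^{\theta}$ there), but it should be written out. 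Finally, note that the paper's contour method also yields the principal-character companion \eqref{qcpbound} for free, by picking up the pole at $s=\frac12$; in your framework one would extract that main term separately by inserting $\vartheta_{\chi_0}(x)=x+O(\sqrt{x}\,x^{\varepsilon})$ and identifying the resulting integral with the one in \eqref{qcpbound}, which is routine but is an extra step your approach would owe.
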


\begin{proof}
We use Mellin inversion. Let
\begin{equation}
F(x) = J_{\kappa-1}\left(\frac{4\pi\sqrt{x}}c\right)\widehat{\phi}\left(\frac{\log x}{\log R}\right).
\end{equation}
We write
\begin{equation}
Q_{\chi}(c) = \sum_{m\ge1}\frac{\chi(m)\Lambda(m)F(m)}{\sqrt{m}} = -\frac1{2\pi i}\int_{(\sigma)}\frac{L'}{L}\left(s+\frac12,\chi\right)\widetilde{F}(s)\;ds,
\end{equation}
valid for $\sigma > \frac12$. Now due to the Generalized Riemann Hypothesis, we have
\begin{equation}
\frac{L'}{L}\left(\frac12+\varepsilon+it,\chi\right) \ll (ct)^{\varepsilon},
\end{equation}
where the implied constant depends only on $\varepsilon$. Meanwhile, we define
\begin{equation}
F_1(x) = J_{\kappa-1}\left(\frac{4\pi\sqrt x}c\right) \qquad F_2(x) = \widehat{\phi}\left(\frac{\log x}{\log R}\right),
\end{equation}
so that $F(x) = F_1(x)F_2(x)$. Using formula (17.43.16) of \cite{GR}, we find the Mellin transform
\begin{equation}
\widetilde{F}_1(s) = \left(\frac{c}{2\pi}\right)^{2s}\frac{\Gamma\left(\frac{\kappa-1}2+s\right)}{\Gamma\left(\frac{\kappa+1}2-s\right)},
\end{equation}
and using a simple substitution, we find
\begin{equation}
\widetilde{F}_2(s) = \log(R)\phi\left(\frac{s\log R}{2\pi i}\right).
\end{equation}
Now due to the Mellin convolution theorem, we have
\begin{equation}
\widetilde{F}(s) = \frac1{2\pi i}\int_{(\sigma)}\widetilde{F}_1(u)\widetilde{F}_2(s-u)\;du,
\end{equation}
for $-\frac{\kappa-1}2 < \sigma < 1$. Thus, taking $\sigma = -\frac12+\varepsilon$, we have
\begin{equation}
Q_{\chi}(c) = \frac1{4\pi^2}\int_{(\varepsilon)}\int_{(-\frac12+\varepsilon)}\frac{L'}{L}\left(s+\frac12,\chi\right)\widetilde{F}_1(u)\widetilde{F}_2(s-u)\;du\;ds.
\end{equation}
Substituting $s=s+u$ gives
\begin{equation}
Q_{\chi}(c) = \frac1{4\pi^2}\int_{(\frac12)}\int_{(-\frac12+\varepsilon)}\frac{L'}{L}\left(s+u+\frac12,\chi\right)\widetilde{F}_1(u)\widetilde{F}_2(s)\;du\;ds.
\end{equation}
Writing $s = \frac12+it$ and $u = -\frac12+\varepsilon+iy$ gives
\begin{equation}
Q_{\chi}(c) = -\frac1{4\pi^2}\int_{-\infty}^{\infty}\int_{-\infty}^{\infty}\frac{L'}{L}\left(\frac12+\varepsilon+i(y+t), \chi\right)\widetilde{F}_1\left(-\frac12+\varepsilon+iy\right)\widetilde{F}_2\left(\frac12+it\right)\;dy\;dt.
\end{equation}
Now we write
\begin{equation}
\frac{L'}{L}\left(\frac12+\varepsilon+i(y+t), \chi\right) \ll c^{\varepsilon}(y+t)^{\varepsilon} \ll c^{\varepsilon}(1+|y|)^{\varepsilon}(1+|t|)^{\varepsilon},
\end{equation}
where the implied constant depends only on $\varepsilon$. Thus
\begin{equation}
Q_{\chi}(c) \ll c^{\varepsilon}\mathcal{J}_1(c)\mathcal{J}_2(c),
\end{equation}
where
\begin{align}
\mathcal{J}_1(c) &= \int_{-\infty}^{\infty}(1+|y|)^{\varepsilon}\left|\widetilde{F}_1\left(-\frac12+\varepsilon+iy\right)\right|\;dy, \\
\mathcal{J}_2(c) &= \int_{-\infty}^{\infty}(1+|t|)^{\varepsilon}\left|\widetilde{F}_2\left(\frac12+it\right)\right|\;dt.
\end{align}
For $\varepsilon$ sufficiently small, we claim these are bounded by
\begin{align}
\mathcal{J}_1(c) &\ll c^{-1+2\varepsilon},\label{j1bound} \\
\mathcal{J}_2(c) &\ll R^{\frac{\theta}2+\varepsilon}||\widehat{\phi}\thinspace''||_{\infty},\label{j2bound}
\end{align}
where the implied constants depend only on $\kappa$ and $\varepsilon$.
To prove \eqref{j1bound}, we estimate $\mathcal{J}_1(c)$ using Stirling's approximation as
\begin{equation}
\widetilde{F}_1\left(-\frac12+\varepsilon+iy\right) \asymp c^{-1+2\varepsilon}(1+|y|)^{-2+2\varepsilon}.
\end{equation}
For \eqref{j2bound}, we write
\begin{equation}
\mathcal{J}_2(c) = \log(R)\int_{-\infty}^{\infty}\left(1+|t|\right)^{\varepsilon}\left|\phi\left(\frac{(\frac12+it)\log R}{2\pi i}\right)\right|\;dt.
\end{equation}
Using the inverse Fourier transform, we write
\begin{equation}
\phi\left(\frac{(\frac12+it)\log R}{2\pi i}\right) = \int_{-\infty}^{\infty}e\left(\frac{x(\frac12+it)\log R}{2\pi i}\right)\widehat{\phi}(x)\;dx = \int_{-\infty}^{\infty}R^{x\left(\frac12+it\right)}\widehat{\phi}(x)\;dx.
\end{equation}
Integrating by parts twice gives this as
\begin{equation}
\phi\left(\frac{(\frac12+it)\log R}{2\pi i}\right) = \frac1{(\frac12+it)^2\log^2(R)}\int_{-\infty}^{\infty}R^{x\left(\frac12+it\right)}\widehat{\phi}\thinspace''(x)\;dx.
\end{equation}
This is bounded as
\begin{equation}
\left|\phi\left(\frac{(\frac12+it)\log R}{2\pi i}\right)\right| \le \frac1{|\frac12+it|^2\log^2(R)}\int_{-\theta}^{\theta}R^{\frac{x}2}||\widehat{\phi}\thinspace''||_{\infty}\;dx \le \frac{R^{\frac{\theta}2}||\widehat{\phi}\thinspace''||_{\infty}}{|\frac12+it|^2|\varepsilon-\sigma|\log^3(R)}.
\end{equation}
So we have
\begin{equation}
\phi\left(\frac{(\frac12+it)\log R}{2\pi i}\right) \ll \frac{R^{\frac{\theta}2+\varepsilon}||\widehat{\phi}\thinspace''||_{\infty}}{(\frac12+|t|)^2},
\end{equation}
and thus the result \eqref{j2bound} follows. Multiplying these together gives the final result.
\end{proof}
Meanwhile, for principal characters, the calculation is exactly the same as above, except there is a pole at $s = \frac12$ that the contour runs over, giving
\begin{equation}
Q_{\chi_0}(c) = \widetilde{F}\left(\frac12\right) - \frac1{2\pi i}\int_{(\sigma)}\frac{L'}{L}\left(s+\frac12,\chi\right)\widetilde{F}(s)\;ds,
\end{equation}
and thus
\begin{corollary}
For $\chi$ equal to the principal character $\chi_0$ modulo $c$, we have
\begin{equation}
Q_{\chi_0}(c) = \int_0^{\infty}\frac1{\sqrt x}J_{\kappa-1}\left(\frac{4\pi\sqrt{x}}c\right)\widehat{\phi}\left(\frac{\log x}{\log R}\right)\;dx + O\left(||\widehat{\phi}\thinspace''||_{\infty}R^{\frac{\theta}2+\varepsilon}c^{-1+\varepsilon}\right),\label{qcpbound}
\end{equation}
where the implied constants depends only on $\kappa$ and $\varepsilon$.
\end{corollary}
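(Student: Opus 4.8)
The plan is to run the contour-shift argument of the preceding lemma, now keeping track of the pole contributed by the principal character. As there, one has the Mellin inversion identity
\[
Q_{\chi_0}(c) = -\frac1{2\pi i}\int_{(\sigma)}\frac{L'}{L}\left(s+\tfrac12,\chi_0\right)\widetilde F(s)\,ds\qquad(\sigma>\tfrac12),
\]
with $F(x)=J_{\kappa-1}(4\pi\sqrt x/c)\,\widehat\phi(\log x/\log R)$. I would first observe that, since $\widehat\phi$ is supported in $[-\theta,\theta]$, the function $F$ is smooth and supported in the compact interval $R^{-\theta}\le x\le R^{\theta}$; hence $\widetilde F$ is entire and decays faster than any polynomial on vertical lines, which legitimizes shifting the contour from $\Re s=\sigma$ to $\Re s=\varepsilon$.

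Next I would identify the residue. Writing $L(s,\chi_0)=\zeta(s)\prod_{p\mid c}(1-p^{-s})$, the only pole of $\frac{L'}{L}(s+\frac12,\chi_0)$ in the half-plane $\Re s\ge\varepsilon$ is at $s=\tfrac12$, arising from the simple pole of $\zeta$ at $1$; since $\prod_{p\mid c}(1-p^{-s-1/2})$ is holomorphic and nonvanishing there, this pole is simple with residue $-1$, so $-\frac{L'}{L}(s+\frac12,\chi_0)\widetilde F(s)$ has residue $\widetilde F(\tfrac12)$ at $s=\tfrac12$. Shifting past it gives
\[
Q_{\chi_0}(c)=\widetilde F\!\left(\tfrac12\right)-\frac1{2\pi i}\int_{(\varepsilon)}\frac{L'}{L}\left(s+\tfrac12,\chi_0\right)\widetilde F(s)\,ds .
\]
The remaining integral I would bound by exactly the computation proving \eqref{qcbound}: the only new point is that the GRH estimate $\frac{L'}{L}(\tfrac12+\varepsilon+it,\chi_0)\ll(ct)^{\varepsilon}$ still holds, because the finite Euler correction contributes $\sum_{p\mid c}\frac{\log p}{p^{1/2+\varepsilon+it}-1}\ll\log c\ll c^{\varepsilon}$ on that line while the $\zeta'/\zeta$ part is handled by GRH as before; the Mellin convolution, the Stirling bound on $\widetilde F_1$, and the double integration by parts yielding $\|\widehat\phi\,''\|_\infty$ then transfer verbatim and produce the error $O(\|\widehat\phi\,''\|_\infty R^{\theta/2+\varepsilon}c^{-1+\varepsilon})$. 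Finally, by the definition of the Mellin transform,
\[
\widetilde F\!\left(\tfrac12\right)=\int_0^{\infty}F(x)\,x^{-1/2}\,dx=\int_0^{\infty}\frac1{\sqrt x}\,J_{\kappa-1}\!\left(\frac{4\pi\sqrt x}{c}\right)\widehat\phi\!\left(\frac{\log x}{\log R}\right)dx,
\]
an integral over the compact set $[R^{-\theta},R^{\theta}]$, which is precisely the claimed main term.

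I do not anticipate any real obstacle: the statement is a corollary of the preceding lemma together with one residue. The only places requiring a moment's care are verifying that the finite Euler factor $\prod_{p\mid c}(1-p^{-s})$ disturbs neither the residue computation nor the GRH bound on $L'/L$, and noting (via the compact support of $F$) that $\widetilde F(\tfrac12)$ is a genuine, finite number.
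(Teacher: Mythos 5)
Your proposal is correct and follows essentially the same route as the paper: the paper's proof is exactly the contour shift of the preceding lemma with the additional simple pole at $s=\tfrac12$ contributing the residue $\widetilde F\left(\tfrac12\right)$, which unwinds to the stated integral, while the shifted integral is bounded verbatim as in \eqref{qcbound}. Your added checks (that the finite Euler factor $\prod_{p\mid c}(1-p^{-s-1/2})$ affects neither the residue nor the $c^{\varepsilon}$ bound on $L'/L$, and that $\widetilde F\left(\tfrac12\right)$ is finite by compact support) are correct and fill in details the paper leaves implicit.
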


Using \eqref{qcbound}, we have
\begin{equation}
\frac{\tau(\vartheta)\tau(\overline{\chi}^2\vartheta)\left[Q_{\chi\eta\overline{\vartheta}}(c) + Q_{\chi\overline{\eta\vartheta}}(c)\right]}{c\varphi(c)} \ll ||\widehat{\phi}\thinspace''||_{\infty}R^{\frac{\theta}2+\varepsilon}c^{-2+\varepsilon},
\end{equation}
unless $\vartheta$ is either $\chi\eta$ or $\chi\overline{\eta}$. Summing over all $\vartheta$ and using \eqref{qcpbound} thus gives
\begin{multline}
\sum_{\vartheta\pmod c}\frac{\tau(\vartheta)\tau(\overline{\chi}^2\vartheta)\left[Q_{\chi\eta\overline{\vartheta}}(c) + Q_{\chi\overline{\eta\vartheta}}(c)\right]}{c\varphi(c)} \\
= \frac{\tau(\chi\eta)\tau(\overline{\chi}\eta) + \tau(\chi\overline{\eta})\tau(\overline{\chi\eta})}{c\varphi(c)}\int_0^{\infty}\frac1{\sqrt x}J_{\kappa-1}\left(\frac{4\pi\sqrt{x}}c\right)\widehat{\phi}\left(\frac{\log x}{\log R}\right)\;dx + O\left(||\widehat{\phi}\thinspace''||_{\infty}R^{\frac{\theta}2+\varepsilon}c^{-1+\varepsilon}\right),
\end{multline}
where the Gauss sums are taken modulo $c$. Next we sum over $c$ as well to get
\begin{multline}
\sum_{\substack{0<c\le C \\ c\equiv0\pmod q}}\sum_{\vartheta\pmod c}\frac{\tau(\vartheta)\tau(\overline{\chi}^2\vartheta)\left[Q_{\chi\eta\overline{\vartheta}}(c) + Q_{\chi\overline{\eta\vartheta}}(c)\right]}{c\varphi(c)} \\
= 2\tau(\chi\eta)\tau(\overline{\chi}\eta)\int_0^{\infty}\frac1{\sqrt x}\widehat{\phi}\left(\frac{\log x}{\log R}\right)\sum_{\substack{0<c\le C \\ c\equiv 0\pmod q}}\frac1{c\varphi(c)}\eta^2\left(\frac{c}q\right)\mu\left(\frac{c}q\right)J_{\kappa-1}\left(\frac{4\pi\sqrt{x}}c\right)\;dx + O\left(||\widehat{\phi}\thinspace''||_{\infty}R^{\frac{\theta}2+\varepsilon}q^{-1+\varepsilon}C^{\varepsilon}\right),
\end{multline}
using Lemma 3.1 of \cite{IK}. Now since $\chi\eta$ and $\overline{\chi}\eta$ are primitive modulo $q$, we have $|\tau(\chi\eta)\tau(\overline{\chi}\eta)| = q$. The sum over $c$ inside the integral is bounded using the triangle inequality and the estimate \eqref{jestsqrt} to get
\begin{equation}
\sum_{\substack{0<c\le C \\ c\equiv 0\pmod q}}\frac1{c\varphi(c)}\eta^2\left(\frac{c}q\right)\mu\left(\frac{c}q\right)J_{\kappa-1}\left(\frac{4\pi\sqrt{x}}c\right) \ll \sum_{\substack{c>0 \\ c\equiv 0\pmod q}}x^{-\frac14}c^{-\frac32} \ll x^{-\frac14}q^{-\frac32},
\end{equation}
where the implied constant depends only on $\kappa$.
Thus we have
\begin{equation}
2\tau(\chi\eta)\tau(\overline{\chi}\eta)\int_0^{\infty}\frac1{\sqrt x}\widehat{\phi}\left(\frac{\log x}{\log R}\right)\sum_{\substack{0<c\le C \\ c\equiv 0\pmod q}}\eta^2\left(\frac{c}q\right)\mu\left(\frac{c}q\right)J_{\kappa-1}\left(\frac{4\pi\sqrt{x}}c\right)\;dx \ll ||\widehat{\phi}||_{\infty}q^{-\frac12}\int_0^{R^{\theta}}x^{-\frac34}\;dx.
\end{equation}
So
\begin{equation}
\sum_{\substack{0<c\le C \\ c\equiv0\pmod q}}\sum_{\vartheta\pmod c}\frac{\tau(\vartheta)\tau(\overline{\chi}^2\vartheta)\left[Q_{\chi\eta\overline{\vartheta}}(c) + Q_{\chi\overline{\eta\vartheta}}(c)\right]}{c\varphi(c)} \ll ||\widehat{\phi}||_{\infty}q^{\frac{\theta-1}2+\varepsilon} + ||\widehat{\phi}\thinspace''||_{\infty}q^{\theta-1+\varepsilon}C^{\varepsilon},
\end{equation}
and so
\begin{equation}
\mathcal{E}(\mathcal{F},\phi,R) \ll ||\widehat{\phi}||_{\infty}q^{\frac{\theta-1}2+\varepsilon} + ||\widehat{\phi}\thinspace''||_{\infty}q^{\theta-1+\varepsilon}C^{\varepsilon}.
\end{equation}
Taking $C$ to be a fixed large power of $q$ (such as $q^{100}$) gives \eqref{conderr}.

\end{subsection}
\end{section}

\begin{section}{The Coset Family} \label{cosetsection}

In this section, we prove Theorem \ref{theorem2}.

\begin{subsection}{The Coset Sum}\label{cosetsumsection}

Now we seek to establish an analogous result to Theorem \ref{theorem1} for the average one-level density of the coset family. First, we can calculate a result analogous to the explicit formula \eqref{oldavg1} using the work of Section \ref{explicitsct}.
\begin{lemma}
If $q = p^k$, $\mathcal{F} = \mathcal{F}_{\kappa,\epsilon}(q,\chi,\eta)$, $\mathcal{B} = \mathcal{H}_{\kappa}(q,\overline{\chi}^2)$, $R=\left(\frac{q}{2\pi}\right)^2$, $w$ is as in \eqref{petwt}, and $q$ is large enough to ensure that $\mathcal{B}$ is nonempty, then
\begin{multline}
\mathcal{D}_1(\mathcal{F},\phi,R,w) = \widehat{\phi}(0) + \frac2{\log R}\mathcal{I}(\kappa,\phi,R) \\
-\frac2{\sigma_{\psi,\epsilon}(1, 1)\log R}\sum_{\ell\neq p}\sum_{e\ge 1}\frac{\left[\sigma_{\psi,\epsilon}\left(\ell^e,1\right) - \sigma_{\psi,\epsilon}\left(\ell^{e-2},1\right)\right]\log \ell}{\ell^{e/2}}\widehat{\phi}\left(\frac{e\log \ell}{\log R}\right),\label{oldavg2}
\end{multline}
where the sum over $\ell$ runs over all primes not equal to $p$, and $\sigma_{\psi,\epsilon}(m, n)$ is defined for positive integers $m, n$ as
\begin{equation}
\sigma_{\psi,\epsilon}(m, n) = \sum_{\chi\in\psi \widehat{G}_{p^j}}\left[1+\epsilon\chi(-1)\right]\chi(m)\overline{\chi(n)}\Delta_{\mathcal{B}}(m, n).
\end{equation}
(We define $\sigma_{\psi,\epsilon}(m,n) = 0$ whenever $m, n$ are not both positive integers.)
\end{lemma}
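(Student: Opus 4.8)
The plan is to derive \eqref{oldavg2} from the explicit formula machinery of Section \ref{explicitsct}, applied to each thin family $\mathcal{F}_{\kappa}(q,\chi,1)$ appearing in the union, and then to recombine the contributions over the coset $\psi\widehat{G}_{p^j}$ with the correct weighting. First I would recall that for a fixed $\chi \in \psi\widehat{G}_{p^j}$, Corollary of the previous section (equation \eqref{oldind} with $\eta = 1$, so that $\eta(\ell^e) + \overline{\eta}(\ell^e) = 2$) gives the one-level density of a single twisted form $f_\chi$, and summing this against the Petersson weights over $f \in \mathcal{B} = \mathcal{H}_{\kappa}(q,\overline{\chi}^2)$ yields exactly the explicit formula \eqref{oldavg1} specialized to $\eta = 1$. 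The key point is that every $\chi$ in the coset has $\overline{\chi}^2$ of conductor $q$ (since $\psi^2$ is primitive modulo $q$ and the characters in $\widehat{G}_{p^j}$ have conductor dividing $p^j < q$), so $\mathcal{B}$ is the same space for every $\chi$ in the coset, and the Hecke eigenvalues $\lambda_f$ are the same; only the twisting character differs between the members $f_\chi$.

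Next I would assemble the average one-level density of the union $\mathcal{F}_{\kappa,\epsilon}(q,p^j,\psi)$. The restriction $\chi(-1) = \epsilon$ in the union is handled by the standard device of inserting $\frac{1}{2}[1 + \epsilon\chi(-1)]$, which equals $1$ when $\chi(-1) = \epsilon$ and $0$ otherwise, and then summing over \emph{all} $\chi \in \psi\widehat{G}_{p^j}$. After dividing by the total weight $\sum_{f \in \mathcal{F}}w(f)$ — which, using the Lemma on the weight function and the fact that the coset has $\varphi(p^j)$ characters of which half satisfy the sign condition (using $p$ odd so $-1$ is not a square obstruction), works out to be proportional to $\sigma_{\psi,\epsilon}(1,1)$ — the $\widehat{\phi}(0)$ and $\mathcal{I}(\kappa,\phi,R)$ terms, being independent of the twisting character, survive unchanged (this uses that the normalization $\log R$ with $R = (q/2\pi)^2$ is the same for every member, since they all have conductor $q^2$). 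The prime-power sum, on the other hand, picks up for each prime $\ell \neq p$ and each $e \ge 1$ the factor
\begin{equation}
\sum_{\chi \in \psi\widehat{G}_{p^j}}[1 + \epsilon\chi(-1)]\,\chi(\ell^e)\bigl[\Delta_{\mathcal{B}}(\ell^e) - \Delta_{\mathcal{B}}(\ell^{e-2})\bigr],
\end{equation}
which is precisely $\sigma_{\psi,\epsilon}(\ell^e,1) - \sigma_{\psi,\epsilon}(\ell^{e-2},1)$ by the definition of $\sigma_{\psi,\epsilon}$ (noting $\overline{\chi(1)} = 1$ and that the $\Delta_{\mathcal{B}}$ factors do not depend on $\chi$, so they pull inside). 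Primes $\ell = p$ are excluded because $\chi(p) = 0$ for every $\chi$ of conductor $q = p^k$, so those terms vanish identically, matching the restriction on the sum in \eqref{oldavg2}.

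Finally I would check the boundary bookkeeping: as in \eqref{oldavg1}, the term with $e - 2 \le 0$ contributes $\sigma_{\psi,\epsilon}(\ell^{e-2},1)$ which we have \emph{defined} to be zero when $\ell^{e-2}$ is not a positive integer (i.e. $e = 1$), so there is no need for an explicit $\delta_{e \ge 3}$; and the overall factor of $2$ in front comes from combining the $\eta(\ell^e) + \overline{\eta}(\ell^e) = 2$ with the $1$ already present, exactly as in the passage from \eqref{oldavg1} to the displayed form. The main obstacle is really just the normalization of the total weight: one must verify carefully that $\sum_{f \in \mathcal{F}}w(f) = \frac{1}{2}\sigma_{\psi,\epsilon}(1,1) \cdot (\text{something absorbed})$ — more precisely that the denominator in $\mathcal{D}_1$ is $\sigma_{\psi,\epsilon}(1,1)$ up to the constant that cancels against the numerator — which requires the Lemma relating $w(f_\chi)$ to $w(f)$ (giving a uniform factor $q/\varphi(q)$ across the coset) and the observation that $\sum_{\chi}[1+\epsilon\chi(-1)]\Delta_{\mathcal{B}}(1) = \sigma_{\psi,\epsilon}(1,1)$ by definition. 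Once that identification is in place, the rest is formal rearrangement of absolutely convergent (indeed finite, by the compact support of $\widehat{\phi}$) sums.
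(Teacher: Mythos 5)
Your overall route is the same as the paper's: write $\mathcal{D}_1(\mathcal{F},\phi,R,w)$ as the ratio
\begin{equation*}
\frac{\sum_{\chi\in\psi\widehat{G}_{p^j}}\sum_{f\in\mathcal{H}_{\kappa}(q,\overline{\chi}^2)}\left[1+\epsilon\chi(-1)\right]D_1(f_{\chi},\phi,R)\left<f,f\right>_q^{-1}}{\sum_{\chi\in\psi\widehat{G}_{p^j}}\sum_{f\in\mathcal{H}_{\kappa}(q,\overline{\chi}^2)}\left[1+\epsilon\chi(-1)\right]\left<f,f\right>_q^{-1}}
\end{equation*}
and substitute \eqref{oldind} with $\eta=1$, the denominator being $\sigma_{\psi,\epsilon}(1,1)$ by definition. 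However, two of your supporting steps are wrong as written. First, the claim that every $\chi$ in the coset gives the same space $\mathcal{B}$ (hence a $\chi$-independent $\Delta_{\mathcal{B}}$ and identical Hecke eigenvalues) is false: what is constant across the coset is only the \emph{conductor} of $\overline{\chi}^2$, namely $q$; the central character $\overline{\chi}^2=\overline{\psi}^2\overline{\chi_j}^2$ itself varies with $\chi_j\in\widehat{G}_{p^j}$, so $\mathcal{H}_{\kappa}(q,\overline{\chi}^2)$ is a different set of newforms for different members of the coset. In the definition of $\sigma_{\psi,\epsilon}(m,n)$ the symbol $\Delta_{\mathcal{B}}(m,n)$ must be read as $\Delta_{\mathcal{H}_{\kappa}(q,\overline{\chi}^2)}(m,n)$ for the summation variable $\chi$; this $\chi$-dependence is exactly what produces $S_{\overline{\chi}^2}(m,n;c)$ inside the coset sum \eqref{cosetsum} of Lemma \ref{cosetsumlemma}, and the analysis of Section \ref{cosetsumsection} would collapse if it were absent. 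Second, your displayed factor
\begin{equation*}
\sum_{\chi\in\psi\widehat{G}_{p^j}}\left[1+\epsilon\chi(-1)\right]\chi(\ell^e)\left[\Delta_{\mathcal{B}}(\ell^e)-\Delta_{\mathcal{B}}(\ell^{e-2})\right]
\end{equation*}
is not what the substitution produces and does not equal $\sigma_{\psi,\epsilon}(\ell^e,1)-\sigma_{\psi,\epsilon}(\ell^{e-2},1)$: equation \eqref{oldind} carries $\chi(\ell^{e-2})\lambda_f(\ell^{e-2})$, not $\chi(\ell^e)\lambda_f(\ell^{e-2})$, and the two differ by a factor $\chi(\ell)^2$ that cannot be removed by appealing to the (in any case false) $\chi$-independence of $\Delta_{\mathcal{B}}$.

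The fix is immediate and restores the paper's argument: substitute \eqref{oldind} with $\eta=1$ (so the bracket $\eta(\ell^e)+\overline{\eta}(\ell^e)$ contributes $2$ for $\ell\neq p$ and $0$ for $\ell=p$) and keep each character value attached to its own Hecke eigenvalue, obtaining for each $\ell\neq p$ and $e\ge1$ the factor
\begin{equation*}
\sum_{\chi\in\psi\widehat{G}_{p^j}}\left[1+\epsilon\chi(-1)\right]\left[\chi(\ell^e)\Delta_{\mathcal{H}_{\kappa}(q,\overline{\chi}^2)}(\ell^e)-\chi(\ell^{e-2})\Delta_{\mathcal{H}_{\kappa}(q,\overline{\chi}^2)}(\ell^{e-2})\right]=\sigma_{\psi,\epsilon}(\ell^e,1)-\sigma_{\psi,\epsilon}(\ell^{e-2},1),
\end{equation*}
which is exactly the numerator in \eqref{oldavg2}. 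The rest of your bookkeeping is fine: the sign projector $1+\epsilon\chi(-1)$ (its constant cancels between numerator and denominator), the uniform weight ratio $w(f_{\chi})=\frac{q}{\varphi(q)}w(f)$ across the coset, the common normalization $R=\left(\frac{q}{2\pi}\right)^2$, the vanishing of the $\ell=p$ terms, and the convention $\sigma_{\psi,\epsilon}(\ell^{-1},1)=0$ handling $e=1$.
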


\begin{proof}
By the formula \eqref{oldavgdef} for one-level density, we have
\begin{equation}
\mathcal{D}_1(\mathcal{F},\phi,R,w) = \frac{\sum_{\chi\in \psi \widehat{G}_{p^j}}\sum_{f\in\mathcal{H}_{\kappa}\left(q,\overline{\chi}^2\right)}\frac{\left[1+\epsilon\chi(-1)\right]D_1(f_{\chi}, \phi, R)}{\left<f, f\right>_q}}{\sum_{\chi\in \psi \widehat{G}_{p^j}}\sum_{f\in\mathcal{H}_{\kappa}\left(q,\overline{\chi}^2\right)}\frac{1+\epsilon\chi(-1)}{\left<f, f\right>_q}}.
\end{equation}
The result follows from substituting \eqref{oldind} into this.
\end{proof}

\begin{lemma}
If $m, n$ are positive integers, we have
\begin{equation}
\sigma_{\psi,\epsilon}(m, n) = \frac{\pi(4\pi)^{\kappa-1}\left[\Gamma_0(1):\Gamma_0(q)\right]}{3\Gamma(\kappa-1)}\left[\delta(m, n)\varphi(p^j) + 2\pi i^{-\kappa}\sum_{\substack{c > 0 \\ c\equiv 0\pmod q}}c^{-1}\sigma_{\psi,\epsilon}(m,n;c)J_{\kappa-1}\left(\frac{4\pi\sqrt{mn}}c\right)\right],\label{sigmamn}
\end{equation}
where the coset sum
\begin{equation}
\sigma_{\psi,\epsilon}(m,n;c) = \sum_{\chi\in\psi \widehat{G}_{p^j}}\chi(m)\overline{\chi}(n)\left[1+\epsilon\chi(-1)\right]S_{\overline{\chi}^2}(m,n;c)\label{cosetsum}
\end{equation}
is defined for positive integers $m, n, c$ with $c$ divisible by $q$.\label{cosetsumlemma}
\end{lemma}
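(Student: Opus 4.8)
The plan is to apply the Petersson trace formula \eqref{petersson} to the space $\mathcal{S}_{\kappa}(q,\overline{\chi}^2)$ separately for each $\chi$ in the coset $\psi\widehat{G}_{p^j}$, and then form the $\chi$-average defining $\sigma_{\psi,\epsilon}(m,n)$. The point to check first is that \eqref{petersson} applies directly with $\mathcal{B} = \mathcal{H}_{\kappa}(q,\overline{\chi}^2)$ for \emph{every} $\chi$ in the coset, i.e.\ that this newform set is already an orthonormal basis of $\mathcal{S}_{\kappa}(q,\overline{\chi}^2)$ and no oldform sieving is needed. Writing $\chi = \psi\chi'$ with $\chi' \in \widehat{G}_{p^j}$, we have $\chi^2 = \psi^2(\chi')^2$, where $(\chi')^2$ has conductor dividing $p^j < q = p^k$ (since $j < k$) while $\psi^2$ is primitive modulo $q$; if $\chi^2$ had conductor smaller than $q$, then $\psi^2 = \chi^2\,\overline{(\chi')^2}$ would have conductor dividing $\lcm(\cond(\chi^2), p^j) < p^k$, contradicting primitivity of $\psi^2$. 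Hence $\chi^2$ is primitive modulo $q$, so exactly as in the discussion following \eqref{family1} there are no oldforms of weight $\kappa$, level $q$, and central character $\overline{\chi}^2$, and \eqref{petersson} applies verbatim, expressing $\Delta_{\mathcal{H}_{\kappa}(q,\overline{\chi}^2)}(m,n)$ as $\frac{\pi(4\pi)^{\kappa-1}\left[\Gamma_0(1):\Gamma_0(q)\right]}{3\Gamma(\kappa-1)}$ times $\delta(m,n) + 2\pi i^{-\kappa}\sum_{q\mid c} c^{-1} S_{\overline{\chi}^2}(m,n;c) J_{\kappa-1}(4\pi\sqrt{mn}/c)$.

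I would then multiply this identity by $\left[1+\epsilon\chi(-1)\right]\chi(m)\overline{\chi(n)}$ and sum over $\chi \in \psi\widehat{G}_{p^j}$, so that the left side becomes $\sigma_{\psi,\epsilon}(m,n)$ by definition and the common constant factors out of the right side. The diagonal term is $\delta(m,n)\sum_{\chi}\left[1+\epsilon\chi(-1)\right]\chi(m)\overline{\chi(n)}$; when $m = n$ and $(m,p)=1$ this equals $\delta(m,n)\left(\varphi(p^j) + \epsilon\psi(-1)\sum_{\chi'\in\widehat{G}_{p^j}}\chi'(-1)\right)$, and since $p$ is odd and $j \ge 1$ we have $p^j > 2$, so $-1 \not\equiv 1 \pmod{p^j}$ and character orthogonality gives $\sum_{\chi'\in\widehat{G}_{p^j}}\chi'(-1) = 0$; hence the diagonal contributes $\delta(m,n)\varphi(p^j)$. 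For the off-diagonal term, the sum over $\chi$ is finite and, for each fixed $\chi$, the sum over $c$ converges absolutely (by \eqref{weilkl} together with \eqref{jestsqrt}), so the two summations may be interchanged, and the inner $\chi$-sum is then exactly the coset sum $\sigma_{\psi,\epsilon}(m,n;c)$ of \eqref{cosetsum}. Assembling the two pieces gives \eqref{sigmamn}.

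The only step carrying any content is the conductor computation showing that $\chi^2$ remains primitive for all $\chi$ in the coset — this is precisely what lets us invoke \eqref{petersson} with $\mathcal{H}_{\kappa}(q,\overline{\chi}^2)$ without any Atkin--Lehner sieving — together with the elementary vanishing $\sum_{\chi'\in\widehat{G}_{p^j}}\chi'(-1)=0$ that collapses the diagonal. I expect these two (small) facts to be the main obstacle, with the rest being routine rearrangement of sums.
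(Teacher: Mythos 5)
Your proposal is correct and follows essentially the same route as the paper: apply the Petersson trace formula \eqref{petersson} to $\mathcal{H}_{\kappa}(q,\overline{\chi}^2)$ for each $\chi$ in the coset, weight by $\left[1+\epsilon\chi(-1)\right]\chi(m)\overline{\chi}(n)$, and bring the (finite) sum over $\chi$ inside the sum over $c$. The two points you spell out explicitly — that $\chi^2=\psi^2(\chi')^2$ stays primitive modulo $q$ so no oldform sieving is needed, and that the diagonal character sum collapses to $\varphi(p^j)$ by orthogonality (for $m=n$ coprime to $p$) — are precisely what the paper leaves implicit in the step ``bringing the sum over $\chi$ to the inside gives the result.''
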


\begin{proof}
Applying the Petersson trace formula \eqref{petersson} to \eqref{oldavg2}, we write
\begin{multline}
\sigma_{\psi,\epsilon}(m, n) = \\
\frac{\pi(4\pi)^{\kappa-1}\left[\Gamma_0(1):\Gamma_0(q)\right]}{3\Gamma(\kappa-1)}\sum_{\chi\in\psi \widehat{G}_{p^j}}\left[1+\epsilon\chi(-1)\right]\chi(m)\overline{\chi}(n)\left[\delta(m, n) + 2\pi i^{-\kappa}\sum_{\substack{c > 0 \\ c\equiv 0\pmod q}}c^{-1}S_{\overline{\chi}^2}(m,n;c)J_{\kappa-1}\left(\frac{4\pi\sqrt{mn}}c\right)\right].
\end{multline}
Bringing the sum over $\chi$ to the inside gives the result.
\end{proof}

We would like to think of this sum as being analogous to a Kloosterman sum. In fact, we can establish a series of results based on how many extra copies of the prime $p$ divide $c$. First, we show that if it is divisible by $k$ extra copies of $p$, it is actually just an ordinary Kloosterman sum.

\begin{lemma}
If $b,m,n$ are positive integers not divisible by the prime $p$, $\chi$ is a Dirichlet character modulo $p^k$, and $c = bp^r$ where $r \ge 2k$, then
\begin{equation}
\chi(m)\overline{\chi}(n)S_{\overline{\chi}^2}(m,n;c) = S(m,n;c).\label{kloosterman2k}
\end{equation}
\end{lemma}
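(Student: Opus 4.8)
The plan is to prove \eqref{kloosterman2k} by a $p$-adic ``completing the square'' manoeuvre that uses the hypothesis $r\ge 2k$ directly, without factoring $c$. Write the twisted sum out as $S_{\overline{\chi}^2}(m,n;c)=\sum_{(x,c)=1}\chi^2(x)\,e_c(mx+n\overline{x})$, where $\chi^2$, a priori a character modulo $p^k$, is read modulo $c$ through its lift (legitimate since $(x,c)=1$ forces $p\nmid x$, and $p^k\mid c$). Both sides of \eqref{kloosterman2k} are then sums over $(\mathbb{Z}/c\mathbb{Z})^\times$, and the goal is to show that on the range that actually supports the sum, the twist $\chi^2(x)$ is a constant equal to $\chi(n)\overline{\chi}(m)$.

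For the main step, fix $t$ modulo $p^k$ and substitute $x\mapsto x(1+bp^{r-k}t)$. The multiplier $1+bp^{r-k}t$ is $\equiv 1\pmod b$ and, since $r-k\ge k$, also $\equiv 1\pmod{p^k}$; hence it is a unit modulo $c$, so the substitution permutes $(\mathbb{Z}/c\mathbb{Z})^\times$ and leaves $\chi^2(x)$ unchanged. For the phase, $r\ge 2k$ gives $2(r-k)\ge r$, so $(1+bp^{r-k}t)^{-1}\equiv 1-bp^{r-k}t\pmod c$; therefore $mx+n\overline{x}$ is sent to $(mx+n\overline{x})+bp^{r-k}t\,(mx-n\overline{x})$, and because $bp^{r-k}t/(bp^r)=t/p^k$ the exponential acquires the factor $e_{p^k}\!\big(t(mx-n\overline{x})\big)$. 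Thus $S_{\overline{\chi}^2}(m,n;c)=\sum_{(x,c)=1}\chi^2(x)\,e_c(mx+n\overline{x})\,e_{p^k}\!\big(t(mx-n\overline{x})\big)$ for every $t$. Averaging over $t$ modulo $p^k$ and applying additive orthogonality collapses the sum to the subrange where $mx\equiv n\overline{x}\pmod{p^k}$, equivalently $mx^2\equiv n\pmod{p^k}$:
\[
S_{\overline{\chi}^2}(m,n;c)=\sum_{\substack{(x,c)=1\\ mx^2\equiv n\ (p^k)}}\chi^2(x)\,e_c(mx+n\overline{x}).
\]
To finish, note that on this subrange $\chi(mx^2)=\chi(n)$, so $\chi^2(x)=\chi(x^2)=\chi(n)\overline{\chi}(m)$ is constant and may be pulled out. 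Running the identical substitution-and-average argument with the trivial twist gives $S(m,n;c)=\sum_{(x,c)=1,\ mx^2\equiv n\ (p^k)}e_c(mx+n\overline{x})$, the same restricted sum. Comparing the two yields $S_{\overline{\chi}^2}(m,n;c)=\chi(n)\overline{\chi}(m)\,S(m,n;c)$, which is \eqref{kloosterman2k} after multiplying through by $\chi(m)\overline{\chi}(n)$ and using $|\chi(m)|=|\chi(n)|=1$ (valid as $p\nmid mn$).

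The only delicate point is the $p$-adic bookkeeping in the substitution: one needs both $1+bp^{r-k}t\equiv 1\pmod{p^k}$ (so $\chi^2$ is untouched) and $(1+bp^{r-k}t)^{-1}\equiv 1-bp^{r-k}t\pmod c$ (so the phase transforms as stated), and each of these is precisely equivalent to $r\ge 2k$ — so the argument consumes exactly the hypothesis. Everything else, namely the bijectivity of the substitution and the orthogonality step, is routine. (Alternatively one could first invoke twisted multiplicativity of $S_\chi(\cdot,\cdot;c)$ in $c$ to reduce to the case $c=p^r$ and then apply the same manoeuvre with $b=1$, but this detour is not needed.)
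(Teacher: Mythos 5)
Your argument is correct and is essentially the paper's proof in a different guise: both exploit $r\ge 2k$ to linearize the inverse and, via orthogonality over a parameter running modulo $p^k$, localize the sum onto the set $mx^2\equiv n\pmod{p^k}$, where $\chi^2(x)$ is the constant $\chi(n)\overline{\chi}(m)$, and then compare with the untwisted sum. The only difference is presentational: the paper first factors off the $b$-part and splits $x=p^{r-k}x_1+x_2$ additively, whereas you stay modulo $c$ and average over the multiplicative shifts $x\mapsto x(1+bp^{r-k}t)$; the paper's final step (observing its reduced expression is independent of $\chi$ and setting $\chi=1$) is the same comparison you carry out by rerunning the argument for the trivial twist.
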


\begin{proof}
First, we factor to get
\begin{equation}
S_{\overline{\chi}^2}(m,n;c) = S(\overline{p}^rm,\overline{p}^rn;b)S_{\overline{\chi}^2}(\overline{b}m,\overline{b}n;p^r).
\end{equation}
Opening up the second Kloosterman sum gives
\begin{equation}
S_{\overline{\chi}^2}(\overline{b}m,\overline{b}n;p^r) = \sum_{\substack{x\pmod{p^r} \\ p\nmid x}}\chi^2(x)e_{p^r}(\overline{b}mx+\overline{b}n\overline{x}).
\end{equation}
Split $x = p^{r-k}x_1 + x_2$, where $x_1$ runs modulo $p^k$ and $x_2$ runs modulo $p^{r-k}$. Then the sum becomes
\begin{equation}
S_{\overline{\chi}^2}(\overline{b}m,\overline{b}n;p^r) = \sum_{x_1\pmod{p^k}}\sum_{\substack{x_2\pmod{p^{r-k}} \\ p\nmid x_2}}\chi^2(x_2)e_{p^r}\left(\overline{b}m(p^{r-k}x_1+x_2) + \overline{b}n\overline{(p^{r-k}x_1+x_2)}\right).
\end{equation}
Since $r \le 2(r-k)$, we have $\overline{p^{r-k}x_1+x_2} = -p^{r-k}x_1\overline{x_2}^2+\overline{x_2}$. We substitute this in and bring the sum over $x_1$ to the inside to write
\begin{equation}
S_{\overline{\chi}^2}(\overline{b}m,\overline{b}n;p^r) = \sum_{\substack{x_2\pmod{p^{r-k}} \\ p\nmid x_2}}\chi^2(x_2)e_{p^r}(\overline{b}mx_2+\overline{b}n\overline{x_2})\sum_{x_1\pmod{p^k}}e_{p^r}(\overline{b}x_1(m-n\overline{x_2}^2)).
\end{equation}
The inner sum vanishes unless $mx_2^2 \equiv n\pmod{p^k}$. Thus we have
\begin{equation}
S_{\overline{\chi}^2}(\overline{b}m,\overline{b}n;p^r) = p^k\sum_{\substack{x\pmod{p^{r-k}} \\ mx^2\equiv n\pmod{p^k}}}\chi^2(x)e_{p^r}(\overline{b}mx+\overline{b}n\overline{x}).
\end{equation}
Since $x^2 \equiv \overline{m}n\pmod{p^k}$, we have $\chi^2(x) = \overline{\chi}(m)\chi(n)$ for all $x$. Thus
\begin{equation}
\chi(m)\overline{\chi}(n)S_{\overline{\chi}^2}(\overline{b}m,\overline{b}n;p^r) = p^k\sum_{\substack{x\pmod{p^{r-k}} \\ mx^2\equiv n\pmod{p^k}}}e_{p^r}(\overline{b}mx+\overline{b}n\overline{x}).
\end{equation}
By our construction, this formula is independent of the choice of lifts from $p^{r-k}$ to $p^r$. Therefore, we write
\begin{equation}
\chi(m)\overline{\chi}(n)S_{\overline{\chi}^2}(\overline{b}m,\overline{b}n;p^r) = \sum_{\substack{x\pmod{p^r} \\ mx^2\equiv n\pmod{p^k}}}e_{p^r}(\overline{b}mx+\overline{b}n\overline{x}).
\end{equation}
This holds for any character $\chi$ modulo $p^k$, since $\chi$ does not appear on the right-hand side. Taking $\chi$ to be the trivial character gives the result.
\end{proof}

Substituting this into \eqref{cosetsum} gives
\begin{corollary}
If $b,m,n$ are positive integers not divisible by $p$, and $c = bp^r$ where $r \ge 2k$, then
\begin{equation}
\sigma_{\psi,\epsilon}(m,n;c) = \varphi(p^j)S(m,n;c).\label{cosetsum2k}
\end{equation}
\end{corollary}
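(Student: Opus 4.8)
The plan is to feed the identity from the preceding lemma directly into the definition \eqref{cosetsum}. Under the hypotheses $p \nmid bmn$ and $r \ge 2k$, that lemma gives $\chi(m)\overline{\chi}(n)S_{\overline{\chi}^2}(m,n;c) = S(m,n;c)$ for \emph{every} Dirichlet character $\chi$ modulo $p^k$, hence in particular for every $\chi$ in the coset $\psi\widehat{G}_{p^j}$. So in each term of \eqref{cosetsum} the twisted Kloosterman sum together with the accompanying character values collapses to the single untwisted sum $S(m,n;c)$, which is independent of $\chi$; pulling it out leaves
\[
\sigma_{\psi,\epsilon}(m,n;c) = S(m,n;c)\sum_{\chi\in\psi\widehat{G}_{p^j}}\bigl(1 + \epsilon\chi(-1)\bigr).
\]

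It then remains only to evaluate the character sum. The coset $\psi\widehat{G}_{p^j}$ has exactly $|\widehat{G}_{p^j}| = \varphi(p^j)$ elements, so the contribution of the constant $1$ is $\varphi(p^j)$. For the remaining piece I would write $\sum_{\chi\in\psi\widehat{G}_{p^j}}\chi(-1) = \psi(-1)\sum_{\xi\in\widehat{G}_{p^j}}\xi(-1)$; since $p$ is odd and $j \ge 1$ we have $p^j \ge 3$, so $-1$ is a nontrivial residue modulo $p^j$, and orthogonality of the characters of $(\mathbb{Z}/p^j\mathbb{Z})^{\times}$ forces $\sum_{\xi\in\widehat{G}_{p^j}}\xi(-1) = 0$. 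Hence $\sum_{\chi\in\psi\widehat{G}_{p^j}}(1+\epsilon\chi(-1)) = \varphi(p^j)$, and substituting back gives the claimed formula $\sigma_{\psi,\epsilon}(m,n;c) = \varphi(p^j)S(m,n;c)$.

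There is really no serious obstacle here: once \eqref{kloosterman2k} is available, the argument reduces to the standard orthogonality relation, which applies precisely because the standing assumptions $p$ odd and $1 \le j < k$ guarantee $-1 \not\equiv 1 \pmod{p^j}$. The only point requiring a moment's attention is to confirm that the hypotheses of the corollary — $p \nmid bmn$ and $r \ge 2k$ — are exactly what is needed to invoke the preceding lemma uniformly over all $\chi$ in the coset; the sign $\epsilon$ and the particular representative $\psi$ play no role beyond contributing the trivial factor $\psi(-1)$ to a sum that vanishes.
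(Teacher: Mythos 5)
Your proposal is correct and is essentially the paper's own argument: substitute \eqref{kloosterman2k} into the definition \eqref{cosetsum}, pull out the now $\chi$-independent factor $S(m,n;c)$, and note that $\sum_{\chi\in\psi\widehat{G}_{p^j}}\left[1+\epsilon\chi(-1)\right]=\varphi(p^j)$ since $-1\not\equiv 1\pmod{p^j}$ forces the $\chi(-1)$ part to vanish by orthogonality. The paper leaves this last character-sum evaluation implicit (it is the same fact that produces the diagonal term $\delta(m,n)\varphi(p^j)$ in \eqref{sigmamn}), and you have simply spelled it out.
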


Thus in the following, we are mainly concerned with the case where $r < 2k$. In general, we can split our coset sum up into a same-sign and an opposite-sign character sum.

\begin{lemma}
If $b,m,n$ are positive integers not divisible by $p$, and $c = bp^r$ where $r\ge k$, then
\begin{equation}
\sigma_{\psi,\epsilon}(m,n;c) = \psi(m)\overline{\psi}(n)\varphi(p^j)S(\overline{p}^rm,\overline{p}^rn;b)\left[\mathcal{K}_{\psi}^+(\overline{b}m,\overline{b}n;p^r) + \epsilon\psi(-1)\mathcal{K}_{\psi}^-(\overline{b}m,\overline{b}n;p^r)\right],\label{sigmasplit}
\end{equation}
where
\begin{equation}
\mathcal{K}_{\psi}^{\pm}(m,n;p^r) = \sum_{\substack{x\pmod{p^r} \\ mx^2\equiv \pm n\pmod{p^j}}}\psi^2(x)e_{p^r}(mx+n\overline{x}),\label{kpsidef}
\end{equation}
and $\overline{p}$ denotes the inverse of $p$ modulo $b$, and $\overline{b}$ denotes the inverse of $b$ modulo $p^r$.
\end{lemma}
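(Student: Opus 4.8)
The plan is to unwind the definition \eqref{cosetsum} by parametrizing the coset as $\chi = \psi\xi$ with $\xi$ running over $\widehat{G}_{p^j}$, factoring each twisted Kloosterman sum into its $b$-part and its $p^r$-part, and then collapsing the sum over $\xi$ by orthogonality of the characters modulo $p^j$. I would begin by recording the identities $\chi(m)\overline{\chi}(n) = \psi(m)\overline{\psi}(n)\xi(m)\overline{\xi}(n)$, $\chi(-1) = \psi(-1)\xi(-1)$, and, since $S_\theta(m,n;c) = \sum_{(x,c)=1}\overline{\theta}(x)e_c(mx+n\overline{x})$ and $\overline{\overline{\chi}^2} = \chi^2 = \psi^2\xi^2$,
\[
S_{\overline{\chi}^2}(m,n;c) = \sum_{\substack{x\pmod c \\ (x,c)=1}}\psi^2(x)\xi^2(x)\,e_c(mx+n\overline{x}).
\]
Since $c = bp^r$ with $p\nmid b$, twisted multiplicativity of Kloosterman sums (the same factorization used at the start of the proof of \eqref{kloosterman2k}) gives $S_{\overline{\chi}^2}(m,n;c) = S(\overline{p}^r m,\overline{p}^r n;b)\,S_{\overline{\chi}^2}(\overline{b}m,\overline{b}n;p^r)$, where the $b$-factor carries the trivial character because $\overline{\chi}^2$, being induced from a character modulo $p^k$, is trivial modulo $b$; here $\overline{p}$ is the inverse of $p$ modulo $b$ and $\overline{b}$ that of $b$ modulo $p^r$. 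This $b$-factor does not involve $\chi$, so it pulls outside the sum over $\xi$ in \eqref{cosetsum}.

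Next I would open the $p^r$-factor as $S_{\overline{\chi}^2}(\overline{b}m,\overline{b}n;p^r) = \sum_{x\pmod{p^r},\, p\nmid x}\psi^2(x)\xi^2(x)\,e_{p^r}(\overline{b}mx+\overline{b}n\overline{x})$, substitute back into \eqref{cosetsum}, and bring the sum over $\xi\in\widehat{G}_{p^j}$ inside the sum over $x$. The inner sum becomes $\sum_{\xi}\xi(mx^2\overline{n})\bigl[1+\epsilon\psi(-1)\xi(-1)\bigr]$, and since $p\nmid mnx$ the quantity $mx^2\overline{n}$ is a unit modulo $p^j$, so orthogonality of the characters of $(\mathbb{Z}/p^j\mathbb{Z})^{\times}$ evaluates this sum to $\varphi(p^j)\bigl[\delta_{mx^2\equiv n\,(p^j)} + \epsilon\psi(-1)\,\delta_{mx^2\equiv -n\,(p^j)}\bigr]$. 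Because $r\ge k > j$, the inverse $\overline{b}$ modulo $p^r$ reduces to a unit modulo $p^j$, so $mx^2\equiv\pm n\pmod{p^j}$ is equivalent to $(\overline{b}m)x^2\equiv\pm(\overline{b}n)\pmod{p^j}$, and the two resulting restricted sums over $x$ are exactly $\mathcal{K}_\psi^{\pm}(\overline{b}m,\overline{b}n;p^r)$ as defined in \eqref{kpsidef}. Reassembling the prefactor $\psi(m)\overline{\psi}(n)$, the $b$-Kloosterman sum, the constant $\varphi(p^j)$, and the pair $\mathcal{K}_\psi^{+}+\epsilon\psi(-1)\mathcal{K}_\psi^{-}$ gives exactly \eqref{sigmasplit}.

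The argument is essentially bookkeeping, so I do not anticipate a real obstacle. The only points needing care are to track correctly which piece of $\overline{\chi}^2 = \overline{\psi}^2\overline{\xi}^2$ sits on which factor in the twisted-multiplicativity split (and to note that the $b$-factor degenerates to an ordinary Kloosterman sum), and to keep the two inverses straight — $\overline{p}$ taken modulo $b$ versus $\overline{b}$ taken modulo $p^r$ — while observing that the congruence condition modulo $p^j$ that defines $\mathcal{K}_\psi^{\pm}$ is insensitive to the choice of lift of $\overline{b}$ from $p^j$ to $p^r$.
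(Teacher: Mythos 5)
Your proposal is correct and follows essentially the same route as the paper: open the coset sum, write $\chi=\psi\xi$, use orthogonality of the characters modulo $p^j$ to impose the conditions $mx^2\equiv\pm n\pmod{p^j}$, and split off the modulus-$b$ part as an ordinary Kloosterman sum via the same twisted-multiplicativity factorization used in the proof of \eqref{kloosterman2k}. The only difference is bookkeeping order — the paper swaps the sums and applies orthogonality over the full modulus $c$ before factoring out the mod-$b$ Kloosterman sum, whereas you factor each $S_{\overline{\chi}^2}(m,n;c)$ first and then apply orthogonality to the $p^r$-part — which changes nothing of substance.
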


\begin{proof}
Opening the Kloosterman sum gives
\begin{equation}
\sigma_{\psi,\epsilon}(m,n;c) = \sum_{\chi\in\psi \widehat{G}_{p^j}}\chi(m)\overline{\chi}(n)\left[1+\epsilon\chi(-1)\right]\sum_{\substack{x\pmod c \\ (x,c)=1}}\chi^2(x)e_c(mx+n\overline{x}).
\end{equation}
We switch the order of summation to get
\begin{equation}
\sigma_{\psi,\epsilon}(m,n;c) = \sum_{\substack{x\pmod c \\ (x,c)=1}}e_c(mx+n\overline{x})\sum_{\chi\in\psi \widehat{G}_{p^j}}\chi(mx^2)\overline{\chi}(n)\left[1+\epsilon\chi(-1)\right].
\end{equation}
Writing the character $\chi$ instead as $\psi\chi$ for some $\chi \in H_j$, we have 
\begin{equation}
\sigma_{\psi,\epsilon}(m,n;c) = \sum_{\substack{x\pmod c \\ (x,c)=1}}\psi(mx^2)\overline{\psi}(n)e_c(mx+n\overline{x})\sum_{\chi\pmod{p^k}}\chi(mx^2)\overline{\chi}(n)\left[1+\epsilon\psi(-1)\chi(-1)\right].
\end{equation}
Applying the orthogonality principle, we write
\begin{equation}
\sigma_{\psi,\epsilon}(m,n;c) = \varphi(p^j)\sum_{\substack{x\pmod c \\ (x,c)=1 \\ mx^2\equiv n\pmod{p^j}}}\psi(mx^2)\overline{\psi}(n)e_c(mx+n\overline{x}) + \epsilon\varphi(p^j)\sum_{\substack{x\pmod c \\ (x,c)=1 \\ mx^2\equiv -n\pmod{p^j}}}\psi(mx^2)\overline{\psi}(-n)e_c(mx+n\overline{x}).
\end{equation}
Factoring out $\psi(m)\overline{\psi}(n)\varphi(p^j)$, and factoring out the Kloosterman sum modulo $b$ as well, gives the result.
\end{proof}

Next we establish a pair of results showing that at most one of the $\mathcal{K}_{\psi}^{\pm}(m,n;p^r)$ is nonzero at a time.

\begin{lemma}
If $m,n$ are positive integers not divisible by $p$, and $k \le r < j+k$, then $\mathcal{K}_{\psi}^+(m,n;p^r) = 0$.\label{lemmakplus}
\end{lemma}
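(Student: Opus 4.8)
The plan is to exploit an extra symmetry of the constraint set $\{x \bmod p^r : mx^2 \equiv n \pmod{p^j}\}$ appearing in \eqref{kpsidef}. For each $t$ running over residues modulo $p$, the map $x \mapsto x(1+p^{k-1}t)$ permutes this set: it is injective modulo $p^r$, and since $(1+p^{k-1}t)^2 \equiv 1 \pmod{p^{k-1}}$ while $j \le k-1$, it preserves the congruence $mx^2 \equiv n \pmod{p^j}$ (note that the hypothesis $m,n$ coprime to $p$ forces $(x,p)=1$ for every $x$ in the set, so $\overline{x}$ is meaningful). So I would reindex the sum $\mathcal{K}_{\psi}^{+}(m,n;p^r)$ by this substitution and compare the reindexed sum with the original term by term.

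First I would evaluate $\psi^2\bigl(x(1+p^{k-1}t)\bigr) = \psi^2(x)\,\psi^2(1+p^{k-1}t)$. Because $\psi^2$ is primitive modulo $q = p^k$ and $p$ is odd, its restriction to the order-$p$ subgroup $\{1+p^{k-1}t : t \bmod p\}$ of $(\mathbb{Z}/p^k)^\times$ is a nontrivial character, hence $\psi^2(1+p^{k-1}t) = e_p(\beta t)$ for some $\beta$ coprime to $p$ depending only on $\psi$. Next I would expand the Kloosterman phase. Using the congruence $(1+p^{k-1}t)^{-1} \equiv 1 - p^{k-1}t \pmod{p^{2k-2}}$ — which holds exactly modulo $p^r$ since $j<k$ gives $r < j+k \le 2k-1$, hence $r \le j+k-1 \le 2k-2$ — one obtains
\[
e_{p^r}\!\bigl(mx(1+p^{k-1}t) + n\overline{x(1+p^{k-1}t)}\bigr) = e_{p^r}(mx + n\overline{x})\, e_{p^{r-k+1}}\!\bigl((mx - n\overline{x})t\bigr),
\]
where the second factor makes sense because $r \ge k$. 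Finally, on the constraint set $mx^2 \equiv n \pmod{p^j}$ one has $mx \equiv n\overline{x} \pmod{p^j}$, so $p^j \mid (mx - n\overline{x})$; and since $r < j+k$ gives $r-k+1 \le j$, the factor $e_{p^{r-k+1}}\bigl((mx-n\overline{x})t\bigr)$ equals $1$ for every $x$ in the set.

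Assembling these identities gives $\mathcal{K}_{\psi}^{+}(m,n;p^r) = e_p(\beta t)\,\mathcal{K}_{\psi}^{+}(m,n;p^r)$ for every $t$ modulo $p$; choosing $t$ with $p \nmid \beta t$ makes $e_p(\beta t)\neq 1$ and forces $\mathcal{K}_{\psi}^{+}(m,n;p^r)=0$. The part requiring care is the bookkeeping of the three exponents, and it is worth noting that each hypothesis is used exactly once: $r \ge k$ lets us factor $p^{k-1}$ against the modulus $p^r$, $j < k$ (equivalently $r \le 2k-2$) is precisely what makes the linearization of $(1+p^{k-1}t)^{-1}$ exact modulo $p^r$, and $r < j+k$ is precisely what collapses the parasitic phase $e_{p^{r-k+1}}\bigl((mx-n\overline{x})t\bigr)$ to $1$ on the constraint set. (The analogous manipulation for $\mathcal{K}_{\psi}^{-}$ produces $mx+n\overline{x}$ rather than $mx-n\overline{x}$ in the exponent, which is why that case is not settled the same way.)
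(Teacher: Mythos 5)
Your proof is correct and uses essentially the same idea as the paper: reindex the sum by a multiplicative shift $x\mapsto(1+p^{s}t)x$ under which the constraint set and additive phase are invariant but $\psi^2$ picks up a nontrivial root of unity. The only difference is cosmetic — the paper splits into the cases $r\ge 2j$ and $r<2j$ (shifting by $1+p^{r-j}$ or $1+p^{j}$ respectively), whereas your uniform choice $s=k-1$ handles both at once since $r\le j+k-1\le 2k-2$ makes the linearization of the inverse exact and $r-k+1\le j$ kills the parasitic phase.
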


\begin{proof}
If $r \ge 2j$, we replace $x$ by $(1+p^{r-j})x$ in \eqref{kpsidef}, so that $\overline{x}$ is replaced by $(1-p^{r-j})\overline{x}$. Then the exponential is multiplied by $e_{p^j}(mx-n\overline{x})$, which is 1 since $mx^2\equiv n\pmod{p^j}$ so $mx\equiv n\overline{x}\pmod p$. However, the character is multiplied by $\psi^2(1+p^{r-j})$. This is not equal to 1, since $r-j < k$ and $\cond(\psi^2) = p^k$. So the sum equals $\psi^2(1+p^{r-j})\neq 1$ times itself, and thus vanishes. \\

If instead $r < 2j$, we replace $x$ instead by $(1+p^j)x$, so that $\overline{x}$ is replaced by $(1-p^j)\overline{x}$. Then the exponential is multiplied by $e_{p^{r-j}}(mx-n\overline{x})$, which is 1 as before since $r-j < j$; and the character is multiplied by $\psi^2(1+p^j)$, which is not 1 since $j < k$.
\end{proof}

\begin{lemma}
If $m,n$ are positive integers not divisible by $p$, and $r>k$, then $\mathcal{K}_{\psi}^-(m,n;p^r) = 0$. \label{lemmakminus}
\end{lemma}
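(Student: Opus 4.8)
The plan is to average $\mathcal{K}_\psi^-(m,n;p^r)$ over a shift of the summation variable by $p^{r-1}$. The key point is that such a shift is ``invisible'' both to the character $\psi^2$ (because $\cond(\psi^2)=p^k$ and $r-1\ge k$ once $r>k$) and to the quadratic congruence modulo $p^j$ defining the sum (because $r>k>j$ forces $r-1\ge j$), so it is a free symmetry of the sum which we can average away to manufacture an extra congruence condition modulo $p$.

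First I would verify that replacing $x$ by $x+p^{r-1}t$, for any fixed residue $t\pmod p$, leaves $\mathcal{K}_\psi^-(m,n;p^r)$ unchanged. This rests on three observations. (i) The index set is preserved: $m(x+p^{r-1}t)^2\equiv mx^2\pmod{p^j}$ since $r-1\ge j$, and $x+p^{r-1}t$ is again a unit. (ii) The character value is unchanged: $x+p^{r-1}t\equiv x\pmod{p^k}$ since $r-1\ge k$, and $\psi^2$ has modulus $p^k$. (iii) Using the truncated expansion $\overline{x+p^{r-1}t}\equiv \overline{x}-p^{r-1}t\,\overline{x}^2\pmod{p^r}$ (valid because $2(r-1)\ge r$), the phase becomes $mx+n\overline{x}+p^{r-1}t\,(m-n\overline{x}^2)$, so the exponential picks up exactly the factor $e_p\big(t(m-n\overline{x}^2)\big)$. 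Hence for every $t\pmod p$ we obtain $\mathcal{K}_\psi^-(m,n;p^r)=\sum_{x}\psi^2(x)\,e_{p^r}(mx+n\overline{x})\,e_p\big(t(m-n\overline{x}^2)\big)$, the sum being over the same index set as before.

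Summing this identity over $t\pmod p$ and dividing by $p$ collapses the $t$-sum to the indicator of $m\equiv n\overline{x}^2\pmod p$, i.e.\ of $mx^2\equiv n\pmod p$. Thus $\mathcal{K}_\psi^-(m,n;p^r)$ equals the same character sum restricted to those $x$ satisfying both $mx^2\equiv -n\pmod{p^j}$ and $mx^2\equiv n\pmod p$. Reducing the first congruence modulo $p$ and comparing it with the second gives $2n\equiv 0\pmod p$, which is impossible since $p$ is odd and $p\nmid n$. Therefore the index set is empty and $\mathcal{K}_\psi^-(m,n;p^r)=0$.

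There is no serious obstacle here; the only real idea is to use the additive shift at level $p^{r-1}$ rather than the multiplicative twist $x\mapsto(1+p^{r-j})x$ employed in the proof of Lemma~\ref{lemmakplus} — that twist fails for $\mathcal{K}_\psi^-$ precisely because the minus-sign condition makes $mx-n\overline{x}$ a unit rather than divisible by $p^j$, so it no longer fixes the phase. It is also worth noting that the same averaging applied to $\mathcal{K}_\psi^+$ produces only the condition $mx^2\equiv n\pmod p$, which is already implied by $mx^2\equiv n\pmod{p^j}$, so the argument correctly does nothing in the plus case. The points to be careful about are the expansion of $\overline{x+p^{r-1}t}$ modulo $p^r$ and the chain of inequalities $r-1\ge j$, $r-1\ge k$, $2(r-1)\ge r$, all of which follow from $r>k\ge j+1\ge 2$.
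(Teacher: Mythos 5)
Your proof is correct, and all the numerical hypotheses you need ($r-1\ge k$, $r-1\ge j$, $2(r-1)\ge r$, $p$ odd, $p\nmid n$) are indeed available from $1\le j<k<r$ and the standing assumption that $p$ is odd. The route differs from the paper's in a mild but genuine way. The paper splits $x=p^kx_1+x_2$ with $x_1$ running modulo $p^{r-k}$, so the inner geometric sum forces the full congruence $mx_2^2\equiv n\pmod{p^{r-k}}$; since the expansion $\overline{p^kx_1+x_2}=-p^kx_1\overline{x_2}^2+\overline{x_2}$ is only valid modulo $p^r$ when $r\le 2k$, the paper must treat $r\ge 2k$ separately by appealing to \eqref{cosetsum2k}. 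You instead average over the single shift $x\mapsto x+p^{r-1}t$, $t\pmod p$, which is a symmetry of the sum for every $r>k$ and extracts only the congruence $mx^2\equiv n\pmod p$ --- exactly the amount of information needed for the contradiction $2n\equiv 0\pmod p$ with the defining condition $mx^2\equiv-n\pmod{p^j}$. Both arguments end with the same incompatibility; yours buys uniformity in $r$ (no case split, no reference back to the $r\ge 2k$ corollary), while the paper's coarser splitting is the same computation it reuses in Lemma \ref{lemmaktwisted}, where the stronger condition modulo $p^{r-k}$ genuinely matters, so the case analysis there is shared infrastructure rather than wasted effort. Your side remarks (why the multiplicative twist of Lemma \ref{lemmakplus} fails for the minus sum, and why the averaging is vacuous for the plus sum) are also accurate.
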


\begin{proof}
If $r < 2k$, split $x = p^kx_1 + x_2$, where $x_1$ runs modulo $p^{r-k}$ and $x_2$ runs modulo $p^k$. Then
\begin{equation}
\mathcal{K}_{\psi}^-(m,n;p^r) = \sum_{x_1\pmod{p^{r-k}}}\sum_{\substack{x_2\pmod{p^k} \\ mx_2^2\equiv -n\pmod{p^j}}}\psi^2(x_2)e_{p^r}\left(m(p^kx_1+x_2)+n\overline{(p^kx_1+x_2)}\right).
\end{equation}
Rearranging and writing $\overline{p^kx_1+x_2} = -p^kx_1\overline{x_2}^2+\overline{x_2}$ gives
\begin{equation}
\mathcal{K}_{\psi}^-(m,n;p^r) = \sum_{\substack{x_2\pmod{p^k} \\ mx_2^2\equiv -n\pmod{p^j}}}\psi^2(x_2)e_{p^r}(mx_2+n\overline{x_2})\sum_{x_1\pmod{p^{r-k}}}e_{p^{r-k}}\left(x_1(m-n\overline{x_2}^2)\right).
\end{equation}
The inner sum vanishes unless $mx_2^2 \equiv n\pmod{p^{r-k}}$. But since $p\nmid n$ and $r > k$, we cannot have both $mx_2^2 \equiv -n\pmod p$ and $mx^2\equiv n\pmod p$, so every term in the sum over $x_2$ vanishes. \\

If instead $r\ge 2k$, this result is a direct consequence of \eqref{cosetsum2k}.
\end{proof}

As a result of this, we have a window of values of $r$ where the entire coset sum vanishes:
\begin{corollary}
If $b,m,n$ are positive integers not divisible by $p$, and $c = bp^r$ where $k<r<j+k$, then $\sigma_{\psi,\epsilon}(m,n;c) = 0$.\label{corljk}
\end{corollary}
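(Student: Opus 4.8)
The plan is to deduce this immediately from the split \eqref{sigmasplit} together with the two vanishing lemmas, Lemma \ref{lemmakplus} and Lemma \ref{lemmakminus}. Since $k < r < j+k$ forces in particular $r \ge k$, the hypotheses of \eqref{sigmasplit} are met, so
\begin{equation*}
\sigma_{\psi,\epsilon}(m,n;c) = \psi(m)\overline{\psi}(n)\varphi(p^j)S(\overline{p}^rm,\overline{p}^rn;b)\left[\mathcal{K}_{\psi}^+(\overline{b}m,\overline{b}n;p^r) + \epsilon\psi(-1)\mathcal{K}_{\psi}^-(\overline{b}m,\overline{b}n;p^r)\right].
\end{equation*}
The only thing to check is that the bracketed factor is zero.

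First I would note that $\overline{b}m$ and $\overline{b}n$ are again not divisible by $p$: since $p\nmid b$, the inverse $\overline{b}$ modulo $p^r$ is a unit at $p$, and $p\nmid m$, $p\nmid n$ by hypothesis. Hence both $\mathcal{K}_{\psi}^{\pm}(\overline{b}m,\overline{b}n;p^r)$ are of the form to which the vanishing lemmas apply. Because $k \le r < j+k$, Lemma \ref{lemmakplus} gives $\mathcal{K}_{\psi}^+(\overline{b}m,\overline{b}n;p^r) = 0$; and because $r > k$, Lemma \ref{lemmakminus} gives $\mathcal{K}_{\psi}^-(\overline{b}m,\overline{b}n;p^r) = 0$. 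Therefore the bracket vanishes identically, and so $\sigma_{\psi,\epsilon}(m,n;c) = 0$.

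There is no real obstacle here — the corollary is purely a bookkeeping combination of the preceding three results, the only (trivial) point requiring attention being the verification that the arguments $\overline{b}m,\overline{b}n$ stay coprime to $p$ so that Lemmas \ref{lemmakplus} and \ref{lemmakminus} are applicable. The conceptual content lies entirely in those two lemmas: the same-sign sum $\mathcal{K}_{\psi}^+$ is killed by the change of variables $x\mapsto(1+p^{r-j})x$ or $x\mapsto(1+p^j)x$ (which leaves the exponential fixed modulo the relevant power but multiplies the character by a nontrivial value, using $\cond(\psi^2)=p^k$), while the opposite-sign sum $\mathcal{K}_{\psi}^-$ is killed by the congruence obstruction $mx^2\equiv -n$ versus $mx^2\equiv n \pmod p$ once $r>k$.
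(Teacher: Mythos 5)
Your proof is correct and is exactly the paper's (implicit) argument: the corollary is stated as an immediate consequence of the decomposition \eqref{sigmasplit} together with Lemma \ref{lemmakplus} (applicable since $k\le r<j+k$) and Lemma \ref{lemmakminus} (applicable since $r>k$), which kill the same-sign and opposite-sign sums respectively. Your added remark that $\overline{b}m,\overline{b}n$ remain coprime to $p$, so the two lemmas apply to those arguments, is the right (and only) point of bookkeeping to check.
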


\begin{remark}
In Section \ref{analogysection}, we will study a sum in the weight aspect which has a decomposition that mirrors the results of Lemma \ref{lemmakplus}, Lemma \ref{lemmakminus}, and Corollary \ref{corljk}.
\end{remark}

If the same-sign sum $\mathcal{K}_{\psi}^+(m,n;p^r)$ does not vanish, then it is exactly equal to a twisted Kloosterman sum:
\begin{lemma}
If $m,n$ are positive integers not divisible by $p$, and $c = bp^r$ where $r\ge j+k$, then
\begin{equation}
\mathcal{K}_{\psi}^+(m,n;p^r) = S_{\overline{\psi}^2}(m,n;p^r).\label{ktwisted}
\end{equation}\label{lemmaktwisted}
\end{lemma}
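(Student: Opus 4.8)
The plan is to show that the two sides of \eqref{ktwisted} differ by a sum that vanishes identically. Unwinding the definitions, $S_{\overline{\psi}^2}(m,n;p^r) = \sum_{x\bmod p^r,\ p\nmid x}\psi^2(x)e_{p^r}(mx+n\overline{x})$, whereas $\mathcal{K}_{\psi}^+(m,n;p^r)$ is the same sum with the extra constraint $mx^2\equiv n\pmod{p^j}$; so it suffices to prove that
\[
\sum_{\substack{x\bmod p^r,\ p\nmid x\\ mx^2\not\equiv n\pmod{p^j}}}\psi^2(x)\,e_{p^r}(mx+n\overline{x}) = 0.
\]
Morally this is a stationary-phase cancellation: the excluded residues are non-critical points of the phase $f(x)=mx+n\overline{x}$ (twisted by $\psi^2$), so a short additive translation should turn the local sum into a complete, nontrivial geometric sum, exactly as the splitting arguments in Lemmas \ref{lemmakplus} and \ref{lemmakminus} produced vanishing inner sums.

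Concretely, I would stratify the bad residues by $d:=\nu_p(mx^2-n)\in\{0,1,\dots,j-1\}$ and, for a bad residue $x$, translate by the order-$p$ subgroup $\{p^{r-d-1}h:h\bmod p\}$, i.e.\ replace $x$ by $x+p^{r-d-1}h$. Three short $p$-adic valuation checks are needed. First, since $r\ge j+k$ and $d\le j-1$ we have $r-d-1\ge k=\cond(\psi^2)$, so $1+p^{r-d-1}h\overline{x}\equiv 1\pmod{p^k}$ and $\psi^2$ is constant along the translate; the inequality $r-d-1\ge 1$ also keeps $p\nmid x$. Second, $j<k$ forces $2(r-d-1)>r$, so the expansion $\overline{x+p^{r-d-1}h}=\overline{x}-p^{r-d-1}h\overline{x}^2+\cdots$ truncates modulo $p^r$ after the linear term, giving $f(x+p^{r-d-1}h)\equiv f(x)+p^{r-d-1}h\,\overline{x}^2(mx^2-n)\pmod{p^r}$; writing $mx^2-n=p^d u$ with $u$ a $p$-adic unit, the phase shift becomes $p^{r-1}h\,\overline{x}^2u$, hence $e_{p^r}(f(x+p^{r-d-1}h))=e_{p^r}(f(x))\,e_p(h\,\overline{x}^2u)$ with $\overline{x}^2u$ a unit. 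Third, $r-d-1>d$ (which follows from $r\ge j+k>2d+1$, using $d<j<k$) shows $\nu_p(m(x+p^{r-d-1}h)^2-n)=d$ as well, so the stratum index is preserved and the bad residues split into disjoint blocks of size $p$. Summing $\psi^2(x)e_{p^r}(f(x))$ over such a block gives $\psi^2(x)e_{p^r}(f(x))\sum_{h\bmod p}e_p(h\,\overline{x}^2u)=0$, and adding these up proves the displayed identity, hence \eqref{ktwisted}.

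The only genuinely load-bearing point is that the pair of inequalities $r-d-1\ge k$ and $2(r-d-1)>r$ holds uniformly over the stratum $0\le d\le j-1$: the first is precisely the hypothesis $r\ge j+k$ at its extreme $d=j-1$, and the second is where $j<k$ enters. Everything else is routine bookkeeping of the form $(x+\delta)^{-1}\equiv\overline{x}-\delta\overline{x}^2\pmod{p^r}$, and as throughout this section $p$ is odd. The one mild subtlety in writing it out carefully is checking that $x\mapsto\bigl(d,\ \text{coset of }p^{r-d-1}\mathbb{Z}/p^r\mathbb{Z}\bigr)$ really partitions the bad residues, which is immediate once $d$ is known to be constant on each coset.
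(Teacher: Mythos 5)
Your argument is correct, but it takes a genuinely different route from the paper's. The paper proves \eqref{ktwisted} with the same splitting device used in Lemmas \ref{lemmakplus} and \ref{lemmakminus}: for $r<2k$ it writes $x=p^kx_1+x_2$ with $x_1$ modulo $p^{r-k}$ and $x_2$ modulo $p^k$, the geometric sum over $x_1$ forces $mx_2^2\equiv n\pmod{p^{r-k}}$, and since $r-k\ge j$ both $\mathcal{K}_{\psi}^+(m,n;p^r)$ and $S_{\overline{\psi}^2}(m,n;p^r)$ collapse to the identical subsum $p^{r-k}\sum_{mx_2^2\equiv n\pmod{p^{r-k}}}\psi^2(x_2)e_{p^r}(mx_2+n\overline{x_2})$, while the range $r\ge 2k$ is disposed of by citing \eqref{cosetsum2k}. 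You instead show directly that the complementary sum over $x$ with $mx^2\not\equiv n\pmod{p^j}$ vanishes, stratifying by $d=\nu_p(mx^2-n)\le j-1$ and translating by $p^{r-d-1}h$; your two inequalities $r-d-1\ge k$ and $2(r-d-1)>r$ are exactly where $r\ge j+k$ and $j<k$ enter, and the resulting phase shift $e_p(h\,\overline{x}^2u)$ with $u$ a unit kills each stratum. What this buys is uniformity: no case split at $r=2k$ and no appeal to the earlier corollary, at the cost of the stratification bookkeeping; the paper's proof is shorter and exhibits both sides as the same explicit collapsed sum. One small imprecision to repair when writing it up: for $d\ge 1$ the set $\{p^{r-d-1}h : h\bmod p\}$ is \emph{not} a subgroup of $\mathbb{Z}/p^r\mathbb{Z}$, so your size-$p$ ``blocks'' do not literally partition the stratum as stated. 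The cleanest fix uses your own third check: translation by any multiple of $p^{r-d-1}$ preserves the stratum, so the stratum is a union of cosets of the subgroup $p^{r-d-1}\mathbb{Z}/p^r\mathbb{Z}$ (of order $p^{d+1}$), and on each such coset the summand equals the fixed constant $\psi^2(x)e_{p^r}(mx+n\overline{x})$ times the nontrivial additive character $t\mapsto e_p(t\,\overline{x}^2u)$ of $\mathbb{Z}/p^{d+1}\mathbb{Z}$, whose complete sum vanishes; equivalently, simply average the stratum sum over the translations $h\bmod p$. Either way this is a one-line adjustment, not a gap.
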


\begin{proof}
If $r < 2k$, split $x = p^kx_1+x_2$ as in Lemma \ref{lemmakminus} to get
\begin{equation}
\mathcal{K}_{\psi}^+(m,n;p^r) = \sum_{\substack{x_2\pmod{p^k} \\ mx_2^2\equiv n\pmod{p^j}}}\psi^2(x_2)e_{p^r}(mx_2+n\overline{x_2})\sum_{x_1\pmod{p^{r-k}}}e_{p^{r-k}}\left(x_1(m-n\overline{x_2}^2)\right).
\end{equation}
The inner sum vanishes unless $mx_2^2\equiv n\pmod{p^{r-k}}$. Since $r-k \ge j$, we just write this as
\begin{equation}
\mathcal{K}_{\psi}^+(m,n;p^r) = p^{r-k}\sum_{\substack{x_2\pmod{p^k} \\ mx_2^2\equiv n\pmod{p^{r-k}}}}\psi^2(x_2)e_{p^r}(mx_2+n\overline{x_2}).
\end{equation}
Splitting the right-hand side up the same way, we write
\begin{equation}
S_{\overline{\psi}^2}(m,n,p^r) = \sum_{\substack{x\pmod {p^r} \\ p\nmid x}}\psi^2(x)e_{p^r}(mx+n\overline{x}) = \sum_{x_1\pmod{p^{r-k}}}\sum_{\substack{x_2\pmod{p^k} \\ p\nmid x_2}}\psi^2(x_2)e_{p^r}(mp^kx_1+mx_2-np^kx_1\overline{x_2}^2 + n\overline{x_2}),
\end{equation}
which rearranges to
\begin{equation}
S_{\overline{\psi}^2}(m, n, p^r) = \sum_{\substack{x_2\pmod{p^k} \\ p\nmid x_2}}\psi^2(x_2)e_{p^r}(mx_2+n\overline{x_2})\sum_{x_1\pmod{p^{r-k}}}e_{p^{r-k}}\left(x_1(m-n\overline{x_2}^2)\right).
\end{equation}
Since the inner sum still vanishes unless $mx^2\equiv n\pmod{p^{r-k}}$, this sum collapses to
\begin{equation}
S_{\overline{\psi}^2}(m,n;c) = p^{r-k}\sum_{\substack{x_2\pmod{p^k} \\ mx_2^2\equiv n\pmod{p^{r-k}}}}\psi^2(x_2)e_{p^r}(mx_2+n\overline{x_2}),
\end{equation}
which is the same as the left-hand side, as desired. \\

If instead $r\ge 2k$, this result is a direct consequence of \eqref{cosetsum2k}.
\end{proof}

Substituting \eqref{ktwisted} into \eqref{sigmasplit} gives
\begin{corollary}
If $b,m,n$ are positive integers not divisible by $p$, and $c = bp^r$ where $r\ge j+k$, then
\begin{equation}
\sigma_{\psi,\epsilon}(m,n;c) = \psi(m)\overline{\psi}(n)\varphi(p^j)S_{\overline{\psi}^2}(m,n;c).
\end{equation}\label{corgjk}
\end{corollary}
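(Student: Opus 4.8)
The plan is to combine the decomposition \eqref{sigmasplit} with the two structural lemmas on the local sums $\mathcal{K}_\psi^{\pm}$, and then reassemble the resulting pieces into a single twisted Kloosterman sum modulo $c$ via the Chinese Remainder Theorem. No new input is needed beyond what has already been proved; the work is entirely bookkeeping.

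First I would observe that, since $j \ge 1$, we have $r \ge j+k > k$, so Lemma \ref{lemmakminus} applies to the argument pair $(\overline{b}m,\overline{b}n)$ and gives $\mathcal{K}_\psi^-(\overline{b}m,\overline{b}n;p^r) = 0$. Hence the opposite-sign term in \eqref{sigmasplit} vanishes identically, leaving
\begin{equation}
\sigma_{\psi,\epsilon}(m,n;c) = \psi(m)\overline{\psi}(n)\varphi(p^j)S(\overline{p}^rm,\overline{p}^rn;b)\,\mathcal{K}_\psi^+(\overline{b}m,\overline{b}n;p^r).
\end{equation}
Next, because $r \ge j+k$, Lemma \ref{lemmaktwisted} identifies $\mathcal{K}_\psi^+(\overline{b}m,\overline{b}n;p^r) = S_{\overline{\psi}^2}(\overline{b}m,\overline{b}n;p^r)$ via \eqref{ktwisted}, so that
\begin{equation}
\sigma_{\psi,\epsilon}(m,n;c) = \psi(m)\overline{\psi}(n)\varphi(p^j)S(\overline{p}^rm,\overline{p}^rn;b)\,S_{\overline{\psi}^2}(\overline{b}m,\overline{b}n;p^r).
\end{equation}

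It then remains to recognize the product of the last two factors as a single twisted Kloosterman sum modulo $c = bp^r$. Since $\psi$, and hence $\overline{\psi}^2$, is a character modulo $p^k$ with $p^k \mid p^r$, we may view $\overline{\psi}^2$ as a character modulo $c$ whose component on the factor $b$ is trivial. The standard multiplicative factorization of Kloosterman-type sums — exactly the one already used at the outset of the proof of \eqref{kloosterman2k} — then gives $S_{\overline{\psi}^2}(m,n;c) = S(\overline{p}^rm,\overline{p}^rn;b)\,S_{\overline{\psi}^2}(\overline{b}m,\overline{b}n;p^r)$, using $(b,p^r)=1$ and $(mn,p)=1$. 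Substituting this back produces the stated identity. The only point that deserves a moment's care — and the closest thing to an obstacle — is checking that in the CRT decomposition $\overline{\psi}^2$ really does sit entirely on the $p^r$-part with trivial $b$-component, so that the untwisted sum $S(\overline{p}^rm,\overline{p}^rn;b)$ correctly carries the full $b$-aspect; everything else is routine.
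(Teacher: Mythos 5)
Your proposal is correct and follows essentially the same route as the paper: the paper obtains the corollary by substituting \eqref{ktwisted} into \eqref{sigmasplit}, with the opposite-sign term killed by Lemma \ref{lemmakminus} (valid since $r\ge j+k>k$) and the product reassembled into $S_{\overline{\psi}^2}(m,n;c)$ via the same twisted multiplicativity used in the proof of \eqref{kloosterman2k}. You have merely made explicit the bookkeeping the paper leaves implicit.
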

Note that this agrees with \eqref{cosetsum2k} when $r\ge 2k$ by applying \eqref{kloosterman2k} with $\chi=\psi$. \\

\begin{remark}
Corollaries \ref{corljk} and \ref{corgjk} together can be thought of as a special case of Lemma 4.19 from \cite{YH}. In Hu's notation, $\mu$ is our $c^{-2}$, $k$ is our $r$, $i_0$ is our $k$, $c_0$ is our $p^k$ (per Hu's Definition 4.17, since we are doing the principal series case), $l$ is our $j$, and $l_0$ equals 0 in our work. The symbols $\theta$ and $\theta'$ are essentially Dirichlet characters in the principal series case, and the sum on the left runs over our coset $\psi\widehat{G}_{p^j}$. The group index $[\theta[l]: \theta[l_0]]$ equals $\varphi(p^j)$ in our case, and it can also be noted that there is no dependence on $\epsilon$ since the opposite-sign sum vanishes due to \ref{lemmakminus}.
\end{remark}

Substituting Lemma \ref{lemmakminus}, Corollary \ref{corljk}, and Corollary \ref{corgjk} in \eqref{sigmamn} gives the expansion
\begin{multline}
\sigma_{\psi,\epsilon}(m, n) = \frac{\pi(4\pi)^{\kappa-1}\left[\Gamma_0(1):\Gamma_0(q)\right]\psi(m)\overline{\psi}(n)\varphi(p^j)}{3\Gamma(\kappa-1)} \times \\
\left[\delta(m, n) + 2\pi i^{-\kappa}\epsilon\psi(-1)\sum_{\substack{b\ge 1 \\ p\nmid b}}\frac{S(\overline{q}m,\overline{q}n;b)\mathcal{K}_{\psi}^-(\overline{b}m,\overline{b}n;q)}{bq}J_{\kappa-1}\left(\frac{4\pi\sqrt{mn}}{bq}\right) \right. \\ + \left.2\pi i^{-\kappa}\sum_{\substack{c> 0 \\ c\equiv 0\pmod{p^{j+k}}}}\frac{S_{\overline{\psi}^2}(m,n;c)}cJ_{\kappa-1}\left(\frac{4\pi\sqrt{mn}}c\right)\right].\label{sigmamnsum}
\end{multline}

Finally, in the only case where the coset sum is not a Kloosterman sum, we can write it as a Fourier series in multiplicative characters:
\begin{lemma}
If $b, m$ are relatively prime positive integers not divisible by $p$, and $c = bq$, then
\begin{equation}
\sigma_{\psi,\epsilon}(m,1;c) = \frac{\epsilon}{\varphi(c)}\sum_{\substack{\eta\pmod c \\ \nu_p(\cond(\eta^2)) \le k-j}}\overline{\eta}(m)\widehat{\sigma}_{\psi}(\eta),\label{csfourier}
\end{equation}
where
\begin{equation}
\widehat{\sigma}_{\psi}(\eta) = \frac{\eta_q(-1)\eta_q^2(b)\eta_b^2(q)\tau^2(\eta_b)\tau(\psi\eta_q)\varphi(p^{j+k})}{\tau(\psi\overline{\eta_q})},\label{csfourierdef}
\end{equation}
and $\eta_b,\eta_q$ are characters modulo $b,q$ respectively whose product is $\eta$.
\end{lemma}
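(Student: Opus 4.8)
The plan is to put ourselves in the regime of equation \eqref{sigmasplit} with $r=k$: since $k\le r<j+k$, Lemma \ref{lemmakplus} kills the same-sign sum $\mathcal{K}_\psi^+$, so for $(m,c)=1$ that identity degenerates to
\[
\sigma_{\psi,\epsilon}(m,1;c)=\epsilon\,\psi(-1)\,\varphi(p^j)\,\psi(m)\,S(\overline{q}\,m,\overline{q};b)\,\mathcal{K}_\psi^-(\overline{b}\,m,\overline{b};p^k),
\]
where $\overline{q}$ is the inverse of $q=p^k$ modulo $b$. By the Chinese Remainder Theorem the right-hand side, as a function of $m$ modulo $c=bp^k$, factors as the scalar $\epsilon\psi(-1)\varphi(p^j)$ times a function of $m\bmod b$ — namely $S(\overline{q}\,m,\overline{q};b)=S(\overline{q}^2m,1;b)$ — times a function of $m\bmod p^k$ — namely $\psi(m)\mathcal{K}_\psi^-(\overline{b}\,m,\overline{b};p^k)$. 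Hence its Fourier coefficient over $(\mathbb{Z}/c\mathbb{Z})^*$ at $\eta=\eta_b\eta_q$ is the product of the two local Fourier coefficients, and it is enough to compute each. The $b$-part is routine: opening the Kloosterman sum and using $\sum_{m\bmod b,\,(m,b)=1}\eta_b(m)e_b(mx)=\overline{\eta_b}(x)\tau_b(\eta_b)$ for $(x,b)=1$, one finds $\sum_m S(\overline{q}^2m,1;b)\,\eta_b(m)=\eta_b^2(q)\,\tau_b(\eta_b)^2$, which is the factor $\eta_b^2(q)\tau^2(\eta_b)$ in \eqref{csfourierdef}.

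The substance is the $p$-local coefficient $\sum_{m\bmod p^k,\,(m,p)=1}(\psi\eta_q)(m)\,\mathcal{K}_\psi^-(\overline{b}\,m,\overline{b};p^k)$. I would open \eqref{kpsidef}, interchange the order of summation, and use the quadratic constraint $\overline{b}\,mx^2\equiv-\overline{b}\pmod{p^j}$ to parametrize $m=-\overline{x}^2u$ with $u$ running over the subgroup $\{u\equiv1\pmod{p^j}\}$ of $(\mathbb{Z}/p^k\mathbb{Z})^*$. The character collapses — the $\psi^2(x)$ from $\mathcal{K}_\psi^-$ cancels the $\psi^2$ produced by $(\psi\eta_q)(-\overline{x}^2u)$, leaving $\overline{\eta_q^2}(x)$ and an outer prefactor $(\psi\eta_q)(-1)$ — and the exponential becomes $e_{p^k}(\overline{b}\,\overline{x}(1-u))$, so after $x\mapsto\overline{x}$ the inner sum is the Gauss sum $\sum_{x\bmod p^k,\,(x,p)=1}\eta_q^2(x)\,e_{p^k}(\overline{b}(1-u)x)$. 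Since $p^j\mid1-u$ for every admissible $u$, this Gauss sum is ramified: it vanishes unless $\nu_p(\cond(\eta_q^2))\le k-\nu_p(1-u)$, hence in particular unless $\nu_p(\cond(\eta_q^2))\le k-j$; and because $\cond(\eta^2)=\cond(\eta_b^2)\cond(\eta_q^2)$ with $\cond(\eta_b^2)$ prime to $p$, this is precisely the restriction $\nu_p(\cond(\eta^2))\le k-j$ in the statement. When it does not vanish it collapses to $p^{\nu_p(1-u)}\,\overline{\eta_q^2}(\,\cdot\,)\,\tau_{p^{k-\nu_p(1-u)}}(\eta_q^2)$.

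What remains is the outer sum over $u$, which I would organize by $f=\nu_p(1-u)$, writing $u=1-p^fs$ with $(s,p)=1$. The crucial input is that $\psi\eta_q$ and $\psi\overline{\eta_q}$ are primitive modulo $p^k$: this follows from $\cond(\psi^2)=p^k$ (which forces $\psi$ primitive mod $p^k$) together with the fact that the conductor hypothesis keeps $\cond(\eta_q)<p^k$, so on each higher congruence subgroup $1+p^f\mathbb{Z}/p^k\mathbb{Z}$ with $f$ large, $\psi\eta_q$ acts as an explicit additive character. Feeding this into the $s$-sums should reassemble everything into the ratio $\tau_q(\psi\eta_q)/\tau_q(\psi\overline{\eta_q})$, with the surviving powers of $p$ combining to $\varphi(p^{j+k})/\varphi(p^j)=p^k$ and the prefactor $(\psi\eta_q)(-1)$ combining with the $\psi(-1)$ carried from \eqref{sigmasplit} to leave $\eta_q(-1)$; multiplying back the $b$-local factor $\eta_b^2(q)\tau^2(\eta_b)$, the factor $\eta_q^2(b)$ produced by pulling $\overline{b}$ out of the Gauss sum, and the scalar $\epsilon\psi(-1)\varphi(p^j)$ then yields $\frac{\epsilon}{\varphi(c)}\widehat{\sigma}_\psi(\eta)$. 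I expect the main obstacle to be exactly this $p$-adic bookkeeping — determining precisely when the ramified inner Gauss sum vanishes (the origin of the conductor restriction) and then checking that the $u$-sum collapses to the advertised clean ratio rather than a messier expression. A more symmetric alternative bypassing \eqref{sigmasplit} entirely is to expand $\sigma_{\psi,\epsilon}(m,1;c)$ directly from \eqref{cosetsum} and Fourier-invert each twisted Kloosterman sum, so that the Fourier coefficient equals $\sum_{\chi\in\psi\widehat{G}_{p^j}}[1+\epsilon\chi(-1)]\tau_c(\chi\eta)\tau_c(\overline{\chi}\eta)$; splitting the Gauss sums over $b$ and $p^k$ and evaluating the resulting coset average of products of Gauss sums runs into the same conductor dichotomy and the same $p$-adic computation.
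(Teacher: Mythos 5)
Your set-up is genuinely different from the paper's (the paper never passes through \eqref{sigmasplit}: it Fourier-inverts each $S_{\overline{\chi}^2}(m,1;c)$ via \eqref{klfourier}, collapses the $m$-sum to be left with the coset average $\sum_{\chi\in\psi\widehat{G}_{p^j}}\chi(-1)\tau(\chi\eta)\tau(\overline{\chi}\eta)$, and then evaluates that average by opening the mod-$q$ Gauss sums), and the reductions you do carry out are correct: dropping $\mathcal{K}_{\psi}^{+}$ at $r=k$ by Lemma \ref{lemmakplus}, the CRT factorization of the Fourier coefficient, the $b$-local computation giving $\eta_b^2(q)\tau^2(\eta_b)$, the parametrization $m=-\overline{x}^2u$ with $u\equiv1\pmod{p^j}$, and the observation that the resulting ramified Gauss sum forces $\nu_p(\cond(\eta_q^2))\le k-j$ (hence the restriction on $\eta$ in \eqref{csfourier}, and the primitivity of $\psi\eta_q$ and $\psi\overline{\eta_q}$ for the surviving $\eta$, since $p$ is odd).

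The gap is that after these reductions the entire content of \eqref{csfourierdef} --- the ratio of Gauss sums, the factor $\varphi(p^{j+k})$, and the sign --- rests on the single identity
\begin{equation*}
\sum_{\substack{u\pmod{p^k} \\ u\equiv 1\pmod{p^j}}}(\psi\eta_q)(u)\sum_{\substack{x\pmod{p^k} \\ p\nmid x}}\eta_q^2(x)\,e_{p^k}\left(\overline{b}(1-u)x\right)\;=\;\eta_q^2(b)\,p^k\,\frac{\tau(\psi\eta_q)}{\tau(\psi\overline{\eta_q})}
\qquad\left(\nu_p\bigl(\cond(\eta_q^2)\bigr)\le k-j\right),
\end{equation*}
and you do not prove it: you say the $u$-sum, organized by $\nu_p(1-u)$, ``should reassemble'' into this ratio, and you yourself flag the $p$-adic bookkeeping as the main obstacle. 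Primitivity of $\psi\eta_q$ alone does not finish this; one must actually evaluate the partial Gauss sums at each level $f=\nu_p(1-u)$, show the intermediate ranges vanish or cancel, and identify the survivors with $\tau(\psi\eta_q)/\tau(\psi\overline{\eta_q})$ --- this is exactly where the paper's proof does all of its work (the $t$-sum detecting $\cond(\psi\overline{\eta_q})=q$, the expansion by $\tau(\psi\overline{\eta_q})$, the collapse to $u\equiv-1\pmod{p^{k-j}}$, and the final sum over $v$ of $\overline{\eta_q}^2(p^{k-j}v-1)$ producing both the conductor condition and the factor $p^j$). Your ``more symmetric alternative'' is in fact the paper's argument, but you leave it equally unexecuted, so the central evaluation is missing on either route. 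As a check that your target is the right one: for $\eta_q$ principal the left side is $\varphi(p^k)+p^{k-1}=p^k$ (only $u=1$ and $\nu_p(1-u)=k-1$ contribute, the latter via primitivity of $\psi$), matching the right side; completing the general case is the substance of the lemma.
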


\begin{proof}
Using Fourier inversion, write
\begin{equation}
\widehat{\sigma}_{\psi}(\eta) = \epsilon\sum_{m\pmod c}\eta(m)\sigma_{\psi,\epsilon}(m,1;c),
\end{equation}
for any character $\eta\pmod c$. Opening up the sum in \eqref{cosetsum}, and using the fact that the same-sign sum vanishes due to Lemma \ref{lemmakplus}, gives
\begin{equation}
\widehat{\sigma}_{\psi}(\eta) = \sum_{m\pmod c}\sum_{\chi\in\psi\widehat{G}_{p^j}}\chi(-m)\eta(m)S_{\overline{\chi}^2}(m,1;c).
\end{equation}
Opening up the Kloosterman sum using \eqref{klfourier} gives
\begin{equation}
\widehat{\sigma}_{\psi}(\eta) = \frac1{\varphi(c)}\sum_{m\pmod c}\sum_{\chi\in\psi\widehat{G}_{p^j}}\sum_{\vartheta\pmod c}\chi(-m)\eta(m)\overline{\vartheta}(m)\tau(\vartheta)\tau(\overline{\chi}^2\vartheta).
\end{equation}
Now we bring the sum over $m$ to the inside and write
\begin{equation}
\widehat{\sigma}_{\psi}(\eta) = \frac1{\varphi(c)}\sum_{\chi\in\psi\widehat{G}_{p^j}}\sum_{\vartheta\pmod c}\chi(-1)\tau(\vartheta)\tau(\overline{\chi}^2\vartheta)\sum_{m\pmod c}\chi(m)\eta(m)\overline{\vartheta}(m).
\end{equation}
This sum equals $\varphi(c)$ if $\vartheta = \chi\eta$ and $0$ otherwise. Thus this collapses to give
\begin{equation}
\widehat{\sigma}_{\psi}(\eta) = \sum_{\chi\in\psi\widehat{G}_{p^j}}\chi(-1)\tau(\chi\eta)\tau(\overline{\chi}\eta).
\end{equation}
Due to equation (3.16) of \cite{IK}, we can factor the Gauss sums as
\begin{align}
\tau(\chi\eta) &= \chi(b)\eta_q(b)\eta_b(q)\tau(\chi\eta_q)\tau(\eta_b), \\
\tau(\overline{\chi}\eta) &= \overline{\chi}(b)\eta_q(b)\eta_b(q)\tau(\overline{\chi}\eta_q)\tau(\eta_b),
\end{align}
and so
\begin{equation}
\widehat{\sigma}_{\psi}(\eta) = \eta_q^2(b)\eta_b^2(q)\tau^2(\eta_b)\sum_{\chi\in\psi\widehat{G}_{p^j}}\chi(-1)\tau(\chi\eta_q)\tau(\overline{\chi}\eta_q).
\end{equation}
Now opening up the Gauss sums gives 
\begin{equation}
\widehat{\sigma}_{\psi}(\eta) = \eta_q^2(b)\eta_b^2(q)\tau^2(\eta_b)\sum_{\chi\in\psi\widehat{G}_{p^j}}\sum_{x\pmod q}\sum_{y\pmod q}\chi(-x)\overline{\chi}(y)\eta_q(x)\eta_q(y)e_q(x+y).
\end{equation}
Bring the sum over $\chi$ to the inside to get
\begin{equation}
\widehat{\sigma}_{\psi}(\eta) = \eta_q^2(b)\eta_b^2(q)\tau^2(\eta_b)\sum_{x\pmod q}\sum_{y\pmod q}\eta_q(x)\eta_q(y)e_q(x+y)\sum_{\chi\in\psi\widehat{G}_{p^j}}\chi(-x)\overline{\chi}(y).
\end{equation}
If $y\equiv -x\pmod{p^j}$, the inner sum equals $\psi(-x)\overline{\psi}(y)\varphi(p^j)$. Otherwise it vanishes. So we write
\begin{equation}
\widehat{\sigma}_{\psi}(\eta) = \eta_q^2(b)\eta_b^2(q)\tau^2(\eta_b)\varphi(p^j)\sum_{\substack{x, y\pmod q \\ y\equiv -x\pmod{p^j}}}\psi(-x)\overline{\psi}(y)\eta_q(x)\eta_q(y)e_q(x+y).
\end{equation}
Write $y = p^jt - x$, where $t$ runs modulo $p^{k-j}$. Then
\begin{equation}
\widehat{\sigma}_{\psi}(\eta) = \eta_q^2(b)\eta_b^2(q)\tau^2(\eta_b)\varphi(p^j)\sum_{x\pmod q}\psi(-x)\eta_q(x)\sum_{t\pmod{p^{k-j}}}\overline{\psi}(p^jt-x)\eta_q(p^jt-x)e_{p^{k-j}}(t).
\end{equation}
If $\cond(\psi\overline{\eta_q}) \neq q$, the sum over $t$ vanishes due to orthogonality. So suppose in the following that $\psi\overline{\eta_q}$ is primitive. Supposing also that $p\nmid x$, we have
\begin{equation}
\overline{\psi}(p^jt-x)\eta_q(p^jt-x) = \frac1{\tau(\psi\overline{\eta_q})}\sum_{u\pmod q}\psi(u)\overline{\eta_q}(u)e_q\left(u(p^jt-x)\right).
\end{equation}
Summing over $t$ gives
\begin{equation}
\sum_{t\pmod{p^{k-j}}}\overline{\psi}(p^jt-x)\eta_q(p^jt-x)e_{p^{k-j}}(t) = \frac1{\tau(\psi\overline{\eta_q})}\sum_{u\pmod q}\psi(u)\overline{\eta_q}(u)e_q(-ux)\sum_{t\pmod{p^{k-j}}}e_{p^{k-j}}\left(t(u+1)\right). 
\end{equation}
If $u\equiv -1\pmod{p^{k-j}}$, the sum over $t$ equals $p^{k-j}$; otherwise it vanishes. So we substitute $u = p^{k-j}v - 1$, where $v$ runs modulo $p^j$, and collapse to
\begin{equation}
\sum_{t\pmod{p^{k-j}}}\overline{\psi}(p^jt-x)\eta_q(p^jt-x)e_{p^{k-j}}(t) = \frac{p^{k-j}}{\tau(\psi\overline{\eta_q})}\sum_{v\pmod{p^j}}\psi(p^{k-j}v-1)\overline{\eta_q}(p^{k-j}v-1)e_q\left(-x(p^{k-j}v-1)\right).
\end{equation}
Then substituting this back in gives
\begin{equation}
\widehat{\sigma}_{\psi}(\eta) = \frac{\eta_q^2(b)\eta_b^2(q)\tau^2(\eta_b)\varphi(q)}{\tau(\psi\overline{\eta_q})}\sum_{x\pmod q}\psi(-x)\eta_q(x)\sum_{v\pmod{p^j}}\psi(p^{k-j}v-1)\overline{\eta_q}(p^{k-j}v-1)e_q\left(-x(p^{k-j}v-1)\right).
\end{equation}
The sum over $x$ is a Gauss sum, so we write
\begin{equation}
\widehat{\sigma}_{\psi}(\eta) = \frac{\eta_q(-1)\eta_q^2(b)\eta_b^2(q)\tau^2(\eta_b)\tau(\psi\eta_q)\varphi(q)}{\tau(\psi\overline{\eta_q})}\sum_{v\pmod{p^j}}\psi(p^{k-j}v-1)\overline{\eta_q}(p^{k-j}v-1)\overline{\psi}(p^{k-j}v-1)\overline{\eta_q}(p^{k-j}v-1),
\end{equation}
which collapses to
\begin{equation}
\widehat{\sigma}_{\psi}(\eta) = \frac{\eta_q(-1)\eta_q^2(b)\eta_b^2(q)\tau^2(\eta_b)\tau(\psi\eta_q)\varphi(q)}{\tau(\psi\overline{\eta_q})}\sum_{v\pmod{p^j}}\overline{\eta_q}^2(p^{k-j}v-1).
\end{equation}
The remaining sum equals $p^j$ (since all terms equal 1) if $\cond(\eta_q^2) \le p^{k-j}$; and otherwise it equals 0. Thus the result follows.
\end{proof}

Substituting in the principal character $\eta=\chi_0$ gives a simpler result:
\begin{corollary}
We have
\begin{equation}
\widehat{\sigma}_{\psi}(1) = \mu^2(b)\varphi(p^{j+k}).\label{csprincipal}
\end{equation}
\end{corollary}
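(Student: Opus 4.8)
The plan is to obtain \eqref{csprincipal} as an immediate specialization of the preceding lemma, substituting the principal character $\eta = \chi_0$ modulo $c = bq$ into the closed form \eqref{csfourierdef}. Since $(b,q)=1$, the principal character modulo $c$ factors as $\chi_0 = \eta_b\eta_q$ with $\eta_b$ the principal character modulo $b$ and $\eta_q$ the principal character modulo $q$. Before applying the formula I would check that the hypotheses of the lemma hold for this $\eta$: we have $\eta^2 = \chi_0$, so $\cond(\eta^2) = 1$ and hence $\nu_p(\cond(\eta^2)) = 0 \le k-j$; and $\psi\overline{\eta_q} = \psi$, which has conductor $q$, so the denominator $\tau(\psi\overline{\eta_q}) = \tau(\psi)$ is nonzero and \eqref{csfourierdef} is valid.

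Next I would evaluate the factors in \eqref{csfourierdef} one at a time. Because $\eta_q$ is principal modulo $q$ and $p\nmid b$, we get $\eta_q(-1) = 1$ and $\eta_q^2(b) = 1$; because $\eta_b$ is principal modulo $b$ and $(q,b) = 1$, we get $\eta_b^2(q) = 1$. The Gauss sum $\tau(\eta_b)$ is the Ramanujan sum $\sum_{(x,b)=1} e_b(x) = \mu(b)$, so $\tau^2(\eta_b) = \mu^2(b)$. Finally, multiplying a Dirichlet character modulo $q$ by the principal character modulo $q$ leaves it unchanged, so $\psi\eta_q = \psi = \psi\overline{\eta_q}$ as characters modulo $q$ and therefore $\tau(\psi\eta_q)/\tau(\psi\overline{\eta_q}) = 1$. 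Collecting these in \eqref{csfourierdef} leaves $\widehat{\sigma}_{\psi}(1) = \mu^2(b)\varphi(p^{j+k})$, which is \eqref{csprincipal}.

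There is no real obstacle here; the corollary is pure bookkeeping once the lemma is in hand. The only point that warrants a moment's attention is the identity $\tau(\eta_b) = \mu(b)$ for the Gauss sum of the principal character modulo $b$, which is the classical evaluation of the Ramanujan sum $c_b(1)$ and is what introduces the squarefree indicator $\mu^2(b)$ into the answer.
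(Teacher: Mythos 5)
Your proposal is correct and is exactly the computation the paper intends: the corollary is obtained by substituting the principal character into \eqref{csfourierdef}, with $\tau(\eta_b)=\mu(b)$ (the Ramanujan sum $c_b(1)$), the principal-character values equal to $1$, and $\tau(\psi\eta_q)/\tau(\psi\overline{\eta_q})=1$ since $\psi\eta_q=\psi\overline{\eta_q}=\psi$. Your explicit checks that $\nu_p(\cond(\eta^2))=0\le k-j$ and that $\psi\overline{\eta_q}=\psi$ is primitive are exactly the hypotheses needed for the lemma's formula to apply, so nothing is missing.
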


Bounding the Gauss sums also gives a quick bound:
\begin{corollary}
We have
\begin{equation}
|\widehat{\sigma}_{\psi}(\eta)| \le bp^{j+k}.\label{sigfourierbound}
\end{equation}
\end{corollary}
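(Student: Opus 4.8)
The plan is to read the bound off directly from the closed form \eqref{csfourierdef}, estimating each factor separately. First I would note that there is nothing to prove unless $\widehat{\sigma}_{\psi}(\eta)\neq 0$. In that case the argument in the proof of the preceding lemma applies: the sum over $t$ there vanishes whenever $\psi\overline{\eta_q}$ fails to be primitive modulo $q=p^k$, so we may assume $\psi\overline{\eta_q}$ is primitive, whence $|\tau(\psi\overline{\eta_q})|=\sqrt{q}$. This also guarantees the denominator in \eqref{csfourierdef} is nonzero, so the formula is literally applicable.

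Next I would estimate the numerator term by term. Since $p\nmid b$ we have $(b,q)=1$, so the three character values $\eta_q(-1)$, $\eta_q^2(b)$, $\eta_b^2(q)$ are roots of unity and contribute a factor of modulus $1$. For the Gauss sums I would invoke the standard estimate $|\tau(\chi)|\le\sqrt{N}$, valid for \emph{every} Dirichlet character $\chi$ modulo $N$ (with equality exactly when $\chi$ is primitive; the imprimitive case follows from expressing $\tau(\chi)$ through the Gauss sum of the inducing primitive character, cf.\ \cite{IK}). This gives $|\tau(\eta_b)|^2\le b$ and $|\tau(\psi\eta_q)|\le\sqrt{q}$. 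Finally $\varphi(p^{j+k})\le p^{j+k}$.

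Assembling these bounds in \eqref{csfourierdef} yields
\[
|\widehat{\sigma}_{\psi}(\eta)| \le \frac{b\cdot\sqrt{q}\cdot p^{j+k}}{\sqrt{q}} = b\,p^{j+k},
\]
which is exactly \eqref{sigfourierbound}. There is essentially no genuine obstacle here; the only point requiring any care is the bookkeeping around primitivity — one must observe that when $\psi\overline{\eta_q}$ is imprimitive the formula \eqref{csfourierdef} is not directly meaningful, but then $\widehat{\sigma}_{\psi}(\eta)=0$ and the claimed inequality is immediate. Everything else is a routine term-by-term size estimate, so the corollary is indeed a "quick bound".
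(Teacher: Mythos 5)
Your proposal is correct and is essentially the paper's own argument: the corollary is obtained by reading off \eqref{csfourierdef} term by term, using $|\tau(\eta_b)|^2\le b$, $|\tau(\psi\eta_q)|\le\sqrt{q}$, $|\tau(\psi\overline{\eta_q})|=\sqrt{q}$, and $\varphi(p^{j+k})\le p^{j+k}$. Your extra remark that the formula is only invoked when $\psi\overline{\eta_q}$ is primitive (otherwise $\widehat{\sigma}_{\psi}(\eta)=0$ and the bound is trivial) is a sensible piece of bookkeeping that the paper leaves implicit.
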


\end{subsection}

\begin{subsection}{The Non-Diagonal Terms}
Now, as before, we want to study the non-diagonal terms in the explicit formula. Just like in our analysis of the thinnest family, we can take the terms where $e=2$ and evaluate them exactly using our previous work. We separate out the diagonal as in \eqref{diagsep} and apply \eqref{squad} to get
\begin{equation}
\mathcal{D}_1(\mathcal{F},\phi,R,w) = \widehat{\phi}(0) + \frac12\phi(0) + \frac2{\log R}\widehat{\phi}(0) + \frac2{\log R}\mathcal{I}(\kappa,\phi,R) - \frac2{\log R}\mathcal{J}(q,\phi,R) + \mathcal{E}(\mathcal{F},\phi, R),\label{diagsepcoset}
\end{equation}
where the non-diagonal terms are given by
\begin{equation}
\mathcal{E}(\mathcal{F},\phi,R) = -\frac2{\sigma_{\psi,\epsilon}(1,1)\log R}\sum_{\ell\neq p}\sum_{e\ge 1}\frac{\left[\sigma_{\psi,\epsilon}\left(\ell^e,1\right) - \delta_{e\ge 3}\sigma_{\psi,\epsilon}\left(\ell^{e-2},1\right)\right]\log\ell}{\ell^{e/2}}\widehat{\phi}\left(\frac{e\log \ell}{\log R}\right).
\end{equation}
Define
\begin{multline}
E_{\psi,\epsilon}(m) = 2\pi i^{-\kappa}\left[\epsilon\psi(-m)\sum_{\substack{b\ge 1 \\ p\nmid b}}\frac{S(\overline{q}m,\overline{q};b)\mathcal{K}_{\psi}^-(\overline{b}m,\overline{b};q)}{bq}J_{\kappa-1}\left(\frac{4\pi\sqrt{m}}{bq}\right)\right. \\ \left.  + \psi(m)\sum_{\substack{c> 0 \\ c\equiv 0\pmod{p^{j+k}}}}\frac{S_{\overline{\psi}^2}(m,1;c)}cJ_{\kappa-1}\left(\frac{4\pi\sqrt{m}}c\right)\right]\label{epsidef}
\end{multline}
to be the non-diagonal part of \eqref{sigmamnsum}, so that we may write
\begin{equation}
\frac{\sigma_{\psi,\epsilon}(m, 1)}{\sigma_{\psi,\epsilon}(1, 1)} = \frac{E_{\psi,\epsilon}(m)}{1 + E_{\psi,\epsilon}(1)},
\end{equation}
when $m \neq 1$. We can bound this sum as follows:

\begin{lemma}
For positive integers $m$, we have
\begin{equation}
E_{\psi,\epsilon}(m) \ll \min\left(m^{\frac14+\varepsilon}q^{-\frac12-\frac{j}k+\varepsilon}, m^{\frac12+\varepsilon}q^{-1-\frac{j}k+\varepsilon}\right),\label{epsimbound}
\end{equation}
where the implied constant depends only on $\kappa$ and $\varepsilon$.
\end{lemma}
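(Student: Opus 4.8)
The plan is to estimate each of the two sums in the definition \eqref{epsidef} of $E_{\psi,\epsilon}(m)$ separately, bounding the arithmetic sums by (twisted) Weil-type estimates, bounding the Bessel factors by \eqref{jestk}--\eqref{jestsqrt} according to the size of their argument, and then summing over the moduli. No appeal to the on-average bound of Lemma \ref{weilavglemma} is needed here, because in both sums the relevant modulus is a multiple of $p^{j+k}$, comfortably larger than the conductor $p^k$. First I would observe that both terms in \eqref{epsidef} carry a factor $\psi(\pm m)$, so $E_{\psi,\epsilon}(m) = 0$ when $p \mid m$; hence one may assume $p \nmid m$, in which case all the Dirichlet and additive characters that appear may be bounded trivially by $1$.

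For the first sum, over $b \ge 1$ with $p \nmid b$, I would apply the Weil bound \eqref{weil} to $S(\overline{q}m, \overline{q}; b)$; since $\overline{q}$ is a unit modulo $b$, $(\overline{q}m, \overline{q}, b) = 1$, so this is $\ll b^{1/2+\varepsilon}$. For the factor $\mathcal{K}_{\psi}^-(\overline{b}m, \overline{b}; q)$ with $q = p^k$ I would use only the trivial bound: since $p$ is odd, the congruence $mx^2 \equiv \pm n \pmod{p^j}$ defining $\mathcal{K}_\psi^\pm$ is, after clearing units, of the form $x^2 \equiv a \pmod{p^j}$ with $p \nmid a$, hence has at most two solutions modulo $p^j$, each lifting to $p^{k-j}$ classes modulo $p^k$, whence $|\mathcal{K}_{\psi}^-(\overline{b}m, \overline{b}; q)| \le 2 p^{k-j}$. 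Using $p^{k-j}/q = p^{-j}$, the first sum is then $\ll p^{-j}\sum_{b\ge1} b^{-1/2+\varepsilon}\bigl|J_{\kappa-1}(4\pi\sqrt{m}/(bq))\bigr|$. Splitting this sum at $b \asymp \sqrt{m}/q$ and using $J_{\kappa-1}(z) \ll z^{-1/2}$ for $z \gg 1$ (i.e. \eqref{jestsqrt}) together with $J_{\kappa-1}(z) \ll z$ for $z \ll 1$ (i.e. \eqref{jestx}), one gets $\ll m^{1/4+\varepsilon}q^{-1/2-j/k}$ when $m \ge q^2$ and $\ll m^{1/2+\varepsilon}q^{-1-j/k}$ when $m \le q^2$; in either range this is bounded by the right-hand side of \eqref{epsimbound}, since $m \asymp q^2$ is exactly where the two quantities in that minimum cross.

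For the second sum, over $c \equiv 0 \pmod{p^{j+k}}$, I would invoke the twisted Weil bound \eqref{weilkl}: as $\psi^2$ is primitive modulo $q = p^k$ we have $\cond(\overline{\psi}^2) = p^k$, and with $(m,1,c) = 1$ this gives $|S_{\overline{\psi}^2}(m,1;c)| \ll c^{1/2+\varepsilon}q^{1/2}$. Writing $c = p^{j+k}c'$, the sum becomes $\ll q^{1/2}(p^{j+k})^{-1/2+\varepsilon}\sum_{c'\ge1}(c')^{-1/2+\varepsilon}\bigl|J_{\kappa-1}(4\pi\sqrt{m}/(p^{j+k}c'))\bigr|$, and the crucial point is that $q^{1/2}(p^{j+k})^{-1} = q^{-1/2-j/k}$, so the restriction to multiples of $p^{j+k}$ supplies precisely the extra factor $q^{-j/k}$ not available from the conductor alone. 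Splitting the $c'$-sum at $c' \asymp \sqrt{m}/p^{j+k}$ and interpolating between the same two Bessel estimates, one finds $\ll m^{1/4+\varepsilon}q^{-1/2-j/k}$ when $m \ge p^{2(j+k)}$ and $\ll m^{1/2+\varepsilon}q^{-1-j/k}$ otherwise; in the intermediate range $q^2 < m < p^{2(j+k)}$ the latter bound is automatically $\ll m^{1/4+\varepsilon}q^{-1/2-j/k}$ precisely because $m < p^{2(j+k)}$, so both halves of the minimum hold there as well.

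Adding the two contributions then yields \eqref{epsimbound}. I expect the main obstacle to be the bookkeeping required to obtain \emph{both} bounds in the minimum simultaneously: one must track carefully which of the Bessel estimates \eqref{jestk}, \eqref{jestx}, \eqref{jestsqrt} applies, depending on whether the Bessel argument exceeds $1$, and check that the crossover points of the individual pieces ($m \asymp q^2$ for the first sum, $m \asymp p^{2(j+k)}$ for the second) are consistent with the crossover $m \asymp q^2$ of the two terms in the minimum. The one piece of genuine arithmetic input is the elementary observation, using that $p$ is odd, that the congruence defining $\mathcal{K}_{\psi}^-$ has at most two solutions modulo $p^j$; without this the trivial bound on $\mathcal{K}_{\psi}^-$ would be too weak and one would need to extract square-root cancellation from that complete sum as well.
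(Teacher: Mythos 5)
Your proposal is correct and is essentially the paper's own argument: you bound the two sums in \eqref{epsidef} separately via the Weil bound \eqref{weil} on the Kloosterman sum modulo $b$, the trivial (Hensel) count $\mathcal{K}_{\psi}^{-}(\overline{b}m,\overline{b};q) \ll p^{k-j}$, the twisted bound \eqref{weilkl} on $S_{\overline{\psi}^2}(m,1;c)$, and the Bessel dichotomy \eqref{jestx}/\eqref{jestsqrt}, then split each sum at the transition point of the Bessel argument, exactly as in the paper. One small wording caveat: in the range $q^2<m<p^{2(j+k)}$ the comparison you invoke (``precisely because $m<p^{2(j+k)}$'') must be made with the unsimplified estimate $q^{1/2}m^{1/2}\left(p^{j+k}\right)^{-3/2+\varepsilon}$ for the second sum, not with its weakened form $m^{1/2+\varepsilon}q^{-1-\frac{j}k+\varepsilon}$, which by itself does not imply the bound $m^{1/4+\varepsilon}q^{-\frac12-\frac{j}k+\varepsilon}$ in that range.
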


\begin{proof}
To bound the sum over $b$ in \eqref{epsidef}, we write
\begin{equation}
\sum_{\substack{b\ge 1\\ p\nmid b}}\frac{S(\overline{q}m,\overline{q};b)\mathcal{K}_{\psi}^-(\overline{b}m,\overline{b};q)}{bq}J_{\kappa-1}\left(\frac{4\pi\sqrt{m}}{bq}\right) \ll \sum_{\substack{b\ge 1 \\ p\nmid b}}b^{-\frac12+\varepsilon}p^{-j}\min\left(m^{\frac12}b^{-1}q^{-1}, m^{-\frac14}b^{\frac12}q^{\frac12}\right),
\end{equation}
using \eqref{weil}, \eqref{jestx}, \eqref{jestsqrt}, and the bound $\mathcal{K}_{\psi}^{\pm}(m,n;p^r) \ll p^{r-j}$ which is simply due to the triangle inequality and Hensel's lemma. To bound the sum over $c$ in \eqref{epsidef}, meanwhile, we write
\begin{equation}
\sum_{\substack{c> 0 \\ c\equiv 0\pmod{p^{j+k}}}}\frac{S_{\overline{\psi}^2}(m,1;c)}cJ_{\kappa-1}\left(\frac{4\pi\sqrt{m}}c\right) \ll \sum_{\substack{c>0 \\ c\equiv 0\pmod{p^{j+k}}}}c^{-\frac12+\varepsilon}q^{\frac12}\min\left(m^{\frac12}c^{-1}, m^{-\frac14}c^{\frac12}\right),
\end{equation}
using \eqref{weilkl}, \eqref{jestx}, and \eqref{jestsqrt}.
The bound $m^{\frac14+\varepsilon}q^{-\frac12-\frac{j}k+\varepsilon}$ results from splitting up each sum based on which term of the min is smaller; and the bound $m^{\frac12+\varepsilon}q^{-1-\frac{j}k+\varepsilon}$ results from just taking the first term of the min (which is better if $\sqrt{m} < q$ since one of the two sums is empty in that case).
\end{proof}

In particular, taking $m=1$ gives the following useful result:
\begin{corollary}
We have
\begin{equation}
E_{\psi,\epsilon}(1) \ll q^{-1-\frac{j}k+\varepsilon},\label{epsi1bound}
\end{equation}
where the implied constant depends only on $\kappa$ and $\varepsilon$.
\end{corollary}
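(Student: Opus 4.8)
The plan is to derive this as an immediate specialization of the preceding lemma. First I would substitute $m = 1$ into the bound \eqref{epsimbound}. Since $1^{1/4+\varepsilon} = 1^{1/2+\varepsilon} = 1$, that bound collapses to $E_{\psi,\epsilon}(1) \ll \min\left(q^{-1/2-j/k+\varepsilon},\, q^{-1-j/k+\varepsilon}\right)$. Because $q = p^k > 1$, the exponent $-1-j/k$ is the more negative of the two, so the minimum is $q^{-1-j/k+\varepsilon}$, which is exactly the claimed estimate; the implied constant is the one from \eqref{epsimbound}, hence depends only on $\kappa$ and $\varepsilon$.

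There is no real obstacle here, since all of the analytic content already sits in the lemma that precedes it (which in turn rests on the Weil bound \eqref{weil}, its twisted counterpart \eqref{weilkl}, the Bessel estimates \eqref{jestx} and \eqref{jestsqrt}, and the trivial bound $\mathcal{K}_\psi^{\pm}(m,n;p^r) \ll p^{r-j}$ for the character sums appearing in \eqref{epsidef}). The only point worth flagging in the write-up is why the sharper exponent $-1-j/k$ rather than $-1/2-j/k$ is available at $m=1$: for $\sqrt{m} < q$ the portion of each of the two series in \eqref{epsidef} on which the second term of the relevant minimum would be used is empty, so only the first term contributes, and $m=1$ trivially satisfies $\sqrt{m} < q$. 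Finally, I would note (for orientation, not as part of the proof) that this corollary is precisely what is needed to control the factor $1 + E_{\psi,\epsilon}(1)$ appearing in the denominator of $\sigma_{\psi,\epsilon}(m,1)/\sigma_{\psi,\epsilon}(1,1)$ when the non-diagonal terms $\mathcal{E}(\mathcal{F},\phi,R)$ are estimated in the remainder of the section.
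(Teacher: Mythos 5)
Your proposal is correct and matches the paper's argument exactly: the corollary is obtained by specializing the bound \eqref{epsimbound} to $m=1$ and observing that the second term of the minimum, $q^{-1-\frac{j}{k}+\varepsilon}$, is the smaller one. The extra commentary on why that sharper exponent is available is harmless but not needed, since the lemma's bound already holds for all positive integers $m$.
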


Thus
\begin{equation}
\mathcal{E}(\mathcal{F},\phi,R) = -\frac2{(1+E_{\psi,\epsilon}(1))\log R}\sum_{\ell\neq p}\sum_{e\ge 1}\frac{\left[E_{\psi,\epsilon}(\ell^e) - \delta_{e\ge 3}E_{\psi,\epsilon}(\ell^{e-2})\right]\log\ell}{\ell^{e/2}}\widehat{\phi}\left(\frac{e\log \ell}{\log R}\right).
\end{equation}
Now for positive integers $e$ and $e'$, define
\begin{equation}
T_{\mathcal{F}}(e, e') = \sum_{\ell\neq p}\frac{E_{\psi,\epsilon}(\ell^{e'})\log\ell}{\ell^{e/2}}\widehat{\phi}\left(\frac{e\log\ell}{\log R}\right),\label{tfdef}
\end{equation}
by analogy with \eqref{sfdef}, so that
\begin{equation}
\mathcal{E}(\mathcal{F},\phi,R) = -\frac2{(1 + E_{\psi,\epsilon}(1))\log R}\sum_{1\le e\le \theta\log_2(R)}\left[T_{\mathcal{F}}(e, e) - \delta_{e\ge 3}T_{\mathcal{F}}(e,e-2)\right].
\end{equation}

Now we want to develop a bound on $T_{\mathcal{F}}(e, e')$.
\begin{lemma}
We have
\begin{equation}
T_{\mathcal{F}}(e, e') \ll ||\widehat{\phi}||_{\infty}q^{-1-\frac{j}k+\varepsilon}\left(q^{-\frac{e}{e'}+1+\frac2{e'}} + q^{-\theta+\frac12+\frac{\theta e'}{2e}+\frac{2\theta}e}\right),\label{tfeebound}
\end{equation}
where the implied constant depends only on $\kappa$ and $\varepsilon$.
\end{lemma}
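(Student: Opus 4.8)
The plan is to substitute the bound \eqref{epsimbound} for $E_{\psi,\epsilon}(\ell^{e'})$ into the definition \eqref{tfdef} and estimate the resulting sum over primes. Since $\widehat{\phi}$ is supported in $[-\theta,\theta]$, the factor $\widehat{\phi}\bigl(\tfrac{e\log\ell}{\log R}\bigr)$ in \eqref{tfdef} vanishes unless $\ell\le R^{\theta/e}$, so that
\[
T_{\mathcal{F}}(e,e') \ll ||\widehat{\phi}||_{\infty}\sum_{\substack{\ell\le R^{\theta/e}\\ \ell\neq p}}\frac{\log\ell}{\ell^{e/2}}\,\bigl|E_{\psi,\epsilon}(\ell^{e'})\bigr|.
\]
By \eqref{epsimbound} we have $E_{\psi,\epsilon}(\ell^{e'})\ll\min\bigl(\ell^{e'/4+\varepsilon}q^{-\frac12-\frac{j}{k}+\varepsilon},\,\ell^{e'/2+\varepsilon}q^{-1-\frac{j}{k}+\varepsilon}\bigr)$, and the two entries of this minimum agree precisely at $\ell=\ell_0:=q^{2/e'}$, the second being the smaller for $\ell\le\ell_0$ and the first for $\ell\ge\ell_0$. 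Accordingly, I would split the sum over $\ell$ at the point $X:=\min(\ell_0,R^{\theta/e})$ and estimate the two pieces separately, using throughout that $R=\left(\frac{q}{2\pi}\right)^2\asymp q^2$.

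For primes $\ell\le X$ I use the second bound. In the cases of interest $e'\in\{e,e-2\}$, so $(e'-e)/2\in\{0,-1\}$ and in particular $(e'-e)/2\ge-1$; hence $\sum_{\ell\le X}\ell^{(e'-e)/2+\varepsilon}\log\ell\ll X^{(e'-e)/2+1+\varepsilon}$ (the boundary case $e'=e-2$ costs only a factor $\log X\ll q^{\varepsilon}$). If $X=\ell_0=q^{2/e'}$ this yields a contribution $\ll||\widehat{\phi}||_{\infty}q^{-1-\frac{j}{k}+\varepsilon}q^{-\frac{e}{e'}+1+\frac2{e'}}$, the first term of \eqref{tfeebound}. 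If instead $X=R^{\theta/e}$ — which, for $q$ large, forces $\theta e'\le e$ — it yields $\ll||\widehat{\phi}||_{\infty}q^{-1-\frac{j}{k}+\varepsilon}q^{\frac{\theta e'}{e}-\theta+\frac{2\theta}{e}}$, and $\theta e'\le e$ makes $\frac{\theta e'}{2e}\le\frac12$, so this is bounded by the second term of \eqref{tfeebound}.

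For primes $X<\ell\le R^{\theta/e}$ (a range which is nonempty only when $\ell_0<R^{\theta/e}$) I use the first bound, obtaining a contribution $\ll||\widehat{\phi}||_{\infty}q^{-\frac12-\frac{j}{k}+\varepsilon}\sum_{\ell_0<\ell\le R^{\theta/e}}\ell^{e'/4-e/2+\varepsilon}\log\ell$. If $e'/4-e/2\ge-1$, the prime sum is $\ll (R^{\theta/e})^{e'/4-e/2+1+\varepsilon}\asymp q^{\frac{\theta e'}{2e}-\theta+\frac{2\theta}{e}+\varepsilon}$, giving the second term of \eqref{tfeebound}; if $e'/4-e/2<-1$, the prime sum converges and is $\ll\ell_0^{e'/4-e/2+1+\varepsilon}=q^{\frac12-\frac{e}{e'}+\frac2{e'}+\varepsilon}$, giving the first term of \eqref{tfeebound}. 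Summing the contributions of the two ranges and consolidating the finitely many $\varepsilon$'s proves \eqref{tfeebound}. The step I expect to be fiddliest is precisely this bookkeeping — verifying in each of the four resulting cases (two possible positions of the split point $X$, two possible signs of $e'/4-e/2+1$) that the power of $q$ produced is dominated by one of the two terms in \eqref{tfeebound}, and keeping the factors of $R$ versus $q$ straight — but there is no essential difficulty beyond the input estimate \eqref{epsimbound}.
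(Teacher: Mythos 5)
Your proposal is correct and takes essentially the same route as the paper: substitute \eqref{epsimbound} into \eqref{tfdef}, use the support of $\widehat{\phi}$ to truncate at $\ell\le R^{\theta/e}$, split the prime sum at the crossover point $q^{2/e'}$ of the two bounds, and estimate each range by the appropriate endpoint. If anything, your bookkeeping (e.g.\ the case $e'/4-e/2<-1$, where the tail sum converges and is governed by its lower endpoint, and the case where the truncation point $R^{\theta/e}$ lies below $q^{2/e'}$) is handled more explicitly than in the paper, which compresses both pieces into a single endpoint estimate.
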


\begin{proof}
Substituting \eqref{epsimbound} into \eqref{tfdef} gives
\begin{equation}
T_{\mathcal{F}}(e, e') \ll ||\widehat{\phi}||_{\infty}\sum_{\substack{\ell\neq p \\ \ell \le R^{\theta/e}}}\min\left(\ell^{-\frac{e}2+\frac{e'}4+\varepsilon}q^{-\frac12-\frac{j}k+\varepsilon}, \ell^{-\frac{e}2+\frac{e'}2+\varepsilon}q^{-1-\frac{j}k+\varepsilon}\right).
\end{equation}
This splits up to give
\begin{equation}
T_{\mathcal{F}}(e, e') \ll ||\widehat{\phi}||_{\infty}q^{-1-\frac{j}k+\varepsilon}\Biggr[\sum_{\substack{\ell\neq p \\ \ell \le \min\left(q^{2/e'}, R^{\theta/e}\right)}}\ell^{-\frac{e}2+\frac{e'}2+\varepsilon} + q^{\frac12}\sum_{\substack{\ell\neq p \\ q^{2/e'} < \ell \le R^{\theta/e}}}\ell^{-\frac{e}2+\frac{e'}4+\varepsilon}\Biggr].
\end{equation}
We bound each sum to get
\begin{equation}
T_{\mathcal{F}}(e, e') \ll ||\widehat{\phi}||_{\infty}q^{-1-\frac{j}k+\varepsilon}\left[1 + \left(q^{\frac2{e'}}\right)^{-\frac{e}2+\frac{e'}2+1+\varepsilon} + q^{\frac12}\left(q^{\frac{2\theta}e}\right)^{-\frac{e}2+\frac{e'}4+1+\varepsilon}\right],
\end{equation}
which gives the result upon expanding out the exponents.
\end{proof}
Specializing $e'=e$ or $e'=e-2$ gives
\begin{corollary}
For $e\ge 1$, we have
\begin{equation}
T_{\mathcal{F}}(e, e) \ll ||\widehat{\phi}||_{\infty}q^{-1-\frac{j}k+\varepsilon}\left(q^{\frac2e} + q^{-\frac{\theta}2+\frac{2\theta}e+\frac12}\right),\label{tfe1bound}
\end{equation}
where the implied constant depends only on $\kappa$ and $\varepsilon$.
\end{corollary}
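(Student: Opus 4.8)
The plan is to obtain \eqref{tfe1bound} as a direct specialization of the bound \eqref{tfeebound} for $T_{\mathcal{F}}(e,e')$, which was itself derived by feeding the pointwise estimate \eqref{epsimbound} for $E_{\psi,\epsilon}$ into the definition \eqref{tfdef} of $T_{\mathcal{F}}$. First I would set $e'=e$ in \eqref{tfeebound} and simplify the two exponents: the first becomes $-\frac{e}{e}+1+\frac2e=\frac2e$, and the second becomes $-\theta+\frac12+\frac{\theta e}{2e}+\frac{2\theta}e=-\frac\theta2+\frac12+\frac{2\theta}e$. Substituting these back recovers exactly \eqref{tfe1bound}, so nothing further is needed for the stated inequality.

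For completeness one should also record the companion bound for $T_{\mathcal{F}}(e,e-2)$, which enters $\mathcal{E}(\mathcal{F},\phi,R)$ alongside $T_{\mathcal{F}}(e,e)$ whenever $e\ge 3$. Here I would put $e'=e-2$ in \eqref{tfeebound}: the first exponent collapses to $\frac{-e+2}{e-2}+1=0$, so that term contributes only $\ll 1$, and since $q\ge 1$ and $\frac2e>0$ this is dominated by the $q^{2/e}$ already present in \eqref{tfe1bound}. The second exponent becomes $-\theta+\frac12+\frac{\theta(e-2)}{2e}+\frac{2\theta}e=-\frac\theta2+\frac12+\frac\theta e$, which is at most $-\frac\theta2+\frac12+\frac{2\theta}e$ because $\theta>0$ and $q\ge 1$. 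Hence $T_{\mathcal{F}}(e,e-2)$ also satisfies \eqref{tfe1bound}, so the same right-hand side governs both specializations.

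There is no genuine obstacle here: the argument is pure bookkeeping of exponents, and the only point requiring a moment's care is that the $e'=e-2$ first term has exponent \emph{exactly} zero rather than something negative, which is harmless because $q^{2/e}\ge 1$. The substance of this part of the section lies in the preceding lemma \eqref{tfeebound} (and, before that, in \eqref{epsimbound} and the expansion \eqref{epsidef}); once that is in hand the corollary is immediate, and its purpose is simply to package the bound in the form that will be summed over $1\le e\le\theta\log_2 R$ when estimating the non-diagonal term $\mathcal{E}(\mathcal{F},\phi,R)$.
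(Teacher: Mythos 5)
Your argument is exactly the paper's: the corollary is nothing more than the specialization $e'=e$ of \eqref{tfeebound}, and your exponent arithmetic ($-\frac{e}{e}+1+\frac2e=\frac2e$ and $-\theta+\frac12+\frac{\theta e}{2e}+\frac{2\theta}e=-\frac\theta2+\frac12+\frac{2\theta}e$) is correct. The additional discussion of $e'=e-2$ is not needed for this statement (the paper treats that case as a separate, sharper corollary bounded by $q^{-\frac12-\frac{j}k+\varepsilon}$ for $e\ge3$), but what you say about it is accurate and harmless.
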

\begin{corollary}
For $e\ge 2$, we have
\begin{equation}
T_{\mathcal{F}}(e, e) \ll ||\widehat{\phi}||_{\infty}q^{\frac{\max(\theta,1)-1}2-\frac{j}k+\varepsilon},\label{tfe2bound}
\end{equation}
where the implied constant depends only on $\kappa$ and $\varepsilon$.
\end{corollary}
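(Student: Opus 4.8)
The plan is to read this estimate off directly from the preceding corollary, \eqref{tfe1bound}, by restricting to $e \ge 2$ and comparing exponents. Since the statement is trivial when $\widehat{\phi} \equiv 0$, we may assume $\theta > 0$ throughout, and in particular $2\theta/e$ is a positive decreasing function of $e$.

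First I would invoke \eqref{tfe1bound}, which for every $e \ge 1$ gives
\[
T_{\mathcal{F}}(e,e) \ll ||\widehat{\phi}||_{\infty}\, q^{-1-\frac{j}{k}+\varepsilon}\left(q^{2/e} + q^{-\theta/2 + 2\theta/e + 1/2}\right).
\]
Factoring out $q^{-j/k+\varepsilon}$, it suffices to check that, for $e \ge 2$, each of $q^{-1 + 2/e}$ and $q^{-1/2 - \theta/2 + 2\theta/e}$ is $\ll q^{(\max(\theta,1)-1)/2}$. For the first: when $e \ge 2$ we have $2/e \le 1$, so $q^{-1 + 2/e} \le 1$, while $\max(\theta,1) \ge 1$ gives $q^{(\max(\theta,1)-1)/2} \ge 1$, hence $q^{-1+2/e} \le q^{(\max(\theta,1)-1)/2}$. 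For the second: as $\theta > 0$ and $e \ge 2$ we have $2\theta/e \le \theta$, so the exponent satisfies $-1/2 - \theta/2 + 2\theta/e \le -1/2 + \theta/2 = (\theta-1)/2 \le (\max(\theta,1)-1)/2$. Adding the two contributions and reinstating $q^{-j/k+\varepsilon}$ yields the claimed bound, with the implied constant still depending only on $\kappa$ and $\varepsilon$ since it is inherited from \eqref{tfe1bound}.

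The only point worth flagging — and it is a mild one — is the appearance of $\max(\theta,1)$, which simultaneously covers the regime $\theta \ge 1$, where the dominant term is $q^{(\theta-1)/2 - j/k+\varepsilon}$ coming from the second summand at $e = 2$, and the regime $\theta < 1$, where the dominant term is $q^{-j/k+\varepsilon}$ coming from the first summand; because $q^{(\max(\theta,1)-1)/2}$ dominates both contributions at once, no explicit case split is needed. I do not anticipate any genuine obstacle here, as the argument is purely a routine simplification of the already-established estimate \eqref{tfe1bound}; the worst case throughout is $e = 2$.
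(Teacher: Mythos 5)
Your proposal is correct and matches the paper's route: the paper obtains \eqref{tfe2bound} by specializing $e'=e$ in \eqref{tfeebound} (which is exactly \eqref{tfe1bound}) and simplifying the exponents for $e\ge 2$, with the worst case $e=2$ and the $\max(\theta,1)$ absorbing both the $\theta<1$ and $\theta\ge 1$ regimes, just as you argue. Your explicit handling of the trivial case $\widehat{\phi}\equiv 0$ and the verification that the constant is inherited from \eqref{tfe1bound} are fine and add nothing beyond what the paper leaves implicit.
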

\begin{corollary}
For $e\ge 3$, we have
\begin{equation}
T_{\mathcal{F}}(e, e-2) \ll ||\widehat{\phi}||_{\infty}q^{-\frac12-\frac{j}k+\varepsilon},\label{tfe3bound}
\end{equation}
where the implied constant depends only on $\kappa$ and $\varepsilon$.
\end{corollary}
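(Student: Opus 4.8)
The plan is to deduce this directly from the general bound \eqref{tfeebound} by setting $e' = e-2$. Since $e \ge 3$ we have $e' = e-2 \ge 1$, so \eqref{tfeebound} applies and yields
\begin{equation}
T_{\mathcal{F}}(e, e-2) \ll ||\widehat{\phi}||_{\infty}q^{-1-\frac{j}k+\varepsilon}\left(q^{-\frac{e}{e-2}+1+\frac2{e-2}} + q^{-\theta+\frac12+\frac{\theta(e-2)}{2e}+\frac{2\theta}e}\right).
\end{equation}
The first step is to observe that the first exponent telescopes: $-\frac{e}{e-2} + 1 + \frac{2}{e-2} = \frac{-e + (e-2) + 2}{e-2} = 0$, so the first term contributes exactly $q^{-1-\frac{j}k+\varepsilon}$, which is already stronger than the claimed bound.

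The second step is to simplify the second exponent. Combining the last two summands gives $\frac{\theta(e-2)}{2e} + \frac{2\theta}{e} = \frac{\theta(e+2)}{2e} = \frac{\theta}{2} + \frac{\theta}{e}$, so the exponent equals $\frac12 - \frac{\theta}{2} + \frac{\theta}{e}$. Since $e \ge 3$ and $\theta \ge 0$, we have $-\frac{\theta}{2} + \frac{\theta}{e} \le -\frac{\theta}{6} \le 0$, so this exponent is at most $\frac12$. Hence the second term contributes at most $q^{-1-\frac{j}k+\varepsilon} \cdot q^{1/2} = q^{-\frac12-\frac{j}k+\varepsilon}$. Adding the two contributions gives the claimed bound, with the implied constant inherited from \eqref{tfeebound} and so depending only on $\kappa$ and $\varepsilon$.

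There is essentially no obstacle here: the argument is pure bookkeeping of exponents. The one point worth flagging is that $\theta$ enters the relevant exponent with a nonnegative net coefficient in the range $e \ge 3$, which is what lets us discard it in the upper bound; this is fully consistent with the running convention that $\theta$ is bounded by an absolute constant and that the result is trivial unless $\theta$ is small.
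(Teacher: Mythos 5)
Your proposal is correct and matches the paper's (implicit) argument: the corollary is obtained exactly by specializing $e' = e-2$ in \eqref{tfeebound} and simplifying the exponents, with the first exponent vanishing identically and the second bounded by $\tfrac12$ since the net coefficient of $\theta$ is nonpositive for $e \ge 3$. Nothing further is needed.
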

Applying \eqref{epsi1bound}, \eqref{tfe2bound}, and \eqref{tfe3bound} gives a result that lets us place $T_{\mathcal{F}}(e,e')$ into the error term in \eqref{result2} in all cases except when $e=e'=1$:
\begin{corollary}
We have
\begin{equation}
\mathcal{E}(\mathcal{F},\phi, R) = -\frac{2T_{\mathcal{F}}(1, 1)}{\log R} + O\left(||\widehat{\phi}||_{\infty}q^{\frac{\max(\theta,1)-1}2-\frac{j}k+\varepsilon}\right),
\end{equation}
where the implied constant depends only on $\kappa$ and $\varepsilon$.
\end{corollary}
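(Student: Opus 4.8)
The plan is to peel off the single term $e=1$ from the sum and to show that every other contribution lies inside the claimed error term. Starting from
\begin{equation*}
\mathcal{E}(\mathcal{F},\phi,R) = -\frac2{(1 + E_{\psi,\epsilon}(1))\log R}\sum_{1\le e\le \theta\log_2(R)}\left[T_{\mathcal{F}}(e, e) - \delta_{e\ge 3}T_{\mathcal{F}}(e,e-2)\right],
\end{equation*}
I would separate off $-\frac{2T_{\mathcal{F}}(1,1)}{(1+E_{\psi,\epsilon}(1))\log R}$ and estimate the remaining sum over $2\le e\le\theta\log_2(R)$ term by term. By \eqref{tfe2bound}, each diagonal term $T_{\mathcal{F}}(e,e)$ with $e\ge2$ is $\ll ||\widehat{\phi}||_{\infty}q^{\frac{\max(\theta,1)-1}{2}-\frac{j}{k}+\varepsilon}$, and by \eqref{tfe3bound} each off-diagonal term $T_{\mathcal{F}}(e,e-2)$ with $e\ge3$ is $\ll ||\widehat{\phi}||_{\infty}q^{-\frac12-\frac{j}{k}+\varepsilon}$, which is no larger. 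There are only $O(\log R)$ admissible values of $e$, and \eqref{epsi1bound} keeps $(1+E_{\psi,\epsilon}(1))^{-1}$ bounded, so after dividing by $\log R$ the whole $e\ge2$ portion is $\ll ||\widehat{\phi}||_{\infty}q^{\frac{\max(\theta,1)-1}{2}-\frac{j}{k}+\varepsilon}$, the factor $\log R$ being absorbed into $q^{\varepsilon}$.

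For the remaining $e=1$ term, \eqref{epsi1bound} gives $(1+E_{\psi,\epsilon}(1))^{-1} = 1 + O(q^{-1-\frac{j}{k}+\varepsilon})$, so that
\begin{equation*}
-\frac{2T_{\mathcal{F}}(1,1)}{(1+E_{\psi,\epsilon}(1))\log R} = -\frac{2T_{\mathcal{F}}(1,1)}{\log R} + O\!\left(|T_{\mathcal{F}}(1,1)|\,q^{-1-\frac{j}{k}+\varepsilon}\right).
\end{equation*}
A crude bound for $T_{\mathcal{F}}(1,1)$ comes from \eqref{tfe1bound} with $e=1$, namely $T_{\mathcal{F}}(1,1)\ll ||\widehat{\phi}||_{\infty}\bigl(q^{1-\frac{j}{k}+\varepsilon}+q^{\frac{3\theta}{2}-\frac12-\frac{j}{k}+\varepsilon}\bigr)$; multiplying by $q^{-1-\frac{j}{k}+\varepsilon}$ gives a quantity $\ll ||\widehat{\phi}||_{\infty}\bigl(q^{-\frac{2j}{k}+\varepsilon}+q^{\frac{3\theta}{2}-\frac32-\frac{2j}{k}+\varepsilon}\bigr)$, and both exponents are at most $\frac{\max(\theta,1)-1}{2}-\frac{j}{k}$ --- the first since $j/k>0$, and the second (for $\theta>1$) since we work in the range $\theta\le 1+\frac{j}{k}$ in which the error term of Theorem \ref{theorem2} is meaningful. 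Combining this with the estimate for the $e\ge2$ portion gives the asserted identity, the implied constant depending only on $\kappa$ and $\varepsilon$ through the cited lemmas.

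This argument is essentially bookkeeping; the only point requiring care is controlling the secondary error produced when $1+E_{\psi,\epsilon}(1)$ is replaced by $1$ in the $e=1$ term, which is exactly why the otherwise unneeded crude bound \eqref{tfe1bound} for $T_{\mathcal{F}}(1,1)$ enters, together with the exponent comparison above.
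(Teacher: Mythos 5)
Your argument is correct and is essentially the paper's own proof: the paper likewise isolates the $e=e'=1$ term and disposes of everything else by citing \eqref{epsi1bound}, \eqref{tfe2bound}, and \eqref{tfe3bound}. You are in fact a bit more explicit than the paper, which leaves implicit the step you carry out with \eqref{tfe1bound} — replacing $(1+E_{\psi,\epsilon}(1))^{-1}$ by $1$ in the $e=1$ term and checking (in the meaningful range $\theta\le 1+\frac{j}{k}$) that the resulting cross term fits inside the stated error.
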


\end{subsection}

\begin{subsection}{The Error Term}
As before, we want to use GRH to bound our remaining error terms. To do this, we group together all the terms where $e = e'$, giving
\begin{equation}
\mathcal{E}(\mathcal{F},\phi, R) = -\frac2{\log R}\sum_{m\ge 1}\frac{\Lambda(m)E_{\psi,\epsilon}(m)}{\sqrt{m}}\widehat{\phi}\left(\frac{\log m}{\log R}\right) + O\left(||\widehat{\phi}||_{\infty}q^{\frac{\max(\theta,1)-1}2-\frac{j}k+\varepsilon}\right).
\end{equation}
We claim the result is
\begin{lemma}
\begin{equation}
\mathcal{E}(\mathcal{F},\phi, R) = -\frac{4\pi i^{-\kappa}\epsilon}{\log R}\sum_{\substack{b\ge 1 \\ p\nmid b \\ \text{squarefree}}}\frac{Q_{\chi_0}(bq)}{b\varphi(bq)} + O\left(\left(||\widehat{\phi}||_{\infty} + ||\widehat{\phi}\thinspace''||_{\infty}\right)q^{\max(\theta,1)-1-\frac{j}k+\varepsilon}\right).\label{nondiag}
\end{equation}
\end{lemma}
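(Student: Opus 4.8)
The plan is to start from the identity displayed immediately before the statement, which already reduces $\mathcal{E}(\mathcal{F},\phi,R)$, up to an admissible error, to $-\frac{2}{\log R}\sum_{m\ge 1}\frac{\Lambda(m)E_{\psi,\epsilon}(m)}{\sqrt m}\widehat{\phi}\left(\frac{\log m}{\log R}\right)$, and then to split $E_{\psi,\epsilon}(m)$ as in \eqref{epsidef} into its \emph{opposite-sign} piece (the sum over $b$ coprime to $p$, with modulus $c=bq$) and its \emph{same-sign} piece (the sum over $c\equiv 0\pmod{p^{j+k}}$). Since $\Lambda$ is supported on prime powers $m=\ell^e$ with $\ell\neq p$, throughout we may assume $p\nmid m$.

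For the same-sign piece I would first truncate the $c$-sum at $c\le C$ for a fixed large power $C$ of $q$, the tail being negligible by \eqref{weilkl} and \eqref{jestx}. For $c\le C$ with $(c,m)=1$ I would expand $S_{\overline{\psi}^2}(m,1;c)$ by the Fourier decomposition \eqref{klfourier}, interchange the $m$-sum with the character sum, and recognize the inner sum over $m$ as a $Q$-type integral in the notation of \eqref{qcdef}. The key point is that the only character terms that could contribute a new main term — those for which $\vartheta$ or $\overline{\psi}^2\vartheta$ is induced from a character of conductor at most $p^k$ — carry a Gauss-sum factor that vanishes because $p^{j+k}\mid c$ strictly exceeds $p^k$; so every surviving term is bounded via \eqref{qcbound} by $\ll(||\widehat{\phi}||_\infty+||\widehat{\phi}\thinspace''||_\infty)R^{\theta/2+\varepsilon}c^{-1+\varepsilon}$. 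Using $|\tau(\vartheta)\tau(\overline{\psi}^2\vartheta)|\le c$ and summing over the $\varphi(c)$ characters and over $c\equiv0\pmod{p^{j+k}}$, $c\le C$, gives $\ll||\widehat{\phi}\thinspace''||_\infty R^{\theta/2+\varepsilon}q^{-1-j/k+\varepsilon}$, which sits inside the target error; the terms with $(c,m)>1$ are handled directly by \eqref{weilkl} and \eqref{jestx} and are $\ll||\widehat{\phi}||_\infty q^{-1+\varepsilon}$.

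The opposite-sign piece produces the main term. Using \eqref{sigmasplit} at $r=k$ together with Lemma \ref{lemmakplus} (so $\mathcal{K}_\psi^+(\cdot;q)$ vanishes), the opposite-sign part of $E_{\psi,\epsilon}(m)$ equals $\frac{2\pi i^{-\kappa}}{\varphi(p^j)}\sum_{p\nmid b}\frac{\sigma_{\psi,\epsilon}(m,1;bq)}{bq}J_{\kappa-1}\!\left(\frac{4\pi\sqrt m}{bq}\right)$. I would again truncate at $b\le C$, dispose of $(b,m)>1$ by \eqref{weil} and the trivial bound $\mathcal{K}_\psi^-\ll p^{k-j}$, and for $(b,m)=1$ insert the expansion \eqref{csfourier}; interchanging sums turns the $m$-sum into $Q_{\overline{\eta}}(bq)$. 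For $\eta$ non-principal, \eqref{qcbound} gives $Q_{\overline{\eta}}(bq)\ll||\widehat{\phi}\thinspace''||_\infty R^{\theta/2+\varepsilon}(bq)^{-1+\varepsilon}$, and combining $|\widehat{\sigma}_{\psi}(\eta)|\le bp^{j+k}$ from \eqref{sigfourierbound} with the fact that $\widehat{\sigma}_{\psi}(\eta)\ne0$ forces $\nu_p(\cond(\eta^2))\le k-j$, so that only $O(bp^{k-j})$ characters $\eta$ occur, the non-principal contribution is $\ll\frac{1}{\varphi(p^j)}\cdot\frac{b^2p^{2k}}{bq\,\varphi(bq)}\cdot||\widehat{\phi}\thinspace''||_\infty R^{\theta/2+\varepsilon}(bq)^{-1+\varepsilon}$ summed over $b\le C$; using $p^{2k}=q^2$ and $\varphi(p^j)\gg p^jq^{-\varepsilon}$ this collapses to $\ll||\widehat{\phi}\thinspace''||_\infty q^{\theta-1-j/k+\varepsilon}$. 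The principal character $\eta=1$ then supplies exactly the main term: by \eqref{csprincipal} its weight is $\mu^2(b)\varphi(p^{j+k})$ (forcing $b$ squarefree), by \eqref{qcpbound} $Q_{\chi_0}(bq)$ equals the stated integral up to the same error, and the normalizing constant $\frac{\varphi(p^{j+k})}{\varphi(p^j)\,q}$ equals $1$, yielding $-\frac{4\pi i^{-\kappa}\epsilon}{\log R}\sum_{p\nmid b,\ \mathrm{squarefree}}\frac{Q_{\chi_0}(bq)}{b\varphi(bq)}$. Assembling the pieces gives \eqref{nondiag}.

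The main obstacle I anticipate is the non-principal $\eta$ estimate in the opposite-sign piece: one must weld together the coarse bound \eqref{sigfourierbound}, the count of admissible $\eta$ coming from the conductor restriction in \eqref{csfourier}, the GRH bound \eqref{qcbound}, and the arithmetic identities $p^{2k}=q^2$ and $\varphi(p^{j+k})/\varphi(p^j)=q$ tightly enough that the outcome lands inside $O\!\left((||\widehat{\phi}||_\infty+||\widehat{\phi}\thinspace''||_\infty)q^{\max(\theta,1)-1-j/k+\varepsilon}\right)$ — a lossier count of the characters would overshoot. A secondary technical point is checking, in the same-sign piece, that the Fourier-dual terms which would otherwise create spurious main terms are annihilated by the vanishing of imprimitive Gauss sums to a modulus divisible by $p^{j+k}$.
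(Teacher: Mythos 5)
Your proposal is correct and follows essentially the same route as the paper: the same split of $E_{\psi,\epsilon}(m)$ into the $b$-sum (opposite-sign) and $c\equiv 0\pmod{p^{j+k}}$ (same-sign) pieces, truncation plus removal of $(c,m)\neq 1$ terms, the Fourier expansions \eqref{csfourier} and \eqref{klfourier} with the GRH bound \eqref{qcbound}, the principal character of \eqref{csprincipal} producing the main term with the normalization $\varphi(p^{j+k})/(\varphi(p^j)q)=1$, and the vanishing Gauss sum (conductor $p^k$ versus modulus divisible by $p^{j+k}$) killing the would-be main term in the same-sign piece. The error accounting matches the paper's, so no changes are needed.
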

To prove this, we first open up the sum in \eqref{epsidef} to get
\begin{equation}
\mathcal{E}(\mathcal{F},\phi, R) = -\frac{4\pi i^{-\kappa}}{\log R}\left[\mathcal{E}_1(\mathcal{F},\phi,R) + \mathcal{E}_2(\mathcal{F},\phi,R)\right] + O\left(||\widehat{\phi}||_{\infty}q^{\frac{\max(\theta,1)-1}2-\frac{j}k+\varepsilon}\right),\label{e12split}
\end{equation}
where
\begin{equation}
\mathcal{E}_1(\mathcal{F},\phi,R) = \frac1{\varphi(p^j)}\sum_{m\ge 1}\sum_{\substack{b\ge 1 \\ p\nmid b}}\frac{\Lambda(m)\sigma_{\psi,\epsilon}(m,1;bq)}{bq\sqrt{m}}J_{\kappa-1}\left(\frac{4\pi\sqrt{m}}{bq}\right)\widehat{\phi}\left(\frac{\log m}{\log R}\right),
\end{equation}
and
\begin{equation}
\mathcal{E}_2(\mathcal{F},\phi,R) = \sum_{m\ge 1}\sum_{\substack{c>0 \\ c\equiv 0\pmod{p^{j+k}}}}\frac{\Lambda(m)\psi(m)S_{\overline{\psi}^2}(m,1;c)}{c\sqrt{m}}J_{\kappa-1}\left(\frac{4\pi\sqrt{m}}c\right)\widehat{\phi}\left(\frac{\log m}{\log R}\right).
\end{equation}
Then we bound each piece separately.

\begin{lemma}
Assume GRH, and take $B = q^{100}$. Then
\begin{equation}
\mathcal{E}_1(\mathcal{F},\phi,R) = \epsilon\sum_{\substack{1\le b\le B \\ p\nmid b \\ \text{squarefree}}}\frac{Q_{\chi_0}(bq)}{b\varphi(bq)} + O\left(\left(||\widehat{\phi}||_{\infty} + ||\widehat{\phi}\thinspace''||_{\infty}\right)q^{\max(\theta,1)-1-\frac{j}k+\varepsilon}\right),
\end{equation}
where the implied constant depends only on $\kappa$ and $\varepsilon$.
\end{lemma}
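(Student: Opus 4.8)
The plan is to start from the definition of $\mathcal{E}_1(\mathcal{F},\phi,R)$ and use the Fourier expansion \eqref{csfourier} of the coset sum $\sigma_{\psi,\epsilon}(m,1;bq)$ to convert the sum over $m$ into a linear combination of the functions $Q_\chi(bq)$ defined in \eqref{qcdef}. First I would truncate the $b$-sum at $B = q^{100}$: the tail $b > B$ contributes negligibly by the trivial bound $\mathcal{K}_\psi^{\pm}(m,n;p^r)\ll p^{r-j}$ together with \eqref{weil}, \eqref{jestsqrt}, and the rapid decay coming from the compact support of $\widehat\phi$, exactly as in the corresponding truncation arguments for the thinnest family (e.g.\ Lemma for $\mathcal{E}_C$). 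One also needs to restrict to $(m,bq)=1$; the coprime-to-$p$ condition is automatic from the $\ell\ne p$ restriction in \eqref{tfdef}, and the terms with $\ell\mid b$ are handled by a bound of the same shape as the lemma bounding $\mathcal{E}^*(\mathcal{F},\phi,R)$, contributing to the error term.

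Second, with $b$ squarefree and $p\nmid b$, I would insert \eqref{csfourier}, so that
\[
\mathcal{E}_1(\mathcal{F},\phi,R) = \frac{\epsilon}{\varphi(p^j)}\sum_{\substack{1\le b\le B\\ p\nmid b\\ \text{squarefree}}}\frac{1}{bq\,\varphi(bq)}\sum_{\substack{\eta\pmod{bq}\\ \nu_p(\cond(\eta^2))\le k-j}}\widehat\sigma_\psi(\eta)\,Q_{\overline\eta}(bq),
\]
after swapping the order of summation and recognizing the $m$-sum against $\overline\eta(m)\Lambda(m)\sqrt m^{-1}J_{\kappa-1}(\cdots)\widehat\phi(\cdots)$ as $Q_{\overline\eta}(bq)$. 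Now I would split the inner sum according to whether $\overline\eta$ is the principal character modulo $bq$ or not. For $\eta$ nonprincipal, apply the GRH bound \eqref{qcbound}, namely $Q_{\overline\eta}(bq)\ll \|\widehat\phi\,''\|_\infty R^{\theta/2+\varepsilon}(bq)^{-1+\varepsilon}$, together with the bound $|\widehat\sigma_\psi(\eta)|\le bp^{j+k}$ from \eqref{sigfourierbound}; since there are $O(bq)$ characters $\eta$, summing everything over $b\le B$ produces a total of size $\ll \|\widehat\phi\,''\|_\infty R^{\theta/2+\varepsilon}q^{-1-j/k+\varepsilon}$ up to the factor $\varphi(p^j)^{-1}$ and the $\frac1{bq\varphi(bq)}$ weight, which is of the claimed form $q^{\max(\theta,1)-1-j/k+\varepsilon}$ (here $R\asymp q^2$). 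For $\eta=\chi_0$ modulo $bq$ we have $\widehat\sigma_\psi(1) = \mu^2(b)\varphi(p^{j+k})$ by \eqref{csprincipal}, so that term contributes
\[
\frac{\epsilon}{\varphi(p^j)}\sum_{\substack{1\le b\le B\\ p\nmid b\\ \text{squarefree}}}\frac{\varphi(p^{j+k})}{bq\,\varphi(bq)}\,Q_{\chi_0}(bq) = \epsilon\sum_{\substack{1\le b\le B\\ p\nmid b\\ \text{squarefree}}}\frac{Q_{\chi_0}(bq)}{b\,\varphi(bq)},
\]
using $\varphi(p^{j+k})/\varphi(p^j) = p^k = q$ and $\mu^2(b)=1$ on squarefree $b$. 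That is exactly the main term in the statement.

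Third, I would need to double-check two bookkeeping points. One is that the condition $\nu_p(\cond(\eta^2))\le k-j$ is compatible with $\eta$ being principal modulo $bq$ (it is: then $\cond(\eta^2)=1$), and that the weaker bound \eqref{sigfourierbound} suffices for all the nonprincipal $\eta$ passing this condition — it does, since we are not trying to save anything beyond the crude count. The other is the requirement in \eqref{qcbound} that $\widehat\phi$ be supported in $(-\theta,\theta)$ and that $\chi$ be nonprincipal to the exact modulus $bq$; the characters $\eta$ arising here are indeed modulo $bq$, and those that are imprimitive but nonprincipal are still covered because \eqref{qcbound} only uses GRH for $L(s,\eta)$ via the logarithmic-derivative bound, which I would invoke for the primitive character inducing $\eta$ (the finitely many Euler factors removed change $Q$ by a negligible amount absorbed into the error).

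\textbf{Main obstacle.} The delicate step is the character-sum split: controlling the total contribution of the $O(bq)$ nonprincipal characters $\eta\pmod{bq}$ for each $b$, and summing over $b\le B=q^{100}$, while only keeping the factor $q^{\max(\theta,1)-1-j/k+\varepsilon}$ in the error. The point is that the large count of characters is exactly compensated by the $(bq)^{-1}$ decay in \eqref{qcbound} and the $\frac1{bq\varphi(bq)}$ normalization, together with $|\widehat\sigma_\psi(\eta)|\le bp^{j+k}$ and $\varphi(p^j)^{-1}p^{j+k}=p^k=q$; verifying that these exponents combine as claimed — and that the $b$-sum converges (it does, since the summand is $\ll b^{-2+\varepsilon}$ after the cancellations) — is where the care is needed. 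Everything else is a routine repetition of the truncation and coprimality arguments already carried out for the thinnest family.
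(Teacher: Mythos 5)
Your overall route is the same as the paper's: truncate the $b$-sum at $B=q^{100}$, discard the terms with $(m,bq)\neq 1$, insert the Fourier expansion \eqref{csfourier}, recognize the $m$-sums as $Q_{\overline{\eta}}(bq)$ from \eqref{qcdef}, and split off the principal character; your main-term computation via \eqref{csprincipal} and $\varphi(p^{j+k})/\varphi(p^j)=p^k=q$ is correct. The genuine gap is in your treatment of the nonprincipal characters. You count the admissible $\eta\pmod{bq}$ crudely as $O(bq)$ and explicitly dismiss the constraint $\nu_p(\cond(\eta^2))\le k-j$ (``we are not trying to save anything beyond the crude count''), yet you assert a total of size $\ll \|\widehat{\phi}\thinspace''\|_{\infty}R^{\theta/2+\varepsilon}q^{-1-j/k+\varepsilon}$. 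Run the exponents: per character, \eqref{sigfourierbound} and \eqref{qcbound} give $\varphi(p^j)^{-1}\,\frac{|\widehat{\sigma}_{\psi}(\eta)|}{bq\varphi(bq)}\,|Q_{\overline{\eta}}(bq)| \ll \|\widehat{\phi}\thinspace''\|_{\infty}b^{-2+\varepsilon}q^{\theta-2+\varepsilon}$; multiplying by $O(bq)$ characters and summing over $b\le B$ yields only $\|\widehat{\phi}\thinspace''\|_{\infty}q^{\theta-1+\varepsilon}$, which is larger than the claimed error by a factor $q^{j/k}$ and would cap the usable support at $\theta<1$, defeating the entire point of the coset averaging. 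The missing factor $q^{-j/k}$ comes precisely from the conductor condition you set aside: $\widehat{\sigma}_{\psi}(\eta)$ vanishes unless the $q$-part $\eta_q$ satisfies $\cond(\eta_q^2)\mid p^{k-j}$, and for odd $p$ at most $2\varphi(p^{k-j})$ characters modulo $q$ do, so the number of admissible $\eta\pmod{bq}$ is $\ll b\,p^{k-j}=b\,q^{1-j/k}$ rather than $\asymp bq$. With this count the nonprincipal contribution is $\ll \|\widehat{\phi}\thinspace''\|_{\infty}B^{\varepsilon}q^{\theta-1-j/k+\varepsilon}$, which is what the lemma requires; this character count is the one step that cannot be waved away.

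A secondary, minor slip: you restrict the $b$-sum to squarefree $b$ \emph{before} inserting \eqref{csfourier}. The expansion holds for all $b$ coprime to $p$ (and to $m$); squarefreeness should only emerge afterwards, since $\widehat{\sigma}_{\psi}(1)=\mu^2(b)\varphi(p^{j+k})$ kills the principal-character contribution of non-squarefree $b$, while the nonprincipal contributions of those $b$ must still be absorbed into the same error estimate as above.
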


\begin{proof}
As before, we bound the tail of the series by
\begin{equation}
\frac1{\varphi(p^j)}\sum_{m\ge 1}\sum_{\substack{b>B \\ p\nmid b}}\frac{\Lambda(m)\sigma_{\psi,\epsilon}(m,1;bq)}{bq\sqrt{m}}J_{\kappa-1}\left(\frac{4\pi\sqrt{m}}{bq}\right)\widehat{\phi}\left(\frac{\log m}{\log R}\right) \ll ||\widehat{\phi}||_{\infty}B^{-\frac12+\varepsilon}q^{2\theta-1+\varepsilon},\label{bqtail}
\end{equation}
and we bound the terms where $(bq, m) \neq 1$ by
\begin{equation}
\frac1{\varphi(p^j)}\sum_{\substack{b\ge 1 \\ p\nmid b}}\sum_{\ell\mid b}\sum_{1\le e\le\theta\log_2(R)}\frac{\sigma_{\psi,\epsilon}(\ell^e,1;bq)\log\ell}{bq\ell^{e/2}}J_{\kappa-1}\left(\frac{4\pi\ell^{e/2}}{bq}\right)\widehat{\phi}\left(\frac{e\log\ell}{\log R}\right) \ll ||\widehat{\phi}||_{\infty}q^{-1+\varepsilon}.
\end{equation}
Using \eqref{csfourier}, we thus write
\begin{equation}
\mathcal{E}_1(\mathcal{F},\phi,R) = \frac{\epsilon}{\varphi(p^j)}\sum_{\substack{1\le b\le B \\ p\nmid b}}\sum_{\substack{\eta\pmod{bq} \\ \nu_p(\cond(\eta^2)) \le k-j}}\frac{\widehat{\sigma}_{\psi}(\overline{\eta})Q_{\eta}(bq)}{bq\varphi(bq)} + O\left(||\widehat{\phi}||_{\infty}B^{-\frac12+\varepsilon}q^{2\theta-1+\varepsilon} + ||\widehat{\phi}||_{\infty}q^{-1+\varepsilon}\right),
\end{equation}
where $Q_{\overline{\chi}\psi}(c)$ is as defined in \eqref{qcdef}. Due to \eqref{csprincipal}, the $\eta=\chi_0$ term gives
\begin{equation}
\epsilon\sum_{\substack{1\le b\le B \\ p\nmid b \\ \text{squarefree}}}\frac{Q_{\chi_0}(bq)}{b\varphi(bq)}.
\end{equation}
We can bound the nonprincipal terms using \eqref{qcbound} and \eqref{sigfourierbound}, writing
\begin{equation}
\frac{\epsilon}{\varphi(p^j)}\sum_{\substack{1\le b\le B \\ p\nmid b}}\sum_{\substack{\eta\pmod{bq} \\ \nu_p(\cond(\eta^2)) \le k-j \\ \eta\neq 1}}\frac{\widehat{\sigma}_{\psi}(\overline{\eta})Q_{\eta}(bq)}{bq\varphi(bq)} \ll \sum_{\substack{1\le b\le B \\ p\nmid b}}\sum_{\substack{\eta\pmod{bq} \\ \nu_p(\cond(\eta^2)) \le k-j \\ \eta\neq 1}}||\widehat{\phi}\thinspace''||_{\infty}b^{-2+\varepsilon}q^{\theta-2+\varepsilon} \ll ||\widehat{\phi}\thinspace''||_{\infty}B^{\varepsilon}q^{\theta-1-\frac{j}k+\varepsilon}.
\end{equation}
Thus
\begin{equation}
\mathcal{E}_1(\mathcal{F},\phi,R) = \epsilon\sum_{\substack{1\le b\le B \\ p\nmid b \\ \text{squarefree}}}\frac{Q_{\chi_0}(bq)}{b\varphi(bq)} + O\left(||\widehat{\phi}\thinspace''||_{\infty}B^{\varepsilon}q^{\theta-1-\frac{j}k+\varepsilon} + ||\widehat{\phi}||_{\infty}B^{-\frac12+\varepsilon}q^{2\theta-1+\varepsilon} + ||\widehat{\phi}||_{\infty}q^{-1+\varepsilon}\right).
\end{equation}
Taking $B$ to be a large fixed power of $q$, such as $q^{100}$, then gives the result.
\end{proof}

The bound for $\mathcal{E}_2(\mathcal{F},\phi,R)$ is completely analogous.

\begin{lemma}
Assume GRH. Then
\begin{equation}
\mathcal{E}_2(\mathcal{F},\phi,R) \ll \left(||\widehat{\phi}||_{\infty}+||\widehat{\phi}\thinspace''||_{\infty}\right)q^{\theta-1-\frac{j}k+\varepsilon},
\end{equation}
where the implied constant depends only on $\kappa$ and $\varepsilon$.
\end{lemma}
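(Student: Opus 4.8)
The proof is completely parallel to that of the preceding lemma for $\mathcal{E}_1(\mathcal{F},\phi,R)$, with one crucial difference: no main term survives, so the entire quantity lands in the error term. First I would truncate the sum over $c$ at $C = q^{100}$; bounding the tail $c > C$ by \eqref{weilkl} and \eqref{jestx}, and discarding the terms with $(c,m) \ne 1$ (where $m = \ell^e$ and $\ell \mid c$) exactly as for $\mathcal{E}^{*}(\mathcal{F},\phi,R)$ in Section \ref{thinsection}, both contributions are easily negligible — in fact the savings is even larger here than in the thinnest-family case, since $c$ is forced to be a multiple of $p^{j+k}$.

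On the surviving range I would apply the Fourier decomposition \eqref{klfourier} to $S_{\overline{\psi}^2}(m,1;c)$ and interchange the sums, obtaining
\[
\mathcal{E}_2(\mathcal{F},\phi,R) = \sum_{\substack{0 < c \le C \\ c \equiv 0 \pmod{p^{j+k}}}} \frac1{c\varphi(c)}\sum_{\vartheta \pmod c}\tau(\vartheta)\,\tau(\overline{\psi}^2\vartheta)\,Q_{\psi\overline{\vartheta}}(c) + O\!\left(||\widehat{\phi}||_{\infty}\,q^{-1+\varepsilon}\right),
\]
with $Q_{\chi}(c)$ as in \eqref{qcdef}. By \eqref{qcbound}, whenever $\psi\overline{\vartheta}$ is nonprincipal modulo $c$ we have $Q_{\psi\overline{\vartheta}}(c) \ll ||\widehat{\phi}\thinspace''||_{\infty}R^{\theta/2+\varepsilon}c^{-1+\varepsilon}$; using the trivial bound $|\tau(\vartheta)\tau(\overline{\psi}^2\vartheta)| \le c$, summing over the $\varphi(c)$ characters $\vartheta$, and then over $c \equiv 0 \pmod{p^{j+k}}$, this batch of terms contributes
\[
\ll ||\widehat{\phi}\thinspace''||_{\infty}R^{\theta/2+\varepsilon}\sum_{c\equiv 0\pmod{p^{j+k}}}c^{-1+\varepsilon} \ll ||\widehat{\phi}\thinspace''||_{\infty}\,q^{\theta-1-\frac{j}k+\varepsilon},
\]
and this is exactly where the factor $q^{-j/k}$ enters, from $c$ ranging over multiples of $p^{j+k}$ rather than merely of $q$.

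It remains to observe that the unique character $\vartheta$ for which $\psi\overline{\vartheta}$ is principal, namely $\vartheta = \psi$ viewed modulo $c$, contributes nothing at all: the Gauss sum modulo $c$ of a character whose conductor is divisible by a prime to a strictly smaller power than that prime divides $c$ must vanish, and here $\nu_p(c) \ge j+k > k = \nu_p(\cond(\psi))$, so $\tau(\psi \pmod c) = 0$ and that term drops out entirely. This is precisely where the argument genuinely diverges from the $\mathcal{E}_1$ computation: there the modulus $bq$ has $p$-adic valuation exactly $k$, so the principal-character term survives and produces part of the $\mathcal{L}(\kappa,p,\phi,R)$ main term, whereas here the larger modulus annihilates it. Collecting the three estimates gives the lemma. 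The only delicate point in the whole argument is this Gauss-sum vanishing, together with the bookkeeping needed to be sure that no residual main term escapes it.
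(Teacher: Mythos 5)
Your argument is correct and follows essentially the same route as the paper: truncation at $C=q^{100}$, removal of the non-coprime terms, the Fourier decomposition \eqref{klfourier}, the GRH bound \eqref{qcbound} for the nonprincipal characters, and the vanishing of $\tau(\psi\bmod c)$ because $\nu_p(c)\ge j+k>k=\nu_p(\cond(\psi))$, so no main term survives. The only cosmetic difference is that the paper records the non-coprime contribution as $O\left(||\widehat{\phi}||_{\infty}q^{-1-\frac{j}{k}+\varepsilon}\right)$ --- the sharper exponent that your own remark about the extra savings from $c\equiv 0\pmod{p^{j+k}}$ already implies --- rather than the $q^{-1+\varepsilon}$ appearing in your display, which is what one should keep so that the bound is dominated by $q^{\theta-1-\frac{j}{k}+\varepsilon}$ for every admissible $\theta$.
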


\begin{proof}
As before, we bound the tail of the series by
\begin{equation}
\sum_{m\ge 1}\sum_{\substack{c>C \\ c\equiv 0\pmod{p^{j+k}}}}\frac{\Lambda(m)\psi(m)S_{\overline{\psi}^2}(m,1;c)}{c\sqrt{m}}J_{\kappa-1}\left(\frac{4\pi\sqrt{m}}c\right)\widehat{\phi}\left(\frac{\log m}{\log R}\right) \ll ||\widehat{\phi}||_{\infty}C^{-\frac12+\varepsilon}q^{2\theta-\frac12-\frac{j}k+\varepsilon},
\end{equation}
and we bound the terms where $(c, m) \neq 1$ by
\begin{equation}
\sum_{\substack{c>0 \\ c\equiv 0\pmod{p^{j+k}}}}\sum_{\ell\mid c}\sum_{1\le e\le\theta\log_2(R)}\frac{\psi(\ell^e)S_{\overline{\psi}^2}(\ell^e,1;c)\log\ell}{c\ell^{e/2}}J_{\kappa-1}\left(\frac{4\pi\ell^{e/2}}c\right)\widehat{\phi}\left(\frac{e\log\ell}{\log R}\right) \ll ||\widehat{\phi}||_{\infty}q^{-1-\frac{j}k+\varepsilon}.
\end{equation}
Using \eqref{klfourier}, we thus write
\begin{equation}
\mathcal{E}_2(\mathcal{F},\phi,R) = \sum_{\substack{0<c\le C \\ c\equiv 0\pmod{p^{j+k}}}}\sum_{\chi\pmod c}\frac{\tau(\chi)\tau(\chi\overline{\psi}^2)Q_{\overline{\chi}\psi}(c)}{c\varphi(c)} + O\left(||\widehat{\phi}||_{\infty}C^{-\frac12+\varepsilon}q^{2\theta-\frac12-\frac{j}k+\varepsilon} + ||\widehat{\phi}||_{\infty}q^{-1-\frac{j}k+\varepsilon}\right),
\end{equation}
where $Q_{\overline{\chi}\psi}(c)$ is as defined in \eqref{qcdef}. Bounding the Gauss sums by $\sqrt{c}$ and using \eqref{qcbound} gives
\begin{equation}
\sum_{\substack{0<c\le C \\ c\equiv 0\pmod{p^{j+k}}}}\sum_{\chi\pmod c}\frac{\tau(\chi)\tau(\chi\overline{\psi}^2)Q_{\overline{\chi}\psi}(c)}{c\varphi(c)} \ll ||\widehat{\phi}\thinspace''||_{\infty}R^{\frac{\theta}2+\varepsilon}\sum_{\substack{0<c\le C \\ c\equiv 0\pmod{p^{j+k}}}}\sum_{\chi\pmod c}c^{-2+\varepsilon} \ll ||\widehat{\phi}\thinspace''||_{\infty}C^{\varepsilon}q^{\theta-1-\frac{j}k+\varepsilon},
\end{equation}
noting that the term where $\chi = \psi$ vanishes because $c$ is divisible by strictly more copies of $p$ than $\cond(\chi) = q$ is, so $\tau(\chi) = 0$. Thus
\begin{equation}
\mathcal{E}_2(\mathcal{F},\phi,R) \ll ||\widehat{\phi}\thinspace''||_{\infty}C^{\varepsilon}q^{\theta-1-\frac{j}k+\varepsilon} + ||\widehat{\phi}||_{\infty}C^{-\frac12+\varepsilon}q^{2\theta-\frac12-\frac{j}k+\varepsilon} + ||\widehat{\phi}||_{\infty}q^{-1-\frac{j}k+\varepsilon}.
\end{equation}
Taking $C$ to be a large fixed power of $q$, such as $q^{100}$, then gives the result.
\end{proof}

Substituting these both in to \eqref{e12split}, and adding the terms where $b > B$ back in since they fit into the error term, gives \eqref{nondiag}.

\end{subsection}

\begin{subsection}{The Special Main Term}
The non-diagonal terms include one sum which contains a significant main term that will help us distinguish between the symmetry groups SO(even) and SO(odd). In this section, we will extract that main term. First, substituting \eqref{nondiag} back into \eqref{diagsepcoset} gives
\begin{multline}
\mathcal{D}_1(\mathcal{F},\phi,R,w) = \widehat{\phi}(0) + \frac12\phi(0) + \frac2{\log R}\widehat{\phi}(0) + \frac2{\log R}\mathcal{I}(\kappa,\phi,R) - \frac2{\log R}\mathcal{J}(q,\phi,R) \\
-\epsilon\mathcal{M}(\mathcal{F},\phi,R) + O\left(\left(||\widehat{\phi}||_{\infty} + ||\widehat{\phi}\thinspace''||_{\infty}\right)q^{\max(\theta,1)-1-\frac{j}k+\varepsilon}\right),\label{mdiagsepcoset}
\end{multline}
where
\begin{equation}
\mathcal{M}(\mathcal{F},\phi, R) = \frac{4\pi i^{-\kappa}}{\log R}\sum_{\substack{1\le b\le B \\ p\nmid b \\ \text{squarefree}}}\frac{Q_{\chi_0}(bq)}{b\varphi(bq)}.\label{specmain}
\end{equation}
Here \eqref{specmain} contains a lower-order main term.

\begin{lemma}
We have
\begin{equation}
\mathcal{M}(\mathcal{F},\phi, R) = \frac{i^{\kappa}}2\left[\phi(0) - \int_{-1}^1\widehat{\phi}(t)\;dt\right] + \frac{2i^{\kappa}}{\log R}\mathcal{L}(\kappa,p,\phi, R) + O\left(\left(||\widehat{\phi}||_{\infty} + ||\widehat{\phi}\thinspace''||_{\infty}\right)q^{\theta-2+\varepsilon}\right),
\end{equation}
where
\begin{equation}
\mathcal{L}(\kappa,p, \phi, R) = \int_{-\infty}^{\infty}e(-y)\mathcal{J}_{\kappa,p}\left(\frac{4\pi iy}{\log R}\right)\phi(y)\;dy,\label{ldef} 
\end{equation}
and $\mathcal{J}_{\kappa,p}(s)$ is an analytic function depending on $\kappa$ and $p$ and defined in \eqref{jkpdef}.
\end{lemma}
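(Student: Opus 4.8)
The plan is to substitute the evaluation of $Q_{\chi_0}(bq)$ from the corollary following \eqref{qcpbound} into the definition \eqref{specmain}. The remainder there, $O\bigl(||\widehat\phi\thinspace''||_\infty R^{\theta/2+\varepsilon}(bq)^{-1+\varepsilon}\bigr)$, contributes
$\ll \frac1{\log R}\sum_{b}\frac{||\widehat\phi\thinspace''||_\infty R^{\theta/2+\varepsilon}(bq)^{-1+\varepsilon}}{b\varphi(bq)} \ll ||\widehat\phi\thinspace''||_\infty q^{\theta-2+\varepsilon}$, using $\varphi(bq)\gg bq\cdot q^{-\varepsilon}$ and $R^{\theta/2}\asymp q^{\theta}$, which lies inside the stated error. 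So it remains to treat the main part $\frac{4\pi i^{-\kappa}}{\log R}\sum_{1\le b\le B}\frac1{b\varphi(bq)}\int_0^\infty x^{-1/2}J_{\kappa-1}\!\left(\tfrac{4\pi\sqrt x}{bq}\right)\widehat\phi\!\left(\tfrac{\log x}{\log R}\right)dx$, the sum being over squarefree $b$ coprime to $p$. The inner integral is exactly $\widetilde F(\tfrac12)$ with $c=bq$, for $F,F_1,F_2$ as in the proof of the lemma on $Q_\chi(c)$, so I would rewrite it by the Mellin convolution identity from that proof as $\frac1{2\pi i}\int_{(\sigma_0)}(bq/2\pi)^{2u}\frac{\Gamma(\frac{\kappa-1}{2}+u)}{\Gamma(\frac{\kappa+1}{2}-u)}\log(R)\,\phi\!\left(\frac{(\frac12-u)\log R}{2\pi i}\right)du$.

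Next I would take $\sigma_0$ in $\bigl(-\tfrac{\kappa-1}2,\tfrac12\bigr)$ (e.g.\ $\sigma_0=0$, legitimate since $\kappa$ is even so $\kappa\ge2$), which is exactly the range where both the Mellin convolution is valid and the $b$-sum $\sum_b b^{2\sigma_0-1}/\varphi(b)$ converges absolutely. Interchanging the sum and the contour integral, extending $\sum_{b\le B}$ to $\sum_{b\ge1}$ (the tail is $\ll B^{-1}q^{\varepsilon}$, negligible for $B=q^{100}$), using $\varphi(bq)=\varphi(b)\varphi(q)$ and $q/\varphi(q)=p/(p-1)$, and recognising the Dirichlet series $\sum_{b\ge1,\,p\nmid b,\,\text{sqfree}}b^{2u-1}/\varphi(b)=G_p(2u-1)$ with $G_p(s)=\prod_{\ell\ne p}\bigl(1+\tfrac{\ell^{s-1}}{1-\ell^{-1}}\bigr)$, I obtain $\mathcal M(\mathcal F,\phi,R)$ as a $q$-free constant times $\frac1{\log R}\int_{(\sigma_0)}(q/2\pi)^{2u}\frac{\Gamma(\frac{\kappa-1}{2}+u)}{\Gamma(\frac{\kappa+1}{2}-u)}\log(R)\,\phi\!\left(\frac{(\frac12-u)\log R}{2\pi i}\right)G_p(2u-1)\,du$, plus an admissible error. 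I would then establish the meromorphic continuation $G_p(s)=\zeta(1-s)A_p(s)$, where $A_p(s)=(1-p^{s-1})\prod_{\ell\ne p}(1-\ell^{s-1})\bigl(1+\tfrac{\ell^{s-1}}{1-\ell^{-1}}\bigr)$ is an absolutely convergent Euler product for $\Re(s)<\tfrac12$ with $A_p(0)=1-p^{-1}$; hence $G_p(2u-1)$ is holomorphic in the relevant half-plane except for a simple pole at $u=\tfrac12$ coming from $\zeta(2-2u)$.

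Now I would change variables via $y=\frac{(\frac12-u)\log R}{2\pi i}$, which sends $(q/2\pi)^{2u}$ to $\frac q{2\pi}e(-y)$, the Gamma ratio to $\frac{\Gamma(\frac\kappa2-\frac{2\pi iy}{\log R})}{\Gamma(\frac\kappa2+\frac{2\pi iy}{\log R})}$, $G_p(2u-1)$ to $G_p\!\left(-\frac{4\pi iy}{\log R}\right)$, and the line $\Re(u)=\sigma_0$ to the horizontal line $\Im(y)=-\frac{(\frac12-\sigma_0)\log R}{2\pi}$ below $\mathbb R$; the factor $\frac q{2\pi}$ combines with $1/\varphi(q)$ into the $q$-free constant $\frac p{2\pi(p-1)}$. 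I would \emph{define} $\mathcal J_{\kappa,p}(s)$ in \eqref{jkpdef} to be $\frac{\Gamma(\frac\kappa2-\frac s2)}{\Gamma(\frac\kappa2+\frac s2)}G_p(-s)-\frac{A_p(0)}{s}$, which is holomorphic at $s=0$ and throughout $\Re(s)\ge0$, and correspondingly split $\frac{\Gamma}{\Gamma}G_p\!\left(-\frac{4\pi iy}{\log R}\right)=\mathcal J_{\kappa,p}\!\left(\frac{4\pi iy}{\log R}\right)+\frac{A_p(0)\log R}{4\pi iy}$. For the $\mathcal J_{\kappa,p}$-term I would shift the $y$-contour up to $\mathbb R$ (the only poles of the integrand, those of the Gamma factor, lie well below $\Im(y)=-\frac{(\frac12-\sigma_0)\log R}{2\pi}$, and $\phi$ decays rapidly along horizontal lines by Paley--Wiener), producing $\frac{2i^\kappa}{\log R}\mathcal L(\kappa,p,\phi,R)$ after collecting the constants. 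For the remaining $\frac{A_p(0)\log R}{4\pi iy}$-term I would shift the contour up \emph{past} the simple pole at $y=0$; combining the residue at $y=0$, the Sokhotski--Plemelj half-residue produced by indenting the final contour around the origin, the evaluation $\operatorname{p.v.}\int_{\mathbb R}\frac{e(-y)\phi(y)}{y}\,dy=-i\pi\int_{-1}^1\widehat\phi(t)\,dt$ (which follows from $\operatorname{p.v.}\int_{\mathbb R}\frac{e(\alpha y)}{y}\,dy=i\pi\operatorname{sgn}(\alpha)$, the evenness of $\widehat\phi$, and $\int_{\mathbb R}\widehat\phi=\phi(0)$), and a routine estimate that replacing the Gamma ratio by its value $1$ at $y=0$ costs only $O(1/\log R)$ (which gets absorbed into the $\mathcal L$-term), I obtain the main term $\frac{i^\kappa}{2}\bigl[\phi(0)-\int_{-1}^1\widehat\phi(t)\,dt\bigr]$, with all truncation and approximation errors collecting into $O\bigl((||\widehat\phi||_\infty+||\widehat\phi\thinspace''||_\infty)q^{\theta-2+\varepsilon}\bigr)$.

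I expect the main obstacle to be the bookkeeping near $y=0$: cleanly isolating the polar part $\frac{A_p(0)}{s}$ of $\frac{\Gamma}{\Gamma}G_p(-s)$, getting the coefficient of $\phi(0)$ to come out as $\frac12$ rather than $1$ through the correct indentation of the shifted contour, and identifying the resulting principal-value integral with $\int_{-1}^1\widehat\phi$, while checking that the discarded Gamma-ratio corrections genuinely land inside $\frac{2i^\kappa}{\log R}\mathcal L$ (a bona fide $O(1/\log R)$ main term) and not in the error. The other ingredients --- the error bound for the $Q_{\chi_0}$-remainder, the interchange of summation and integration, and the continuation of $G_p$ --- are routine.
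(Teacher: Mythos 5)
Your proposal is correct and follows essentially the same route as the paper: substitute the evaluation \eqref{qcpbound} of $Q_{\chi_0}$, pass to a Mellin/contour representation of the Bessel integral, sum over $b$ to get a Dirichlet series with a simple pole, split off the polar part to define $\mathcal{J}_{\kappa,p}$ and obtain the $\mathcal{L}$-term, and evaluate the polar contribution by a half-residue plus principal-value (Plancherel) computation, which parallels the paper's cosine/sine split with an $\varepsilon$-semicircle. The only cosmetic difference is that you keep the factor $q/\varphi(q)=p/(p-1)$ outside and work with the $p$-omitted Euler product, so your $\mathcal{J}_{\kappa,p}$ is $\frac{p-1}{p}$ times the function in \eqref{jkpdef}; that constant times your $\mathcal{L}$-integral reproduces the paper's term exactly, so the stated lemma is unaffected.
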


\begin{proof}
First we apply \eqref{qcpbound} to write
\begin{equation}
\mathcal{M}(\mathcal{F},\phi, R) = \frac{4\pi i^{-\kappa}}{\log R}\sum_{\substack{b\ge 1 \\ p\nmid b \\ \text{squarefree}}}\frac{Q_M(bq)}{b\varphi(bq)} + O\left(\left(||\widehat{\phi}||_{\infty} + ||\widehat{\phi}\thinspace''||_{\infty}\right)q^{\theta-2+\varepsilon}\right),\label{specmainqm}
\end{equation}
where
\begin{equation}
Q_M(c) = \int_0^{\infty}\frac1{\sqrt x}J_{\kappa-1}\left(\frac{4\pi\sqrt{x}}c\right)\widehat{\phi}\left(\frac{\log x}{\log R}\right)\;dx.\label{qmdef}
\end{equation}
We substitute $y = \frac{4\pi\sqrt x}c$ to get
\begin{equation}
Q_M(c) = \frac{c}{2\pi}\int_0^{\infty}J_{\kappa-1}(y)\widehat{\phi}\left(\frac{2\log(cy/4\pi)}{\log R}\right)\;dy.
\end{equation}
Now we apply Mellin inversion. Let
\begin{equation}
\widetilde{J}_{\kappa-1}(s) = \int_0^{\infty}x^sJ_{\kappa-1}(x)\;\frac{dx}x = 2^{s-1}\frac{\Gamma\left(\frac{\kappa+s-1}2\right)}{\Gamma\left(\frac{\kappa-s+1}2\right)},
\end{equation}
be the Mellin transform of the Bessel function, where the integral is found using (6.561.14) of \cite{GR}. The region of convergence is $-\kappa+1 < \Re(s) < \frac32$. In this region, we write
\begin{equation}
J_{\kappa-1}(y) = \frac1{2\pi i}\int_{(\sigma)}y^{-s}\widetilde{J}_{\kappa-1}(s)\;ds.
\end{equation}
Substituting this in gives
\begin{equation}
Q_M(c) = \frac{c}{4\pi^2i}\int_0^{\infty}\int_{(\sigma)}y^{-s}\widetilde{J}_{\kappa-1}(s)\widehat{\phi}\left(\frac{2\log(cy/4\pi)}{\log R}\right)\;ds\;dy.
\end{equation}
We rearrange the order of integration to get
\begin{equation}
Q_M(c) = \frac{c}{4\pi^2i}\int_{(\sigma)}\widetilde{J}_{\kappa-1}(s)\int_0^{\infty}y^{-s}\widehat{\phi}\left(\frac{2\log(cy/4\pi)}{\log R}\right)\;dy\;ds.
\end{equation}
Write $y = \frac{4\pi R^{t/2}}c$ to transform this into
\begin{equation}
Q_M(c) = \frac{\log R}{2\pi i}\int_{(\sigma)}\widetilde{J}_{\kappa-1}(s)\int_{-\infty}^{\infty}\left(\frac{4\pi}c\right)^{-s}R^{-(s-1)t/2}\widehat{\phi}(t)\;dt\;ds.
\end{equation}
Write $R^{-(s-1)t} = e\left(\frac{i(s-1)t\log R}{2\pi}\right)$. Then using Fourier inversion, we have
\begin{equation}
\int_{-\infty}^{\infty}R^{-(s-1)t/2}\widehat{\phi}(t)\;dt = \phi\left(\frac{i(s-1)\log R}{4\pi}\right).
\end{equation}
So we have, substituting this in together with the formula for the Mellin transform of the Bessel function,
\begin{equation}
Q_M(c) = \frac{\log R}{2\pi i}\int_{(\sigma)}\left(\frac{4\pi}c\right)^{-s}\widetilde{J}_{\kappa-1}(s)\phi\left(\frac{i(s-1)\log R}{4\pi}\right)\;ds.
\end{equation}
Replacing $s$ with $1-s$ gives the formula
\begin{equation}
Q_M(c) = \frac{c\log R}{8\pi^2i}\int_{(\sigma)}\left(\frac{4\pi}c\right)^s\widetilde{J}_{\kappa-1}(1-s)\phi\left(\frac{is\log R}{4\pi}\right)\;ds,
\end{equation}
for $-\frac12 < \sigma < \kappa$. Now, we substitute this formula back into \eqref{specmainqm} to get
\begin{multline}
\mathcal{M}(\mathcal{F},\phi, R) = \frac{i^{-\kappa-1}}{2\pi\left(1-\frac1p\right)}\sum_{\substack{b\ge1 \\ p\nmid b \\ \text{squarefree}}}\frac1{\varphi(b)}\int_{(\sigma)}\left(\frac{4\pi}{bq}\right)^s\widetilde{J}_{\kappa-1}(1-s)\phi\left(\frac{is\log R}{4\pi}\right)\;ds \\
+ O\left(\left(||\widehat{\phi}||_{\infty} + ||\widehat{\phi}\thinspace''||_{\infty}\right)q^{\theta-2+\varepsilon}\right).
\end{multline}
Now for $\Re(s) > 1$, define the special function
\begin{equation}
\xi(s) = \sum_{\substack{b\ge 1 \\ \text{squarefree}}}\frac1{b^{s-1}\varphi(b)} = \prod_{\ell}\left(1+\frac1{\ell^{s-1}(\ell-1)}\right) = \zeta(s)A(s),
\end{equation}
where
\begin{equation}
A(s) = \prod_{\ell}\left(1 + \frac{\ell^{s-1}-1}{\ell^{2s-1}(\ell-1)}\right),
\end{equation}
and the product is over all primes $\ell$. Then for $\sigma > 0$, we have
\begin{equation}
\mathcal{M}(\mathcal{F},\phi, R) = \frac{i^{-\kappa-1}p}{2\pi}\int_{(\sigma)}\left(\frac{4\pi p}{q}\right)^s\frac{\xi(1+s)\widetilde{J}_{\kappa-1}(1-s)}{p^s(p-1)+1}\phi\left(\frac{is\log R}{4\pi}\right)\;ds + O\left(\left(||\widehat{\phi}||_{\infty} + ||\widehat{\phi}\thinspace''||_{\infty}\right)q^{\theta-2+\varepsilon}\right).
\end{equation}
Now we want to consider what happens when we shift the contour to approach $\sigma = 0$ from the right. All the factors in the integrand are holomorphic in a neighborhood of the real line, except for $\xi(1+s)$. Now $\zeta(s)$ is holomorphic everywhere away from a simple pole at $s=1$. Meanwhile, the Euler product $A(s)$ for $\Re(s) > \frac12$, so $\xi(s)$ is holomorphic everywhere in the half-plane $\Re(s) > \frac12$ except for a simple pole at $s=1$. We also have $A(1) = 1$ and
\begin{equation}
\widetilde{J}_{\kappa-1}(1-s) = 2^{-s}\frac{\Gamma\left(\frac{\kappa-s}2\right)}{\Gamma\left(\frac{\kappa+s}2\right)},
\end{equation}
which is analytic in a neighborhood of the origin and satisfies $\widetilde{J}_{\kappa-1}(1-s) = 1$ when $s = 0$. Putting these all together, we define the function
\begin{equation}
\mathcal{J}_{\kappa,p}(s) = \frac{p^{1+s}\xi(1+s)}{p^s(p-1)+1}\frac{\Gamma\left(\frac{\kappa-s}2\right)}{\Gamma\left(\frac{\kappa+s}2\right)} - \frac1s,\label{jkpdef}
\end{equation}
which is analytic in the strip $-\frac12 < \Re(s) < \kappa$, and write
\begin{equation}
\mathcal{M}(\mathcal{F},\phi, R) = \frac{i^{-\kappa-1}}{2\pi}\int_{(\sigma)}\left(\frac{2\pi}{q}\right)^s\left[\frac1s + \mathcal{J}_{\kappa,p}(s)\right]\phi\left(\frac{is\log R}{4\pi}\right)\;ds + O\left(\left(||\widehat{\phi}||_{\infty} + ||\widehat{\phi}\thinspace''||_{\infty}\right)q^{\theta-2+\varepsilon}\right).
\end{equation}
Writing $s$ in place of $\frac{s\log R}{4\pi}$ and using the fact that $R = \log\left(\frac{q^2}{4\pi^2}\right)$ gives
\begin{multline}
\mathcal{M}(\mathcal{F},\phi, R) = \frac{i^{-\kappa-1}}{2\pi}\int_{(\sigma)}\frac{e^{-2\pi s}\phi(is)}s\;ds + \frac{2i^{-\kappa-1}}{\log R}\int_{(\sigma)}e^{-2\pi s}\mathcal{J}_{\kappa,p}\left(\frac{4\pi s}{\log R}\right)\phi(is)\;ds \\
+ O\left(\left(||\widehat{\phi}||_{\infty} + ||\widehat{\phi}\thinspace''||_{\infty}\right)q^{\theta-2+\varepsilon}\right).
\end{multline}
In the second integral above, we can send $\sigma\to 0$ and write $s = iy$ to get
\begin{equation}
\frac{2i^{-\kappa-1}}{\log R}\int_{(\sigma)}e^{-2\pi s}\mathcal{J}_{\kappa,p}\left(\frac{4\pi s}{\log R}\right)\phi(is)\;ds = \frac{2i^{\kappa}}{\log R}\int_{-\infty}^{\infty}e(-y)\mathcal{J}_{\kappa,p}\left(\frac{4\pi iy}{\log R}\right)\phi(y)\;dy = \frac{2i^{\kappa}}{\log R}\mathcal{L}(\kappa,p,\phi,R).
\end{equation}
It remains to evaluate the main term
\begin{equation}
\mathcal{N}(\kappa, \phi) = \frac{i^{-\kappa-1}}{2\pi}\int_{(\sigma)}\frac{e^{-2\pi s}\phi(is)}s\;ds.
\end{equation}
First, we write $s = \sigma + it$ to get
\begin{equation}
\mathcal{N}(\kappa, \phi) = \frac{i^{\kappa}e^{-2\pi\sigma}}{2\pi}\int_{-\infty}^{\infty}\frac{e^{-2\pi it}\phi(i\sigma-t)}{\sigma+it}\;dt.
\end{equation}
Now, we send $\sigma$ to 0 from the right and split the complex exponential up into a cosine and a sine, thus writing
\begin{equation}
\mathcal{N}(\kappa, \phi) = \frac{i^{\kappa}}{2\pi}\left[\mathcal{C}(\phi) - i\mathcal{S}(\phi)\right],
\end{equation}
where
\begin{equation}
\mathcal{C}(\phi) = \lim_{\sigma\to 0^+}\int_{-\infty}^{\infty}\frac{\cos(2\pi t)\phi(i\sigma-t)}{\sigma+it}\;dt \qquad\text{and}\qquad \mathcal{S}(\phi) = \lim_{\sigma\to 0^+}\int_{-\infty}^{\infty}\frac{\sin(2\pi t)\phi(i\sigma-t)}{\sigma+it}\;dt.
\end{equation}
Since $\sin(t)$ vanishes at $t=0$, we can set $\sigma$ directly to 0 in $\mathcal{S}(\phi)$ to get
\begin{equation}
\mathcal{S}(\phi) = -i\int_{-\infty}^{\infty}\frac{\sin(2\pi t)\phi(t)}t\;dt.
\end{equation}
Now the Fourier transform of $f(t) = \frac{\sin(2\pi t)}t$ is
\begin{equation}
\widehat{f}(\xi) = \begin{cases}
\pi & (|\xi| < 1) \\
0 & (|\xi| > 1)
\end{cases}.
\end{equation}
Thus by Plancherel's theorem,
\begin{equation}
\mathcal{S}(\phi) = -i\int_{-\infty}^{\infty}f(t)\phi(t)\;dt = -i\int_{-\infty}^{\infty}\widehat{f}(\xi)\widehat{\phi}(\xi)\;d\xi = -i\pi\int_{-1}^1\widehat{\phi}(t)\;dt.
\end{equation}
Meanwhile, we cannot just set $\sigma$ to 0 in $\mathcal{C}(\phi)$ because there is a pole at $s=0$. However, we can set $\sigma$ almost to zero with the exception of a semicircle of radius $\varepsilon$ around the origin. This gives
\begin{equation}
\mathcal{C}(\phi) = \int_{-\infty}^{-\varepsilon}\cos(2\pi t)\phi(t)\frac{dt}t + \int_{\varepsilon}^{\infty}\cos(2\pi t)\phi(t)\frac{dt}t + \int_{-\pi/2}^{\pi/2}e^{-2\pi \varepsilon\cos\theta}\cos(2\pi \varepsilon\sin\theta)\phi(i\varepsilon)\;d\theta.
\end{equation}
The first two integrals cancel out precisely. In the third, we can now send $\varepsilon$ to 0 to get
\begin{equation}
\mathcal{C}(\phi) = \int_{-\pi/2}^{\pi/2}\phi(0)\;d\theta = \pi\phi(0).
\end{equation}
Putting these together gives
\begin{equation}
\mathcal{N}(\kappa,\phi) = \frac{i^{\kappa}}2\left[\phi(0) - \int_{-1}^1\widehat{\phi}(t)\;dt\right],\label{inm1}
\end{equation}
and the result follows.
\end{proof}
\begin{remark}
The symmetry type and remaining lower-order main terms in Theorem \ref{theorem2} follow from substituting this back into \eqref{mdiagsepcoset}.
\end{remark}
\end{subsection}
\end{section}

\begin{section}{Analogy with the Weight Aspect} \label{analogysection}

\stepcounter{subsection}

It is useful to consider the analogy of the work above, which is done in level aspect, with the analogous problem of thin families in weight aspect. A useful reference is \cite{DFS}, which studied in the weight aspect a family analogous to the thinnest family in this work. The low-lying zeros problem in weight aspect for intermediate size families is to find the asymptotics of
\begin{equation}
\mathcal{D}_{\epsilon}(T, \Delta, h, \phi) = \frac{\sum_{\kappa\equiv 3+\epsilon\pmod 4}h\left(\frac{\kappa-T}{\Delta}\right)\mathcal{D}_1(\mathcal{H}_{\kappa}(1,1), \phi, T^2, w)}{\sum_{\kappa\equiv 3+\epsilon\pmod 4}h\left(\frac{\kappa-T}{\Delta}\right)},
\end{equation}
where $\epsilon = \pm1$, $T$ and $\Delta$ are positive real numbers with $T^{\varepsilon} \le \Delta \le T^{1-\varepsilon}$, $h$ is a smooth nonnegative weight function with compact support contained in $(0, \infty)$, $\phi$ is a real even Schwartz-class function as before, and $w(f) = \frac1{\left<f,f\right>_1}$ is the Petersson weight. Here we are thinking of $\Delta$ and $T$ as growing large (and thus the weight of the cuspforms being averaged as growing large) while the level stays at $q=1$. The first observation we can make is that $T$ and $\Delta$ are analogous to $p^k$ and $p^j$ respectively in the coset family, since $T$ controls the minimum value of the weight $\kappa$ (analogous to the level $q$ in our work) and $\Delta$ controls how ``spread out" the average is, since larger values cause there to be more weights being averaged. This reflects how larger values of $j$ (while still less than $k$) enlarge the coset family $\mathcal{F}_{\kappa,\epsilon}(q,p^j,\psi)$ and thus allow us to average over a larger set. \\

Just like the thinnest family can often be thought of as the limit of the coset family as $j\to 0$, the analogue of this family in level aspect is a sort of limit as $\Delta \to 1$. Namely, the average is taken over only a single weight $\kappa$ of size $T$. The first result in this direction is Theorem 1.2 of \cite{ILS}, which gives support up to $(-1, 1)$, matching the support in our Theorem \ref{theorem1}. Meanwhile, Theorem 1.3 of \cite{ILS} is the other extreme $\Delta \to T$, and it accordingly gives support up to $(-2, 2)$. However, there do not seem to be many results in the literature about the low-lying zeros of the intermediate families in weight aspect where $T^{\varepsilon} \le \Delta \le T^{1-\varepsilon}$. We expect that the support would go up to $1 + \frac{\log\Delta}{\log T}$ in this case, in analogy with our results in the level aspect. \\

Using our explicit formula \eqref{olds}, we can write analogously to \eqref{oldavg1}
\begin{equation}
\mathcal{D}_1(\mathcal{B}, \phi, T^2, w) = \frac2{\log R}\mathcal{I}(\kappa,\phi,T^2) - \sum_{\ell}\sum_{e\ge 1}\frac{\left[\Delta_{\mathcal{B}}(\ell^e) - \Delta_{\mathcal{B}}(\ell^{e-2})\right]\log \ell}{2\ell^{e/2}\Delta_{\mathcal{B}}(1)\log T}\widehat{\phi}\left(\frac{e\log\ell}{2\log T}\right),
\end{equation}
where $\mathcal{B} = \mathcal{H}_{\kappa}(1, 1)$. Now we follow the presentation of Section 5.5 of \cite{IW}. Using the Petersson trace formula to open up $\Delta_{\mathcal{B}}(\ell^e)$ and related sums, we can bring the sum over $\kappa$ to the inside to obtain a sum of the form
\begin{equation}
G(m, n) = 2\pi\sum_{c\ge 1}c^{-1}G\left(\frac{4\pi\sqrt{mn}}c\right)S(m,n;c),
\end{equation}
where
\begin{equation}
G(x) = \epsilon\sum_{\kappa\equiv 3+\epsilon\pmod 4}h\left(\frac{\kappa-T}{\Delta}\right)J_{\kappa-1}(x),
\end{equation}
and a factor of $i^{-\kappa} = \epsilon$ has been brought to the front. \\

The sums $G(m, n)$ and $G(x)$ are analogous to the sums $\sigma_{\psi,\epsilon}(m,n)$ and $\sigma_{\psi,\epsilon}(m,n;c)$ in the level aspect. Specifically, recall the sum
\begin{equation}
2\pi i^{-\kappa}\sum_{\substack{c > 0 \\ c\equiv 0\pmod q}}c^{-1}\sigma_{\psi,\epsilon}(m,n;c)J_{\kappa-1}\left(\frac{4\pi\sqrt{mn}}c\right)
\end{equation}
studied in Lemma \ref{cosetsumlemma}, where
\begin{equation}
\sigma_{\psi,\epsilon}(m,n;c) = \sum_{\chi\in\psi \widehat{G}_{p^j}}\chi(m)\overline{\chi}(n)\left[1+\epsilon\chi(-1)\right]S_{\overline{\chi}^2}(m,n;c).
\end{equation}
It is well known that the Bessel function mirrors the behavior of the Kloosterman sum in the level aspect. \\

Finally, we consider Lemma 5.8 of \cite{IW}, which states that
\begin{equation}
4G(x) = \epsilon g(x) - f(x) + O(xc_3(g)),
\end{equation}
where
\begin{equation}
f(x) = \int_0^{\infty}g\left(\sqrt{2xy}\right)\sin\left(x+y-\frac{\pi}4\right)\frac{dy}{\sqrt{\pi y}},
\end{equation}
\begin{equation}
g(x) = h\left(\frac{x-T+1}{\Delta}\right),
\end{equation}
and
\begin{equation}
c_3(g) = \int_{-\infty}^{\infty}\left|\widehat{g}(t)t^3\right|\;dt = \Delta\int_{-\infty}^{\infty}\left|\widehat{h}(\Delta t)t^3\right|\;dt = \frac1{\Delta^3}\int_{-\infty}^{\infty}|\widehat{h}(t)t^3|\;dt \ll \frac1{\Delta^3},
\end{equation}
with the implied constant depending only on $h$. Summing over $c$ on both sides gives
\begin{equation}
4G(m,n) = 2\pi\epsilon\sum_{c\ge 1}c^{-1}g\left(\frac{4\pi\sqrt{mn}}c\right)S(m,n;c) - 2\pi\sum_{c\ge 1}c^{-1}f\left(\frac{4\pi\sqrt{mn}}c\right)S(m,n;c) + O\left(\frac{\sqrt{mn}}{\Delta^3}\right),\label{weightsigmasum}
\end{equation}
where the implied constant is absolute. \\

The equation \eqref{weightsigmasum} is analogous to \eqref{sigmamnsum}. In particular, the first of the two sums over $c$ in \eqref{weightsigmasum} is analogous to the level aspect sum
\begin{equation}
\epsilon\psi(-1)\sum_{\substack{b\ge 1 \\ p\nmid b}}\frac{S(\overline{q}m,\overline{q}n;b)\mathcal{K}_{\psi}^-(\overline{b}m,\overline{b}n;q)}{bq}J_{\kappa-1}\left(\frac{4\pi\sqrt{mn}}{bq}\right) = \frac{\overline{\psi}(m)\psi(n)}{\varphi(p^j)}\sum_{\substack{c\ge 1 \\ p^k\mid\mid c}}c^{-1}\sigma_{\psi,\epsilon}(m,n;c)J_{\kappa-1}\left(\frac{4\pi\sqrt{mn}}c\right)
\end{equation}
in \eqref{sigmamnsum}. In the same way that the sum in the level aspect runs only over the $c$ that are divisible by $p$ exactly $k$ times (due to Lemma \ref{lemmakminus}), the sum in the weight aspect detects only the values of $c$ in a thin slice, since $\frac{4\pi\sqrt{mn}}c$ needs to be of size roughly $T$ -- recall that $T$ corresponds to $p^k$ in our analogy -- or else $g(x)$ vanishes. \\

Meanwhile, the second sum in \eqref{weightsigmasum} is analogous to the level aspect sum
\begin{equation}
\sum_{\substack{c> 0 \\ c\equiv 0\pmod{p^{j+k}}}}\frac{S_{\overline{\psi}^2}(m,n;c)}cJ_{\kappa-1}\left(\frac{4\pi\sqrt{mn}}c\right)
\end{equation}
in \eqref{sigmamnsum}. In the same way that the sum in the level aspect runs only over the $c$ that are divisible by $p$ at least $j+k$ times (due to Lemma \ref{lemmakplus}), we claim that the sum in the weight aspect runs only over the values of $c$ for which $\frac{4\pi\sqrt{mn}}c \gg T\Delta$ -- recall that $T\Delta$ corresponds to $p^{j+k}$ in our analogy -- up to an error term. Our proof of this again follows the presentation of \cite{IW}. First, we write
\begin{equation}
f(x) = \Im\left[\frac{e^{i\left(x-\frac{\pi}4\right)}}{\sqrt{\pi}}\int_0^{\infty}e^{iy}g\left(\sqrt{2xy}\right)\frac{dy}{\sqrt y}\right].
\end{equation}
Integrating by parts gives
\begin{equation}
f(x) = \Re\left[\frac{e^{i\left(x-\frac{\pi}4\right)}}{\sqrt{2\pi}}\int_0^{\infty}e^{iy}\frac{\sqrt{2xy}g'\left(\sqrt{2xy}\right) - g\left(\sqrt{2xy}\right)}{y^{3/2}}\;dy\right].
\end{equation}
Integrating by parts a second time gives
\begin{equation}
f(x) = -\Im\left[\frac{e^{i\left(x-\frac{\pi}4\right)}}{\sqrt{2\pi}}\int_0^{\infty}e^{iy}\frac{10xyg''\left(\sqrt{2xy}\right) - 15\sqrt{2xy}g'\left(\sqrt{2xy}\right) + 4g\left(\sqrt{2xy}\right)}{10y^{5/2}}\;dy\right].
\end{equation}
so
\begin{equation}
f(x) \ll \int_0^{\infty}\frac{2xy\left|g''\left(\sqrt{2xy}\right)\right|}{y^{5/2}}\;dy + \int_0^{\infty}\frac{\sqrt{2xy}\left|g'\left(\sqrt{2xy}\right)\right|}{y^{5/2}}\;dy + \int_0^{\infty}\frac{\left|g\left(\sqrt{2xy}\right)\right|}{y^{5/2}}\;dy,
\end{equation}
where the implied constant is absolute. Substituting $u=\sqrt{2xy}$ in each integral gives
\begin{equation}
f(x) \ll x^{3/2}\left(\int_0^{\infty}\frac{\left|g''(u)\right|}{u^2}\;du + \int_0^{\infty}\frac{\left|g'(u)\right|}{u^3}\;dy + \int_0^{\infty}\frac{\left|g(u)\right|}{u^4}\;du\right).
\end{equation}
Now supposing $h(x)$ has support contained in $(0, M)$, we see that $g(x)$ has support contained in $(T-1, T+\Delta M-1)$. Thus in general we have
\begin{equation}
\int_0^{\infty}\frac{\left|g^{(m)}(u)\right|}{u^{j+1}}\;du \le \int_{T-1}^{T+\Delta M-1}\frac{\left|\left|g^{(m)}\right|\right|_{\infty}}{u^{j+1}}\;du = \frac{\left|\left|h^{(m)}\right|\right|}{j\Delta^m}\left[\frac1{(T-1)^j} - \frac1{(T+\Delta M-1)^j}\right] \ll \frac1{T^{j+1}\Delta^{m-1}},
\end{equation}
where the implied constant depends only on $h$, $j$, and $m$. Thus we have
\begin{equation}
f(x) \ll \frac{x^{3/2}}{T^2\Delta},\label{fbound1}
\end{equation}
where the implied constant depends only on $h$. We also may write, without integrating by parts,
\begin{equation}
|f(x)| \le \frac1{\sqrt{\pi}}\int_0^{\infty}\frac{g\left(\sqrt{2xy}\right)}{\sqrt y}\;dy = \sqrt{\frac2{\pi x}}\int_0^{\infty}g(u)\;du = \sqrt{\frac2{\pi x}}\int_{T-1}^{T+\Delta M-1}||h||_{\infty}\;du = \frac{\sqrt2\Delta M||h||_{\infty}}{\sqrt{\pi x}},
\end{equation}
and thus
\begin{equation}
f(x) \ll \frac{\Delta}{x^{1/2}},\label{fbound2}
\end{equation}
where the implied constant depends only on $h$. Now if $x \ll T^2$, we use \eqref{fbound1} to get
\begin{equation}
f(x) \ll \frac{x^{3/2}}{T^2\Delta} \ll \frac{xT}{T^2\Delta} = \frac{x}{T\Delta}.
\end{equation}
Otherwise, if $x \gg T^2$, we use \eqref{fbound2} to get
\begin{equation}
f(x) \ll \frac{\Delta}{x^{1/2}} = \frac{x\Delta}{x^{3/2}} \ll \frac{x\Delta}{T^3} \ll \frac{x}{T\Delta}.
\end{equation}
Thus $f(x) \ll \frac{x}{T\Delta}$, so all the terms in the second sum of \eqref{weightsigmasum} where $\frac{4\pi\sqrt{mn}}c \ll T\Delta$ are bounded by
\begin{equation}
2\pi\sum_{c\gg \frac{4\pi\sqrt{mn}}{T\Delta}}c^{-1}\frac{\frac{4\pi\sqrt{mn}}c}{T\Delta}c^{\frac12+\varepsilon} \ll (mn)^{\frac14+\varepsilon}(T\Delta)^{-\frac12+\varepsilon} \ll \frac{\sqrt{mn}}{\Delta}.
\end{equation}
so the sum only runs over the $c$ for which $\frac{4\pi\sqrt{mn}}c\gg T\Delta$, up to an error term of $O\left(\frac{\sqrt{mn}}{\Delta}\right)$, where the implied constant depends only on $h$.

\end{section}

\begin{section}{Proof of Theorems \ref{thintheoremnv} and \ref{cosettheoremnv} (Nonvanishing Results)} \label{nvsection}

\stepcounter{subsection}

We start by proving Theorem \ref{cosettheoremnv}. Theorem \ref{thintheoremnv} will follow as a corollary. We note first that
\begin{equation}
\sum_{m\ge 0}p_m(\mathcal{F}, w) = 1.
\end{equation}
Now if we choose $\phi$ such that $\phi(0) = 1$ and $\phi(x) \ge 0$ for all $x$ (and as before, assume $\phi$ is even, Schwartz-class, and has a compactly supported Fourier transform) then we have
\begin{equation}
D_1(f,\phi, R) \ge \operatorname{ord}_{s=\frac12}L(s, f),
\end{equation}
just by ignoring all the terms except $\rho = \frac12$, which is counted according to its multiplicity. Then we have
\begin{equation}
\mathcal{D}_1(\mathcal{F},\phi,R,w) \ge \frac{\sum_{f\in\mathcal{F}}w(f)\operatorname{ord}_{s=\frac12}L(s, f)}{\sum_{f\in\mathcal{F}}w(f)} = \sum_{m\ge 1}mp_m(\mathcal{F}, w).
\end{equation}
In particular, for each positive integer $m$,
\begin{equation}
p_m(\mathcal{F}, w) \le \frac1m\mathcal{D}_1(\mathcal{F},\phi,R,w),
\end{equation}
by ignoring all but one term in the sum. Then suppose we have a result of the form
\begin{equation}
\lim_{q\to\infty}\mathcal{D}_1(\mathcal{F},\phi,R,w) = \mathcal{I}(\phi; G) = \int_{-\infty}^{\infty}\phi(x)W(G)(x)\;dx,
\end{equation}
for $\widehat{\phi}$ supported within $(-\theta, \theta)$, where $q$ is some parameter of the family (so that $\mathcal{F}$ and $R$ both depend on $q$) and $W(G)(x)$ is one of the group density distributions defined in \eqref{wux} through \eqref{wsoox}. For any $\varepsilon > 0$ and $m\ge 1$, we can write
\begin{align}
p_m(\mathcal{F}, w) &\le \frac1m\mathcal{I}(\phi; G) + \varepsilon, \\
p_0(\mathcal{F}, w) &\ge 1 - \mathcal{I}(\phi; G) - \varepsilon,
\end{align}
for sufficiently large $q$. If we assume that all $f\in\mathcal{F}$ have the same root number $\epsilon$, then we can come up with better lower bounds on the nonvanishing because we know that $p_m(\mathcal{F}, w) = 0$ unless $m$ has parity matching that of $\epsilon$. So we have
\begin{equation}
p_0(\mathcal{F}, w) \ge 1 - \frac12\mathcal{I}(\phi;G)-\varepsilon,\label{p0even}
\end{equation}
if $\epsilon = 1$, and
\begin{equation}
p_1(\mathcal{F}, w) \ge \frac32 - \frac12\mathcal{I}(\phi;G)-\varepsilon,\label{p1odd}
\end{equation}
if $\epsilon = -1$. (Note that $p_0(\mathcal{F}, w) = 0$ in this case; that is, $L(f, \frac12) = 0$ for every $f\in\mathcal{F}$.) \\

Now using the Fourier pair
\begin{align}
\phi(x) &= \left(\frac{\sin(\pi\theta x)}{\pi\theta x}\right)^2, \\
\widehat{\phi}(y) &= \frac1{\theta}\left(1-\frac{|y|}{\theta}\right),
\end{align}
given in (1.42) of \cite{ILS}, we can write the value of this integral in specific cases as follows:
\begin{align}
\mathcal{I}(\phi; \mathrm{U}) &= \frac1{\theta}, \label{ipu} \\
\mathcal{I}(\phi; \mathrm{O}) &= \frac1{\theta} + \frac12, \label{ipo} \\
\mathcal{I}(\phi; \mathrm{SO(even)}) &= \frac2{\theta} - \frac1{2\theta^2}, \label{ipsoe} \\
\mathcal{I}(\phi; \mathrm{SO(odd)}) &= 1 + \frac1{2\theta^2}, \label{ipsoo}
\end{align}
where the latter two are valid if $\theta \ge 1$. \\

In the case of the coset family, we have $\theta = 1 + \frac{j}k$ due to Theorem \ref{theorem2}. Substituting this into \eqref{ipsoe} and \eqref{ipsoo}, and applying either \eqref{p0even} or \eqref{p1odd}, gives the results \eqref{cosetevennv} and \eqref{cosetoddnv}, and this proves Theorem \ref{cosettheoremnv}. \\

To derive Theorem \ref{thintheoremnv} from this, we simply apply \eqref{p0even} and \eqref{p1odd} to $\mathcal{F}_{\kappa}(q,\chi,\eta)$, since all its elements have root number $\epsilon = i^{\kappa}\chi(-1)$, its symmetry group is $G = \mathrm{O}$, and we have support up to $\theta = 1$. Substituting this into \eqref{ipo}, and applying \eqref{p0even} and \eqref{p1odd}, gives the results \eqref{thinevennv} and \eqref{thinoddnv}.

\begin{remark}
We are not able to use this technique to get any results for non-quadratic characters $\eta$, since in this case the family only has unitary symmetry and the root number $\epsilon$ is not necessarily real.
\end{remark}

\end{section}

\begin{section}{Proof of Lemma \ref{weilavglemma} (The On-Average Weil Bound)} \label{weilsection}

\stepcounter{subsection}

To prove our on-average Weil bound, we first factor $c = p_1^{e_1}\cdots p_k^{e_k}$ and $\chi = \chi_1\cdots\chi_k$, where each $\chi_i$ is a Dirichlet character modulo $p_i^{e_i}$. Also, let $d_i$ be the inverse of $\frac{c}{p_i^{e_i}}$ modulo $p_i^{e_i}$ for each $i$. Then we can factor the Kloosterman sum to get
\begin{equation}
S_{\chi}(m,n;c) = \prod_{i=1}^kS_{\chi_i}(d_im,d_in;p_i^{e_i}).
\end{equation}
Next, for each $i$, define the residue $\ell_i$ modulo $p_i^{e_i}$ as follows. If $e_i$ is even, define it so that
\begin{equation}
\overline{\chi(1+zp_i^{e_i/2})} = e\left(\frac{\ell_iz}{p_i^{e_i/2}}\right),
\end{equation}
and if $e_i$ is odd, define it so that
\begin{equation}
\overline{\chi(1+zp_i^{(e_i-1)/2})} = e\left(\frac{\ell_iz}{p_i^{(e_i+1)/2}} + (p_i-1)\frac{\ell_iz^2}{2p_i}\right) = e\left(\frac{\ell_iz}{p_i^{(e_i+1)/2}} - \frac{\overline{2}\ell_iz^2}{p_i}\right),
\end{equation}
where the second equality holds unless $p_i = 2$. Furthermore, let $M_q(\alpha,\beta,\gamma)$ denote the number of solutions to the congruence $\alpha x^2+\beta x+\gamma \equiv 0\pmod q$. \\

Now we establish the following key lemma:
\begin{lemma}
For positive integers $m, n, c$, and $\chi$ a Dirichlet character modulo $c$, we have
\begin{equation}
|S_{\chi}(m,n;c)| \le c^{1/2}\tau(c)(m,n,\flrt(c))^{1/2}\left(\prod_{i=1}^kp_i^{\nu_{p_i}(c_i^2+4mn)}, \flrt(c)\right)^{1/2},\label{flrtlemma}
\end{equation}
where $c = p_1^{e_1}\cdots p_k^{e_k}$, $c_i = \frac{c\ell_i}{p_i^{e_i}}$, $\ell_i$ is as defined above, and $\flrt$ is as defined in \eqref{flrtdef}.
\end{lemma}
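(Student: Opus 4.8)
The plan is to factor $S_\chi(m,n;c)$ into its prime-power components and estimate each component by the completion (stationary-phase) method. By the Chinese Remainder factorization recorded above, $S_\chi(m,n;c) = \prod_{i=1}^k S_{\chi_i}(d_im,d_in;p_i^{e_i})$, so it suffices to establish the local bound
\begin{equation}
|S_{\chi_i}(d_im,d_in;p_i^{e_i})| \le p_i^{e_i/2}\,\tau(p_i^{e_i})\,(m,n,p_i^{f_i})^{1/2}\bigl(p_i^{\nu_{p_i}(c_i^2+4mn)},\,p_i^{f_i}\bigr)^{1/2}
\end{equation}
for each $i$, where $f_i := \lfloor e_i/2\rfloor$; multiplying over $i$ yields \eqref{flrtlemma}, since $\tau$ is multiplicative and $\prod_i p_i^{f_i} = \flrt(c)$. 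When $e_i = 1$ both $f_i$ and the two gcd factors are trivial, and the estimate reduces to the classical Weil bound $|S_{\chi_i}(a,b;p_i)| \le 2p_i^{1/2} \le p_i^{1/2}\tau(p_i)$ for a twisted Kloosterman sum to a prime modulus.

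So fix $i$ with $e:=e_i \ge 2$ and abbreviate $p=p_i$, $f=f_i$, $\ell=\ell_i$, $\chi=\chi_i$, and $(a,b)=(d_im,d_in)$ reduced modulo $p^e$. The substitution $x = x_0(1+zp^f)$, with $x_0$ running over a fixed set of representatives of the units modulo $p^f$ and $z$ running modulo $p^{e-f}$, is a bijective parametrization of the units modulo $p^e$. Since $\overline{x} \equiv \overline{x_0}(1 - zp^f + z^2p^{2f}-\cdots)\pmod{p^e}$, one gets
\begin{equation}
ax + b\overline{x} \equiv (ax_0 + b\overline{x_0}) + zp^f(ax_0 - b\overline{x_0}) + z^2p^{2f}b\overline{x_0}\pmod{p^e},
\end{equation}
the last term being absent when $e=2f$. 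Invoking the defining relations for $\ell$, $\overline{\chi}(1+zp^f)$ equals $e_{p^f}(\ell z)$ when $e=2f$, and $e_{p^{f+1}}(\ell z)\,e_p\bigl(\tfrac{p-1}2\ell z^2\bigr)$ when $e=2f+1$. Discarding the $x_0$-exponential (which has modulus $1$) and carrying out the sum over $z$: when $e=2f$ it equals $p^f$ times the indicator of $ax_0 - b\overline{x_0} + \ell \equiv 0\pmod{p^f}$; when $e=2f+1$, after splitting $z$ into its classes modulo $p$ and modulo $p^f$, it equals the indicator of the same congruence modulo $p^f$ times a quadratic Gauss sum in the residue of $z$ modulo $p$, of modulus at most $p^{1/2}$ outside a degenerate case (where the sum has modulus $p$ but the congruence is then forced modulo $p^{f+1}$, restoring the bound). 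Multiplying the congruence by $x_0$ to rewrite it as $ax_0^2 + \ell x_0 - b \equiv 0\pmod{p^f}$, we arrive in all cases at
\begin{equation}
|S_\chi(a,b;p^e)| \le p^{e/2}\,M_{p^f}(a,\ell,-b)
\end{equation}
(with $M_{p^{f+1}}$ and a compensating factor in the degenerate sub-case).

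The remaining task is to bound $M_{p^f}(a,\ell,-b)$. If $p\nmid m$ (so $p\nmid a$), completing the square — legitimate for $p$ odd; for $p=2$ the slightly lossier elementary estimates for quadratic congruences and Gauss sums modulo powers of $2$ suffice, the loss being absorbed by $\tau(2^e)$ — identifies the solution count with the number of square roots modulo $p^f$ of the discriminant $\ell^2+4ab$. Since $d_i$ is a unit with $\overline{d_i}\,\ell \equiv c_i\pmod{p^e}$, we have $\ell^2+4ab \equiv d_i^2(c_i^2+4mn)\pmod{p^e}$, so up to the cap at $f$ (which is all that is used) $\nu_p(\ell^2+4ab) = \nu_p(c_i^2+4mn) =: v_i$, and the elementary count of square roots gives $M_{p^f}(a,\ell,-b) \le 2p^{\min(v_i,f)/2} = 2\bigl(p^{\nu_p(c_i^2+4mn)},p^f\bigr)^{1/2}$. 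If instead $p\mid m$, the congruence $ax_0^2+\ell x_0-b\equiv 0\pmod p$ is linear and solvable only if either $p\mid n$ — which yields the factor $(m,n,p^f)^{1/2}$ — or $p\nmid\ell$, in which case $x_0$ is pinned modulo $p$ and the lift is again counted by the quadratic congruence modulo $p^f$; either way the displayed local bound holds. Multiplying over $i$ and using $\tau(p^e)\ge 2p$ for $e\ge 2$ to absorb the constant $2$ and the $O(1)$ losses from the degenerate sub-cases then gives \eqref{flrtlemma}.

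The main obstacle will be the odd-exponent case: precisely evaluating and bounding the residual quadratic Gauss sum over $z$ modulo $p$, including the degenerate sub-case in which its quadratic coefficient vanishes modulo $p$ and the separate treatment of $p=2$ (using the alternative expansion of $\overline{\chi}(1+zp^f)$ recorded above), together with the bookkeeping needed to keep the $p$-adic valuation of $\ell^2+4ab$ correctly matched with $\nu_p(c_i^2+4mn)$ through the normalizations by $d_i$.
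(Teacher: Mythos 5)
Your proposal is correct in outline and takes essentially the same route as the paper: Chinese-Remainder factorization, Postnikov-type evaluation of each prime-power factor via the substitution $x=x_0(1+zp_i^{f_i})$ reducing to the count of roots of $mx_0^2+c_ix_0-n$ modulo $p_i^{\lfloor e_i/2\rfloor}$ (with the residual quadratic Gauss sum and its degenerate sub-case, counted modulo $p_i^{(e_i+1)/2}$, at odd exponents), identification of the discriminant with $c_i^2+4mn$ up to the unit $d_i^2$, and the prime-modulus Weil bound when $e_i=1$ --- the only difference being that the paper cites Propositions 9.4, 9.7, 9.8 and Lemma 9.6 of \cite{KL} (which rest on Lemmas 12.2 and 12.3 of \cite{IK}) for the local evaluations and root counts that you re-derive by hand. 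Note only that at primes with $e_i=1$ the cited local bound is $2p_i^{1/2}(m,n,p_i)^{1/2}$ rather than your unconditional $2p_i^{1/2}$ (which fails when $p_i\mid(m,n)$ and $\chi_i$ is principal); this edge case is glossed over in the paper's final assembly as well, so it does not distinguish your argument from the paper's.
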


\begin{proof}
If $e_i$ is even, we have by the proof of Proposition 9.7 of \cite{KL} (which in turn uses Lemma 12.2 of \cite{IK}) that
\begin{equation}
S_{\chi}(d_im,d_in; p_i^{e_i}) = p_i^{e_i/2}\sum_{\substack{y\pmod{p_i^{e_i/2}} \\ p_i\nmid y \\ d_imy^2+\ell_iy-d_in\equiv 0\pmod{p_i^{e_i/2}}}}\overline{\chi(y)}e_{p_i^{e_i}}(d_imy+d_in\overline{y}),
\end{equation}
and thus the triangle inequality gives
\begin{equation}
|S_{\chi_i}(d_im,d_in;p_i^{e_i})| \le p^{e_i/2}M_{p_i^{e_i/2}}(d_im, \ell_i, -d_in) = p_i^{e_i/2}M_{p_i^{e_i/2}}(m, c_i, -n).
\end{equation}
If $e_i$ is odd and $e_i \ge 3$, we have by the proof of Proposition 9.8 of \cite{KL} (which in turn uses Lemma 12.3 of \cite{IK}) that
\begin{equation}
S_{\chi}(d_im,d_in; p_i^{e_i}) = p_i^{(e_i-1)/2}\sum_{\substack{y\pmod{p_i^{(e_i-1)/2}} \\ p_i\nmid y \\ d_imy^2+\ell_iy-d_in\equiv 0\pmod{p_i^{(e_i-1)/2}}}}\overline{\chi(y)}e_{p_i^{e_i}}(d_imy+d_in\overline{y})G_{p_i}(y),
\end{equation}
where $G_{p_i}(y)$ is the Gauss sum
\begin{equation}
G_{p_i}(y) = \sum_{z\pmod{p_i}}e_{p_i}\left(\left(d_iny^{-3} + (p_i-1)\frac{\ell_i}2y^{-2}\right)z^2 + \left(d_imy^2+\ell_iy-d_in\right)p_i^{-(e_i-1)/2}z\right).\label{gausssumodd}
\end{equation}
If the coefficient of $z^2$ does not vanish modulo $p_i$, the Gauss sum \eqref{gausssumodd} has absolute value $\sqrt{p_i}$, and so analogous to the case where $e_i$ is even, we have
\begin{equation}
|S_{\chi_i}(d_im,d_in;p_i^{e_i})| \le p^{e_i/2}M_{p_i^{(e_i-1)/2}}(d_im, \ell_i, -d_in) = p_i^{e_i/2}M_{p_i^{(e_i-1)/2}}(m, c_i, -n).
\end{equation}
Supposing instead that this coefficient vanishes, we must have
\begin{equation}
2d_in \equiv \ell_iy \pmod{p_i}.\label{z2vanish}
\end{equation}
Now $y$ also satisfies the congruence
\begin{equation}
d_imy^2 + \ell_iy - d_in \equiv 0\pmod{p_i}.\label{postnikovodd}
\end{equation}
We may multiply \eqref{postnikovodd} by $\ell_i^2$ and substitute in \eqref{z2vanish} to get the congruence
\begin{equation}
4d_i^3mn^2 + \ell_i^2d_in \equiv 0\pmod{p_i}.
\end{equation}
Now if $n\equiv 0\pmod{p_i}$, we also have $\ell_i\equiv 0\pmod{p_i}$ by \eqref{z2vanish}, and thus $m\equiv 0\pmod{p_i}$ by \eqref{postnikovodd}. Otherwise we can multiply by $n^{-1}d_i^{-3}$ to write this as
\begin{equation}
c_i^2 + 4mn \equiv 0 \pmod{p_i},
\end{equation}
and if $n\equiv 0\pmod{p_i}$, this is also necessarily true because $c_i\equiv 0\pmod{p_i}$. \\

Now if
\begin{equation}
\left(d_imy^2+\ell_iy-d_in\right)p_i^{-(e_i-1)/2} \equiv 0\pmod{p_i},
\end{equation}
then the Gauss sum \eqref{gausssumodd} equals $p_i$. Otherwise, it equals zero. Now suppose we take $y\pmod{p_i^{(e_i+1)/2}}$ and consider how many solutions to $my^2 + c_iy - n \equiv 0$ there are in some set of representatives modulo $p_i^{(e_i-1)/2}$. Since these representatives are evenly distributed across the equivalence classes, we see that there are precisely
\begin{equation}
\frac1{p_i}M_{p_i^{(e_i+1)/2}}(m, c_i, -n)
\end{equation}
such zeros. The remaining terms in the sum over $y$ vanish. Thus, in the case where the coefficient of $z^2$ in \eqref{gausssumodd} vanishes, we have
\begin{equation}
|S_{\chi_i}(d_im,d_in;p_i^{e_i})| \le p_i^{(e_i-1)/2}\cdot\frac1{p_i}M_{p_i^{(e_i+1)/2}}(m, c_i, -n)\cdot p_i = p_i^{(e_i-1)/2}M_{p_i^{(e_i+1)/2}}(m, c_i, -n).
\end{equation}
Finally, consider the case of $e_i=1$. In this case, the Kloosterman sum is over a prime modulus, so Proposition 9.4 of \cite{KL} applies directly. Thus
\begin{equation}
|S_{\chi_i}(d_im,d_in;p_i^{e_i})| \le 2p_i^{1/2}(m,n,p_i)^{1/2}.
\end{equation}

Next, we need to bound these $M_q(\alpha,\beta,\gamma)$ values. If $p\nmid\alpha$, we have
\begin{equation}
M_{p^e}(\alpha,\beta,\gamma) \le 2p^{\frac12\min\{e,\nu_p(\beta^2-4\alpha\gamma)\}},
\end{equation}
which follows from Lemma 9.6 of \cite{KL}. If $p\mid\alpha$, then unless $p\mid\beta$ and $p\mid\gamma$ as well, the result above still holds because either the discriminant is not divisible by $p$, or there are no roots. If $p \mid (\alpha,\beta,\gamma)$, then letting $\delta = \nu_p((\alpha,\beta,\gamma,p^e))$, we have
\begin{equation}
M_{p^e}(\alpha,\beta,\gamma) = p^{\delta}M_{p^{e-\delta}}\left(\frac{\alpha}{p^{\delta}}, \frac{\beta}{p^{\delta}}, \frac{\gamma}{p^{\delta}}\right) \le 2p^{\frac12\min\left\{e+\delta, \nu_p(\beta^2-4\alpha\gamma)\right\}}.
\end{equation}
Applying this bound, we see that in all the cases above, we have the bound
\begin{equation}
|S_{\chi_i}(d_im,d_in;p_i^{e_i})| \le 2p_i^{\frac{e_i+\delta_i}2}p_i^{\frac12\min\left\{\left\lfloor\frac{e_i}2\right\rfloor, \nu_{p_i}\left(c_i^2 + 4mn\right)\right\}},
\end{equation}
where $\delta_i = \nu_{p_i}\left(\left(m,n,\ell_i,p_i^{\lfloor\frac{e_i}2\rfloor}\right)\right)$. Multiplying together thus gives
\begin{equation}
|S_{\chi}(m,n;c)| \le \prod_{i=1}^k\left(2p_i^{\frac{e_i+\delta_i}2}p_i^{\frac12\min\left\{\left\lfloor\frac{e_i}2\right\rfloor, \nu_{p_i}\left(c_i^2 + 4mn\right)\right\}}\right).
\end{equation}
which simplifies to give \eqref{flrtlemma}.
\end{proof}

Now since $(m,n,\flrt(c)) \le (\flrt(mn), \flrt(c))$, we have
\begin{equation}
|S_{\chi}(m,n;c)| \le c^{1/2}\tau(c)(\flrt(r),\flrt(c))^{1/2}\left(\prod_{i=1}^kp_i^{\nu_{p_i}(c_i^2+4r)}, \flrt(c)\right)^{1/2}.
\end{equation}
where $r = mn$. Then we may sum up to write
\begin{equation}
\sum_{\substack{m,n\ge 1 \\ A\le mn\le B}}|S_{\chi}(m,n;c)| \le c^{1/2}\tau(c)\sum_{A\le r\le B}\tau(r)(\flrt(r),\flrt(c))^{1/2}\left(\prod_{i=1}^kp_i^{\nu_{p_i}(c_i^2+4r)}, \flrt(c)\right)^{1/2}.
\end{equation}
Applying the arithmetic-geometric mean inequality gives
\begin{equation}
\sum_{\substack{m,n\ge 1 \\ A\le mn\le B}}|S_{\chi}(m,n;c)| \le \frac12c^{1/2}\tau(c)\left[\sum_{A\le r\le B}\tau(r)(\flrt(r),\flrt(c)) + \sum_{A\le r\le B}\tau(r)\left(\prod_{i=1}^kp_i^{\nu_{p_i}(c_i^2+4r)}, \flrt(c)\right)\right].\label{amgm}
\end{equation}
It will suffice to show that both sums in \eqref{amgm} are bounded by $B^{\varepsilon}c^{\varepsilon}\left(B-A+c^{1/2}\right)$. Now we split up the first sum based on the gcd of $r$ and $c$, writing it as
\begin{equation}
\sum_{A\le r\le B}\tau(r)(\flrt(r),\flrt(c)) = \sum_{d\mid \flrt(c)}d\sum_{\substack{A\le r\le B \\ (\flrt(r),\flrt(c))=d}}\tau(r).
\end{equation}
We bound the inner sum as
\begin{equation}
\sum_{\substack{A\le r\le B \\ (\flrt(r),\flrt(c))=d}}\tau(r) \ll B^{\varepsilon}\left(\frac{B-A}{d^2} + 1\right).
\end{equation}
since we have $d^2\mid r$. Then we write
\begin{equation}
\sum_{A\le r\le B}\tau(r)(\flrt(r),\flrt(c)) \ll B^{\varepsilon}\sum_{d\mid\flrt(c)}\left(\frac{B-A}d + d\right) \ll B^{\varepsilon}c^{\varepsilon}\left(B-A+c^{1/2}\right).\label{amgmbound}
\end{equation}
We split up the second sum in \eqref{amgm} similarly, writing
\begin{equation}
\sum_{A\le r\le B}\tau(r)\left(\prod_{i=1}^kp_i^{\nu_{p_i}(c_i^2+4r)}, \flrt(c)\right) = \sum_{d\mid\flrt(c)}d\sum_{\substack{A\le r\le B \\ \left(\prod_{i=1}^kp_i^{\nu_{p_i}(c_i^2+4r)}, \flrt(c)\right)=d}}\tau(r).\label{funkysum}
\end{equation}
Now for each prime $p_i$ dividing $c$, we know that for $r$ in the inner sum, we have
\begin{equation}
\nu_{p_i}(c_i^2+4r) \ge \nu_{p_i}(d),
\end{equation}
and thus that
\begin{equation}
4r\equiv -c_i^2\pmod{p_i^{\nu_{p_i}(d)}}.
\end{equation}
If $p_i\neq 2$, there is a unique solution for $r$ modulo $p_i^{\nu_{p_i}(d)}$ since $4$ has an inverse modulo $p_i$. If $p_i = 2$, we claim there are at most 4 solutions for $r$ modulo $2^{\nu_2(d)}$. If $\nu_2(d) \le 2$ this is obvious, since there are at most 4 possible residues. If $\nu_2(d) > 2$ but $c_i$ is odd, there are no solutions. Otherwise, we divide the whole congruence by 4 to get
\begin{equation}
r\equiv -\left(\frac{c_i}2\right)^2\pmod{2^{\nu_2(d)-2}}.
\end{equation}
This gives a unique residue $r$ modulo $2^{\nu_2(d)-2}$, which can be lifted up to $2^{\nu_2(d)}$ in exactly four ways. Thus, putting all these residues together via the Chinese remainder theorem, we see that there are at most four possible residues for $r\pmod d$ for which $\left(\prod_{i=1}^kp_i^{\nu_{p_i}(c_i^2+4r)}, \flrt(c)\right)=d$. Thus the number of possible terms in the sum over $A\le r\le B$ satisfying this condition is at most $4\left(\frac{B-A}d+1\right)$, so the sum satisfies
\begin{equation}
\sum_{\substack{A\le r\le B \\ \left(\prod_{i=1}^kp_i^{\nu_{p_i}(c_i^2+4r)}, \flrt(c)\right)=d}}\tau(r) \ll B^{\varepsilon}\left(\frac{B-A}d+1\right).
\end{equation}
Then, as in \eqref{amgmbound}, we write
\begin{equation}
\sum_{A\le r\le B}\tau(r)\left(\prod_{i=1}^kp_i^{\nu_{p_i}(c_i^2+4r)}, \flrt(c)\right) \ll B^{\varepsilon}\sum_{d\mid\flrt(c)}(B-A+d) \ll B^{\varepsilon}c^{\varepsilon}\left(B-A+c^{1/2}\right),
\end{equation}
and this completes the proof of \eqref{weilavg}.

\end{section}

\bibliographystyle{alpha-abbrvsort}
\bibliography{refs}

\end{document}